\theoremstyle{plain}
\newtheorem{theorem}{Theorem}[section]
\newtheorem{proposition}[theorem]{Proposition}
\newtheorem{corollary}[theorem]{Corollary}
\newtheorem{lemma}[theorem]{Lemma}
\newtheorem{example}[theorem]{Example}
\theoremstyle{definition}
\newtheorem{definition}[theorem]{Definition}
\newtheorem{remark}[theorem]{Remark}
\newcommand{\proket}{{\mathrm{pke}}}
\newcommand{\profket}{{\mathrm{profket}}}
\newcommand{\ket}{{\mathrm{ket}}}
\newcommand{\fket}{{\mathrm{fket}}}
\newcommand{\lra}{\longrightarrow}
\newcommand{\Spec}{\mbox{Spec}}
\newcommand{\Fil}{\mbox{\rm Fil}}
\newcommand{\Iw}{\mbox{Iw}}
\newcommand{\GL}{{\rm \mathbf{GL}}}
\newcommand{\Z}{{\mathbb Z}}
\newcommand{\Q}{{\mathbb Q}}
\newcommand{\C}{{\mathbb C}}
\newcommand{\F}{{\mathbb F}}
\newcommand{\N}{{\mathbb N}}
\newcommand{\V}{{\mathbb V}}
\newcommand{\WW}{{\mathbb W}}
\newcommand{\cM}{{\cal M}}
\newcommand{\cC}{{\cal C}}
\newcommand{\cI}{{\cal I}}
\newcommand{\cU}{{\cal U}}
\newcommand{\cT}{{\cal T}}
\newcommand{\cF}{{\cal F}}
\newcommand{\cG}{{\cal G}}
\newcommand{\cE}{{\cal E}}
\newcommand{\cV}{{\cal V}}
\newcommand{\cA}{{\cal A}}
\newcommand{\PP}{{\mathbb P}}
\newcommand{\bD}{{\mathbb D}}
\newcommand{\bV}{{\mathbb V}}
\newcommand{\cB}{\mathcal{B}}
\newcommand{\cD}{{\cal D}}
\newcommand{\cO}{{\cal O}}
\newcommand{\cW}{{\cal W}}
\newcommand{\cZ}{{\cal Z}}
\newcommand{\fD}{{\mathfrak{D}}}
\newcommand{\fFil}{{\mathfrak{Fil}}}
\newcommand{\Symm}{{\rm Sym}}
\newcommand{\cY}{\mathcal{Y}}
\newcommand{\cX}{\mathcal{X}}
\newcommand{\cQ}{\mathcal{Q}}
\begin{document}

\title{Overconvergent de Rham Eichler-Shimura morphisms.}
\author{Fabrizio Andreatta \\ Adrian Iovita}
\maketitle

\tableofcontents \pagebreak

\section{Introduction.}

This article represents our attempt to improve the previous results on defining and understanding 
overconvergent Eichler-Shimura maps in \cite{EichlerShimura} and \cite{half}.
 
We fix a prime integer $p>2$.
We recall that an (overconvergent) Eichler-Shimura morphism is a comparison map describing weight $k$ overconvergent modular symbols, seen as pro-Kummer \'etale cohomology classes of a sheaf of weight $k$ distributions $\bD_k$ (where $k\colon \Z_p^\ast\rightarrow B^\ast$ is a $B$-valued weight as in Definition \ref{def:weights}) tensored some period ring, in terms of overconvergent modular forms of weight $k+2$, tensored with the same period ring. In \cite{EichlerShimura} and in \cite{half} we defined and studied Hodge-Tate Eichler-Shimura maps while in this article  we'll have Hodge-Tate, de Rham and crystalline variants.

To really explain what are the main issues we deal with in this article, let us observe that there has been remarkable recent progress in $p$-adic Hodge theory and especially in integral $p$-adic Hodge theory, let us just mention \cite{bhatt_morrow_scholze}, \cite{bhatt_scholze},  \cite{cesnavicius_koshikawa}, \cite{colmez_dospinescu_niziol}. The articles quoted here deal with various cohomology theories on formal schemes or adic spaces with constant coefficients. On the other hand it has been clear for some time that for applications to $p$-adic automorphic forms one needs to work with cohomology with very large coefficients. In this article we try to understand $p$-adic Hodge theory (comparison morphisms really) with large coeffcients and therefore, unfortunately, we cannot use the recent results quoted above.

More precisely, let $\cX:=\cX_0(p^m, N)$ be the log adic space defined by  the modular curve over $\Q_p$ associated  to the congruence subgroup $\Gamma_0(p^m) \cap \Gamma_1(N)$. For any $r\ge 0$ we have open subspaces $\cX\bigl(p/\mathrm{Ha}^{p^r}\bigr)$ where $\mathrm{Ha}$ is a (any) local lift of the Hasse invariant. We fix $k, B$ as above and let $b\in \N$. Then, Theorem \ref{thm:hodgetate} states that there is such a neighborhood and a canonical $\C_p$-linear, Galois and Hecke equivariant map:
$$
\Psi_{\rm HT, b}\colon {\rm H}^1\bigl(\cX_\proket, \bD_k(1)\bigr)^{(b)}\widehat{\otimes}\C_p\lra {\rm H}^0\bigl(
\cX\bigl(p/\mathrm{Ha}^{p^r}\bigr), \omega^{k+2}  \bigr)^{(b)}\widehat{\otimes}\C_p.
$$
Here the exponent $(b)$ on a module indicates the submodule of slope $\le b$ for the action of the $U_p$-operator. 
In a slightly different way and only for analytic weights, the map $\Psi_{\rm HT,b}$ was constructed
in \cite{EichlerShimura}. There we also proved that the map is generically surjective. The new result in this paper is: 

\begin{theorem}If $\prod_{i=0}^{b-1}(u_k-i)\in \bigl(B[1/p]\bigr)^\ast$ then $\Psi_{\rm HT,b}$ is surjective, for all $b\ge 1$ and it is surjective if $b=0$.
\end{theorem}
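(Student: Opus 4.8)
The plan is to identify $\mathrm{coker}(\Psi_{\mathrm{HT},b})$ with a cohomological obstruction group carrying a $U_p$-action and to show that, on the slope $\le b$ part, it is annihilated by $\prod_{i=0}^{b-1}(u_k-i)$; the theorem is then immediate, the case $b=0$ being the remark that an empty product equals $1$. Concretely, I would start from the filtration on $\bD_k$ dual to the evident filtration on the sheaf of weight $k$ locally analytic functions: after $\widehat{\otimes}$ with the completed structure sheaf over a suitable $\cX\bigl(p/\mathrm{Ha}^{p^r}\bigr)$, its graded pieces are Tate twists of powers of $\om$, the bottom piece producing $\om^{k+2}$ — this is exactly the input to the construction of $\Psi_{\mathrm{HT},b}$ recalled before Theorem \ref{thm:hodgetate}. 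Passing to pro-Kummer-\'etale $\rH^1$, then $\widehat{\otimes}\C_p$, then the slope $\le b$ part for the completely continuous operator $U_p$ (a functor which is exact on the relevant category) produces a complex
\begin{multline*}
\cH^{(b)}\lra \rH^1\bigl(\cX_\proket,\bD_k(1)\bigr)^{(b)}\widehat{\otimes}\C_p \xrightarrow{\Psi_{\mathrm{HT},b}} \\
\rH^0\bigl(\cX\bigl(p/\mathrm{Ha}^{p^r}\bigr),\om^{k+2}\bigr)^{(b)}\widehat{\otimes}\C_p \xrightarrow{\partial} {\cH'}^{(b)}
\end{multline*}
with $\mathrm{im}(\Psi_{\mathrm{HT},b})=\ker(\partial)$, where $\cH$ and $\cH'$ are assembled, with Tate twists, out of the $\rH^j$ of $\om^{-k}$ on $\cX\bigl(p/\mathrm{Ha}^{p^r}\bigr)$ — the ``complementary half'' of Eichler--Shimura, which accounts for the $\theta$-critical overconvergent forms of weight $k+2$. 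Hence $\mathrm{coker}(\Psi_{\mathrm{HT},b})\cong\mathrm{im}(\partial)\subseteq{\cH'}^{(b)}$, and it suffices to bound this image. (Equivalently, one can extract $\partial$ from the de Rham Eichler-Shimura morphism of this paper, whose target is the hypercohomology of the de Rham complex $\bigl[\om^{-k}\xrightarrow{\nabla}\om^{k+2}\bigr]$ interpolated over the weight; that makes the operator $\nabla$ used below visible directly.)

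\textbf{Slopes against the weight.} To bound $\mathrm{im}(\partial)$ I would use the interaction of $\partial$ with $U_p$ and with the Sen/weight operator $\vartheta$ attached to the $\Z_p^\ast$-action defining $k$ (equivalently, to the Gauss--Manin connection $\nabla$). From the relation $U_p\circ\nabla = p\,(\nabla\circ U_p)$ — the theta operator raises the weight by $2$ and multiplies $U_p$-eigenvalues by $p$ — one gets that the $j$-fold iterate of $\nabla$ raises $U_p$-slopes by $j$; therefore the slope $\le b$ part of the obstruction is carried by the images of the $\le b$-fold iterates of $\nabla$, and on those the Sen operator $\vartheta$ is pinned down, with eigenvalues differing from $u_k$ by the integers $0,\dots,b-1$ in the normalisation of Definition \ref{def:weights}. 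Feeding this into the description of $\partial$ shows that $\prod_{i=0}^{b-1}(u_k-i)$ annihilates $\mathrm{coker}(\Psi_{\mathrm{HT},b})$. Thus, when $\prod_{i=0}^{b-1}(u_k-i)\in(B[1/p])^\ast$, the cokernel vanishes and $\Psi_{\mathrm{HT},b}$ is surjective; for $b=0$ the product is empty and $\Psi_{\mathrm{HT},0}$ is surjective with no hypothesis on $k$, which refines the generic surjectivity established in \cite{EichlerShimura}. (In the classical specialisation $k\in\N$ this is the familiar fact that a weight $k+2$ form fails to lift exactly when it is $\theta$-critical, which forces slope $\ge k+1$ and so cannot occur in slope $\le b$ once $u_k\notin\{0,\dots,b-1\}$.)

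\textbf{The main obstacle.} The real work is entirely in the second step, \emph{with large coefficients}. One must prove complete continuity of $U_p$ on the obstruction spaces $\cH,\cH'$ themselves, so that the slope $\le b$ truncation is exact and compatible with $\partial$ and with the comparison maps; and one must carry out the Sen-operator computation for the infinite-rank, $B$-linear sheaves $\bD_k\widehat{\otimes}\hO$ — precisely the setting in which the recent constant-coefficient $p$-adic Hodge theory is unavailable, so the eigenvalue statement and the slope estimate have to be proved by hand. One must also control all of these objects, and $\Psi_{\mathrm{HT},b}$ itself, under shrinking the strict neighbourhood $\cX\bigl(p/\mathrm{Ha}^{p^r}\bigr)$, and keep the $\widehat{\otimes}\C_p$-completions under control when moving between $\rH^0$, $\rH^1$ and the $U_p$-slope decomposition. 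These functional-analytic issues, rather than any formal manipulation, are where the argument stands or falls.
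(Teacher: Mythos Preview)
Your proposal has a structural gap that precedes the analytic difficulties you flag at the end. The filtration on $\bD_k\widehat\otimes\widehat\cO$ with graded pieces $\omega^{k-2h}$ only exists over the strict neighbourhood $\cX\bigl(p/\mathrm{Ha}^{p^r}\bigr)$ (equivalently $\cX_\infty^{(u)}$), not over the whole proper curve $\cX$: it is defined using the Hodge--Tate splitting of $T_pE\otimes\widehat\cO$, which requires a canonical subgroup. Hence any long exact sequence it produces computes the cokernel of $\Phi\colon\mathrm{H}^1\bigl((\cX_\infty^{(u)})_\proket,\mathfrak{D}_k\bigr)^{(b)}\to\mathrm{H}^0\bigl(\cX_\infty^{(u)},\omega^{k+2}\bigr)^{(b)}$ --- but $\Phi$ is already an \emph{isomorphism} (Proposition~\ref{prop:HeckeWkinfty}), so that cokernel is zero unconditionally. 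The displayed four-term complex you write, with $\mathrm{H}^1\bigl(\cX_\proket,\bD_k(1)\bigr)^{(b)}$ in the second slot and objects built from $\omega^{-k}$ over the neighbourhood in the outer slots, simply does not arise from the filtration: there is no mechanism linking the global cohomology on $\cX$ to the local filtration on $\cX_\infty^{(u)}$ other than the restriction map $\mathcal R$ itself. What you are calling $\partial$ and $\mathcal H'$ would exist if you replaced the global $\mathrm{H}^1$ by the local one, but then there is nothing to prove.

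The entire content of the theorem is the surjectivity of the restriction $\mathcal R\colon\mathrm{H}^1\bigl(\cX_\proket,\mathfrak{D}_k\bigr)^{(b)}\to\mathrm{H}^1\bigl((\cX_\infty^{(u)})_\proket,\mathfrak{D}_k\bigr)^{(b)}$, and your Sen-operator/$\theta$-critical argument does not touch $\mathcal R$. The paper's method is completely different and genuinely global: given a class $x$ on $\cX_\infty^{(u)}$ with $Q(U_p)x=0$, one uses the dynamics of the $U_p$-correspondence to propagate $P(U_p)^{n+u+1}x$ to the large open $\cX\setminus\cX_0^{(n+1)}$, and then a decomposition $U_p^n=(U_p^n)^{\rm good}\amalg(U_p^n)^{\rm bad}$ (\`a la Kassaei) to produce a compatible class on $\cX_0^{(n)}$. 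These glue modulo $p^r$ via Mayer--Vietoris, and iteration gives a convergent preimage in $\mathrm{H}^1(\cX_\proket,\mathfrak{D}_k)^{(b)}$. The hypothesis on $\prod_{i=0}^{b-1}(u_k-i)$ enters not through $\nabla$ on the $\infty$-side but through an \emph{increasing} filtration on $\mathfrak{D}_k$ over the opposite neighbourhood $\cX_0^{(n)}$ (Proposition~\ref{prop:Wk0}), where it controls the torsion obstruction to the gluing; a separate reduction step (localising $B$, then specialising at classical weights $b\le i\le h-1$) is needed to descend from the a priori stronger condition $\prod_{i=0}^{h-1}(u_k-i)\in B[1/p]^\ast$ to the stated one.
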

Our next result in this article is a de Rham overconvergent Eichler-Shimura map, i.e. we prove, assuming the hypothesis and notations above:

\begin{theorem} a) There is a natural, Galois and Hecke equivariant $B\widehat{\otimes}B_{\rm dR}^+$-semilinear map
$$(\ast)\quad \rho_k\colon \mathrm{H}^1\bigl(\cX_{\overline{K},\proket}, \bD_k^o(T_0^\vee)[n] \bigr)^{(b)}\widehat{\otimes} B_{\rm dR}^+\lra \mathrm{H}^1_{\rm dR}\bigl(\cX\bigl(p/\mathrm{Ha}^{p^r}\bigr), \mathbf{W}_{k, \rm dR}\bigr)^{(b)}\widehat{\otimes} \Fil^{-1}B_{\rm dR},$$
where $\mathbf{W}_{k, dR}$ is a modular sheaf with connection and a filtration which interpolates 
$p$-adically the family of sheaves $\{{\rm Sym}^r{\rm H}^1_{\rm dR}\bigl(E/\cX  \bigr)\}_{r\in \N}$ with their filtrations and connections.  

b) If $\prod_{i=0}^{b-1}(u_k-i)\in \bigl(B[1/p]\bigr)^\ast$ then the display $(\ast)$ above becomes:
$$\rho_k\colon \mathrm{H}^1\bigl(\cX_{\overline{K},\proket}, \bD_k^o(T_0^\vee)[n] \bigr)^{(b)}\widehat{\otimes} B_{\rm dR}^+\lra \mathrm{H}^0\bigl(\cX\bigl(p/\mathrm{Ha}^{p^r}\bigr), \omega^{k+2}\bigr)^{(b)}\widehat{\otimes} \Fil^{-1}B_{\rm dR}$$ 
and it is surjective.
\end{theorem}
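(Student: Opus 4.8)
\noindent\emph{Sketch of the argument.}
The plan is to obtain $\rho_k$ as the degree-one component of a de Rham period comparison with coefficients in the large sheaf $\bD_k^o$, and then to deduce part (b) from the Hodge filtration on $\mathbf{W}_{k,\rm dR}$ together with the surjectivity of $\Psi_{\rm HT,b}$ proved above. First I would use that on slope-$\le b$ parts the compact operator $U_p$ identifies $\mathrm{H}^1(\cX_{\overline K,\proket},\bD_k^o(T_0^\vee)[n])^{(b)}$ with the corresponding pro-Kummer \'etale cohomology of a suitable overconvergent neighbourhood $\cX(p/\mathrm{Ha}^{p^r})$; there, in the style of Faltings and adapted to the large coefficients, one constructs a canonical isomorphism of pro-Kummer \'etale sheaves $\bD_k^o(T_0^\vee)[n]\widehat{\otimes}\cO\bB_{\rm dR}^{+}\cong\mathbf{W}_{k,\rm dR}\widehat{\otimes}\cO\bB_{\rm dR}^{+}$, with $\cO\bB_{\rm dR}$ the structural de Rham period sheaf, compatible with the filtrations and with the connections (the Gauss--Manin connection on $\mathbf{W}_{k,\rm dR}$ and the canonical connection on $\cO\bB_{\rm dR}$). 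This isomorphism is produced by $p$-adically interpolating the classical statement for the sheaves $\{{\rm Sym}^r\mathrm{H}^1_{\rm dR}(E/\cX)\}_r$ attached to the universal elliptic curve, using the trivialisation of the relevant local systems over the infinite-level (Igusa-type) pro-Kummer \'etale cover on which $\bD_k^o$ is defined. A Poincar\'e lemma for $\cO\bB_{\rm dR}$ then yields a $\gal(\overline K/K)$-equivariant identification $\mathrm{H}^1(\cX_{\overline K,\proket},\bD_k^o(T_0^\vee)[n])^{(b)}\widehat{\otimes}B_{\rm dR}^{+}\cong\mathrm{H}^1_{\rm dR}(\cX(p/\mathrm{Ha}^{p^r}),\mathbf{W}_{k,\rm dR})^{(b)}\widehat{\otimes}B_{\rm dR}^{+}$, compatible with the convolution of the Hodge filtration and the filtration on $B_{\rm dR}$. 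Tracking the normalisation (the Tate twist $(1)$ of the Hodge--Tate version becomes the shift to $\Fil^{-1}B_{\rm dR}$, the source twist $\bD_k^o(T_0^\vee)[n]$ absorbing the discrepancy) and restricting this filtered identification to the relevant steps produces the (not necessarily bijective) semilinear map $(\ast)$ of part (a).

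For part (b), one analyses the target through the Hodge filtration on $\mathbf{W}_{k,\rm dR}$. Since $\cX(p/\mathrm{Ha}^{p^r})$ is affinoid, higher coherent cohomology vanishes, so $\mathrm{H}^1_{\rm dR}(\cX(p/\mathrm{Ha}^{p^r}),\mathbf{W}_{k,\rm dR})$ is the cokernel of the connection, and the bottom step of the Hodge filtration, which by Kodaira--Spencer is $\omega^{k+2}$, produces a natural surjection $\mathrm{H}^0(\cX(p/\mathrm{Ha}^{p^r}),\omega^{k+2})\twoheadrightarrow\mathrm{H}^1_{\rm dR}(\cX(p/\mathrm{Ha}^{p^r}),\mathbf{W}_{k,\rm dR})$ whose kernel is the image of a $\Theta$-type operator out of weight $-k$. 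The hypothesis $\prod_{i=0}^{b-1}(u_k-i)\in(B[1/p])^\ast$ controls the $U_p$-slopes of this kernel exactly as in the argument for surjectivity of $\Psi_{\rm HT,b}$: it forces the kernel to meet the slope-$\le b$ part trivially, so that $\mathrm{H}^0(\cX(p/\mathrm{Ha}^{p^r}),\omega^{k+2})^{(b)}\cong\mathrm{H}^1_{\rm dR}(\cX(p/\mathrm{Ha}^{p^r}),\mathbf{W}_{k,\rm dR})^{(b)}$, and $(\ast)$ takes the stated shape. For the surjectivity of $\rho_k$, I would reduce modulo $\Fil^1$: the target becomes $\mathrm{H}^0(\cX(p/\mathrm{Ha}^{p^r}),\omega^{k+2})^{(b)}\widehat{\otimes}\C_p(-1)$, and $\rho_k$ reduces, up to a Tate twist, to $\Psi_{\rm HT,b}$, which is surjective by the preceding Theorem under the same hypothesis. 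Since $B_{\rm dR}^{+}$ is $\Fil$-adically complete and both the source and the target $\mathrm{H}^0(\cX(p/\mathrm{Ha}^{p^r}),\omega^{k+2})^{(b)}\widehat{\otimes}\Fil^{-1}B_{\rm dR}$ are complete $B_{\rm dR}^{+}$-modules on which $\rho_k$ is strict for the filtrations, a topological Nakayama / successive-approximation argument promotes this to surjectivity of $\rho_k$.

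The hard part is the first step: the de Rham period comparison with the large coefficient sheaf $\bD_k^o$. As stressed in the introduction, the recent $p$-adic Hodge comparison theorems apply to finite-rank $\Z_p$-local systems (or small Banach coefficients) and not here, so the period sheaves $\bD_k^o\widehat{\otimes}\cO\bB_{\rm dR}^{+}$, their Hodge filtrations, and the local trivialisations over the infinite-level cover must be built by hand, and the relevant Poincar\'e lemma and descent proved while keeping track of the filtration, the connection and the $U_p$-action; in particular one must show the comparison is strict for the filtrations, which is what legitimises both the slope-$\le b$ truncation and the final Nakayama step. A secondary difficulty is to match the Hodge filtration arising from the period comparison with the geometric Hodge filtration on $\mathbf{W}_{k,\rm dR}$, and to pin down precisely why $\prod_{i=0}^{b-1}(u_k-i)\in(B[1/p])^\ast$ kills the $\Theta$-operator kernel on $U_p$-slope $\le b$.
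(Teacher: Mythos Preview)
Your strategy for part (b) --- reduce modulo $t$ to land on the Hodge--Tate map $\Psi_{\rm HT,b}$ and then lift surjectivity by d\'evissage over the $t$-adic filtration --- is exactly what the paper does, and your identification of $\mathrm{H}^1_{\rm dR}$ with the cokernel of $\nabla_k$ on the affinoid, together with the role of the hypothesis on $\prod(u_k-i)$, is essentially correct.

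The construction in part (a), however, diverges from the paper and contains a real gap. You propose a filtered \emph{isomorphism} $\bD_k^o(T_0^\vee)[n]\widehat{\otimes}\cO\bB_{\rm dR}^+\cong\mathbf{W}_{k,\rm dR}\widehat{\otimes}\cO\bB_{\rm dR}^+$, to be obtained by interpolating the classical comparison for $\mathrm{Sym}^r\mathrm{H}^1_{\rm dR}(E)$. But $\bD_k^o$ is a sheaf of distributions (a profinite, ``inverse-limit'' object) while $\mathbf{W}_{k,\rm dR}$ carries an increasing filtration and is of ``direct-limit'' type; the natural map between their building blocks, produced in the paper by the duality $\xi_k\colon\WW_k(T,s)^\vee\to\WW_k^D(T^\vee,s^\vee,Q_n)$, is \emph{not} an isomorphism: its matrix entries are $\alpha^n\binom{u_k}{n}$, which are not units. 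So no such sheaf isomorphism is available, and you cannot apply a Poincar\'e lemma to conclude an identification on $\mathrm{H}^1$. The paper avoids this entirely: it only constructs a one-way map. Concretely, it starts from the classical crystalline comparison $\alpha_{\rm log}\colon T_p(E)^\vee\to (\mathrm{H}_E\otimes\cO\mathbb{A}_{\rm log})^{\nabla=0}$ for the rank-$2$ object, refines it so that the image lands in $\mathrm{H}_E^\sharp\otimes\cO\mathbb{A}_{\max,n+1}^{\log}$ with the marked sections matching modulo $p^n$, and then pushes this through the VBMS/dual-VBMS functoriality to obtain a map of sheaves $\zeta_k\colon\bD_k^o(T_0^\vee)[n]\to(\mathbf{W}_{k,\rm dR}\widehat{\otimes}\cO\mathbb{A}_{\max,n+1}^{\log}[p^{-1}])^{\nabla=0}$. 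A separate slope argument (the $U_p$-operator on $\mathbf{W}_{k,\rm dR}/\Fil_m$ is divisible by $p^{h+1}$) then shows that on the slope-$\le b$ part the induced map on $\mathrm{H}^1$ factors through a finite step $\Fil_m\mathbf{W}_{k,\rm dR}$, after which one can pass to $\cO\bB_{\rm dR}^+$ and compute.

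A smaller point: the shift to $\Fil^{-1}B_{\rm dR}$ is not a Tate-twist bookkeeping artefact. It comes from the fact that $\mathbf{W}_{k,\rm dR}$ has an \emph{increasing} filtration while $\cO\bB_{\rm dR}$ has a decreasing one, so the total connection $\nabla'=\nabla_k\otimes 1+1\otimes\nabla_{\rm dR}$ sends $\Fil_m\otimes\Fil^0$ into $\Fil_{m+1}\otimes\Fil^{-1}$; this d\'ecalage is intrinsic to the de Rham complex here and is what forces the target to carry $\Fil^{-1}B_{\rm dR}$.
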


In order to make it clear what improvements we were able to produce in this article we now list the new ideas.
 
\

1){\it  Neighbourhoods of the ordinary loci in modular curves.}

\

Both in \cite{EichlerShimura} and 
\cite{half} we worked on the (log) adic modular curves $\cX_1(N)$ and $\cX_0(p, N)$; these are the (log) adic spaces associated to the modular curves over $\Q_p$ of level $\Gamma_1(N)$ and respectively $\Gamma_1(N)\cap \Gamma_0(p)$, which have a connected respectively two connected components of ordinary loci. We worked with strict neighbourhoods of these ordinary loci of deapth $n\in \N$ defined as the points $x$ with the property $v_x({\rm Ha})\le 1/n$ These neighbourhoods are defined over ${\rm Spa}(L, \cO_L)$
over which the point $x$ is defined, where $L$ is some complete extension of $\Q_p$ and these neighbourhoods are also used in this very article for the de Rham Eichler-Shimura maps.

For the Hodge-Tate comparison maps in this article we use a better technology, inspired by the work of 
\cite{CHJ}.
Namely by considering the perfectoid adic space $\cX(p^\infty, N)$ over ${\rm Spa}(\C_p, \cO_{\C_p})$ associated to the projective limit of adic modular curves $\displaystyle \lim_{\leftarrow, m}\cX(p^m, N)$  and the Hodge-Tate period map
$$\pi_{\rm HT}\colon \cX(p^\infty, N)\lra \PP^1_{\Q_p},$$ we define interesting opens $U_\#^{(n)}\subset \PP^1_{\Q_p}$, for $n\ge 1$ and the symbol $\#\in \{0, \infty\}$, which are invariant under the action of the $m$-th Iwahori subgroup ${\rm Iw}_m\subset \GL_2(\Z_p)$ as follows: if $\#=\infty$ then $m\ge n$ if $\#=0$ then $m\ge 1$ and  on which we understand the dynamic of the $U_p$-operator. Then by the properties of $\pi_{\rm HT}$, for every $n\ge 1$ there are: an $m\ge 1$ as above and neighbourhoods of the ordinary 
loci in $\cX_0(p^m, N)$ denoted $\cX_0^{(n)}, \cX_\infty^{(n)}$ such that if  $\pi_m\colon \cX(p^\infty, N)\rightarrow \cX_0(p^m, N)$ is the natural projection, then $\pi_{\rm HT}^{-1}(U_\#^{(n)})=\pi_m^{-1}(\cX_\#^{(n)})$, where $\#\in \{0, \infty  \}$, and we understand very well the dynamic of the $U_p$-operator on sections of modular sheaves on $\cX_0(p^m, N)$. We remark that $\cX_0(p^m, N)$ has many connected components of the ordinary locus if $m$ is large  and a complicated semistable integral model, therefore it would have been difficult to apply the previous method, i.e. defining neighbourhoods of the ordinary loci using ${\rm Ha}$,
in $\cX_0(p^m, N)$ for $m>1$.

\

2) {\it Payman Kassaei's method for the cohomology of pro-Kummer \'etale sheaves.}

\

Let us now explain our new take on the overconvergent Hodge-Tate Eichler-Shimura morphism. We fix a 
slope $b\in \N$ and a weight $k\colon \Z_p^\ast\lra B^\ast$ as in Definition \ref{def:weights}. This weight is $N$-analytic, for some $N\in \N$, i.e. there is 
$u_k\in B[1/p]$ such that $k(t)={\rm exp}\bigl(u_k\log(t)\bigr)$ for all $t\in 1+p^N\Z_p$.
This data determines integers $n, r, m$ such that on $\cX:=\cX_0(p^m, N)$ we have our neighbourhoods 
$\cX_\infty^{(\mu)}$ for $\mu\le m$ and $\cX_0^{(n)}, \cX_0^{(n+1)}$. We base-change $\cX, \cX_\infty^{(\mu)}, \cX_0^{(n)}, \cX_0^{(n+1)}$ over ${\rm Spa}(\Q_p, \Z_p)$ to ${\rm Spa}(B[1/p], B)$ and still denote them   $\cX, \cX_\infty^{(\mu)}, \cX_0^{(n)}, \cX_0^{(n+1)}$.
We let $\bD_k^o$ be the integral sheaf of weight $k$-distributions, seen as a pro-Kummer \'etale sheaf on $\cX$ and denote by $\bD_k:=\bD_k^o\otimes_{\Z_p}\Q_p$.

First let us recall that the map $\Psi_{\rm HT,b}$ appears, after passing to the the open subspaces defined in (1), as the following composition:

$$
{\rm H}^1\bigl(\cX_\proket, \bD_k(1)  \bigr)^{(b)}\widehat{\otimes}\C_p\cong \Bigl({\rm H}^1\bigl(\cX_\proket, \fD_k (1)  \bigr)[1/p]\Bigr)^{(b)}\stackrel{\mathcal{R}}{\lra}\Bigl({\rm H}^1\bigl((\cX_\infty^{(u)})_\proket, \fD_k(1)\bigr)[1/p]\Bigr)^{(b)}\stackrel{\Phi}{\lra} 
$$

$$
\stackrel{\Phi}{\lra} {\rm H}^0\bigl(\cX_\infty^{(u)}, \omega^{k+2}   \bigr)^{(b)}\widehat{\otimes} \C_p,
$$
where $\fD_k:=\bD_k\widehat{\otimes}\cO_{\cX_\proket}$, $\mathcal{R}$ is the restriction map and $\Phi$ was defined in \cite{half} and in \ref{prop:HeckeWkinfty} and it was proved in loc.~cit.~that it is an isomorphism. 
Therefore in order to prove Theorem \ref{thm:hodgetate} we need to show that restriction from 
$\cX_\proket$ to $(\cX_\infty^{(u)})_\proket$ induces a surjective map on the ${\rm H}^1$'s, if 
$\prod_{i-0}^{b-1}(u_k-i)$ is a unit in $B[1/p]$.

To do this we use Payman Kassaei's idea of proving classicity of overconvergent modular forms of integral weight and small slope. More precisely given $x\in {\rm H}^1\bigl((\cX_\infty^{(u)})_\proket, 
\fD_k\bigr)^{(b)}$, we may see it as an element of ${\rm H}^1\bigl((\cX_\infty^{(u)})_\proket, \fD_k^o\  \bigr)$ which is annihilated by $Q(U_p)$, where $Q(T)\in (B\widehat{\otimes}\cO_{\C_p})[T]$ is a polynomial all of whose roots have valuations $\le b$. We write $Q(T)=P(T)-\alpha$, with $P(0)=0$
and denote by $a$ the valuation of $\alpha$.
Then by applying to $x$ the operator $\bigl(P(U_p)\bigr)^{n+u+1}$ we can see it as a class $\tilde{x}$ in
${\rm H}^1\bigl((\cX\backslash \cX_0^{(n+1)})_\proket, \fD_k^o\bigr)$. On the other hand, following Kassaei, we can define a new operator $\bigl(P(U_p)^{n}  \bigr)^{\rm good}$ by choosing all the 
isogenies defining the correspondence $U_p^{n}$ which map $\cX_0^{(n)}$ to $\cX_\infty^{(1)}$.
Let $\mathcal{P}(x):=\bigl(P(U_p)^{n}\bigr)^{\rm good}\bigl(P(U_p)^{u+1}(x)\bigr)\in {\rm H}^1\bigl(\cX_0^{(n)}, \fD_k^o\bigr)$. As the family $\{\cX\backslash \cX_0^{(n+1)}, \cX_0^{(n)}\}$ is an open covering of $\cX$
one can use a Mayer-Vietoris sequence in order to glue  $p^s\tilde{x}, p^s\mathcal{P}(x)$ for a certain fixed power of $p$, $s$ modulo $p^r$, where $r$ was chosen in the beginning large enough so that
$r\ge 2(s+d+1+(u+n+1)a)$, for a certain constant $d$ (see Section \S 5). We obtain a class $z\in {\rm H}^1\bigl(\cX_\proket, \fD_k^0   \bigr)$
annihilated by $Q(U_p)$ and such that its restriction to $\cX_\infty^{(n)}$ is 
congruent to $p^{s+d+1}\alpha^{u+n+1}x$ modulo $p^r$, i.e. there is $x_1\in {\rm H}^1\bigl((\cX_\infty^{(u)})_\proket, \fD_k^o\bigr)$ annihilated by $Q(U_p)$ such that $\mathcal{R}(z)=p^{s+d+1}\alpha^{n+u+1}(x-p^{r/2}x_1)$. Now we iterate the process for $x_1$ and in the end obtain an element $y\in {\rm H}^1\bigl(\cX_\proket, \fD_k\bigr)^{(b)}$ such that $\mathcal{R}(y)=x$.

\

2) {\it On the de Rham comparison.}

\

What we think is interesting to note about the de Rham Eichler-Shimura map is the "decalage" between the filtrations on
$B_{\rm dR}$ that appear. This decalage is explained as follows: on the pro-Kummer \'etale site of
$\cX\bigl(p/\mathrm{Ha}^{p^r}\bigr)$ we have the sheaves with filtrations and connections:
$\nabla'\colon \mathbf{W}_{k, \rm dR}\widehat{\otimes}\cO\mathbb{B}_{\rm dR}\lra  \mathbf{W}_{k+2, \rm dR}\widehat{\otimes}\cO\mathbb{B}_{\rm dR}$ (see Section \S\ref{sec:derhamcomp} for the details), where
$\nabla'=\nabla_k\widehat{\otimes}1 + 1\widehat{\otimes}\nabla_{\rm dR}$ and $\mathbf{W}_{k, \rm dR}$ has an increasing, infinite filtration, while $\cO\mathbb{B}_{\rm dR}$ has a decreasing, infinite filtration. Both $\nabla_k$ and $\nabla_{\rm dR}$ satisfy the Griffith transversality property with respect to the respective filtrations but on the tensor product we don't have a natural filtration. We have however 
the following:
$$
\nabla'\colon \Fil_m\mathbf{W}_{k, \rm dR}\widehat{\otimes}\Fil^0\cO\mathbb{B}_{\rm dR}\lra  \Fil_{m+1}\mathbf{W}_{k+2, \rm dR}\widehat{\otimes}\Fil^{-1}\cO\mathbb{B}_{\rm dR}.
$$
This explains the decalage.

As an immediate consequence of the above theorem we have a "Big Exponential map", i.e. we have a Hecke equivariant, $B$-linear map
$$\mathrm{Exp}^\ast_k\colon \mathrm{H}^1\left(G,\mathrm{H}^1\bigl(\cX_{\overline{K},\proket}, \bD_k^o(T_0^\vee)[n] (1)\bigr)^{(b)}\right)\lra \mathrm{H}^1_{\rm dR}\bigl(\cX\bigl(p/\mathrm{Ha}^{p^r}\bigr), \mathbf{W}_{k, \rm dR,\bullet}\bigr)^{ (b)},$$ which has the property that  for every classical weight specialization  $k_0$ it is compatible with the classical dual exponentail map as follows:

a) If $k_0>b-1$, i.e. $k_0$ is a non-crtical weight for the slope $b$, then we have the following commutative diagram with horizontal isomorphisms. Here we denoted by $\exp^\ast_{k_0}$ the Kato dual exponential map associated to weight $k_0$ modular forms.
$$
\begin{array}{ccccccccc}
\Bigl(\mathrm{H}^1\left(G,\mathrm{H}^1\bigl(\cX_{\overline{K},\proket}, \bD_k^o(T_0^\vee)[n](1) \bigr)^{(b)}\right)\Bigr)_{k_0}&\stackrel{\bigl(\mathrm{Exp}^\ast_k\bigr)_{k_0}}{\lra} &\Bigl(\mathrm{H}^1_{\rm dR}\bigl(\cX\bigl(p/\mathrm{Ha}^{p^r}\bigr), \mathbf{W}_{k, \rm dR,\bullet}\bigr)^{ (b)}\Bigr)_{k_0}\\
\downarrow \cong&&\downarrow\cong\\
\mathrm{H}^1\left(G,\mathrm{H}^1\bigl(\cX_{\overline{K},\proket},  \Symm^{k_0}(T_p(E)^\vee)(1)\bigr)^{(b)}\right)&\stackrel{\exp^\ast_{k_0}}{\lra} &\Fil^0\mathrm{H}^1_{\rm dR}\bigl(\cX, \Symm^{k_0}(\mathrm{H}_E)\bigr)^{(b)}
\end{array}
$$

b) If \ $0\le k_0\le b+1$, i.e. $k_0$ is critical with respect to $b$, we only have a commutaive diagram of the form

 $$
\begin{array}{ccccccccc}
\Bigl(\mathrm{H}^1\left(G,\mathrm{H}^1\bigl(\cX_{\overline{K},\proket}, \bD_k^o(T_0^\vee)[n](1) \bigr)^{(b)}\right)\Bigr)_{k_0}&\stackrel{\bigl(\mathrm{Exp}^\ast_k\bigr)_{k_0}}{\lra} &\Bigl(\mathrm{H}^1_{\rm dR}\bigl(\cX\bigl(p/\mathrm{Ha}^{p^r}\bigr), \mathbf{W}_{k, \rm dR,\bullet}\bigr)^{ (b)}\Bigr)_{k_0}\\
\downarrow &&\uparrow\\
\mathrm{H}^1\left(G,\mathrm{H}^1\bigl(\cX_{\overline{K},\proket},  \Symm^{k_0}(T_p(E)^\vee)(1)\bigr)^{(b)}\right)&\stackrel{\exp^\ast_{k_0}}{\lra} &\Fil^0\mathrm{H}^1_{\rm dR}\bigl(\cX, \Symm^{k_0}(\mathrm{H}_E)\bigr)^{(b)}
\end{array}
$$
where the right vertical arrow is induced by restriction.

\section{Preliminaries.}
\label{sec:setup}

We will denote by $X$, $Y$, $Z$, $\ldots$   log schemes and by caligraphic letters $\cX$, $\cY$, $\cZ$, $\ldots$ log adic spaces. We refer to \cite{Diao} for generalities on those.

\subsection{Pro-Kummer \'etale site.} Given a finite saturated (for short "fs") locally noetherian log scheme $X$ (resp.~a fs locally noetherian log adic space $\cX$)  we denote by $X_\ket$, $X_\fket$ (resp.~$\cX_\ket$, $\cX_\fket$) the Kummer \'etale site, respectively the finite Kummer \'etale site (see \cite[Def.~2.1]{Illusie}, \cite[Def.~4.1.2]{Diao}). Following Scholze \cite{ScholzeHodge}, we denote  by $X_\proket$, resp.~$X_\profket$ (resp.~$\cX_\proket$, $\cX_\profket$) the pro-Kummer \'etale site, resp.~the pro-finite Kummer  \'etale site (see \cite[Def. 5.1.2 \& 5.1.9]{Diao}) of $X$ respectivelly $\cX$.

As a category it is the full subcategory of pro-$X_\ket$, resp.~pro-$X_\fket$ (resp.~pro-$\cX_\ket$, pro-$\cX_\fket$) of pro-objects that are pro-Kummer \'etale over $X$, resp.~pro-finite Kummer \'etale over $X$ (resp.~$\cX$), i.e., objects that are equivalent to cofiltered systems $\displaystyle{\lim_\leftarrow}  Z_i$ such that $Z_i\to X$ is  Kummer \'etale, resp.~finite Kummer \'etale,  for every $i$ and there exists an index $i_0$ such that $Z_j\to Z_i$ is finite Kummer \'etale and surjective for $i$ and $j\geq i_0$ (and similarly for $\cX$). For the covering families we refer to loc.~cit. 

We have a natural projection $\nu\colon X_\proket\to X_\ket$ (resp.~$\nu\colon \cX_\proket\to \cX_\ket$) sending $U\in X_\ket$ (resp.~in $\cX_\ket$) to the constant inverse system defined by $U$. Then, by \cite[Prop.~5.1.6 \& 5.1.7]{Diao} for every sheaf of abelian groups $\cF$ on $X_\ket$ (resp.~in $\cX_\ket$) and any quasi-compact and quasi-separated object $U={\displaystyle{\lim_\leftarrow}} U_j$ in $X_\proket$ (resp.~in $\cX_\proket$) we have natural isomorphisms of $\delta$-functors  $$\mathrm{H}^i\bigl(U_\proket,\nu^{-1}(\cF)\bigr)\cong \lim_\to \mathrm{H}^i\bigl(U_{j,\ket},\cF\bigr), \qquad \cF\to {\rm R}\nu_\ast\nu^{-1}\bigl(\cF\bigr).$$

\subsection{Sheaves on the pro-Kummer \'etale site.} \label{sec:periodsheaves}

We then have the following sheaves on $\cX_{\proket}$ defined in \cite[Def.~5.4.1]{Diao} and in \cite[Def.~2.2.3]{DLLZ} following \cite[Def.~6.1]{ScholzeHodge}:

\begin{itemize} 

\item[i.] The structure sheaf $\cO_{\cX_{\proket}}:=\nu^{-1}\bigl(\cO_{\cX_\ket}\bigr)$ and its subsheaf of integral elements  $\cO_{\cX_{\proket}}^+:=\nu^{-1}\bigl(\cO_{\cX_\ket}^+\bigr)$. It comes endowed with a moprhism of sheaves of multiplicative monoids $\alpha\colon \cM \to  \cO_{\cX_{\proket}}$ defined by taking $\nu^{-1}$ of the morphism of sheaves of multiplicative monoids  defining the log structure on $\cX$. 

\item[ii.] The completed sheaf $\widehat{\cO}_{\cX_{\proket}}^+:=\displaystyle{\lim_{\infty \leftarrow n}} \cO_{\cX_{\proket}}^+/p^n \cO_{\cX_{\proket}}^+$ and the completed structure sheaf $ \widehat{\cO}_{\cX_{\proket}}:=\widehat{\cO}_{\cX_{\proket}}^+\bigl[ \frac{1}{p}\bigr]$.

\item[iii.] The tilted integral structure sheaf  $\widehat{\cO}_{\cX^\flat_{\proket}}^+:= \displaystyle{\lim_{\leftarrow \varphi}} \cO_{\cX_{\proket}}^+/p \cO_{\cX_{\proket}}^+$ and the tilted  structure sheaf $\widehat{\cO}_{\cX_{\proket}^\flat}:=\widehat{\cO}_{\cX_{\proket}^\flat}^+\otimes_{K^{\flat+}} K^\flat$.  It comes endowed with a morphism of monoids $\alpha^\flat \colon \cM^\flat \to \widehat{\cO}_{\cX_{\proket}^\flat}$ where $\cM^\flat$ is the inverse limit $\displaystyle{\lim_{\leftarrow}} \cM$ indexed by $\N$ with transition maps given by rasing to the  $p$-th power, $\widehat{\cO}_{\cX^\flat}$ is identified  as a sheaf of mutiplicative monoids with  the inverse limit $\displaystyle{\lim_\leftarrow} \widehat{ \cO}_{\cX_{\proket}}$ indexed by $\N$ with transition maps given by rasing to the  $p$-th power
 and the map $\alpha^\flat$ is the inverse limit of the maps $\alpha$ composed with the natural maps ${ \cO}_{\cX_{\proket}}\to \widehat{ \cO}_{\cX_{\proket}}$.

\item[iv.] The period sheaf $\mathbb{A}_{\rm inf}:=\mathrm{W}\bigl(\widehat{\cO}_{\cX^\flat_{\proket}}^+ \bigr)$ and the period map $\vartheta\colon \mathbb{A}_{\rm inf,\cX}\to \widehat{\cO}_{\cX_{\proket}}^+$. 
\end{itemize}

\

\

\subsection{Log affinoid pefectoid opens.}\label{sec:logaffinoid} Consider a  locally noetherian fs log adic space $\cX$ over $\mathrm{Spa}(\Q_p,\Z_p)$. Following \cite[Def.~5.3.1 \& Rmk.~5.3.2]{Diao}, an object $U=\lim_{i\in I} U_i$, with $U_i=\bigl(\mathrm{Spa}(R_i,R_i^+),\cM_i\bigr)$ in~$\cX_\proket$ is called \emph{log affinoid perfectoid} if:
 
\begin{itemize}

\item[a.] there is an initial object $0 \in I$.
\item[b.]  each $U_i$ admits a global sharp finite saturated chart $P_i$ such that each transition map
$U_j\to U_i$ is modeled on the Kummer chart $P_i \to P_j$;
\item[c.]  $\bigl(\mathrm{Spa}(R_i,R_i^+)\bigr)_i$ is affinoid perfectoid, i.e.,  the $p$-adic completion $(R,R^+)$ of $\lim_i (R_i,R_i^+)$ is a perfectoid affinoid $\Q_p$-algebra;
\item[d.] the monoid $P = \lim_i P_i$ is $n$-divisible for all $n$.

\end{itemize}

Given a log affinoid pefectoid $U$ as above we denote by $\widehat{U}:=\mathrm{Spa}\bigl(R,R^+\bigr)$ the associated perfectoid affinoid space. By \cite[Lemma 5.3.6]{Diao} it has the same underlying topological space as $U$ (which is defined as the inverse limit of topological spaces $\displaystyle \lim_{\leftarrow\  i} \vert U_i\vert$). Moreover by \cite[Thm.~5.4.3]{Diao} and \cite[Thm.~6.5]{ScholzeHodge} we have that

$$\widehat{\cO}_{\cX_\proket}^+(U)=R^+,\quad \widehat{\cO}_{\cX_\proket}(U)=R, \quad \widehat{\cO}_{\cX^\flat_\proket}^+(U)=R^{\flat+}, \quad \widehat{\cO}_{\cX^\flat_\proket}(U)=R^{\flat},    \quad \mathbb{A}_{\rm inf}(U)=W\bigl(R^{\flat+}\bigr)$$and the cohomology groups $$\mathrm{H}^i\bigl(U,\widehat{\cO}_{\cX_\proket}^+\bigr) \sim 0 ,\quad \mathrm{H}^i\bigl(U,\widehat{\cO}_{\cX^\flat_\proket}^+\bigr) \sim 0 , \quad \mathrm{H}^i\bigl(U, \mathbb{A}_{\rm inf}\bigr) \sim 0 \quad \forall i\geq 1$$ (where $\sim$ means almost $0$).

Thanks to \cite[Prop.~5.3.12 \& Prop.~5.3.13]{Diao} there exists a basis $\mathcal{B}$ for the site $\cX_\proket$ given by log affinoid perfectoid objects such that for every locally constant 
$p$-torsion sheaf $\mathbb{L}$ on $\cX_\ket$ and every $U\in \mathcal{B}$ we have $\mathrm{H}^i\bigl( \cX_{\proket}\vert_U,\mathbb{L}\bigr)=0$ for $i\geq 1$. In case $X$ is a fs log scheme over $\Q_p$ there is an analogous notion of log affinoid perfectoid opens of $X_\proket$ and  it follows from the arguments in loc.~cit.~that there exists a basis of $X_\proket$  with the same property. 

\

Let $K$ be a perfectoid  field of characteristic $0$ with an open and bounded subring $K^+$. Assume that $\cX$ is defined over $\mathrm{Spa}(K,K^+)$.  In this case $\mathbb{A}_{\rm inf}$, resp.~$\widehat{\cO}_\cX^+$ is a sheaf of algebras over the classical period ring ${\rm A}_{\rm inf}:=\mathrm{W}\bigl(K^{\flat+}\bigr)$, resp.~over $K^+$ and given a generator $\zeta\in {\rm A}_{\rm inf}$ for the kernel of the canonical ring homomorphism ${\rm A}_{\rm inf}\to K^+$ it follows from \cite[Lemma 6.3]{ScholzeHodge} that we have an exact sequence $$0 \longrightarrow \mathbb{A}_{\rm inf} \stackrel{\cdot \zeta}{\longrightarrow} \mathbb{A}_{\rm inf} \stackrel{\vartheta}{\longrightarrow}  \widehat{\cO}_{\cX_\proket}^+ \longrightarrow 0.$$

\subsection{Comparison results.}\label{sec:comparecoeff} Assume that $\cX$ is a finite saturated locally noetherian log adic space over an affinoid field $\mathrm{Spa}(K,K^+)$ with $K$ algebraically closed of characteristic $0$.  The main result of \cite{Diao}, namely Theorem 6.2.1, states that if the underlying adic space to $\cX$ is log smooth and proper and $\mathbb{L}$ is an $\mathbb{F}_p$-local system on $\cX_\ket$, then the cohomology groups $\mathrm{H}^i(\cX_\ket,\mathbb{L}\bigr)$ are finite for all $i$, they vanish for $i\gg 0$ and the natural map $$\mathrm{H}^i(\cX_\ket,\mathbb{L}\bigr) \otimes K^+/p \to \mathrm{H}^i(\cX_\ket,\mathbb{L}\otimes \cO_{\cX_\ket}^+/p\bigr) $$is an almost isomorphism for every $i\geq 0$. As $\cF\cong {\rm R}\nu_\ast\nu^{-1}\bigl(\cF\bigr)$ for any sheaf of abelian goups, we obtain the same cohomology groups replacing $\cX_\ket$ with $\cX_\proket$ in the isomorphisms above.
Here we denoted $\nu\colon \cX_\proket\lra \cX_\ket$ the natural morphism of sites.

\

A second issue is a GAGA type comparison in the case $X$ is finite separated locally noetherian log scheme, proper and log smooth over $K$. Let $\cX$ be the associated log adic space over $\mathrm{Spa}(K,K^+)$. We have a natural morphism of sites $\gamma\colon \cX_\ket \to X_\ket$. Let $\mathbb{L}$ be an $\mathbb{F}_p$-local system on $X_\ket$. Then

\begin{proposition}\label{prop:GAGA} For every $i\geq 0$ the natural morphism $\mathrm{H}^i(X_\ket,\mathbb{L}\bigr) \longrightarrow \mathrm{H}^i(\cX_\ket,\gamma^\ast\bigl(\mathbb{L}\bigr)\bigr)$ is an isomorphism.

\end{proposition}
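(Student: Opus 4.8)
The plan is, by d\'evissage in the coefficients, to reduce to the case of the constant sheaf $\F_p$, and then to pass from the Kummer \'etale site to the ordinary \'etale site of the underlying (log-trivial) space, where the classical GAGA comparison for adic spaces due to Huber applies.

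\emph{Step 1: reduction to $\mathbb{L}=\F_p$.} We may assume $X$ connected. By the theory of the Kummer \'etale fundamental group of an fs log scheme (cf.~\cite{Illusie}, \cite{Diao}, \cite{DLLZ}), the $\F_p$-local system $\mathbb{L}$ is trivialised by a finite Kummer \'etale Galois cover $h\colon X'\to X$ with some finite group $G$; since $h$ is finite (hence proper) and Kummer \'etale (hence log \'etale), $X'$ is again proper and log smooth over $K$, and $h^\ast\mathbb{L}\cong\F_p^{\oplus r}$ is constant. There results a Cartan--Leray spectral sequence
$$
E_2^{p,q}=\mathrm{H}^p\bigl(G,\mathrm{H}^q(X'_\ket,h^\ast\mathbb{L})\bigr)\Longrightarrow \mathrm{H}^{p+q}(X_\ket,\mathbb{L}),
$$
and, by the GAGA equivalence for finite Kummer \'etale covers of \cite{DLLZ}, the analytification $\cX'\to\cX$ of $h$ is again finite Kummer \'etale Galois with the same group $G$, so there is an analogous spectral sequence converging to $\mathrm{H}^{p+q}(\cX_\ket,\gamma^\ast\mathbb{L})$; the morphism induced by $\gamma^\ast$ is a map of these two spectral sequences, compatible with the $G$-actions. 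By comparison of spectral sequences it therefore suffices to prove the proposition for $\mathbb{L}=\F_p$ on an arbitrary proper log smooth $X$ over $K$.

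\emph{Step 2: from the Kummer \'etale to the \'etale site.} Let $\epsilon_X\colon X_\ket\to X_\et$ and $\epsilon_\cX\colon\cX_\ket\to\cX_\et$ be the canonical morphisms of sites forgetting the log structure, and let $\gamma_0\colon\cX_\et\to X_\et$ be the analytification morphism of the underlying ordinary \'etale sites, so that $\epsilon_X\circ\gamma=\gamma_0\circ\epsilon_\cX$ as morphisms of sites $\cX_\ket\to X_\et$. Since $K$ has characteristic $0$, $p$ is invertible on $X$, and the logarithmic purity computations of Kato and Nakayama (see also \cite{Illusie}) describe $\mathrm{R}\epsilon_{X\ast}\F_p$ as a bounded complex of constructible $\F_p$-sheaves on $X_\et$, functorial in the log structure: concretely $\mathrm{R}^q\epsilon_{X\ast}\F_p\cong\bigwedge^q\bigl(\overline{M}_X^{\mathrm{gp}}\otimes_{\Z}\F_p(-1)\bigr)$ with $\overline{M}_X^{\mathrm{gp}}=M_X^{\mathrm{gp}}/\cO_X^\ast$; the same holds on $\cX_\et$. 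This yields Leray spectral sequences
$$
\mathrm{H}^p\bigl(X_\et,\mathrm{R}^q\epsilon_{X\ast}\F_p\bigr)\Longrightarrow\mathrm{H}^{p+q}(X_\ket,\F_p),\qquad \mathrm{H}^p\bigl(\cX_\et,\mathrm{R}^q\epsilon_{\cX\ast}\F_p\bigr)\Longrightarrow\mathrm{H}^{p+q}(\cX_\ket,\F_p),
$$
and $\gamma^\ast$ induces a map from the first to the second.

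\emph{Step 3: conclusion, and the main obstacle.} The sheaves $\mathrm{R}^q\epsilon_{X\ast}\F_p$ are computed \'etale-locally by the standard Kummer covers attached to a chart together with the Kummer-theoretic description of their cohomology, and both the covers and this description are preserved by analytification; hence there are canonical base-change isomorphisms $\gamma_0^\ast\bigl(\mathrm{R}^q\epsilon_{X\ast}\F_p\bigr)\cong\mathrm{R}^q\epsilon_{\cX\ast}\F_p$, compatible with the morphism of spectral sequences of Step 2. Since $X$ is proper over $K$ and the $\mathrm{R}^q\epsilon_{X\ast}\F_p$ are constructible torsion sheaves, Huber's comparison theorem for the \'etale cohomology of a proper scheme over a complete non-archimedean field and its adic analytification gives that $\mathrm{H}^p(X_\et,\mathrm{R}^q\epsilon_{X\ast}\F_p)\to\mathrm{H}^p(\cX_\et,\mathrm{R}^q\epsilon_{\cX\ast}\F_p)$ is an isomorphism for all $p,q$; comparing the two Leray spectral sequences then gives $\mathrm{H}^i(X_\ket,\F_p)\stackrel{\sim}{\longrightarrow}\mathrm{H}^i(\cX_\ket,\F_p)$, and together with Step 1 this proves the proposition. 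The main obstacle is exactly the compatibility asserted at the start of Step 3: one must check that the explicit local model computing $\mathrm{R}\epsilon_\ast\F_p$ -- the Kummer \'etale covers of a chart and the Kummer theory on them -- genuinely commutes with analytification, so that the higher direct images on the scheme and on the adic space correspond under $\gamma_0^\ast$. A secondary point is to confirm that the d\'evissage of Step 1, in particular the GAGA equivalence of finite Kummer \'etale covers and the structure of the Kummer \'etale fundamental group, holds in the generality needed here.
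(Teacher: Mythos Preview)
Your proposal is correct and follows the same overall architecture as the paper: both arguments compare the Leray spectral sequences for the forgetful maps $X_\ket\to X_\et$ and $\cX_\ket\to\cX_\et$, reduce to showing that the higher direct images $\mathrm{R}^q\epsilon_\ast$ commute with analytification, and then invoke Huber's comparison theorem on the ordinary \'etale site.

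The differences are in how the base-change step is handled. The paper works with the given $\mathbb{L}$ throughout (no reduction to $\F_p$) and verifies the isomorphism $\gamma_0^\ast\mathrm{R}^j\beta_\ast\mathbb{L}\cong\mathrm{R}^j\alpha_\ast\gamma^\ast\mathbb{L}$ by passing to stalks at geometric points: for a strictly local scheme and the associated strictly local adic space, the finite Kummer \'etale sites are both equivalent to finite $\mathrm{Hom}(\overline{M}^{\rm gp},\widehat{\Z})$-sets, so the stalks agree. This resolves exactly the ``main obstacle'' you flag in Step~3, and it does so without appealing to the Kato--Nakayama formula for $\mathrm{R}^q\epsilon_\ast\F_p$ (which you would still need to justify on the adic side). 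It also makes your Step~1 d\'evissage unnecessary, since the stalk argument works uniformly for any Kummer-\'etale local system. Your route is fine, but the paper's stalkwise comparison via the Kummer \'etale fundamental group is more self-contained and sidesteps both issues you identify.
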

\begin{proof} Let $X^o$ and $\cX^o$ be the scheme, resp.~the adic space defined by $X$ and $\cX$ forgetting the log structures. In this case the morphism of sites $\gamma^o\colon \cX_{\rm et}^o \to X_{\rm et}^o$ induces the map $\mathrm{H}^i(X_{\rm et}^o,F\bigr) \longrightarrow \mathrm{H}^i(\cX_{\rm et}^o,\gamma^{o,\ast}\bigl(F\bigr)\bigr)$  for every sheaf of torsion abelian groups $F$. It is an isomorphism due to \cite[Thm. 3.2.10]{Huber}.
Consider the commutative diagram of sites $$\begin{array}{ccc}
\cX_{\rm et} & \stackrel{\gamma}{\longrightarrow} &  X_{\rm et} \cr 
\alpha \big\downarrow & & \big\downarrow \beta \cr 
\cX_{\rm et}^o & \stackrel {\gamma^o}{\longrightarrow} &  X_{\rm et}^o \cr 
\end{array}$$
 
Using the compatibility of the Leray spectral sequences $\mathrm{H}^i(\cX_{\rm et}^o,\mathrm{R}^j \alpha_\ast \gamma^\ast(\mathbb{L})\bigr) \Longrightarrow   \mathrm{H}^{i+j}(\cX_\ket,\gamma^\ast\bigl(\mathbb{L}\bigr)\bigr)$ and $\mathrm{H}^i(X_{\rm et}^o,\mathrm{R}^j \beta_\ast (\mathbb{L})\bigr) \Longrightarrow   \mathrm{H}^{i+j}(X_\ket,\mathbb{L}\bigr)$ and the result of Huber,  it suffices to prove that the natural morphism $$\gamma^{o,\ast}\bigl(\mathrm{R}^j \beta_\ast (\mathbb{L}) \bigr) \longrightarrow \mathrm{R}^j \alpha_\ast\bigl( \gamma^\ast(\mathbb{L}\bigr) $$is an isomorphism of sheaves for every $j$.   It suffice to prove that we get an isomorphism after passing to stalks at  geometric points $\zeta=\mathrm{Spa}\bigl(l,l^+\bigr) \to \cX^o$, as those form a conservative family by \cite[Prop.~2.5.5]{Huber}. Recall that $\zeta$ might consist of more than one point but it has a unique closed point $\zeta_0$. Taking the stalk at $\zeta$ is equivalent to take global sections over the associated strictly local adic space $\cX^o(\zeta)$  (see \cite[Lemma 2.5.12]{Huber}. Let $X^o(\zeta_0)$ be the spectrum of the strict henselization of $X$ at $\zeta_0$. Taking the stalk at $\zeta$ of $\gamma^{o,\ast}$ of a sheaf is equivalent to take the sections of that sheaf over 
 $X^o(\zeta_0)$. We have a natural map of sites  $\cX^o(\zeta)_\ket\to X^o(\zeta_0)_\ket$, considering on $X^o(\zeta)$ and on $ \cX^o(\zeta)$ the log structures coming from  $X$ and $\cX$. We need to show that it is an equivalence. In both cases the Kummer \'etale sites are the same as the finite Kummer \'etale sites; indeed by definition the Kummer \'etale topology is generated in both cases by finite Kummer \'etale covers and classical \'etale morphisms and  a Kummer cover of  $X^o(\zeta)$, resp.~$ \cX^o(\zeta)$ is still strictly local and hence does not admit any non-trivial classical \'etale cover (see \cite[Lemma 2.5.6]{Huber} in the adic setting). Both in the schematic and in the adic setting the finite Kummer \'etale sites are  equivalent to the category of finite sets with continuous action of the group $\mathrm{Hom}(\overline{M}^{\rm gp},\widehat{\Z}\bigr)$ with $\overline{M}$ the stalk of the log structure at $\zeta$, modulo $l^\ast$. See \cite[Ex.~4.7(a)]{Illusie} in the schematic case and \cite[Prop.~4.4.7]{Diao} in the adic case. As such quotient is the same in the schematic and adic cases, the conclusion follows.

\end{proof}

\section{VBMS and dual VBMS.}

\subsection{VBMS i.e. vector bundles with marked sections.}
\label{sec:VBMS}

We recall the main constructions of \cite{andreatta_iovita_triple} and \cite{ICM_AIP}.
Let $\cX$ denote an adic analytic space over $\mathrm{Spa}(\Q_p, \Z_p)$ and let $(\cE, \cE^+)$ denote a pair consisting of  a locally free $\cO_{\cX}$-module $\cE$ of rank $2$ and a sub-sheaf $\cE^+$ of $\cE$ 
which is a locally free  $\cO_{\cX}^+$-module of rank $2$ such that $\cE=\cE^+\otimes_{\cO_{\cX}^+}\cO_{\cX}$. Let $\cI\subset \cO_{\cX}^+$ be an invertible ideal such that $\cI$ gives the topology on $\cO_{\cX}^+$ and let $r\ge 0$ be an integer such that $  \cI \subset p^r \cO_{\cX}^+$.

We suppose that there is a section $s\in \mathrm{H}^0(\cX, \cE^+/\cI\cE^+)$ such that the submodule $\bigl(\cO_{\cX}^+/\cI\bigr) s$ is a direct summand of $\cE^+/\cI\cE^+$.
We have the following

\begin{theorem}[\cite{ICM_AIP}] 
\label{thm:VBMS}
a) The functor attaching to every adic space $\gamma\colon \cZ\rightarrow \cX$, such that
$t^\ast(\cI)$ is an invertible ideal in $\cO_{\cZ}^+$, the set (group in fact):
$$
\bV(\cE, \cE^+)\bigl(\gamma\colon \cZ\rightarrow \cX  \bigr):=\mathrm{Hom}_{\cO_{\cZ}^+}\bigl(\gamma^\ast(\cE^+), \cO_{\cZ}^+  \bigr)=\mathrm{H}^0\bigl(\cZ, \gamma^\ast(\cE^+)^\vee   \bigr),
$$
is represented by the adic vector bundle $\bV(\cE,\cE^+):=\mathrm{Spa}_{\cX}\bigl(\mathrm{Sym}(\cE), \mathrm{Sym}(\cE^+)  \bigr)\rightarrow\cX$.

b) The subfunctor of  $\bV(\cE, \cE^+)$ denoted $\bV_0(\cE^+, s)$ which associates to every adic space
$\gamma\colon \cZ\rightarrow \cX$ as above, the set:
$$
\bV_0(\cE^+,s)\bigl(\gamma\colon \cZ\rightarrow \cX\bigr):=\Bigl\{h\in\bV(\cE,\cE^+)\bigl(\gamma\colon \cZ\rightarrow \cX\bigr)
\ \vert \ h\bigl(\mathrm{mod}\ \gamma^\ast(\cI) \bigr)(\gamma^\ast(s))=1\Bigr\},
$$
is represented by the open adic subspace of $\bV(\cE,\cE^+)$, also denoted $\bV_0(\cE^+,s)$,  consisting of the points $x$ such that $\vert\tilde{s}-1\vert_x\le \vert\alpha\vert_x$, where $\tilde{s}$ is a (local) lift of $s$ to $\cE^+$ and $\alpha$ is a (local) generator of $\cI$ at $x$. 

c) Suppose that we have sections $s$ and $t\in \mathrm{H}^0(\cX, \cE^+/\cI\cE^+)$ which form an $\bigl(\cO_{\cX}^+/\cI\bigr)$-basis of $\cE^+/\cI\cE^+$. Then, the subfunctor $\bV_0(\cE^+,s,t)$ of $\bV_0(\cE^+, s)$ which associates to every adic space
$\gamma\colon \cZ\rightarrow \cX$ the set:
$$
\bV_0(\cE^+,s,t)\bigl(\gamma\colon \cZ\rightarrow \cX\bigr):=\Bigl\{h\in\bV_0(\cE^+,s)\bigl(\gamma\colon \cZ\rightarrow \cX\bigr)
\ \vert \ h\bigl(\mathrm{mod}\ \gamma^\ast(\cI) \bigr)(\gamma^\ast(t))=0\Bigr\},
$$
is represented by the open adic subspace $\bV_0(\cE^+,s)$ of $\bV_0(\cE^+)$ consisting of the points $x$ such that $\vert\tilde{t}\vert_x\le \vert\alpha\vert_x$ for a (any) lift $\tilde{t}$ of $t$ to $\cE^+$ and $\alpha$  a (local) generator of $\cI$ at $x$.

\end{theorem}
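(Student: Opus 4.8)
The plan is to deduce all three parts from the universal property of the relative $\Spa$ of a symmetric algebra, the only extra input for (b) and (c) being that the congruence conditions on the marked sections cut out \emph{rational} open subspaces. For part (a) I would first reduce to the local situation, since the formation of both $\bV(\cE,\cE^+)=\Spa_\cX\bigl(\Symm(\cE),\Symm(\cE^+)\bigr)$ and of the functor $\gamma\mapsto\Hom_{\cO_\cZ^+}(\gamma^\ast\cE^+,\cO_\cZ^+)$ is local on $\cX$ for the analytic topology: so one may assume $\cX=\Spa(R,R^+)$ is affinoid, $\cI=\alpha R^+$ is principal, and $\cE^+$ is free with basis $e_1,e_2$, so that $\Symm(\cE^+)=R^+[X_1,X_2]$, $X_i$ corresponding to $e_i$. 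Because $\cI$ gives the topology on $\cO_\cX^+$, the $\cI$-adic topology on $R^+$ is the $p$-adic one, hence $\Symm(\cE^+)$ has completion $R^+\langle X_1,X_2\rangle$ and $\bV(\cE,\cE^+)$ is, locally over $\cX$, the relative polydisc $\Spa\bigl(R\langle X_1,X_2\rangle,R^+\langle X_1,X_2\rangle\bigr)\to\Spa(R,R^+)$. A morphism $\cZ\to\bV(\cE,\cE^+)$ over $\cX$ (with $\gamma^\ast\cI$ invertible) is then the same as a pair $(a_1,a_2)\in\cO_\cZ^+(\cZ)^2$, hence via $e_i\mapsto a_i$ the same as an element of $\Hom_{\cO_\cZ^+}(\gamma^\ast\cE^+,\cO_\cZ^+)$; the universal property of the symmetric algebra makes this bijection functorial and basis-independent, so it globalizes, giving (a). Along the way one records that any $\sigma\in\mathrm H^0(\cX,\cE^+)$, seen as the degree-one part of $\Symm(\cE^+)$, defines a function on $\bV(\cE,\cE^+)$ with value $h(\gamma^\ast\sigma)$ at the point classified by $h$; applied to local lifts $\tilde s,\tilde t\in\cE^+$ of $s$ and $t$ this is what the symbols $\tilde s,\tilde t$ in (b), (c) mean.

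For part (b), given a test object $\gamma\colon\cZ\to\cX$ classifying $h\in\bV(\cE,\cE^+)(\cZ)$, the condition $h\,(\mathrm{mod}\ \gamma^\ast\cI)(\gamma^\ast s)=1$ is equivalent to $h(\gamma^\ast\tilde s)-1\in\gamma^\ast(\cI)\cdot\cO_\cZ^+$, i.e.\ to the pullback of the function $\tilde s-1$ on $\bV(\cE,\cE^+)$ being divisible in $\cO_\cZ^+$ by a local generator $\alpha$ of $\cI$; since $\alpha$ is a non-zero-divisor and $\cO_\cZ^+$ is integrally closed in $\cO_\cZ$, this is the same as $\vert\tilde s-1\vert_x\le\vert\alpha\vert_x$ for every $x\in\cZ$. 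The point that makes $W:=\{\vert\tilde s-1\vert\le\vert\alpha\vert\}$ a genuine rational open subset of $\bV(\cE,\cE^+)$ is that the hypothesis ``$\cI$ gives the topology on $\cO_\cX^+$'' forces $\vert\alpha\vert_x\neq0$ at every point of $\cX$, hence of $\bV(\cE,\cE^+)$, so the side condition $\vert\alpha\vert_x\neq0$ required of a rational domain holds automatically. One then checks that $W$ does not depend on the auxiliary data: changing $\tilde s$ within its $\cI\cE^+$-coset alters $\tilde s-1$ by a section of $\cI\cdot\Symm(\cE^+)$, of absolute value $\le\vert\alpha\vert_x$ everywhere, and changing the generator $\alpha$ multiplies it by a unit of $\cO_\cX^+$. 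By the translation above, a morphism $\cZ\to\bV(\cE,\cE^+)$ lands in the subfunctor $\bV_0(\cE^+,s)$ exactly when it factors through $W$, so $\bV_0(\cE^+,s)$ is represented by $W$, which is the locus described in the statement.

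Part (c) follows by the same argument with $t$ and the value $0$ in place of $s$ and $1$: for $h$ in $\bV_0(\cE^+,s)(\cZ)$ the further condition $h\,(\mathrm{mod}\ \gamma^\ast\cI)(\gamma^\ast t)=0$ amounts to $\vert\tilde t\vert_x\le\vert\alpha\vert_x$ on $\cZ$, with well-definedness again coming from the $\cI\cE^+$-ambiguity of $\tilde t$. The hypothesis that $(s,t)$ is an $\cO_\cX^+/\cI$-basis is used to choose the local basis of $\cE^+$ so that $\tilde s\equiv e_1$ and $\tilde t\equiv e_2$ modulo $\cI$ (any lifts of $s,t$ generate $\cE^+$ by Nakayama, hence form a basis), whence $\bV_0(\cE^+,s,t)$ is identified with the explicit relative polydisc $\{\vert X_1-1\vert\le\vert\alpha\vert,\ \vert X_2\vert\le\vert\alpha\vert\}$ inside $\bV_0(\cE^+,s)=\{\vert X_1-1\vert\le\vert\alpha\vert\}$. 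The part that I expect will be most delicate is not conceptual but the adic bookkeeping behind (a): pinning down the topology on $\Symm(\cE^+)$ so that $\Spa_\cX$ of it is the intended adic space, checking that $\gamma^\ast\cI$ stays invertible on $\bV(\cE,\cE^+)$ so that the latter is itself a legitimate test object, and---the genuinely subtle ingredient---using ``$\cI$ gives the topology'' to guarantee that the loci cut out by $\vert\,\cdot\,\vert\le\vert\alpha\vert$ are honestly open. All of this is carried out in \cite{andreatta_iovita_triple} and \cite{ICM_AIP}.
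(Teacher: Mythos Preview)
Your argument is correct and follows essentially the same route as the paper: reduce to an affinoid $U=\mathrm{Spa}(R,R^+)$ on which $\cE^+$ is free and $\cI=(\alpha)$, identify $\bV(\cE,\cE^+)\vert_U$ with the relative polydisc $\mathrm{Spa}\bigl(R\langle X,Y\rangle,R^+\langle X,Y\rangle\bigr)$ via the tautological map $f_0\mapsto X,\ f_1\mapsto Y$, and then observe that the marked-section conditions cut out the rational subdomains $\{\vert X-1\vert\le\vert\alpha\vert\}$ and $\{\vert X-1\vert\le\vert\alpha\vert,\ \vert Y\vert\le\vert\alpha\vert\}$. The only point the paper adds, and which you might want to record because it is used repeatedly afterwards (e.g.\ in \S\ref{sec:explicit} and in the explicit duality of \S\ref{sec:explicitduality}), is the coordinate change $X=1+\alpha Z$, $Y=\alpha W$ that presents $\bV_0(\cE^+,s)\vert_U$ and $\bV_0(\cE^+,s,t)\vert_U$ themselves as relative polydiscs $\mathrm{Spa}\bigl(R\langle Z,Y\rangle,R^+\langle Z,Y\rangle\bigr)$ and $\mathrm{Spa}\bigl(R\langle Z,W\rangle,R^+\langle Z,W\rangle\bigr)$.
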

\begin{proof}
The proof is local on $\cX$. Assume that $U\subset \cX$ is an affinoid open $U=\mathrm{Spa}(R,R^+)$ such that $\cI\vert_U$ is principal generated by $\alpha\in R^+$ and $\cE^+\vert_U$ is free with basis $f_0$, $f_1$ with $f_0({\rm mod}\ \alpha)=s\vert_U$. Then  $f_1({\rm mod}\ \alpha) $ generates $\Bigl(\bigl(\cE^+/\cI\cE^+\bigr)/s\bigl(\cO_{\cX}/\cI  \bigr)\Bigr)\vert_U$ and we assume in case (c) that $f_1({\rm mod}\ \alpha)=t\vert_U$. Then by \cite[\S 2]{andreatta_iovita_triple} we have
$
\bV(\cE,\cE^+)\vert_U={\rm Spa}\bigl(R\langle X,Y\rangle , R^+\langle X,Y\rangle  \bigr)$ and 
$$
\bV_0(\cE^+,s)\vert_U={\rm Spa}\bigl(R\langle X,Y\rangle\langle\frac{X-1}{\alpha}\rangle , R^+\langle X,Y\rangle\langle \frac{X-1}{\alpha}\rangle  \bigr)={\rm Spa}\bigl(R\langle Z,Y\rangle, R^+\langle Z,Y\rangle  \bigr)
$$
where $\displaystyle X=1+\alpha Z$ giving also the map to $\bV(\cE,\cE^+)\vert_U$ . Similarly $$
\bV_0(\cE^+,s,t)\vert_U={\rm Spa}\bigl(R\langle Z,W\rangle, R^+\langle Z,W\rangle  \bigr)
$$
with $\displaystyle Y=\alpha W$. We have the tautological map over $\bV(\cE,\cE^+)\vert_U$ given by $$ \cE^+\otimes_{R^+} R^+\langle X,Y\rangle \to R^+\langle X,Y\rangle,\qquad  f_0\mapsto X, f_1\mapsto Y$$from which we deduce similarly the tautological maps over $\bV_0(\cE^+,s)\vert_U$ and $\bV_0(\cE^+,s,t)\vert_U$ providing the claimed representability and concluding the proof.

\end{proof}

\subsection{Dual VBMS.}\label{sec:dualVBMS}

In this article we'll need a variant of the construction in Section \S \ref{sec:VBMS} which we now present. 
Suppose that $\cX$, $\cI$, $(\cE, \cE^+)$ are as in Section \S \ref{sec:VBMS}. Moreover we assume that there is an exact sequence of locally free $\cO_{\cX}^+/\cI$-modules
$$
0\lra \cQ\lra \cE^+/\cI\cE^+\lra \cF\lra 0,
$$
and a section $s\in \mathrm{H}^0\bigl(\cX,  \cF \bigr)$ such that $\bigl(\cO_{\cX}^+/\cI  \bigr)s=\cF$.
We have

\begin{theorem}
\label{thm:VBMSD}
The subfunctor $\bV_0^D(\cE^+, \cQ, s)$ of $\bV(\cE,\cE^+)$ defined by associating to every 
adic space $t\colon \cZ\rightarrow \cX$ as in Section \S \ref{sec:VBMS} the set
$$
\bV_0(\cE^+,\cQ, s)\bigl(\gamma\colon \cZ\rightarrow \cX\bigr):=
$$
$$:=\Bigl\{h\in\bV(\cE,\cE^+)\bigl(\gamma\colon \cZ\rightarrow \cX\bigr)
\ |\ h\bigl(\mathrm{mod}\ \gamma^\ast(\cI) \bigr)\bigl(\cQ\bigr)=0\ \mathrm{ and }\ h\bigl(\mathrm{mod}\ \gamma^\ast(\cI) \bigr)(t^\ast(s))=1\Bigr\},
$$
is represented by the the open adic subspace of $\bV(\cE, \cE^+)$ denoted $\bV_0^D(\cE^+, \cQ,s)$ and consisting of the points $x$ such that $\vert q\vert_x\le \vert\alpha\vert_x$ and $\vert\tilde{s}-1\vert_x\le \vert\alpha\vert_x$, where
$q$ is a (local) lift to $\cE^+$ of a local generator of $\cQ$ at $x$, $\alpha$ is a (local) generator of $\cI$ at $x$ and $\tilde{s}$ is a (local) lift of $s$ to $\cE^+$.

\end{theorem}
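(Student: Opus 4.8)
The plan is to imitate closely the proof of Theorem~\ref{thm:VBMS}. The first observation is that the two conditions defining $\bV_0^D(\cE^+,\cQ,s)$ — that $h\bmod\gamma^\ast(\cI)$ kill $\gamma^\ast(\cQ)$ and send $\gamma^\ast(s)$ to $1$ — are intrinsic, i.e.\ they do not refer to any choice of local basis, and they are manifestly stable under base change $\cZ'\to\cZ$; hence $\bV_0^D(\cE^+,\cQ,s)$ is a well-defined subfunctor of $\bV(\cE,\cE^+)$ on the category of $\gamma\colon\cZ\to\cX$ with $\gamma^\ast(\cI)$ invertible. Note also that the second condition makes sense precisely because the first holds: once $h\bmod\gamma^\ast(\cI)$ annihilates $\gamma^\ast(\cQ)$ it factors through $\gamma^\ast(\cF)$, and for the same reason the value of $h\bmod\gamma^\ast(\cI)$ on a local lift $\tilde s\in\cE^+$ of $s$ does not depend on the lift. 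Since representability and the description of the representing open are local on $\cX$, I would now pass to a small affinoid open $U=\mathrm{Spa}(R,R^+)\subset\cX$.

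On a sufficiently small $U$ we may assume that $\cI|_U=\alpha R^+$ is principal (with $\alpha\in p^rR^+$, hence topologically nilpotent), that $\cQ|_U$ is free of rank $1$, and that $\cE^+|_U$ is free of rank $2$. The key local step is to choose a basis $f_0,f_1$ of $\cE^+|_U$ adapted to the data: since $\cF|_U=(R^+/\alpha)\,s|_U$ is free, the sequence $0\to\cQ|_U\to(\cE^+/\cI\cE^+)|_U\to\cF|_U\to0$ splits, so a generator $\overline{f_1}$ of $\cQ|_U$ together with a lift $\overline{f_0}$ of $s|_U$ form an $R^+/\alpha$-basis of $(\cE^+/\cI\cE^+)|_U$; lifting $\overline{f_0},\overline{f_1}$ arbitrarily to $f_0,f_1\in\cE^+|_U$, these form an $R^+$-basis of $\cE^+|_U$ because $R^+$ is $\alpha$-adically complete with $\alpha$ topologically nilpotent, so a square matrix over $R^+$ that is invertible modulo $\alpha$ is invertible. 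This is the exact analogue of the choice of basis made in the proof of Theorem~\ref{thm:VBMS}(c).

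With such a basis, exactly as in the proof of Theorem~\ref{thm:VBMS} one has $\bV(\cE,\cE^+)|_U=\mathrm{Spa}(R\langle X,Y\rangle,R^+\langle X,Y\rangle)$ with tautological homomorphism $f_0\mapsto X$, $f_1\mapsto Y$, so that for $\gamma\colon\cZ\to\cX$ factoring through $U$ (with $\gamma^\ast(\cI)$ invertible) an element $h\in\bV(\cE,\cE^+)(\cZ)$ amounts to the pair $\bigl(h(f_0),h(f_1)\bigr)\in\cO_\cZ^+(\cZ)^2$. The condition $h\bmod\gamma^\ast(\cI)(\gamma^\ast\cQ)=0$ becomes $\gamma^\ast(\alpha)\mid h(f_1)$ (as $\cQ|_U$ is generated by $\overline{f_1}$), and then, since $f_0$ maps to $s|_U$ in $\cF|_U$, the condition $h\bmod\gamma^\ast(\cI)(\gamma^\ast s)=1$ becomes $\gamma^\ast(\alpha)\mid h(f_0)-1$. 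These are precisely the conditions cutting out the rational open subdomain $\{\,|X-1|\le|\alpha|,\ |Y|\le|\alpha|\,\}$ of $\bV(\cE,\cE^+)|_U$, on which, setting $X=1+\alpha Z$ and $Y=\alpha W$, one gets $\mathrm{Spa}\bigl(R\langle X,Y\rangle\langle\frac{X-1}{\alpha},\frac{Y}{\alpha}\rangle,\,R^+\langle X,Y\rangle\langle\frac{X-1}{\alpha},\frac{Y}{\alpha}\rangle\bigr)=\mathrm{Spa}(R\langle Z,W\rangle,R^+\langle Z,W\rangle)$; its universal property identifies $U$-morphisms $\cZ\to$ this open with pairs $\bigl(h(f_0),h(f_1)\bigr)$ satisfying the two divisibilities, i.e.\ with $\bV_0^D(\cE^+,\cQ,s)(\gamma\colon\cZ\to\cX)$. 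Because both the functor and the rational domains are intrinsic, these local descriptions glue over a cover of $\cX$, yielding the open adic subspace $\{x:|q|_x\le|\alpha|_x,\ |\tilde s-1|_x\le|\alpha|_x\}$ of the statement. I expect the only mildly delicate point to be precisely this bookkeeping — verifying that the divisibility conditions, and in particular the well-definedness of ``$h$ sends $s$ to $1$'', are independent of the local choices, so that the local pictures agree on overlaps; there is no serious geometric obstacle.
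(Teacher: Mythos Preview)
Your proposal is correct and follows exactly the approach the paper intends: the paper actually states Theorem~\ref{thm:VBMSD} without proof, leaving it as an evident variant of the proof of Theorem~\ref{thm:VBMS}, and your argument is precisely that variant. Your local computation (choosing a basis $f_0,f_1$ of $\cE^+|_U$ with $\overline{f_0}$ lifting $s$ and $\overline{f_1}$ generating $\cQ$, then identifying the subfunctor with the rational open $\{|X-1|\le|\alpha|,\ |Y|\le|\alpha|\}$ via $X=1+\alpha Z$, $Y=\alpha W$) matches verbatim the explicit description the paper records in \S\ref{sec:explicit}, where the same space is written as $\mathrm{Spa}(R\langle C,D\rangle,R^+\langle C,D\rangle)$ with $A=1+\alpha C$, $B=\alpha D$.
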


\subsection{The sheaves $\WW_k$ and $\WW^D_k$.}
\label{sec:WkWDk}

Let the hypothesis be as in Section \S \ref{sec:VBMS} and \S \ref{sec:dualVBMS}. We assume that we have a morphism of
adic spaces $\cX\to \cW$ where let us recall that $\cW$ is the adic weight space for $\GL_{2,/\Q}$. For every adic space $\cZ$ the morphisms $\cZ\to \cW$ classify continuous homomorphisms $\Z_p^\ast \to \Gamma\bigl(\cZ,\cO_{\cZ}\bigr)$. 
We denote by $k^{\rm univ}\colon \Z_p^\ast \to  \Gamma\bigl(\cX,\cO_{\cX}\bigr)$ the continuous homomorphism defined by $\cX\to \cW$. We assume that  $k^{\rm univ}$ satisfies the following   analyticity assumption:    there exists a section $u_{\rm univ}$ of $\cO_{\cX}$ such that  
$\vert  u_{\rm univ} \vert_x <  \vert p^{\frac{1}{p-1} -r}  \vert_x$ for every $x\in \cX$   and  $$k^{\rm univ}(t)=\exp u_{\rm univ} \log(t), \quad \forall t\in 1+ p^r \Z_p.$$
We recall that the integer $r\ge 0$ is such that $\cI\subset p^r \cO_{\cX}^+$.  Let us denote by $\cT$ the adic torus representing the functor which associates to an adic space $\gamma\colon \cZ\rightarrow \cX$
the group
$$
\cT(\gamma\colon \cZ\rightarrow \cX):=1+\gamma^\ast(\cI).
$$
Then $k^{\rm univ}$ defines a character $k^{\rm univ}\colon \cT \to \mathbb{G}_m$, i.e., a morphism of adic spaces and group functors, using the fomula above. 

We have natural actions of $\cT$ on $\bV_0(\cE^+, s)$, $\bV_0(\cE^+, s,t)$  and $\cV_0^D\bigl(\cE^+,\cQ, s \bigr)$ defined on $\gamma\colon \cZ\rightarrow \cX$ points by: $u\ast h:=uh$ and $u\ast h':=uh'$
where $u\in \cT(\gamma\colon \cZ\rightarrow \cX)$, $h\in \bV_0(\cE^+, s)(\gamma\colon \cZ\rightarrow \cX)$, resp. $h\in \bV_0(\cE^+, s,t)(\gamma\colon \cZ\rightarrow \cX)$, resp.  $h'\in\bV_0^D\bigl( \cE^+\vee,\cQ, s\bigr)(\gamma\colon\cZ\rightarrow \cX)$.   Let us denote by $f\colon \bV_0(\cE^+,s)\lra \cX$, $g\colon \bV_0(\cE^+,s,t)\lra \cX$  and by $f^D:\bV_0^D\bigl(\cE^+,\cQ, s\bigr)\lra \cX$ the structural morphisms.

\begin{definition}
We denote:
$$
\WW_{k^{\rm univ}}(\cE^+, s):=f_\ast\bigl(\cO_{\bV_0(\cE^+,s)^+ } \bigr)[k^{\rm univ}],\ \ \WW_{k^{\rm univ}}(\cE^+, s,t):=g_\ast\bigl(\cO_{\bV_0(\cE^+,s) }^+\bigr)[k^{\rm univ}]$$
and
$$
\WW^D_{k^{\rm univ}}\bigl(\cE^+,\cQ, s  \bigr):=f^D_\ast\Bigl(\cO_{\bV_0^D\bigl(\cE^+,\cQ, s  \bigr)}^+  \Bigr)[k^{\rm univ}],
$$
where if $\cG$ is an $\cO_{\cX}^+$-module on $\cX$ with an action by the torus $\cT:=1+\cI$, we denote $\cG[k^{\rm univ}]$ the subsheaf of
$\cG$ of sections $x$ such that $u\ast x=k^{\rm univ}(u)x$, for all corresponding sections $u$ of $\cT$. 

\end{definition}

\subsubsection{Local descriptions of the sheaves $\WW_k$ and $\WW_k^D$.}\label{sec:explicit}

We assume all notations and assumptions of the proof of Theorem \ref{thm:VBMS} and of Section \S \ref{sec:WkWDk}. Let $U={\rm Spa}(R,R^+)\subset 
\cX$ be an affinoid open such that $\cE^+\vert_U=f_0\cO_U^++f_1\cO_U^+$, $\cI\vert_U=\alpha\cO_U^+$
and such that $f_0({\rm mod}\ \alpha)=s\vert_U$ and $f_1({\rm mod}\ \alpha) $ generates $\Bigl(\bigl(\cE^+/\cI\cE^+\bigr)/s\bigl(\cO_{\cX}/\cI  \bigr)\Bigr)\vert_U$ (resp. $f_1({\rm mod}\ \alpha)=t\vert_U$.

In view of the next section we also consider the dual situation, i.e. recall that $s$ is a global marked section of $\cE^+$ modulo $\cI$ and denote $\cF:=(\cE^+/\cI\cE^+)/(s\cO_\cX)$. By dualizing we obtain the exact sequence
$$
0\lra \cQ\lra  (\cE^+)^\vee/\cI(\cE^+)^\vee \lra (\cO_\cX/\cI)s^\vee\lra 0,
$$
where $\cQ:=\cF^\vee$,
which defines data for $\bV_0^D$.
Then $(\cE^+)^\vee\vert_U=f_0^\vee\cO_{U}^++
f_1^\vee\cO_{U}^+$, $\cQ\vert_U=f_1^\vee\bigl(\cO_U^+/\alpha\cO_U^+\bigr)$ and 
$f_0^\vee({\rm mod}\ \alpha)=s^\vee\vert_U$. 
Therefore
$$
\bV_0(\cE^+,s)\vert_U={\rm Spa}\bigl(R\langle X,Y\rangle\langle\frac{X-1}{\alpha}\rangle , R^+\langle X,Y\rangle\langle \frac{X-1}{\alpha}\rangle  \bigr)={\rm Spa}\bigl(R\langle Z,Y\rangle, R^+\langle Z,Y\rangle  \bigr)
$$
where $\displaystyle X=1+\alpha Z$ and 
$$
\bV_0(\cE^+,s,t)\vert_U={\rm Spa}\bigl(R\langle Z,W\rangle, R^+\langle Z,W\rangle\bigr)
$$with $Y=\alpha W$. Similarly we have
$$
\bV_0^D\bigl(\cE^\vee,\cQ, s^\vee\bigr)|_U={\rm Spa}\Bigl(R\langle A,B\rangle\langle \frac{A-1}{\alpha}, \frac{B}{\alpha} \rangle,  R^+\langle A,B\rangle\langle \frac{A-1}{\alpha}, \frac{B}{\alpha} \rangle \Bigr)=
$$
$$
={\rm Spa}\bigl(R\langle C, D\rangle, R^+\langle C, D\rangle  \bigr),
$$
where $A=1+\alpha C$ and $B=\alpha D$.

Therefore we have
$$
\WW_{k^{\rm univ}}(\cE^+, s)(U)=R^+\langle Z,Y\rangle[k^{\rm univ}]=R^+\langle \frac{Y}{1+\alpha Z}\rangle k^{\rm univ}(1+\alpha Z),
$$

$$
\WW_{k^{\rm univ}}(\cE^+, s,t)(U)=R^+\langle Z,W\rangle[k^{\rm univ}]=R^+\langle \frac{W}{1+\alpha Z}\rangle k^{\rm univ}(1+\alpha Z)
$$

and
$$
\WW^D_{k^{\rm univ}}\bigl((\cE^+)^\vee,\cQ, s^\vee  \bigr)(U)=R^+\langle C,D\rangle[k^{\rm univ}]=R^+\langle \frac{D}{1+\alpha C}\rangle
k^{\rm univ}(1+\alpha C).
$$

\subsection{The duality.}
\label{sec:duality}

Suppose that $\cX$, $\cI$, $ (\cE, \cE^+)$, $s$ are as in  Section \S \ref{sec:VBMS}  and let us denote by
$$\iota \colon \cF:=s\bigl(\cO_{\cX}^+/\cI\bigr)\hookrightarrow  \cE^+/\cI\cE^+.$$Let $\bigl(\cE', (\cE^+)^\vee\bigr)$ 
denote the pair where $(\cE^+)^\vee$ is the $\cO_{\cX}^+$-dual of $\cE^+$
and $\cE':=(\cE^+)^\vee\otimes_{\cO_{\cX}^+}\cO_{\cX}$. Moreover let us consider 
$\cQ:=\mathrm{Ker}\bigl(\cE^+/\cI\cE^+\stackrel{\iota^\vee}{\lra}\cF^\vee  \bigr)$, where $\cF^\vee$ is the $\cO_{\cX}^+/\cI$-dual of $\cF$. 
As explained in Theorems \ref{thm:VBMS} and \ref{thm:VBMSD} we have 
adic spaces $\bV_0(\cE^+,s)$ and $\bV_0^D\bigl((\cE^+)^\vee, \cQ, s^\vee\bigr)$ over $\cX$.  Consider the morphism of adic spaces
$$
\langle \ , \ \rangle\colon \bV_0(\cE^+, s)\times_{\cX}\bV_0^D\bigl((\cE^+)^\vee, \cQ, s^\vee\bigr)\lra 
\bV_0(\cO_{\cX}^+, 1), 
$$defined on points as follows. Fix a morphism of adic spaces $\gamma\colon \cZ\rightarrow \cX$. Consider  $h\in \bV_0(\cE^+,s)\bigl(\gamma\colon \cZ\rightarrow \cX\bigr)$ and $h'\in \bV_0^D\bigl((\cE^+)^\vee, \cQ, s^\vee\bigr)\bigl(\gamma\colon \cZ\rightarrow \cX\bigr)$. By definition this is equivalent to give morphisms of $\cO_{\cZ}^+$-modules  $h\colon\gamma^\ast(\cE^+)\rightarrow \cO_{\cZ}^+$ with $h(\mathrm{mod}\ \cI)(\gamma^\ast(s))=1$  and  
$h'\colon\gamma^\ast(\cE^+)^\vee\rightarrow \cO_{\cZ}^+$ with $h'(\mathrm{mod}\ \gamma^\ast(\cI))(\cQ)=0$ 
and $h'(\mathrm{mod}\ \gamma^\ast(\cI))(\gamma^\ast(s^\vee))=1$. Then $h'(h)\in \mathrm{H}^0(\cZ, \cO_{\cZ}^+)$ and $h'(h)(\mathrm{mod}\ \gamma^\ast(\cI))=1$, i.e., $h'(h)\in \bV_0(\cO_{\cX}^+, 1)\bigl(\gamma\colon\cZ\rightarrow \cX  \bigr)$.
We  define
$$\langle h, h'\rangle:=h'(h)\in  \bV_0(\cO_{\cX}^+, 1)\bigl(\gamma\colon\cZ\rightarrow \cX  \bigr).$$Notice that $\bV(\cO_{\cX},\cO_{\cX}^+, 1)$ is the affine one dimensional space  $\mathbb{A}^1_\cX$ over $\cX$, with standard coordinate $T$.  The  torus $\cT \times \cT$  acts componentwisly on  $\bV_0(\cE^+, s)\times_{\cX}\bV_0^D\bigl((\cE^+)^\vee, \cQ, s^\vee\bigr)$. Given sections $(h,h')$ of the latter and $(u,u')$ of $\cT\times \cT$ we have $\langle u \ast h, u' \ast h'\rangle =(u \cdot u') \ast \langle h, h'\rangle$.

\begin{lemma} There is a section $T^{k^{\rm univ}}$ of $\WW_{k^{\rm univ}}(\cO_{\cX}^+, 1)$ over $\cX$ such that  $\WW_{k^{\rm univ}}(\cO_{\cX}^+, 1)= T^{k^{\rm univ}} \cdot \cO_{\cX}^+$. Moroever,  $\langle \ , \ \rangle^\ast\bigl(T^{k^{\rm univ}}\bigr)\in  \WW_{k^{\rm univ}}(\cE^+, s) \widehat{\otimes} \WW^D_{k^{\rm univ}}\bigl((\cE^+)^\vee,\cQ, s^\vee  \bigr)$.

\end{lemma}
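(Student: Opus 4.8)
The plan is to work locally on a cover of $\cX$ by affinoid opens $U=\Spec(R,R^+)$ of the type used in the proof of Theorem \ref{thm:VBMS}, where $\cI\vert_U=\alpha\cO_U^+$ and $\cE^+\vert_U$ has a basis $f_0,f_1$ with $f_0\bmod\alpha = s\vert_U$, and to use the explicit coordinate descriptions recorded in Section \S\ref{sec:explicit}. First I would address the first assertion. Over $U$ we have $\bV_0(\cO_\cX^+,1)\vert_U = \Spec(R\langle T'\rangle, R^+\langle T'\rangle)$ with $\mathbb{A}^1_\cX$-coordinate $T = 1+\alpha T'$, and the torus $\cT = 1+\cI$ acts by $u\ast T = uT$. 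Thus $\WW_{k^{\rm univ}}(\cO_\cX^+,1)(U) = R^+\langle T'\rangle[k^{\rm univ}]$, the sections on which $1+\alpha T'$ acts through the character $k^{\rm univ}$. Using the analyticity assumption on $k^{\rm univ}$ (the bound $\vert u_{\rm univ}\vert_x < \vert p^{1/(p-1)-r}\vert_x$, which guarantees that $k^{\rm univ}(1+\alpha Z) = \exp\bigl(u_{\rm univ}\log(1+\alpha Z)\bigr)$ converges in $R^+\langle Z\rangle$), I would set $T^{k^{\rm univ}}\vert_U := k^{\rm univ}(1+\alpha T')$, a well-defined invertible element of $R^+\langle T'\rangle$ (its inverse being $k^{-{\rm univ}}(1+\alpha T')$), on which $\cT$ acts through $k^{\rm univ}$ by the homomorphism property $k^{\rm univ}(u(1+\alpha T')) = k^{\rm univ}(u)k^{\rm univ}(1+\alpha T')$. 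One then checks that $\WW_{k^{\rm univ}}(\cO_\cX^+,1)(U)$ is exactly the free rank-one $R^+$-module generated by this element: any eigensection $x$ for $k^{\rm univ}$ has $x/T^{k^{\rm univ}}\vert_U$ invariant under $\cT$, hence lies in $R^+$. Finally one verifies these local generators glue: on overlaps the transition is via a unit of $\cO_\cX^+$ (a change of the section "$1$" of $\cO_\cX^+/\cI$, which is canonical), so $T^{k^{\rm univ}}$ is a global section trivializing $\WW_{k^{\rm univ}}(\cO_\cX^+,1)$ as an $\cO_\cX^+$-line bundle.

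For the second assertion, I would unwind the pullback along $\langle\ ,\ \rangle$ in the local coordinates. On $U$, $\bV_0(\cE^+,s)\vert_U = \Spec(R\langle Z,Y\rangle, R^+\langle Z,Y\rangle)$ with tautological map sending $f_0\mapsto 1+\alpha Z$, $f_1\mapsto Y$, and $\bV_0^D((\cE^+)^\vee,\cQ,s^\vee)\vert_U = \Spec(R\langle C,D\rangle, R^+\langle C,D\rangle)$ with tautological map sending $f_0^\vee\mapsto 1+\alpha C$, $f_1^\vee\mapsto \alpha D$. The pairing morphism on points sends $(h,h')\mapsto h'(h)$; computing on the universal point, $h'(h) = (1+\alpha C)(1+\alpha Z) + (\alpha D)\cdot Y = 1 + \alpha\bigl(Z + C + \alpha CZ + DY\bigr)$, so $\langle\ ,\ \rangle^\ast(T) = 1 + \alpha\bigl(Z+C+\alpha CZ+DY\bigr)$ inside $R^+\langle Z,Y\rangle \widehat{\otimes}_{R^+} R^+\langle C,D\rangle$. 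Applying the character $k^{\rm univ}$ gives
$$
\langle\ ,\ \rangle^\ast\bigl(T^{k^{\rm univ}}\bigr)\vert_U = k^{\rm univ}\bigl((1+\alpha C)(1+\alpha Z) + \alpha D Y\bigr),
$$
and using multiplicativity of $k^{\rm univ}$ on the torus $\cT$ one rewrites this as
$$
k^{\rm univ}(1+\alpha Z)\,k^{\rm univ}(1+\alpha C)\cdot k^{\rm univ}\!\left(1 + \frac{\alpha D Y}{(1+\alpha C)(1+\alpha Z)}\right).
$$
The first two factors are precisely the generators of $\WW_{k^{\rm univ}}(\cE^+,s)(U)$ and $\WW^D_{k^{\rm univ}}((\cE^+)^\vee,\cQ,s^\vee)(U)$ from Section \S\ref{sec:explicit}, and the remaining factor lies in $R^+\langle \tfrac{Y}{1+\alpha Z}\rangle\,\widehat{\otimes}\,R^+\langle \tfrac{D}{1+\alpha C}\rangle$ because $\tfrac{\alpha DY}{(1+\alpha C)(1+\alpha Z)}$ is $\alpha$ times the product $\tfrac{Y}{1+\alpha Z}\cdot\tfrac{D}{1+\alpha C}$, so its $k^{\rm univ}$-exponential converges there. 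Hence locally $\langle\ ,\ \rangle^\ast(T^{k^{\rm univ}})$ lies in $\WW_{k^{\rm univ}}(\cE^+,s)\,\widehat{\otimes}\,\WW^D_{k^{\rm univ}}((\cE^+)^\vee,\cQ,s^\vee)$, and since $T^{k^{\rm univ}}$ is a global section and the construction is functorial, the local statements glue to the global one.

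The main obstacle is not the formal bookkeeping but making rigorous the convergence and well-definedness of the exponential/power $k^{\rm univ}(1+\text{something in }\alpha(\ldots))$ in the relevant (completed) Tate algebras, and checking that the $\cT$-action identifications are compatible with the $\widehat{\otimes}$ appearing in the target — i.e. that the cross term $\tfrac{\alpha DY}{(1+\alpha C)(1+\alpha Z)}$ genuinely lands in the completed tensor product of the two local models and not merely in some larger ring. This is exactly where the analyticity hypothesis on $k^{\rm univ}$ (the comparison of $\vert u_{\rm univ}\vert_x$ with $\vert p^{1/(p-1)-r}\vert_x$) is used: it ensures $\exp(u_{\rm univ}\log(1+v))$ converges whenever $v\in\alpha\cO^+$, which covers all three factors above. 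Once that is in place the gluing over the affinoid cover is routine because the only ambiguity in the local models is an $\cO_\cX^+$-unit coming from the choice of trivialization of $\cI$, and $k^{\rm univ}$ of a unit in $1+\cI$ is again such a unit, so all comparisons are $k^{\rm univ}$-equivariant.
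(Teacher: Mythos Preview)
Your proposal is correct and follows essentially the same route as the paper: work locally in the explicit coordinates of \S\ref{sec:explicit}, define $T^{k^{\rm univ}}=k^{\rm univ}(1+\alpha V)$, compute $\langle\ ,\ \rangle^\ast(T)=(1+\alpha Z)(1+\alpha C)+\alpha Y\cdot D$, and factor $k^{\rm univ}$ of this as $k^{\rm univ}(1+\alpha Z)\,k^{\rm univ}(1+\alpha C)\,k^{\rm univ}\bigl(1+\alpha\tfrac{Y}{1+\alpha Z}\cdot\tfrac{D}{1+\alpha C}\bigr)$. The paper's proof is terser but identical in content; your additional remarks on convergence and gluing are correct elaborations of points the paper leaves implicit.
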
 
\begin{proof} The statament is local on $\cX$. We use the explicit coordinates of \S \ref{sec:explicit}. Then, $T=1+\alpha V$ and $T^{k^{\rm univ}}$ is the section $ k^{\rm univ}(1+\alpha V)$. By loc.~cit., we have $\WW_{k^{\rm univ}}(\cO_{\cX}^+, 1)(U)= T^{k^{\rm univ}} \cdot R^+$.

As    $\langle \ , \ \rangle^\ast\bigl( T \bigr)=X\otimes A + Y \otimes B=\bigl(1+\alpha Z )\big) \cdot \bigl(1+\alpha C \bigr)+ \alpha Y \otimes D$, we conclude that
\begin{equation}\label{eq:dualk} \langle \ , \ \rangle^\ast\bigl( T^{k^{\rm univ}} \bigr)=k^{\rm univ}(1+\alpha Z)  k^{\rm univ}(1+\alpha C)   \cdot  k^{\rm univ}\left(1+\alpha \frac{Y}{1+\alpha Z} \otimes \frac{D}{1+\alpha C}\right).\end{equation}This concludes the proof.
\end{proof}

\begin{definition}\label{def:dualityWkWk^D} Write $\WW_{k^{\rm univ}}(\cE^+, s)^\vee$ for the $\cO_{\cX}^+$-dual of $\WW_{k^{\rm univ}}(\cE^+, s)$.  Define the map of $\cO_{\cX}^+$-modules $$\xi_{k^{\rm univ}}\colon \WW_{k^{\rm univ}}(\cE^+, s)^\vee \longrightarrow \WW^D_{k^{\rm univ}}\bigl((\cE^+)^\vee,\cQ, s^\vee  \bigr),\quad \gamma \mapsto (\gamma \otimes 1) \bigl(\langle \ , \ \rangle^\ast\bigl(T^{k^{\rm univ}}\bigr)\bigr).$$

\end{definition}

\subsubsection{Local descriptions of the duality between $\WW_k$ and $\WW_k^D$.}\label{sec:explicitduality}  

We put ourselves in the setting of \S \ref{sec:explicit} and compute explicitly the pairing $\xi_{k^{\rm univ}}$ on the affinoid $U$ in terms of the local coordintaes of loc.~cit..  As $k^{\rm univ}$ is supposed to be $r$-analytic on $\cX$, we can write $k^{\rm univ}(t)=\exp u_{\rm univ} \log (t)$ for every $t\in 1+p^r \Z_p$. We then claim that $$\xi_{k^{\rm univ}}\left( k^{\rm univ}(1+\alpha Z)  \bigl(\frac{Y}{1+\alpha Z} \bigr)^n\right)^\vee=\alpha^n \left( \begin{matrix}u_{\rm univ} \cr n \cr \end{matrix}\right)  k^{\rm univ}(1+\alpha C)  \bigl(\frac{D}{1+\alpha C}\bigr)^n ,$$where $\displaystyle \left( \begin{matrix}u_{\rm univ} \cr n \cr \end{matrix}\right) :=\frac{ u_{\rm univ} (u_{\rm univ}-1) \cdots (u_{\rm univ}-n+1)}{n!}$
if $n\ge 1$ and $\displaystyle \left( \begin{matrix}u_{\rm univ} \cr 0 \cr \end{matrix}\right)=1$.

First of all, one computes that, if we write $f(X):=\exp u_{\rm univ} \log (1+X)=\sum_{n=0}^\infty a_n X^n$ as a formal power series in $X$, then $\displaystyle a_n= \left( \begin{matrix}u_{\rm univ} \cr n \cr \end{matrix}\right)$. Using (\ref{eq:dualk}), we deduce that  $$\frac{\langle \ , \ \rangle^\ast\bigl( T^{k^{\rm univ}} \bigr)}{k^{\rm univ}(1+\alpha Z)  k^{\rm univ}(1+\alpha C)}= \sum_ {n=0}^\infty \frac{\alpha^n u_{\rm univ} (u_{\rm univ}-1) \cdots (u_{\rm univ}-n+1)}{n!}  \left(\frac{Y}{1+\alpha Z} \right)^n\otimes \left(\frac{D}{1+\alpha C} \right)^n$$and the claim follows.

\subsection{An example: locally analytic functions and distributions.}\label{sec:anal}

We consider in this article weights defined as follows. Let $W$ denote the weight space seen as an adic space.

\begin{definition}
\label{def:weights}
Let $U\subset W$ be an open disk which is not a connected component of $W$ and $\Lambda_U$ denote the $\Z_p$-subalgebra of sections in $\cO_U^+(U)$.
which are bounded, see Section \S4 of \cite{EichlerShimura}. We recall that $\Lambda_U$ is a complete noetherian local $\Z_p$-algebra and we call "weak topology" the $m_{\Lambda_U}$-adic topology of $\Lambda_U$, where $m_{\Lambda_U}$ is its maximal ideal.  Let $B$ denote either $\Lambda_U$ for some open disk $U\subset W$ or $\cO_K$ for a finite extension $K$ of $\Q_p$.
In this article we will work with $B$-valued weights $k\colon\Z_p^\ast\lra B^\ast$, which, if
$B=\Lambda_U$ is the universal weight associated to $U$. 
\end{definition}

Let   $k\colon \Z_p^\ast\lra B^\ast$ be a weight as in definition \ref{def:weights} and suppose  it is an $r$-analytic character. Following \cite[Def. 3.1]{BarreraShan} we set:

\begin{definition}\label{def:anal}  For every integer $n\geq r$ let $A_{k}^o(T_0)[n]$ denote the space of functions $f\colon T_0\lra B$ such that 
\smallskip

(1) for every  $a\in \Z_p^\ast$, $t\in T_0$ we have $f(at) = k(a) f(t)$

\smallskip
(2) the function $z\rightarrow f(1,z)$ extends to an $r$-analytic  function, i.e., for every $i\in \Z/p^n \Z$ the function $f(1,i+p^n z)$ for $z\in \Z_p$ is given by the values of a convergent power series $\sum_{m=0}^\infty a_{m,i} z^m $.  

\

Define $D_k^o(T_0)[n]:={\rm Hom}_{B}\bigl(A_{k}^o(T_0)[n], B\bigr)$, the continuous dual of $A_{k}^o(T_0)[n]$ with respect to the weak topology of $B$. We write $A_{k}(T_0)[n]:=A_{k}^o(T_0)[n]\otimes_\Z \Q$ and 
$D_k(T_0)[n]:=D_k^o(T_0)[n]\otimes_\Z \Q$.

\end{definition}

By \cite[Lemma 3.1]{BarreraShan} the action of  $\Delta_1$ on $T_0$ induces actions of $\Delta_1$ on $A_k^o(T_0)[n]$, $D_k^o(T_0)[n]$, $A_k(T_0)[n]$ and $D_k(T_0)[n]$.  Morever,  \cite[Def. 3.3 \& Prop. 3.3]{BarreraShan}  the $B$ module $D_k^o(T_0)[n]$ admits a  decreasing filtration $\Fil^\bullet D_k^o(T_0)[n]$ of $B$-modules, stable under the action of $\Iw_1$, such that the graded pieces are finite and  $D_k^o(T_0)[n]$ is the inverse limit $\displaystyle{\lim_{\infty\leftarrow m}} D_k^o(T_0)[n]/\Fil^m D_k^o(T_0)[n]$.

\subsubsection{An alternative description.} For later purposes we end this section by describing $A_k(T_0)[n]$ and $D_k(T_0)[n]$ using the formalism of VBMS.  Let $T=\Z_p\oplus \Z_p$, with marked sections $s=(1,0)$ and $t=(0,1)$ in $T/p^n T$. Write $f_0=(1,0)$ and $f_1=(0,1)\in T$. Denote by $T^\vee$ the $\Z_p$-dual of $T$ and  let $T_0^\vee \subset T^\vee$ be the subset of elements $\Z_p^\ast e_0 \times \Z_p e_1 $ with $e_0=f_0^\vee$ and $e_1=f_1^\vee$. For every $\lambda=0,\ldots, p^n-1$ denote by  $\WW_k(T,s,t+\lambda s ,p^n)$, or simply $\WW_k(T,s,t+\lambda s)$ if the power of $p$ we are working with is clear from the context,  the sections $\WW_{k}(T, s,t+\lambda s)(U)$ over the adic space $U=\mathrm{Spa}(B[1/p],B)$. Let $\Iw_n$ be the subgroup of matrices $M=\left(\begin{matrix}\alpha & \beta \cr \gamma & \delta\end{matrix} \right)\in \mathrm{GL}_2(\Z_p)$ such that $\gamma\equiv 0$ modulo $p^n$. Then:

\begin{proposition}\label{prop:AkWkveee} There is an  $\Iw_n$-equivariant isomorphism of $B$-modules $$\nu_n\colon \oplus_{\lambda=0}^{p^n-1}  \WW_{k}(T, s,t-\lambda s) \longrightarrow A_{k,\lambda}^o(T_0^\vee)[n].$$Then, taking duals with respect to the weak topology on $B$ we get a decomposition into  a direct sum and a $\Iw_n$-equivariant isomorphism$$\nu_n^\vee\colon D_k^o(T_0^\vee)[n]=\oplus_{\lambda=0}^{p^n} D_{k,\lambda}^o(T_0^\vee)\cong \oplus_{\lambda=0}^{p^n-1}  \WW_{k}(T, s,t-\lambda s)^\vee.$$

\end{proposition}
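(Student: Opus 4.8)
The plan is to construct the isomorphism $\nu_n$ explicitly at the level of the affinoid $U=\mathrm{Spa}(B[1/p],B)$ using the local description of the sheaves $\WW_k$ from Section \S\ref{sec:explicit}, and then to check $\Iw_n$-equivariance by a direct matrix computation. First I would recall that, by Theorem \ref{thm:VBMS}(c) applied to the data $(\cE,\cE^+)=(T[1/p],T)$, $\cI=p^n\cO_U^+$, marked sections $s=f_0$ and $t-\lambda s$, the space $\bV_0(T,s,t-\lambda s)(U)$ is identified with $\mathrm{Spa}\bigl(B[1/p]\langle Z,W\rangle, B\langle Z,W\rangle\bigr)$, where a point is the homomorphism $h\colon T\to B$ with $h(f_0)=1+p^n Z$ and $h(f_1)=p^n W + \lambda(1+p^n Z)$ (the shift by $\lambda$ coming from replacing $t$ by $t-\lambda s$). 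By the computation in loc.\ cit., $\WW_k(T,s,t-\lambda s)(U)=B\langle \tfrac{W}{1+p^n Z}\rangle\, k(1+p^n Z)$. On the other side, an element of $A_k^o(T_0^\vee)[n]$ is a function $\varphi$ on $T_0^\vee=\Z_p^\ast e_0\times\Z_p e_1$, homogeneous of weight $k$ in the $\Z_p^\ast$-variable, whose restriction to $e_0\times\Z_p e_1$ is $r$-analytic; decomposing $\Z_p$ into residue classes $i+p^n\Z_p$ gives exactly the index $\lambda=i$ and an $r$-analytic function of one variable on that class, i.e.\ the summand $A_{k,\lambda}^o(T_0^\vee)$.

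The core of the argument is then to write down $\nu_n$ on each summand. The universal point $h$ of $\bV_0$ over $\WW_k(T,s,t-\lambda s)$ is, by construction, a homomorphism $T\to(\text{base ring})$, hence an element of $T^\vee\otimes(\text{base})$, and evaluating it against $(e_0,e_1)$-coordinates realizes it as a point of $T^\vee$; the conditions $h(f_0)\equiv 1$, $h(f_1)\equiv\lambda$ modulo $p^n$ say precisely that $h$ lands in the $\lambda$-th piece of $T_0^\vee$ after the Iwahori twist. A section of $\WW_k(T,s,t-\lambda s)(U)$, being a function on $\bV_0$ transforming by $k$ under the torus $\cT=1+p^n\cO^+$, thus defines by restriction a weight-$k$ homogeneous function on $T_0^\vee$ whose $e_0\times\Z_p e_1$-restriction is given by a convergent power series in $\tfrac{W}{1+p^nZ}$ — i.e.\ an $r$-analytic function. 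This assignment is the map $\nu_n$; one checks it is an isomorphism of $B$-modules by matching the monomial bases: $k(1+p^nZ)\bigl(\tfrac{W}{1+p^nZ}\bigr)^m$ on the left corresponds to the function whose restriction to the $\lambda$-th class is $z\mapsto k(\text{unit})\cdot (p^n z)^m$ (suitably normalized), and these manifestly span both sides over $B$.

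Next I would verify $\Iw_n$-equivariance. The group $\Iw_n$ acts on $T=\Z_p f_0\oplus\Z_p f_1$ by $M=\mat{\alpha}{\beta}{\gamma}{\delta}$ with $\gamma\equiv 0\ (p^n)$, hence preserves the line $\Z_p f_0$ modulo $p^n$ and therefore preserves the flag data defining $\bV_0$; it permutes the sections $t-\lambda s$ among themselves modulo $p^n$ (this is where the direct sum over $\lambda$ is essential — a single $\lambda$ is not stable, but the collection is), and on coordinates it acts by the fractional-linear formula $Z\mapsto$ (image of $\alpha+p^n(\cdots)$), $\tfrac{W}{1+p^nZ}\mapsto \tfrac{\gamma/p^n + \delta W/(1+p^nZ)}{\alpha+\beta W/(1+p^nZ)}$ up to the weight factor $k(\alpha+\cdots)$. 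Comparing with the standard formula for the $\Iw_n$-action on $A_k^o(T_0^\vee)[n]$ coming from the right action on $T_0^\vee$ (as in \cite[Lemma 3.1]{BarreraShan}) shows the two agree. Finally, dualizing with respect to the weak topology of $B$: since $D_k^o(T_0^\vee)[n]=\mathrm{Hom}_B\bigl(A_k^o(T_0^\vee)[n],B\bigr)$ and the direct sum is finite, the dual of $\nu_n$ is the claimed $\Iw_n$-equivariant isomorphism $\nu_n^\vee\colon D_k^o(T_0^\vee)[n]\cong\oplus_\lambda \WW_k(T,s,t-\lambda s)^\vee$, and the decomposition $D_k^o(T_0^\vee)[n]=\oplus_\lambda D_{k,\lambda}^o(T_0^\vee)$ is just the dual of the decomposition of $A_k^o$. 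I expect the main obstacle to be bookkeeping: getting the normalizations (powers of $p^n$, the shift by $\lambda$, the precise fractional-linear transformation and its weight cocycle factor) consistent between the VBMS side and the Barrera--Shan conventions, so that equivariance holds on the nose rather than up to a sign or a unit; the existence and bijectivity of $\nu_n$ itself is essentially formal from Theorem \ref{thm:VBMS} and the explicit local descriptions.
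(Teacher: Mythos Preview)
Your proposal is correct and follows essentially the same route as the paper: decompose $A_k^o(T_0^\vee)[n]$ by residue classes $\lambda$, match each summand with $\WW_k(T,s,t-\lambda s)$ via the explicit monomial bases from \S\ref{sec:explicit}, and then check $\Iw_n$-equivariance. The one difference worth noting is that where you propose to verify equivariance by a direct fractional-linear computation on the $\WW_k$ side (and correctly flag the bookkeeping as the main obstacle), the paper sidesteps exactly that bookkeeping by passing through an auxiliary map $\xi\colon B\langle X,Y\rangle\to A(T^\vee)$, $f(X,Y)\mapsto\bigl(ue_0+ve_1\mapsto f(u,v)\bigr)$, on the ambient coordinate ring of $\bV(T)$; equivariance of $\xi$ is a one-line check, and $\nu_n$ is recovered from $\xi$ via the substitutions $X=1+p^nZ$, $Y=p^nW_\lambda+\lambda X$, so equivariance of $\nu_n$ follows without tracking the fractional-linear formulas and weight cocycle by hand.
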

\begin{proof} We describe the isomorphism explicitly. First of all notice that  $A_k^o(T_0^\vee)[n]$ decomposes as a direct sum $$A_k^o(T_0^\vee)[n]=\oplus_{\lambda=0}^{p^n-1} A_{k,\lambda}^o(T_0^\vee)$$according to residue classes: we say that $f\in A_k^o(T_0^\vee)[n]$ lies in $A_{k,\lambda}^o(T_0^\vee)$ if an only if $f(1,w)$ is zero if $w\not\in \lambda+ p^n \Z_p$. In particular, $$A_{k,\lambda}^o(T_0^\vee)=B\langle w_\lambda \rangle \cdot u^k$$where $\sum_{m=0}^\infty a_m w^m \cdot u^k\colon T_0^\vee \to B$ sends $u e_0 + v e_1 \mapsto k(u) \cdot \sum_m a_m \bigl(\frac{v/u -\lambda }{p^m}\bigr)^m  $ if $v/u\in \lambda + p^n\Z_p$ and to $0$ otherwise. The standard left action of $\Iw_n$ on $T$ is described as follows: given $M=\left(\begin{matrix}\alpha & \beta \cr \gamma & \delta\end{matrix} \right)\in \Iw_n$ we have $M (f_0) =\alpha f_0 + \gamma f_1$, $M(f_1)=\beta f_0 + \delta f_1$.  This induces a right action given by  $e_0 \cdot M =\alpha e_0 + \beta e_1$, $e_1 \cdot M=\gamma e_0 + \delta e_1$. We finally obtain the left action of $\Iw_n$ on $A_k^o(T_0^\vee)[n]$. Explicitly as $(u e_0 + v e_1)\cdot M= (\alpha u +\gamma v) e_0 + (\beta u+ \delta v) e_1$ then    $$M(w_0^m \cdot u^k ) =  k(\alpha + \gamma w_0)  \left(\frac{\beta  + \delta w_0 }{\alpha +\gamma w_0}\right)^m \cdot u^k.$$As $w_\lambda^m \cdot u^k = \left(\begin{matrix} 1 & -\lambda \cr 0 & 1 \end{matrix}\right) (w_0^m \cdot u^k) $ we get the sought for action of $\Iw_n$ on $\oplus_{\lambda=0}^{p^n-1} A_{k,\lambda}^o(T_0^\vee)$.

On the other hand consider the subfunctors $\amalg_{\lambda=0}^{p^n-1}  \bV_0(T,s,t-\lambda s) \to \bV(T)$ over the adic space $U=\mathrm{Spa}(B[1/p],B)$. The action of $\Iw_n$ on $T$ restricts to an action on this subfunctors and induces an action on $\oplus_{\lambda=0}^{p^n-1}  \WW_{k}(T, s,t-\lambda s)$. Explicitly  $$\WW_{k}(T, s,t-\lambda s)=B\langle \frac{W_\lambda}{1+p^n Z}\rangle k(1+p^n Z)$$ according to \S\ref{sec:explicit}. It contains the $B$-submodule of  the space of integral functions $B\langle X,Y\rangle$ of $\V(T)$ where $X=1+p^n Z$ and $Y= p^n W_\lambda+\lambda X$. Recall that we have a universal map $T \to B\langle X,Y\rangle$ defined by sending $m f_0 + r f_1\mapsto m X + r Y$. The left action of $\Iw_n$ on 
$T$ defines by universality an action on $B\langle X,Y\rangle$: if $M=\left(\begin{matrix} \alpha & \beta \cr \gamma & \delta \cr \end{matrix}\right)$ then $M (f_0) =\alpha f_0 + \gamma f_1$, $M(f_1)=\beta f_0 + \delta f_1$ and $M(X)=\alpha X + \gamma Y$, $M(Y)=\beta X + \delta Y$. Denote by $\Iw_n^1\subset \Iw_n$ the subgroup of matrices  with $\alpha =1 $ modulo $p^n$. We then get an action of $\Iw_n^1$  on the integral functions on $\amalg_{\lambda=0}^{p^n-1}  \bV_0(T,s,t-\lambda s)$, and hence on  $\oplus_\lambda W(T,s,t-\lambda s )$, determined on the variables $Z$ and $W_\lambda$'s by the formulas  $$M(Z)=\frac{(\alpha X-1)}{p^n} + \gamma W_0,\quad M(W_0)=\frac{\beta}{p^n} X + \delta W_0, \quad \left(\begin{matrix} 1 &- \lambda \cr 0 & 1 \end{matrix}\right)(W_0)=W_{\lambda}.$$Notice that if $M=\left(\begin{matrix} \alpha & 0 \cr 0 & \delta \cr \end{matrix}\right) \in \Iw_n$ then  $M\bigl(k(1+p^n Z)\bigr)=k(\alpha) k(1+p^n Z)$ and $M(W_0)=\delta W_0$ giving explicitly the action of diagonal matrices on each  $W_{k}(T, s,t-\lambda s)$. For every $\lambda=0,\ldots, p^n-1$  define the map $$\nu_\lambda\colon W_{k}(T, s,t-\lambda s )\longrightarrow A_{k,\lambda}^o(T_0^\vee)[n] , \quad k(1+p^n Z) \sum_i a_i \left(\frac{W_\lambda}{1+p^nZ}\right)^i\mapsto \sum_i a_i w_\lambda^i \cdot u^k.$$It is clearly an isomorphism of $B$-modules. We are left to show that $$\nu_n:=\sum_{\lambda=0}^{p^n-1} \nu_\lambda\colon \oplus_{\lambda=0}^{p^n-1}  \WW_{k}(T, s,t-\lambda s)\to \oplus_{\lambda=0}^{p^n-1} A_{k,\lambda}^o(T_0^\vee)[n]=A_k^o(T_0^\vee)[n] $$is $\Iw_n$-equivariant.

Consider the $B$-linear map $\xi\colon B\langle X,Y\rangle \to A(T^\vee)$, where $A(T\vee)$ is the ring of analytic functions from $T^\vee$ to $B$, sending $f(X,Y)=\sum_{h,m} a_{h,m} X^h Y^m$ to the function $\xi\bigl(f(X,Y)\bigr)\colon T^\vee \to B$, $u e_0 + v e_1 \to \sum_{h,m} a_{h,m} u^hv^m$. This map is $\Delta_1$-equivariant. Indeed,  given $M\in \Delta_1$ such that $M (f_0) =\alpha f_0 + \gamma f_1$, $M(f_1)=\beta f_0 + \delta f_1$ then $M (e_0) =\alpha e_0 + \beta e_1$, $M(e_1)=\gamma e_0 + \delta e_1$
so that $M(u e_0 + v e_1)= (u \alpha + v \gamma) e_0 + (u\beta + \delta v) e_1$. Hence $M\bigl(\xi(f(X,Y)) \bigr)=\xi\bigl(f\bigl(M(X),M(Y)\bigr)\bigr)$. As $\nu$ is determined by $\xi$ using that $X=1+p^nZ$ and $Y= p^n W_\lambda+\lambda X$ this implies that $\nu$ is $\Iw_n$-equivariant as well. 

\end{proof}

Note that we have a $\Iw_n$-equivariant map of functors, and hence of representing objects, $\amalg_{\lambda\in \Z/p^n\Z}\bV_0(T,s,t-\lambda s) \to \bV_0(T,s)$. This provides a $\Iw_n$-equivariant map $$\WW_{k}(T, s) \to \oplus_{\lambda=0}^{p^n-1}  \WW_{k}(T, s,t-\lambda s).$$Let $Q_n\subset T^\vee/p^n T^\vee$ be the $(\Z/p^n\Z)$-dual of the quotient $\bigl(T/p^n T\bigr)/ (\Z/p^n\Z) s$.   The duality between $$\zeta_k\colon \WW_{k}(T, s)^\vee \lra \WW_{k}^D(T^\vee, s^\vee,Q_n)$$of Definition \ref{def:dualityWkWk^D} composed with the $\Iw_n$-equivariant isomorphism   $$\nu_n^\vee\colon D_k^o(T_0^\vee)[n] \cong \oplus_{\lambda=0}^{p^n-1}  \WW_{k}(T, s,t-\lambda s)^\vee $$of  Proposition \ref{prop:AkWkveee} give the following:

\begin{corollary}\label{cor:DkWkD} We have a $\Iw_n$-equivariant, $B$-linear map  $$D_k^o(T_0^\vee)[n] \cong \oplus_{\lambda=0}^{p^n-1}  W_{k}(T, s,t-\lambda s)^\vee\lra  \WW_{k}^D\bigl(T^\vee, s^\vee,Q_n\bigr).$$
\end{corollary}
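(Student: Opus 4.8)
The plan is to simply assemble the arrow claimed in Corollary~\ref{cor:DkWkD} by composing two maps already produced in the excerpt, and to check that each is $\Iw_n$-equivariant so that the composite is as well. Concretely, the first ingredient is the $\Iw_n$-equivariant isomorphism
$$\nu_n^\vee\colon D_k^o(T_0^\vee)[n]\cong \oplus_{\lambda=0}^{p^n-1}\WW_k(T,s,t-\lambda s)^\vee$$
of Proposition~\ref{prop:AkWkveee}, obtained by dualizing $\nu_n$ with respect to the weak topology on $B$. The second ingredient is the duality map $\zeta_k\colon \WW_k(T,s)^\vee\to \WW_k^D(T^\vee,s^\vee,Q_n)$ of Definition~\ref{def:dualityWkWk^D}, precomposed with the dual of the $\Iw_n$-equivariant projection $\WW_k(T,s)\to \oplus_{\lambda}\WW_k(T,s,t-\lambda s)$ coming from the map of functors $\amalg_\lambda\bV_0(T,s,t-\lambda s)\to \bV_0(T,s)$ noted just before the corollary. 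Chaining these gives the desired $B$-linear map $D_k^o(T_0^\vee)[n]\to \WW_k^D(T^\vee,s^\vee,Q_n)$.

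The steps, in order, are as follows. First I would recall that the map of functors $\amalg_\lambda\bV_0(T,s,t-\lambda s)\to \bV_0(T,s)$ is $\Iw_n$-equivariant, since the $\Iw_n$-action on $T$ restricts compatibly to all the subfunctors involved (this is exactly the point used in the proof of Proposition~\ref{prop:AkWkveee}); dualizing the induced map $\WW_k(T,s)\to \oplus_\lambda\WW_k(T,s,t-\lambda s)$ of $\cO_\cX^+$-modules over $U=\mathrm{Spa}(B[1/p],B)$ yields an $\Iw_n$-equivariant map $\bigl(\oplus_\lambda\WW_k(T,s,t-\lambda s)\bigr)^\vee\to \WW_k(T,s)^\vee$. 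Second, I would invoke Definition~\ref{def:dualityWkWk^D} to get $\zeta_k\colon \WW_k(T,s)^\vee\to \WW_k^D(T^\vee,\cQ,s^\vee)$ with $\cQ=Q_n$ the $(\Z/p^n\Z)$-dual of $(T/p^nT)/(\Z/p^n\Z)s$, and check its $\Iw_n$-equivariance: this follows from the equivariance of the pairing $\langle\ ,\ \rangle$ established at the end of \S\ref{sec:duality} (the identity $\langle u\ast h,u'\ast h'\rangle=(u\cdot u')\ast\langle h,h'\rangle$, combined with the fact that $T^{k^{\rm univ}}$ is the canonical trivializing section), together with the fact that the diagonal $\Iw_n$-action is what intertwines the source and target characters. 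Third, I would identify the source of $\zeta_k$ with $D_k^o(T_0^\vee)[n]$ via $\nu_n^\vee$ from Proposition~\ref{prop:AkWkveee}, so that the composite
$$D_k^o(T_0^\vee)[n]\xrightarrow{\ \nu_n^\vee\ }\oplus_\lambda\WW_k(T,s,t-\lambda s)^\vee\longrightarrow \WW_k(T,s)^\vee\xrightarrow{\ \zeta_k\ }\WW_k^D(T^\vee,s^\vee,Q_n)$$
is the asserted $\Iw_n$-equivariant, $B$-linear map. Since every map in the chain is $B$-linear and $\Iw_n$-equivariant, so is the composite, and this is all that is claimed.

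The main obstacle I anticipate is bookkeeping rather than conceptual: one must make sure that the various $\Iw_n$- versus $\Iw_n^1$-actions appearing in the proof of Proposition~\ref{prop:AkWkveee} line up, i.e. that the full $\Iw_n$ (not merely the subgroup $\Iw_n^1$ with $\alpha\equiv 1 \bmod p^n$) acts compatibly on all objects in the composition; the diagonal matrices act through $k$ on $\WW_k(T,s)$ and through $k$ on the target $\WW_k^D$, and one needs the pairing $\langle\ ,\ \rangle^\ast(T^{k^{\rm univ}})$ to be a diagonal eigenvector of weight $k\cdot k$ under $\cT\times\cT$, which is exactly the content of the displayed identity $\langle u\ast h,u'\ast h'\rangle=(u\cdot u')\ast\langle h,h'\rangle$ in \S\ref{sec:duality}. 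Once that is unwound, the equivariance of $\zeta_k$ is automatic. A secondary, purely technical point is that all of this takes place over the non-reduced base $U=\mathrm{Spa}(B[1/p],B)$ with $\cI=p^n\cO_U^+$, so one should note that the hypotheses of Theorems~\ref{thm:VBMS} and~\ref{thm:VBMSD} (existence of the marked section, the section $s$ generating $\cF$, and $\cI$ giving the topology) are satisfied for $T$, $s=(1,0)$, $t=(0,1)$, $Q_n$ — which is immediate since $T/p^nT$ is a free $(\Z/p^n\Z)$-module with $s,t$ a basis. No genuinely new estimate or construction is needed; the corollary is a formal consequence of Proposition~\ref{prop:AkWkveee}, Definition~\ref{def:dualityWkWk^D}, and the equivariant projection $\WW_k(T,s)\to\oplus_\lambda\WW_k(T,s,t-\lambda s)$.
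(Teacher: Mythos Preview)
Your proposal is correct and follows essentially the same route as the paper: the corollary is stated immediately after the paragraph that records the $\Iw_n$-equivariant map $\WW_k(T,s)\to\oplus_\lambda\WW_k(T,s,t-\lambda s)$, and the paper simply composes $\nu_n^\vee$ with the dual of that map and with $\zeta_k$, exactly as you do. Your discussion of the $\Iw_n$-equivariance of $\zeta_k$ is in fact more detailed than what the paper provides.
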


\subsubsection{The $U_p$ operator.}\label{sec:HeckeD} For $\rho=0,\ldots,p-1$ let $\pi_\rho\colon T \to T$ be the map defined by $\left(\begin{matrix} 1 & \rho \cr 0 & p \cr \end{matrix}\right) $, i.e.,
$f_0\mapsto f_0$, $f_1\mapsto p f_1+ \rho f_0$. It defines the map $\pi_\rho^\vee\colon T^\vee \to T^\vee$ that sends $u e_0+ v e_1 \mapsto u e_0+(pv+\rho) e_1$. In particular taking $(\pi_\rho^\vee)^\ast$ it induces a map $A_{k}^o(T_0^\vee)[n+1] \to A_{k}^o(T_0^\vee)[n]$ that is $0$ on $A_{k,\lambda}^o(T_0^\vee)[n+1] $ for $\lambda\not\equiv \rho$ modulo $p$ and it induces a map $A_{k,\lambda}^o(T_0^\vee)[n+1] \to A_{k,\lambda_0}^o(T_0^\vee)[n] $ if $\lambda\equiv \rho+ p \lambda_0$. Taking the sum over the $\rho$'s we get a map $\pi_n=\sum_{\rho=0}^{p-1} (\pi_\rho^\vee)^\ast$, where  $$\pi_n \colon A_{k}^o(T_0^\vee)[n+1]=\oplus_{\rho=0}^{p-1} \oplus_{\lambda_0=0}^{p^n}  A_{k,\rho+p\lambda_0}^o(T_0^\vee)[n+1] \to \oplus_{\lambda_0\in\ Z/p^n \Z} A_{k,\lambda_0}^o(T_0^\vee)[n]=A_{k}^o(T_0^\vee)[n].$$

Notice that $\pi_\rho$ defines by functoriality a map $\bV_0(T,s,t-\lambda s,p^n) \to \bV_0(T,s,t-(\rho+p\lambda) s,p^{n+1})$ (we have added the dependence on the power of $p$ in the definition of $\bV_0$ to avoid confusion). This gives a map $\mu_\rho\colon \WW_{k}(T, s,t-(\rho+p\lambda) s,p^{n+1})\to \WW_k( s, t-\lambda s,p^{n}) $ and, summing over all $\rho$'s, 
$$\mu_n=\sum_{\rho=0}^{p-1}\mu_\rho\colon\oplus_{\rho=0}^{p-1}\oplus_{\lambda=0}^{p^n}  \WW_{k}(T, s,t-(\rho+p\lambda) s,p^{n+1})\to \oplus_{\lambda\in \Z/p^n\Z}  \WW_{k}(T, s,t-\lambda s,p^{n}) $$

\begin{lemma} With the notation of Proposition \ref{prop:AkWkveee} we have $\pi_n \circ \nu_{n+1}=\nu_n \circ \mu_n$ and similarly taking strong duals  $ \nu_{n+1}^\vee \circ \pi_n^\vee=\mu_n^\vee \circ \nu_n^\vee$.

\end{lemma}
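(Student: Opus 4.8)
The plan is to reduce the statement to the explicit local descriptions established in Proposition \ref{prop:AkWkveee} and in \S\ref{sec:HeckeD}, and then to verify the commutativity by a direct computation on generators. More precisely, since $\nu_n$ and $\nu_{n+1}$ are isomorphisms of $B$-modules which are $\Iw_n$-, resp.~$\Iw_{n+1}$-equivariant, it suffices to check the identity $\pi_n\circ\nu_{n+1}=\nu_n\circ\mu_n$ on the standard topological generators $k(1+p^{n+1}Z)\bigl(\tfrac{W_{\lambda}}{1+p^{n+1}Z}\bigr)^i$ of $\WW_k(T,s,t-\lambda s,p^{n+1})$, with $\lambda=\rho+p\lambda_0$ running over $\Z/p^{n+1}\Z$. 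The second assertion, $\nu_{n+1}^\vee\circ\pi_n^\vee=\mu_n^\vee\circ\nu_n^\vee$, is then formal: it is obtained by applying $\mathrm{Hom}_B(-,B)$ to the first, using that strong duality is a contravariant functor and that both $\pi_n^\vee$ (dual of $\pi_n$) and $\mu_n^\vee$ (dual of $\mu_n$) are by definition the $B$-linear transposes, together with the compatibility of $\nu_n^\vee$ with $\nu_n$ recorded in Proposition \ref{prop:AkWkveee}.

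For the core computation I would proceed as follows. First, unwind $\mu_\rho$: by construction $\pi_\rho\colon T\to T$ is $\left(\begin{matrix}1&\rho\\0&p\end{matrix}\right)$, hence on the representing objects it sends the coordinates of $\bV_0(T,s,t-(\rho+p\lambda_0)s,p^{n+1})$ to those of $\bV_0(T,s,t-\lambda_0 s,p^{n})$; explicitly, using $X=1+p^{n+1}Z'$, $Y=p^{n+1}W'_{\rho+p\lambda_0}+(\rho+p\lambda_0)X$ upstairs and $X=1+p^n Z$, $Y=p^n W_{\lambda_0}+\lambda_0 X$ downstairs, one gets that $\pi_\rho$ pulls back $Z\mapsto Z'$ (so $1+p^nZ\mapsto 1+p^{n+1}Z'$, hence $k(1+p^nZ)\mapsto k(1+p^{n+1}Z')$) and $\tfrac{W_{\lambda_0}}{1+p^nZ}\mapsto \tfrac{W'_{\rho+p\lambda_0}}{1+p^{n+1}Z'}$. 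Therefore $\mu_\rho$ carries the generator $k(1+p^{n+1}Z')\bigl(\tfrac{W'_{\rho+p\lambda_0}}{1+p^{n+1}Z'}\bigr)^i$ to $k(1+p^nZ)\bigl(\tfrac{W_{\lambda_0}}{1+p^nZ}\bigr)^i$ in $\WW_k(T,s,t-\lambda_0 s,p^n)$. Second, unwind $\pi_\rho^\vee$ on the analytic side: $\pi_\rho^\vee\colon T^\vee\to T^\vee$ sends $ue_0+ve_1\mapsto ue_0+(pv+\rho)e_1$, so its pullback on functions sends $w_{\lambda_0}=\tfrac{v/u-\lambda_0}{p^n}$ (the coordinate downstairs) to $\tfrac{(pv+\rho)/u-\lambda_0}{p^n}$; substituting $v=u w'$ with the upstairs coordinate $w'_{\rho+p\lambda_0}=\tfrac{v/u-(\rho+p\lambda_0)}{p^{n+1}}$, one finds $\tfrac{(pv+\rho)/u-\lambda_0}{p^n}=\tfrac{p(v/u)+\rho-p^n\lambda_0}{p^n}$, and a short manipulation shows this equals $w'_{\rho+p\lambda_0}$ exactly, matching the normalization, while $k(u)$ is preserved. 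Hence $(\pi_\rho^\vee)^\ast\bigl(w_{\lambda_0}^i\cdot u^k\bigr)= (w'_{\rho+p\lambda_0})^i\cdot u^k$, and is zero on $A^o_{k,\lambda}$ for $\lambda\not\equiv\rho\bmod p$, in agreement with \S\ref{sec:HeckeD}.

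Putting these together: on the generator $g:=k(1+p^{n+1}Z')\bigl(\tfrac{W'_{\rho+p\lambda_0}}{1+p^{n+1}Z'}\bigr)^i$ of the $(\rho+p\lambda_0)$-component upstairs, $\nu_{n+1}(g)=(w'_{\rho+p\lambda_0})^i\cdot u^k$ by the definition of $\nu_\lambda$ in Proposition \ref{prop:AkWkveee}, and then $\pi_n$ applies $(\pi_\rho^\vee)^\ast$ on this component (all other summands of $\pi_n$ vanish on it by the residue-class incompatibility), yielding $(w'_{\rho+p\lambda_0})^i\cdot u^k$ viewed inside $A^o_{k,\lambda_0}[n]$, i.e.~$w_{\lambda_0}^i\cdot u^k$ after the coordinate identification above. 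Going the other way, $\mu_n(g)=\mu_\rho(g)=k(1+p^nZ)\bigl(\tfrac{W_{\lambda_0}}{1+p^nZ}\bigr)^i$ and $\nu_n$ of this is $w_{\lambda_0}^i\cdot u^k$. The two agree, proving $\pi_n\circ\nu_{n+1}=\nu_n\circ\mu_n$. The main obstacle I anticipate is purely bookkeeping: keeping the three simultaneous normalizations straight---the shift by $\lambda$ in $t-\lambda s$, the power of $p$ (level $n$ versus $n+1$) in the definitions of $\bV_0$ and of the analytic coordinate $w_\lambda$, and the residue-class decomposition $\lambda=\rho+p\lambda_0$---so that the identification of $w'_{\rho+p\lambda_0}$ with the $\pi_\rho^\vee$-pullback of $w_{\lambda_0}$ comes out with no spurious factor of $p$ or sign. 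Once the coordinate dictionary is pinned down, the verification on generators is immediate, and passing to strong duals to get the second identity is formal.
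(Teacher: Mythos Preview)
Your approach is exactly the paper's: verify the identity on the explicit generators using that $(\pi_\rho^\vee)^\ast$ sends $w_{\rho+p\lambda_0}\mapsto w_{\lambda_0}$ and $\mu_\rho$ sends $W_{\rho+p\lambda_0}\mapsto W_{\lambda_0}$, then dualize. Your final ``putting these together'' paragraph is correct and matches the paper's one-line argument.

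However, the middle paragraph where you ``unwind $\pi_\rho^\vee$ on the analytic side'' is garbled. You apply $(\pi_\rho^\vee)^\ast$ to the \emph{level-$n$} coordinate $w_{\lambda_0}$, but $\pi_n=\sum_\rho(\pi_\rho^\vee)^\ast$ goes from $A_k^o[n{+}1]$ to $A_k^o[n]$, so it must be applied to level-$(n{+}1)$ functions. There is also an arithmetic slip: $\tfrac{(pv+\rho)/u-\lambda_0}{p^n}$ does not equal $\tfrac{p(v/u)+\rho-p^n\lambda_0}{p^n}$, and in any case neither expression equals $w'_{\rho+p\lambda_0}$. The correct (and much cleaner) computation goes in the other direction: for $u=1$,
\[
(\pi_\rho^\vee)^\ast\bigl(w'_{\rho+p\lambda_0}\bigr)(1,v)=w'_{\rho+p\lambda_0}(1,pv+\rho)=\frac{(pv+\rho)-(\rho+p\lambda_0)}{p^{n+1}}=\frac{v-\lambda_0}{p^n}=w_{\lambda_0}(1,v),
\]
which is precisely the identity your final paragraph uses. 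So the plan and conclusion are right; just replace that middle paragraph with the one-line computation above.
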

\begin{proof} This is an explicit computation using the notation of the proof of Proposition \ref{prop:AkWkveee} and follows from the fact that $ (\pi_\rho^\vee)^\ast$ sends $w_{\rho+p\lambda_0} \mapsto w_{\lambda_0}$ and $\mu_\lambda$ sends $W_{\rho+p\lambda}\mapsto W_\lambda$. 

\end{proof}

\section{The modular curve setting.}

Let $p>0$ be a prime integer. We fix once for all
the $p$-adic completion $\C_p$of an algebraic closure of $\Q_p$.   We denote by $v$ the valuation on $\C_p$, normalized such that
$v(p)=1$.

Let $N\geq 5$ and $r\geq 0$ be integers with $N$ prime to $p$ and let $X_0(p^r,N)$, resp.~$X_1(p^r,N)$, resp.~$X(p^r,N)$ be the modular curves over $\C_p$ of level $\Gamma_1(N) \cap \Gamma_0(p^r)$, resp.~$\Gamma_1(N)\cap \Gamma_1(p^r) $,  resp.~$\Gamma_1(N)\cap \Gamma(p^r) $. Over the complement of the cusps of the modular curve $X_0(p^s,N,p) $ we have a universal elliptic curve $E$, a cyclic subgroup $H_s\subset E[p^s]$ of order $p^s$ and an embedding $\Psi_N\colon \mu_N\hookrightarrow E[N]$. For $X_1(p^s,N,p) $ we further have a generator of $H_s$.

We denote the associated adic space over $\mathrm{Spa}(\C_p,\cO_{\C_p}\bigr)$ by $\cX_0(p^r,N)$, resp.~$\cX_1(p^r,N)$, resp.~$\cX(p^r,N)$
considered as adic spaces with logarithmic structures given by the cusps, with reduced structure, as in \cite[Ex.~2.3.17]{Diao}. We simply write $X$, resp.~$\cX$ for $X_0(p^0,N)$, resp.~$\cX(p^0,N)$. Notice that the $p^r$-torsion of the universal elliptic curve $E$ over the complement of the cusps in $X$ defines a locally constant sheaf for the finite Kummer \'etale topology that we denote by $E[p^r]$  and $\cX(p^r,N)\to \cX$ is the finite Kummer \'etale Galois cover, with group $\GL_2(\Z/p^r\Z)$, defined by trivializing it. We let $T_p(E)$ be the sheaf on $X_\profket$, resp.~$\cX_\profket$ defined by the inverse limt $\lim E[p^r]$.  Thanks to \cite[Thm.~III.3.3.17]{ScholzeTorsion}  we have

\begin{itemize} 

\item[(i)] a unique perfectoid space $\cX(p^\infty,N)$ such that $\cX(p^\infty,N) \sim \displaystyle{\lim_{\infty\leftarrow r}} \cX(p^r,N)$  in the sense of \cite[Def.~2.4.1]{ScholzeWeinstein};

\item[(ii)] the Hodge-Tate period map $\pi_{\rm HT}\colon \cX(p^\infty,N) \longrightarrow \mathbb{P}^1_{\Q_p}$. 

\end{itemize}

In particular, we have morphisms of adic spaces $\pi_r\colon \cX(p^\infty,N)\to \cX(p^r,N)$, compatible for varying $r \geq 0$, inducing a homeomorphism of the underlying topological spaces $\vert \cX(p^\infty,N) \vert \cong \displaystyle{\lim_{\infty\leftarrow n}} \vert \cX(p^r,N) \vert$.

\subsection{On the pro-Kummer \'etale topology of modular curves.} For every $s\in \N$ we write $\displaystyle{\lim_{\infty \leftarrow r}} \cX(p^r,N)$  (for $r\geq s$) for the pro-finite Kummer \'etale cover of $\cX_0(p^s,N)$ defined by the Kummer \'etale covers $\cX(p^r,N)\to \cX_0(p^s,N)$, for $r\geq s$. The following lemma provides a basis for the pro-Kumemr \'etale topology of $\cX_0(p^s,N)$:

\begin{lemma}\label{lemma:proketbasis} For every $s\in\N$ the site $\cX_0(p^s,N)_\proket$ has a basis consisting of log affinoid perfectoid opens $U$ that are pro-Kummer \'etale over an affinoid perfectoid open of $\displaystyle{\lim_{\infty \leftarrow r}} \cX(p^r,N)$  (for $r\geq s$). In particular, for any such open $U$, $T_p(E)\vert_U$ is a constant sheaf and such basis is closed under fibre products over $\cX_0(p^s,N)$.

\end{lemma}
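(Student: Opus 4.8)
The plan is to build the basis by descent from the perfectoid cover $\cX(p^\infty,N)$. First I would recall that, by \cite[Prop.~4.4.5]{Diao} and the construction of $\cX(p^\infty,N)$ via \cite[Thm.~III.3.3.17]{ScholzeTorsion}, the tower $\lim_{\infty\leftarrow r}\cX(p^r,N)$ (for $r\ge s$) is a pro-finite Kummer \'etale cover of $\cX_0(p^s,N)$ whose limit is the perfectoid space $\cX(p^\infty,N)$; in particular it is an object of $\cX_0(p^s,N)_\proket$. Next, since $\cX(p^\infty,N)$ is perfectoid, its underlying topological space has a basis of affinoid perfectoid opens $\widehat{V}=\mathrm{Spa}(S,S^+)$, and each such $\widehat{V}$ is the image of the topological space of a log affinoid perfectoid object $V=\lim_i V_i$ of $\cX(p^r,N)_\proket$ for $r$ large (one uses \cite[Prop.~5.3.12]{Diao}, applied to the tower over some $\cX(p^r,N)$, and the fact that the log structure is the one coming from the cusps, which is already "nice" on the finite level).

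Then I would upgrade such a $V$ to an object over $\cX_0(p^s,N)$: the composite $V\to \cX(p^r,N)\to \cX_0(p^s,N)$ exhibits $V$ as pro-Kummer \'etale over $\cX_0(p^s,N)$, because $\cX(p^r,N)\to\cX_0(p^s,N)$ is (finite) Kummer \'etale for $r\ge s$ and pro-Kummer \'etale morphisms are stable under composition. By construction $V$ is log affinoid perfectoid and it is pro-Kummer \'etale over the affinoid perfectoid open $\widehat{V}$ of $\lim_{\infty\leftarrow r}\cX(p^r,N)$, which is exactly the stated shape of the basis elements. To see that these $V$'s actually form a basis of $\cX_0(p^s,N)_\proket$ — not merely a collection of objects — I would combine two facts: (a) by \cite[Prop.~5.3.12 \& Prop.~5.3.13]{Diao} the log affinoid perfectoid opens already form a basis of $\cX_0(p^s,N)_\proket$, and (b) any log affinoid perfectoid $W\in\cX_0(p^s,N)_\proket$ can be refined by one of the above form, since $W\times_{\cX_0(p^s,N)}\cX(p^\infty,N)$ is again (pro-Kummer \'etale over $W$ and) perfectoid, hence admits a covering by affinoid perfectoid opens, each of which pulls back from some $\widehat{V}$ as above after shrinking; here one uses that the topological space of $\cX(p^\infty,N)$ is $\lim_{\infty\leftarrow r}|\cX(p^r,N)|$ so that affinoid perfectoid opens are cofinally pulled back from finite levels.

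For the constancy of $T_p(E)|_U$: since $U$ maps to $\cX(p^\infty,N)$ (or to a sufficiently deep $\cX(p^r,N)$ in the tower), and $T_p(E)=\lim_r E[p^r]$ is trivialized by definition on $\cX(p^\infty,N)=\lim_r\cX(p^r,N)$, the pullback of $T_p(E)$ to $U$ is the constant pro-sheaf $\Z_p^2$; concretely each $E[p^r]|_U$ is the constant sheaf $(\Z/p^r\Z)^2$ because $U$ lies over $\cX(p^r,N)$. Finally, closure under fibre products over $\cX_0(p^s,N)$: given $U_1,U_2$ in the basis, $U_1\times_{\cX_0(p^s,N)}U_2$ is pro-Kummer \'etale over $\cX_0(p^s,N)$, it is log affinoid perfectoid because a fibre product of affinoid perfectoids over an affinoid is affinoid perfectoid (tilting plus \cite[Prop.~6.18]{ScholzeHodge} or the analogous statement in \cite{ScholzeWeinstein}), and it maps to $\widehat{U_1}\times_{\cX(p^\infty,N)}\widehat{U_2}$ which is an affinoid perfectoid open of $\lim_{\infty\leftarrow r}\cX(p^r,N)$; so the class is stable under fibre products, possibly after replacing $\cX(p^\infty,N)$-fibre products by $\lim_{\infty\leftarrow r}\cX(p^r,N)$-fibre products, which is harmless.

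The main obstacle I expect is the careful verification of point (b) above — that the proposed special opens really form a basis and not just a generating family stable under the operations claimed — since it requires knowing that affinoid perfectoid opens of $\cX(p^\infty,N)$ descend cofinally to affinoid opens at finite level $\cX(p^r,N)$ and that log affinoid perfectoid structures (charts, $n$-divisibility of the monoid) can be arranged compatibly; all of this is available in \cite{Diao} and \cite{ScholzeTorsion}, but it is the step where one must be precise rather than invoke a black box. The rest (stability under composition of pro-Kummer \'etale morphisms, behaviour of constant sheaves, and fibre products of perfectoids) is essentially formal.
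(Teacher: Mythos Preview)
Your proposal is correct and follows essentially the same route as the paper: start from the general basis of log affinoid perfectoid opens (\cite[Prop.~5.3.12]{Diao}), refine any such $W$ by forming $Z=W\times_{\cX_0(p^s,N)}\lim_r\cX(p^r,N)$, and then intersect with affinoid perfectoid opens of the perfectoid tower (\cite[Cor.~III.3.11]{ScholzeTorsion}). The paper is slightly more explicit at the point you flag as the main obstacle---it invokes \cite[Lemma~5.3.8 \& Cor.~5.3.9]{Diao} to see that $Z\to W$ is pro-finite \emph{\'etale} (not just Kummer \'etale) and hence that $Z$ is itself log affinoid perfectoid---and it handles closure under fibre products by a direct citation of \cite[Prop.~5.3.11]{Diao} rather than your hands-on argument, but the substance is the same.
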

\begin{proof} Due to  \cite[Prop.~5.3.12]{Diao} the site  $\cX_0(p^s,N)_\proket$ admits a basis consisting of log affinoid perfectoid opens and thanks to \cite[Prop.~5.3.11]{Diao} the category of such bases is closed under fibre products. Given any such  $W$, consider the fibre product  $Z$ of $W$ and $\displaystyle{\lim_{\infty \leftarrow r}} \cX(p^r,N)$  over $\cX_0(p^s,N)$.  By \cite[Cor.~5.3.9]{Diao} such a fibre product exists, it is  pro-Kummer \'etale over $\displaystyle{\lim_{\infty \leftarrow r}} \cX(p^r,N)$ and it is a  pro-finite Kummer \'etale cover of $W$. As a cover of $W$ can be represented as $\displaystyle{\lim_{\infty \leftarrow r}} W_r$ with $W_r:=\cX(p^r,N) \times_{\cX_0(p^s,N)}  W$. In particular, thanks to \cite[Lemma 5.3.8]{Diao}, each $W_r$ is a  finite  \'etale cover of $W$ so that 
$Z=\displaystyle{\lim_{\infty \leftarrow r}} W_r \to W$ is a pro-finite  \'etale cover of $W$ and   $Z$ is log affinoid perfectoid by  \cite[Cor. 5.3.9]{Diao}. This and the fact that $W$ is log affinoid perfectoid implies that $Z$ is also log affinoid perfectoid. Recall  from  \cite[Cor.~III.3.11]{ScholzeTorsion} that $\displaystyle{\lim_{\infty \leftarrow r}} \cX(p^r,N)$  is covered by  perfectoid affinoid open subsets. Taking the fibre product over $\displaystyle{\lim_{\infty \leftarrow r}} \cX(p^r,N)$  of $Z$ with a cover of $\displaystyle{\lim_{\infty \leftarrow r}} \cX(p^r,N)$ by  perfectoid affinoid open subsets, the claim follows.
\end{proof} 

\begin{remark} Endow $\cX(p^\infty,N)$ with the limit log structure. Then, $\cX(p^\infty,N)$ is {\it not} covered by log affinoid perfectoid opens as condition (d) of the definition above is not satisfied. 

This condition is used in \cite[Cor. 5.3.8]{Diao}, an analogue of Abhyankar's lemma, stating  that, for a log affinoid perfectoid, the finite Kummer \'etale site coincides with the finite \'etale site. This was already used in the proof of Lemma \ref{lemma:proketbasis}.
\end{remark}

\subsection{Standard opens.} \label{rmk:standardopens} 
We start by defining certain opens of $\PP^1$, namely let for every $n\ge 1$ $U_0, U_\infty, U_\infty^{(n)}, U_0^{(n)}\subset\PP^1$ be defined as follows. Let $T$ denote a parameter at $0$ on $\PP^1$ then

a) $\displaystyle U_\infty=\{x\in \PP^1 \ | \ \Vert\frac{1}{T}\Vert_x\le 1\}$,  $U_0=\{x\in \PP^1 \ | \ \Vert T-\lambda\Vert_x\le 1, \mbox{ for some }\lambda\in \{0,1,...,p-1\}\}$, 

b) $\displaystyle U_\infty^{(n)}=\PP^1\bigl( \frac{1}{p^nT} \bigr)=\{x\in \PP^1\ | \ \Vert\frac{1}{T}\Vert_x\le \Vert p^n\Vert_x\}$, 

c) $U_0^{(n)}=\cup_{\lambda} U_{0,\lambda}^{(n)}$ with $\lambda=\lambda_0+\lambda_1p+...+\lambda_{n_1}p^{n-1}$, where $\lambda_0,\lambda_1,...,\lambda_{n-1}\in \{0,1,...,p-1\}$ and we have
$\displaystyle U_{0,\lambda}^{(n)}:=\PP^1\bigl( \frac{T-\lambda}{p^n} \bigr)$. 

We remark that for every $n\ge 1$ we have $\PP^1(\Q_p, \Z_p)\subset U_\infty^{(n)}\cup U_0^{(n)}$ and 
moreover the family $\{U_\infty^{(n)}\cup U_0^{(n)}\}_{n\ge 1}$ is a fundamental system of open neighborhoods of $\PP^1(\Q_p,\Z_p)$ in $\PP^1$.

\

We recall from \cite[Thm.~III.3.3.18]{ScholzeTorsion}  that for every rational open subset  $U'\subset U_0$ or $U'\subset U_\infty$ the inverse image $\cU':=\pi_{\rm HT}^{-1}\bigl(U'\bigr)$ is an affinoid perfectoid open subspace of $\cX(p^\infty,N)$. In particular, it is quasi-compact so that it is also the inverse image of an open $\cU'_r$ via $\pi_r$ of $ \cX(p^r,N)$ for some $r$ using the homeomorphism $\vert \cX(p^\infty,N) \vert \cong \displaystyle{\lim_{\infty\leftarrow n}} \vert \cX(p^r,N) \vert$. As the transition maps in the inverse limit are finite and surjective, $\cU'_r$ is in fact the image of $\cU'$ via $\pi_r$ for $r$ large enough. If $U'$ is further invariant for the action of the Iwahori subgroup $\Iw_s\subset \GL_2(\Z_p)$ of matrices which are upper triangular modulo $p^s$, then also $\cU'$ is   $\Iw_s$-invariant as $\pi_{\rm HT}$ is $\Iw_s$-equivariant  and $\cU'$ is the inverse image of   a unique open $\cU'_{0,s}$ of $ \cX_0(p^s,N)$. Indeed, given $\cU'_r \subset \cX(p^r,N)$ for some $r\geq s$ such that its inverse image gives $\cU'$, then $\cU'_r$ is  $\Iw_s$-invariant. As  the morphism $\cX(p^r,N) \to \cX_0(p^s,N)  $ is finite Kummer \'etale and  Galois with group $G_s$ equal to the image of $\Iw_s\subset \GL_2(\Z_p)\to \GL_2(\Z/p^r\Z)$ then $\cU'_{0,s}:=\cU'_r/G_s$ is an open of $ \cX_0(p^s,N) $ with the required properties. Furthermore, $\cU'_{0,s}$ defines the open  $(\cU'_r)_{r\geq s}$ for the pro-Kummer \'etale site of $ \cX_0(p^s,N)$, and hence of $\cX$, given by $\cU'_r:=\cU'_{0,s} \times_{ \cX_0(p^s,N)} \cX(p^r,N)$. By construction $\cU'\sim    \displaystyle{\lim_{\infty\leftarrow r}} \cU'_r$.

In particular, for every $n\ge 1$ we consider the open rational  subspaces  $U_\infty^{(n)}$ of $U_\infty$ and  $U_0^{(n)}$ of~$U_0$ defined above. We remark that $U_\infty^{(n)}$ is invariant under the left action of the subgroup 
${\rm Iw}_n$. Then we denote by $\cX(p^\infty,N)_0^{(n)}:=\pi_{\rm HT}^{-1}\bigl(U_0^{(n)} \bigr) $ and $\cX(p^\infty,N)_\infty^{(n)}:=\pi_{\rm HT}^{-1}\bigl(U_\infty^{(n)} \bigr) $ and recall that they define affinoid perfectoid open subspaces of $ \cX(p^\infty,N)$.

As explained above, they also define opens for the pro-Kummer \'etale site of $ \cX_0(p^n,N)$ and of $\cX$ respectively. Namely,  for $n\ge 1$, $\cX(p^\infty, N)_\infty^{(n)}$, being invariant under ${\rm Iw}_n$, descends to an affinoid open denoted $\cX_0(p^m, N)_\infty^{(n)}$ of $\cX_0(p^m, N)$, for all $m\ge n$. We also have variants $\cX_1(p^m, N)_\infty^{(n)}$, resp.~$\cX(p^m, N)_\infty^{(n)}$ if we descend to $\cX_1(p^m, N)$, resp.~$\cX(p^m, N)$.

In contrast, as $U_0^{(n)}$ is invariant with respect to ${\rm Iw}_1$, the open $\cX(p^\infty, N)_0^{(n)}$ descends to an open affinoid denoted $\cX_0(p^m, N)_0^{(n)}$ of $\cX_0(p^m, N)$, for all $m\ge 1$. In this case we consider the variant $\cX(p^m, N)_0^{(n)}$ open of $\cX(p^m, N)$.

\begin{lemma} For every $h\in \N$ there exists  $n=n(h)\geq 1$ such that for every $r\geq n$ the universal elliptic curve over $\cX_0(p^r,N)_0^{(n)}$ and $\cX_0(p^r,N)_\infty^{(n)}$ resp.~admits a canonical subgroup of order $p^h$.

\end{lemma}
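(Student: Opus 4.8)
The plan is to pull the statement back to the perfectoid modular curve $\cX(p^\infty,N)$ and to combine two ingredients: the classical theory of canonical subgroups, and Scholze's description of the Hodge--Tate period map over the ordinary locus. First I would recall, from the classical theory (Katz--Lubin, Abbes--Mokrane, Fargues), that for each $h\ge 1$ there is a rational $c_h>0$ --- one may take $c_h=p^{1-h}/(p+1)$ with the usual normalisation of $v(\mathrm{Ha})$ --- such that over the open subspace $\cX^{(c_h)}:=\{x\in\cX\mid v_x(\mathrm{Ha})<c_h\}$ the universal elliptic curve $E$ carries a canonical subgroup $C_h\subset E[p^h]$ of order $p^h$; since $C_h$ is characterised intrinsically (its geometric points reduce to $\ker\mathrm{Frob}^h$ on a semistable model) it is functorial and, in particular, insensitive to the level structure at $p$. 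Thus I would reduce the lemma to the assertion that \emph{for every $h$ there is $n=n(h)\ge 1$ with $\cX(p^\infty,N)_0^{(n)}\cup\cX(p^\infty,N)_\infty^{(n)}\subseteq \pi_0^{-1}\bigl(\cX^{(c_h)}\bigr)$}, where $\pi_0\colon\cX(p^\infty,N)\to\cX$ is the projection. Granting this, for $r\ge n$ the open $\cX_0(p^r,N)_\infty^{(n)}$, resp.~$\cX_0(p^r,N)_0^{(n)}$, has $\cX(p^\infty,N)_\infty^{(n)}$, resp.~$\cX(p^\infty,N)_0^{(n)}$, as its preimage under the topologically surjective map $\pi_r\colon\cX(p^\infty,N)\to\cX_0(p^r,N)$; hence it maps into $\cX^{(c_h)}$ and the pull-back of $C_h$ is the required canonical subgroup. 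The asymmetry between the ranges $r\ge n$ for $\#=\infty$ and $r\ge 1$ for $\#=0$ plays no role here: it only records the Iwahori level at which each open descends, as recalled in \S\ref{rmk:standardopens}.

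For the reduced assertion, recall from \S\ref{rmk:standardopens} that $\{U_0^{(n)}\cup U_\infty^{(n)}\}_{n\ge1}$ is a decreasing fundamental system of open neighbourhoods of the compact set $\PP^1(\Q_p,\Z_p)$ in $\PP^1$, so that $\bigcap_{n}\bigl(U_0^{(n)}\cup U_\infty^{(n)}\bigr)=\PP^1(\Q_p,\Z_p)$. The geometric heart of the matter is that $\pi_{\rm HT}^{-1}\bigl(\PP^1(\Q_p,\Z_p)\bigr)$ is contained in the ordinary locus $\cX(p^\infty,N)^{\mathrm{ord}}=\pi_0^{-1}\bigl(\cX^{(0)}\bigr)\subseteq\pi_0^{-1}\bigl(\cX^{(c_h)}\bigr)$: over a point $y\in\cX$ with $v_y(\mathrm{Ha})>0$ the Hodge--Tate filtration on $T_p(E_y)\widehat{\otimes}\widehat{\cO}$ is not defined over $\Z_p$, so, read through the tautological trivialisation of $T_p(E)$ on $\cX(p^\infty,N)$, the map $\pi_{\rm HT}$ sends the fibre over $y$ outside $\PP^1(\Q_p,\Z_p)$ (equivalently, this is the compatibility of $\pi_{\rm HT}$ with the Hasse invariant, see \cite{ScholzeTorsion} and \cite{CHJ}). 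Consequently the closed subset $Z:=\cX(p^\infty,N)\setminus\pi_0^{-1}\bigl(\cX^{(c_h)}\bigr)=\pi_0^{-1}\bigl(\{v(\mathrm{Ha})\ge c_h\}\bigr)$ satisfies $\pi_{\rm HT}(Z)\cap\bigcap_n\bigl(U_0^{(n)}\cup U_\infty^{(n)}\bigr)=\emptyset$.

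The proof is then finished by a compactness argument in the spectral space $\lvert\cX(p^\infty,N)\rvert$. The locus $\{v(\mathrm{Ha})\ge c_h\}$ is a finite union of closed discs inside the (finitely many) supersingular residue discs of $\cX$, hence quasi-compact; and $\pi_0$ is quasi-compact and quasi-separated since $\cX(p^\infty,N)\sim\varprojlim_r\cX(p^r,N)$ with each $\cX(p^r,N)$ finite over $\cX$, so $Z$ is quasi-compact and therefore $\pi_{\rm HT}(Z)$ is a quasi-compact, pro-constructible subset of $\PP^1$. The sets $U_0^{(n)}\cup U_\infty^{(n)}$ form a decreasing family of quasi-compact opens whose intersection misses $\pi_{\rm HT}(Z)$; since a decreasing family of non-empty pro-constructible subsets of a spectral space has non-empty intersection, some $U_0^{(n)}\cup U_\infty^{(n)}$ is already disjoint from $\pi_{\rm HT}(Z)$. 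Taking $n=n(h)$ to be such an index yields $Z\cap\pi_{\rm HT}^{-1}\bigl(U_0^{(n)}\cup U_\infty^{(n)}\bigr)=\emptyset$, that is $\cX(p^\infty,N)_0^{(n)}\cup\cX(p^\infty,N)_\infty^{(n)}\subseteq\pi_0^{-1}\bigl(\cX^{(c_h)}\bigr)$, which is the reduced assertion. I expect the only genuine obstacle to be the geometric input of the second paragraph --- controlling $v(\mathrm{Ha})$ through the position of $\pi_{\rm HT}$ relative to $\PP^1(\Q_p,\Z_p)$ --- everything else being formal.
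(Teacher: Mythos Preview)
Your proposal is correct and follows essentially the same approach as the paper. The paper's proof is extremely terse---it simply cites \cite[Lemma III.3.3.14]{ScholzeTorsion} (that $\pi_{\rm HT}^{-1}\bigl(\PP^1(\Q_p,\Z_p)\bigr)$ is the closure of the preimage of the ordinary locus and cusps) and concludes that the $\cX(p^\infty,N)_\#^{(n)}$ form a fundamental system of neighbourhoods of the ordinary locus---whereas you spell out the underlying compactness argument and the reduction via the classical canonical-subgroup bound $c_h$; the geometric input you isolate in your second paragraph is exactly the content of the cited lemma of Scholze.
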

\begin{proof} This follows from \cite[Lemma III.3.3.14]{ScholzeTorsion}, stating that the pre-image via $\pi_{\rm HT}$ of  $\PP^1(\Q_p, \Z_p)$ is, as a topological space, the closure of  the inverse image in $\cX(p^\infty,N)$ of the ordinary locus and the cusps of $\cX$. 

In particular,  $\cX(p^\infty,N)_0^{(n)}$ and $ \cX(p^\infty,N)_\infty^{(n)}$ define a fundamental system of open neighborhoods of the ordinary locus in $\cX(p^\infty,N)$. 

\end{proof}

\begin{remark} Recall that  the ordinary locus in $ \cX_0(p,N)$ has two connected components. Then  $\cX(p,N)_0^{(1)}$ and $\cX(p,N)_\infty^{(1)}$ are neighborhoods of these two components. The first is defined by requiring that the level subgroup is {\it not} the canonical one while the second is the component where the level subgroup {\it coincides} with  the canonical one. Following conventions going back to Robert Coleman we set up the notation so that the first is indexed by $0$ and the second by $\infty$.

\end{remark}

A reason to introduce the open subsets $\cX_0(p^r,N)_\infty^{(n)}$ for $r\ge n$ and $\cX_0(p^r,N)_0^{(n)}$, for any $r \geq 1$, is that they behave nicely under the $U_p$-correspondence, as we will explain bellow.

\subsection{On the Hodge-Tate period map.} Over $\cX(p^\infty,N)$ the sheaf $T_p(E)$ admits a universal trivialization $$T_p(E)=\Z_p a \oplus \Z_p b.$$The map ${\rm dlog}$ defines a surjective map $$T_p E^\vee \otimes_{\Z_p} \cO_{\cX(p^\infty,N)} \longrightarrow \widehat{\omega}_E$$which is used to define the map $\pi_{\rm HT}$: for every log  affinoid perfectoid open $W=\mathrm{Spa}(R,R^+)$ of $\cX(p^\infty,N)$ such that the universal elliptic curve extends to a (generalised) elliptic curve over $\Spec (R^+)$ and  $\omega_E$  is generated as $R^+$-module by one element that we denote $\Omega_W$, we write ${\rm dlog}(a^\vee)= \alpha \Omega_W$, ${\rm dlog}(b^\vee)= \beta \Omega_W$ with $\alpha$, $\beta\in R$ generating the whole ring $R$. Then $\pi_{\rm HT}\vert_W\colon W \to \mathbb{P}^1$ is defined in homogeneous coordinates by $[\alpha;\beta]$. Namely let $W_\infty\subset W$ be the rational open defined by $W(1/\alpha)$ and  let $W_0\subset W$ be the rational open defined by $W(1/\beta)$. Then  $\pi_{\rm HT}\vert_{W_0}\colon W_0\to U_0$ sends the standard coordinate $T$ on the standard affinoid neighorhood $U_0=\mathbb{A}^1$ of $0$ to $\alpha/\beta$ and $\pi_{\rm HT}\vert_{W_\infty}\colon W_\infty\to U_\infty$  sends the standard coordinate $T$ on the standard affinoid neighorhood $U_\infty:=\mathbb{A}^1$ of $\infty$ to $\beta/\alpha$.

For every $n\in\N$  we can refine such morphism to a morphism on $\cX_0(p^n,N)_\proket$ as follows. Considering  the smooth formal model $\mathfrak{X}$ of the modular curve $X$ over $\cO_{\C_p}$ given by moduli theory and the universal generalised elliptic curve $E$ over $\mathfrak{X}$, the invariant differentials of $E$ relative to $\mathfrak{X}$ and the fact that $\cX$ is the adic generic fiber of $\mathfrak{X}$, give an invertible $\cO_{\cX}^+$-module $\omega_E^+$ on $\cX$. Pulling back via the projection map $\cX_0(p^n,N)_\proket \to \cX $ we get a $\cO_{\cX_0(p^n,N)_\proket}^+$-module  for the pro-Kummer \'etale topology that we still, abusively denoted $\omega_E^+$ and passing to $p$-adic completions we finally obtain an invertible $\widehat{\cO}_{\cX_0(p^n,N)}^+$-module $ \widehat{ \omega}_E^+$. Here for simplicity we write $\widehat{\cO}_{\cX_0(p^n,N)}^+$ for $\widehat{\cO}_{\cX_0(p^n,N)_\proket}^+$.

Consider the map ${\rm dlog}$ for the basis of  $\cX_0(p^n,N)_\proket$ given in Lemma \ref{lemma:proketbasis} where $a$ modulo $p^n$ is a generator in $T_ p(E)/ p^n T_p(E)$ of the level subgroup of order $p^n$.  For every log affinoid perfectoid open $U$ as in loc.~cit., write $\widehat{U}=\mathrm{Spa}(R,R^+)$. Then   $T_p(E)^\vee$ is constant on $U$, we have a universal generalised elliptic curve $E$ over $\Spec(R^+)$  and  $\widehat{\cO}_{\cX_0(p^n,N)}^+(U)=R^+$. We then have the map ${\rm dlog}\colon T_p(E)^\vee(U) \otimes_{\Z_p} R^+  \longrightarrow \widehat{ \omega}_E^+(U)$. Gluing we obtain a map of sheaves on $\cX_0(p^s,N)_\proket$:

\begin{equation}\label{eq:dlog} 
{\rm dlog}\colon T_p(E)^\vee \otimes_{\Z_p} \widehat{\cO}_{\cX_0(p^n,N)}^+  \longrightarrow \widehat{ \omega}_E^+\end{equation}

\begin{proposition}\label{proposition:existcanonical} For every $r\in \N$  there exists $m\in\N$ such that 

\begin{itemize}

\item[a.] there exists a canonical subgroup $C_n$ of order $p^n$ on $\cX_0(p^n,N)_\infty^{(m)}$ and $\cX_0(p^n,N)_0^{(m)}$ which on  $\cX_0(p^n,N)_\infty^{(m)}$ coincides with the level $n$ subgroup and which on  $\cX_0(p^n,N)_0^{(m)}$ is disjoint from the level $n$ subgroup;

\item[b.] there exists an invertible $\cO_\cX^+$-module  $\omega_E^{\rm mod}$ on  $\cX_0(p^n,N)_\infty^{(m)}$ and on  $\cX_0(p^n,N)_0^{(m)}$ contained in $\omega_E^+$;

\item[c.] the map ${\rm dlog}$ surjects onto $\omega_E^{\rm mod}\otimes_{\cO_{\cX_1(p^n,N)}^+}\widehat{\cO}_{\cX_1(p^n,N)}^+ $ on $\cX_1(p^n,N)_{\infty,\proket}^{(m)}$ and modulo $p^r$  factors via $C_n^\vee$. The kernel of ${\rm dlog}$ is isomorphic to $\bigl(\omega_E^{\rm mod}\bigr)^{-1}\otimes_{\cO_{\cX_1(p^n,N)}^+}\widehat{\cO}_{\cX_1(p^n,N)}^+ $ (here we omit the Tate twist that usually appears as we work over $\C_p$);

\item[d.]  the map ${\rm dlog}$ surjects onto $\omega_E^{\rm mod}\otimes_{\cO_{\cX(p^n,N)}^+}\widehat{\cO}_{\cX(p^n,N)}^+ $ on $\cX(p^n,N)_{0,\proket}^{(m)}$ and modulo $p^s$ factors via $C_n^\vee$. The kernel of ${\rm dlog}$ is isomorphic to $\bigl(\omega_E^{\rm mod}\bigr)^{-1}\otimes_{\cO_{\cX(p^n,N)}^+}\widehat{\cO}_{\cX(p^n,N)}^+ $.

\end{itemize}

\end{proposition}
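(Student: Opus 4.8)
The strategy is to deduce all four assertions from the quantitative theory of the canonical subgroup of the universal (generalised) elliptic curve $E$, using that the opens $\cX_0(p^n,N)_\infty^{(m)}$ and $\cX_0(p^n,N)_0^{(m)}$ shrink, as $m$ grows, to a strict neighbourhood of the ordinary locus. By the lemma above, the families $\{\cX(p^\infty,N)_\infty^{(m)}\}_m$ and $\{\cX(p^\infty,N)_0^{(m)}\}_m$ are fundamental systems of open neighbourhoods of the ordinary locus in $\cX(p^\infty,N)$, and, being invariant under the appropriate Iwahori subgroups (\S\ref{rmk:standardopens}), they descend to the opens $\cX_0(p^n,N)_\#^{(m)}$. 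Over the ordinary locus the Hasse invariant is a unit, so the truncated valuation of a local lift of the Hasse invariant tends to $0$ on these opens as $m\to\infty$; hence, by the theory of canonical subgroups (cf.\ \cite{ScholzeTorsion}, and as used to construct the modular sheaf in \cite{ICM_AIP}), there is an integer $m_0=m_0(n)$ so that for $m\ge m_0$ the subgroup $C_n\subset E[p^n]$ of order $p^n$ reducing to $\ker(\mathrm{Frob})$ exists over $\cX_0(p^n,N)_\infty^{(m)}$ and over $\cX_0(p^n,N)_0^{(m)}$, together with an integer $m_1=m_1(n,r)\ge m_0$ such that for $m\ge m_1$ the Hodge--Tate map of $C_n$, and the dual estimate for $E[p^n]/C_n$, are controlled modulo $p^r$ in the sense made precise in (c)--(d). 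We fix $m:=m_1$.

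For (a) it remains to compare $C_n$ with the level-$n$ subgroup $H_n$ that defines $\cX_0(p^n,N)$. I would work upstairs on $\cX(p^\infty,N)$, where $T_p(E)$ is trivialised and, in the notation preceding \eqref{eq:dlog}, $\pi_{\rm HT}=[\alpha;\beta]$ with $\mathrm{dlog}(a^\vee)=\alpha\Omega_W$ and $\mathrm{dlog}(b^\vee)=\beta\Omega_W$. Over $\cX(p^\infty,N)_\infty^{(m)}=\pi_{\rm HT}^{-1}(U_\infty^{(m)})$ one has $\Vert\alpha/\beta\Vert\le\Vert p^m\Vert$, so the kernel of $\mathrm{dlog}$ (the Hodge--Tate line) reduces, modulo $p^n$, to a fixed line of $T_p(E)/p^n$; matching this with the $\Iw_m$-stable line — which is exactly the pull-back of $H_n$ — and using the normalisation of the Remark on the two ordinary components, one gets $C_n=H_n$ on $\cX_0(p^n,N)_\infty^{(m)}$. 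Over $\cX(p^\infty,N)_0^{(m)}=\pi_{\rm HT}^{-1}(U_0^{(m)})$ instead $\Vert\alpha/\beta-\lambda\Vert\le\Vert p^m\Vert$ for some $\lambda\in\{0,\dots,p-1\}$, so $\alpha/\beta$ is a unit and the Hodge--Tate line is transverse to the $\Iw_1$-stable line, whence $C_n\cap H_n=0$ on $\cX_0(p^n,N)_0^{(m)}$. All of this is $\Iw_m$- (resp.\ $\Iw_1$-) invariant, so it descends to $\cX_0(p^n,N)$ as in \S\ref{rmk:standardopens}.

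For (b) I would produce $\omega_E^{\rm mod}$ on an integral model first: over the formal model on which $C_n$ spreads out to a finite flat group scheme (available once the Hasse invariant is invertible up to the relevant small power of $p$), the Cartier dual $C_n^\vee$ carries a Hodge--Tate map into $\omega_E^+$ modulo a power of $p$, and one defines $\omega_E^{\rm mod}\subseteq\omega_E^+$ to be the invertible $\cO_\cX^+$-submodule generated locally by any element of $\omega_E^+$ whose reduction modulo $p^r$ generates the image of $\mathrm{dlog}$ restricted to $C_n^\vee$. For $m\ge m_1$ this image is a free rank-one $(\cO_\cX^+/p^r)$-direct summand of $\omega_E^+/p^r\omega_E^+$, which is exactly what makes $\omega_E^{\rm mod}$ well defined and invertible; this is the modular sheaf of \cite{ICM_AIP}. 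Pulling it back along $\cX_0(p^n,N)_\proket\to\cX$ (and likewise to $\cX_1(p^n,N)_\proket$, $\cX(p^n,N)_\proket$) and completing $p$-adically yields the objects entering (c) and (d).

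Finally, for (c) and (d): surjectivity of $\mathrm{dlog}$ onto $\omega_E^{\rm mod}\otimes\widehat\cO$ is immediate from the construction of $\omega_E^{\rm mod}$ together with the fact that $\mathrm{dlog}$ on $T_p(E)^\vee$ extends $\mathrm{dlog}|_{C_n^\vee}$; the statement that $\mathrm{dlog}$ modulo $p^r$ factors through the quotient $T_p(E)^\vee\twoheadrightarrow C_n^\vee$ is the assertion that $\mathrm{dlog}$ annihilates the anticanonical part $(E[p^n]/C_n)^\vee$ modulo $p^r$, which is precisely the dual canonical-subgroup estimate forcing $m$ to grow with $r$. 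The identification $\ker(\mathrm{dlog})\cong(\omega_E^{\rm mod})^{-1}\otimes\widehat\cO$ then follows from the self-duality of the Hodge--Tate sequence $0\to\ker(\mathrm{dlog})\to T_p(E)^\vee\otimes\widehat\cO\to\widehat\omega_E\to 0$ under the Weil pairing $T_p(E)^\vee\cong T_p(E)$ (the Tate twist being trivial over $\C_p$): it identifies $\ker(\mathrm{dlog})$ with the orthogonal complement of the sublattice $\omega_E^{\rm mod}$, namely $(\omega_E^{\rm mod})^{-1}\otimes\widehat\cO$. I expect the real obstacle to be the bookkeeping of these quantitative canonical-subgroup estimates: pinning down $m=m(n,r)$ so that simultaneously $C_n$ exists, $\mathrm{dlog}(C_n^\vee)$ is a rank-one direct summand modulo $p^r$, and $\mathrm{dlog}$ kills the anticanonical part modulo $p^r$, while keeping this compatible with both the descent from the perfectoid tower $\cX(p^\infty,N)$ down to $\cX_0(p^n,N)$ and the passage from the integral model to the pro-Kummer \'etale structure sheaf of \eqref{eq:dlog}.
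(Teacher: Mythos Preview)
Your approach is essentially the same as the paper's, only far more detailed. The paper's proof is two sentences: all of (a)--(d) are known to hold on the classical Hasse-invariant neighbourhoods $\cX(p/\mathrm{Ha}^{p^m})$ by \cite{AIP}, and by \cite[Lemma III.3.3.8]{ScholzeTorsion} the opens $\cX_0(p^n,N)_\#^{(m)}$ sit inside such neighbourhoods for $m$ large; done. You take the same reduction (your first paragraph) and then unpack what the citation to \cite{AIP} actually contains---the construction of $\omega_E^{\rm mod}$ from the Hodge--Tate map of $C_n^\vee$, the quantitative estimates, and the self-duality argument for $\ker(\mathrm{dlog})$---which is correct but not expected here. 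One minor bookkeeping slip: with the paper's conventions (the parameter $T$ at $0$ and $\pi_{\rm HT}\vert_{W_0}(T)=\alpha/\beta$), the condition on $U_\infty^{(m)}$ is $\Vert\beta/\alpha\Vert\le\Vert p^m\Vert$, not $\Vert\alpha/\beta\Vert\le\Vert p^m\Vert$; this does not affect the logic of your argument for (a).
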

\begin{proof} It follows from \cite{AIP} that the result holds true on strict neighborhoods $\cX\bigl(p/\mathrm{Ha}^{p^m}\bigr)$ of the ordinary loci in $\cX$ defined by the points $x$ where  $\vert p \vert_x < \vert \mathrm{Ha}^{p^m} \vert_x$ for $m$ large enough; here $\mathrm{Ha}$ is a (any) local lift of the Hasse invariant.  Thanks to \cite[Lemma III.3.3.8]{ScholzeTorsion} there exists $n\in\N$ such that $\cX(p^r,N)_\infty^{(n)}$ and $\cX(p^r,N)_0^{(n)}$ are contained in   $\cX\bigl(p/\mathrm{Ha}^{p^m}\bigr)$.  The claim follows. 

\end{proof}

\begin{remark} See \cite{CHJ} for similar results in the case of Shimura curves. The notation $\omega_E^{\rm mod}$ is taken from  \cite{PS}.

\end{remark}

From Proposition \ref{proposition:existcanonical} we get an integral version of the Hodge-Tate exact sequence: $$0 \rightarrow \omega_E^{\rm mod,-1}\otimes_{\cO_{\cX_0(p^n,N)}^+}\widehat{\cO}_{\cX_0(p^n,N)_\infty^{(m)}}^+  \longrightarrow T_p(E)^\vee \otimes_{\Z_p} \widehat{\cO}_{\cX_0(p^n,N)_\infty^{(m)}}^+ \longrightarrow  \omega_E^{\rm mod}\otimes_{\cO_{\cX_0(p^n,N)}^+}\widehat{\cO}_{\cX_0(p^n,N)_\infty^{(m)}}^+ \rightarrow 0 .$$This will be useful to compute the cohomology $\mathrm{H}^1\bigl(\cX_1(p^n,N)_{\infty,\proket}^{(m)}, \widehat{\cO}_{\cX_1(p^s,N)_\infty^{(n)}}  \bigr)$. Infact, tensoring the exact sequence with $\omega_E^{\rm mod}$ and taking the long exact sequence in cohomology we obtain a map 

\begin{equation}\label{eq:H0H1}\mathrm{H}^0\bigl(\cX_0(p^n,N)_\infty^{(m)}, \omega_E^{\rm mod,2}\bigr)  \longrightarrow \mathrm{H}^1\bigl(\cX_0(p^s,N)_{\infty,\proket}^{(n)}, \widehat{\cO}_{\cX_1(p^n,N)_\infty^{(m)}}^+  \bigr)\end{equation}and simlarly for $\cX_0(p^n,N)_0^{(m)}$, or their covers $\cX_1(p^n,N)_\infty^{(m)}$, $\cX(p^n,N)_0^{(m)}$.

\subsection{The sheaf $\omega_E^k$.}\label{sec:omegak}

The map ${\rm dlog}$ provides $\omega_E^{\rm mod}/p^r \omega_E^{\rm mod} $, where $r$ is like in Proposition  \ref{proposition:existcanonical}, with a marked section $s$ over $\cX_1(p^n,N)_\infty^{(m)}$ as the image of the tautological generator of $C_n^\vee$. 

Similalry, recall that we have a decomposition $\cX(p^n,N)_0^{(m)}=\amalg_{\lambda\in \Z/p^n\Z} \cX(p^n,N)_{0,\lambda}^{m)}$ where over $\cX(p^n,N)_{0,\lambda}^{m)}$, using the trivialization $T_p(E)/p^n T_p(E)=(\Z/p^n\Z) a \oplus (\Z/p^n\Z) b$ so that $a$ generates the level subgroup $H_n$, we have ${\rm dlog} (a^\vee)=\lambda {\rm dlog} (b^\vee)$. In particular, the canonical subgroup $C_n$ is generated by $b+\lambda a$ and $\omega_E^{\rm mod}/p^r \omega_E^{\rm mod} $ acquires a marked section $s:={\rm dlog} (b^\vee)$.

Suppose we are given an $h$-analytic character $k\colon \Z_p^\ast \to B^\ast$, for $h\leq r$, as in Definition \ref{def:weights}. Using the formalism of VBMS from \S\ref{sec:WkWDk} for $\omega_E^{\rm mod}$ and the section $s$ modulo $p^r$  we get the invertible $\cO_{\cX_1(p^n,N)_\infty^{(m)}}^+\widehat{\otimes} B$-module, resp. $\cO_{\cX(p^n,N)_0^{(m)}}\widehat{\otimes} B$-module  $$\omega_E^k:=\WW_{k}(\omega_E^{\rm mod}, s).$$Since $\omega_E^{\rm mod}$ and the section $s$ modulo $p^r$, are stable under the action of the automorphism group $(\Z/p^n\Z)^\ast$ of    $j_n\colon \cX_1(p^n,N)_\infty^{(m)}\to \cX_0(p^n,N)_\infty^{(m)}$ then $(\Z/p^n\Z)^\ast$ acts on $j_{n,\ast}\bigl(\omega_E^k[1/p]\bigr)$ and, taking  the subsheaf of $j_{n,\ast}\bigl(\omega_E^k[1/p]\bigr)$ on which $\Z_p^\ast$ acts via the character $k$, then $j_{n,\ast}\bigl(\omega_E^k[1/p]\bigr)$ descends to an invertible $\cO_{\cX_0(p^n,N)_\infty^{(m)}}\widehat{\otimes} B[1/p]$-module that we denote $\omega_E^k[1/p] $.

Similalry the Galois group $\Delta_n$ of $j_n\colon \cX(p^n,N)_0^{(m)}\to \cX_0(p^n,N)_0^{(m)}$, which is the Borel subgroup of $\mathrm{GL}_2(\Z/p^n\Z)$  acts compatibly on $(\omega_E^{\rm mod},s)$  and hence it acts on $j_{n,\ast}\bigl(\omega_E^k[1/p]\bigr)$.  In this case we let $\omega_E^k[1/p]$ be the invertible $\cO_{\cX_0(p^n,N)_0^{(m)}}\widehat{\otimes} B[1/p]$-module defined as the subsheaf of $j_{n,\ast}\bigl(\omega_E^k[1/p]\bigr)$ on which the Borel $\mathrm{GL}_2(\Z_p)$ acts via the projection onto the lower right entry $\Z_p^\ast$ composed with  the character $k$.

\subsection{The $U_p$-correspondence.}\label{sec:Up} 

Given the modular curve $\cX_0(p^s,N)$ we have correspondences $T_\ell$, for $\ell$ not dividing $p N$, and $U_p$. They are defined by the analytification $\cX_0(p^s,N, \ell)$, resp. $\cX_0(p^s,\N,p)$ of the modular curve  $X_0(p^s,N,\ell)$, resp. $X_0(p^s,N,p) $ classifying, at least away from the cusps, subgroups $D$ of order $\ell$ of the universal elliptic curve $E$, resp. subgroups of order $p$ of $E$ complementary to the $p$-torsion $H_1$ of the cyclic subgroup $H_s$ of order $p^s$ defined by the level structure. We have two maps $q_1,q_2\colon ,\cX_0(p^s,N, \ell) \to \cX_0(p^s,N)$ defined by the analytification of the maps  $q_1,q_2\colon X_0(p^s,N,\ell)\to X_0(p^s,N) $ where $q_1$ sends the universal object $(E,H_s,\Psi_N,D)$ to $(E,H_s,\Psi_N)$ (the forgetful map) and $q_2$ sends the universal object $(E,H_S,\Psi_N,D)$ to $(E/D, H_s',\Psi_N')$ where $H_s'$ is the image of $H_s$ via the isogeny $E\to E/D$ and $\Psi_N'$ is $\psi_N$ composed with this isogeny. 

 These maps induce morphisms of sites $\cX_0(p^s,N,\ell )_\proket \to \cX_0(p^s,N)_\proket  $, resp. $\cX_0(p^s,N,p )_\proket \to \cX_0(p^s,N)_\proket  $. As $q_1, q_2$ are finite Kummer \'etale the following follows from the discussion in \cite[Cor. 2.6]{EichlerShimura} or \cite[Prop. 4.5.2]{Diao}:

\begin{lemma}\label{lemma:trace}  There is a natural isomorphism of functors $q_{i,\ast}\cong q_{i,!}$. In particular, $q_{i,\ast}$ is exact and we have a natural transformation   ${\rm Tr}_{q_i}\colon  q_{i,\ast} q_i^\ast \to {\rm Id}$, called the trace map.
\end{lemma}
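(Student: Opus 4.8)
The plan is to reduce the statement to the standard formalism for finite Kummer étale morphisms of log adic spaces. First I would recall that $q_1,q_2\colon \cX_0(p^s,N,\ell)\to \cX_0(p^s,N)$ are finite Kummer étale away from the cusps, and that with the log structures given by the cusps they are in fact finite Kummer étale morphisms of fs log adic spaces; this is the content of \cite[Ex.~2.3.17]{Diao} together with the moduli-theoretic description of the correspondences (the analogous statement in the schematic setting is classical, and the adic version follows by passing to the associated adic spaces). The case of $U_p$, defined via $\cX_0(p^s,N,p)$, is identical: the forgetful map and the quotient map are both finite Kummer étale because on the level of coarse moduli they are finite flat away from the cusps and the cuspidal behaviour is log étale by construction.

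Next I would invoke the general fact, recalled in the excerpt, that for a finite Kummer étale morphism $f$ of fs locally noetherian log adic spaces one has $f_\ast\cong f_!$; this is \cite[Prop.~4.5.2]{Diao} (see also the discussion in \cite[Cor.~2.6]{EichlerShimura}). Since $f$ is in particular a finite morphism of sites with $f_\ast$ exact (again \cite[Prop.~4.5.2]{Diao}, or directly because finite Kummer étale covers are, locally on the base, disjoint unions of copies of the base), we may define the trace. Concretely, for $f$ finite Kummer étale of some locally constant degree, one has the adjunction counit together with a trace: on a log affinoid perfectoid (or more simply a Kummer étale) chart over which $f$ trivializes, $f_\ast f^\ast\cF$ is a finite direct sum of copies of $\cF$ indexed by the sheets, and summing the components gives ${\rm Tr}_f\colon f_\ast f^\ast\cF\to\cF$; one checks this is independent of the chart and hence glues, exactly as in the classical étale case. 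Applying this with $f=q_i$ yields the natural transformation ${\rm Tr}_{q_i}\colon q_{i,\ast}q_i^\ast\to{\rm Id}$.

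The only genuine point requiring care — and the step I expect to be the main obstacle — is verifying that the correspondence maps $q_1,q_2$ really are finite Kummer étale as morphisms of \emph{log} adic spaces, i.e.\ that the log structures at the cusps match up correctly under the forgetful and quotient maps. Away from the cusps this is immediate from the moduli interpretation (the fibres are torsors under finite groups, or finite disjoint unions thereof), so the content is entirely at the boundary, where one must check that $q_i$ pulls the canonical log structure back to the canonical log structure up to a Kummer chart. This is handled by the standard analysis of the Tate curve and the cuspidal expansions of the covering maps $X_0(p^s,N,\ell)\to X_0(p^s,N)$ (respectively $X_0(p^s,N,p)\to X_0(p^s,N)$), which shows that in suitable coordinates $q_i$ is given on the formal completion at a cusp by a map of the form $q\mapsto q^{d}$ for an appropriate $d$, i.e.\ a Kummer morphism. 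Granting this, finiteness is clear (properness plus quasi-finiteness), and Kummer étaleness follows; the isomorphism $q_{i,\ast}\cong q_{i,!}$ and the exactness of $q_{i,\ast}$ are then formal, and the trace map is constructed as above.
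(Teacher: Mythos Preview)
Your proposal is correct and follows essentially the same approach as the paper: the paper states immediately before the lemma that $q_1,q_2$ are finite Kummer \'etale and then cites \cite[Cor.~2.6]{EichlerShimura} and \cite[Prop.~4.5.2]{Diao} for the conclusion, with no further argument given. Your more detailed justification of the finite Kummer \'etale property at the cusps (via the Tate curve analysis) is a welcome expansion of what the paper treats as understood, but the overall line of reasoning is the same.
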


The maps $q_1$ and $q_2$ induce maps of perfectoid spaces. For $\ell\neq p$ the fibre product $\cX(p^\infty,N,\ell ):=\cX(p^\infty,N) \times_{\cX_0(p^s,N)}^{q_i} \cX_0(p^s,N,\ell)$ exists and is independent of the choice of maps $q_1$ or $q_2$ and we then have the two projections $q_1,q_2\colon \cX(p^\infty,N,\ell ) \to \cX(p^s,N,\ell)$. For $\ell=p$ conisder $\cX(p^\infty,N,p ):=\cX(p^\infty,N) \times_{\cX_0(p^s,N)}^{q_i} \cX_0(p^s,N,p)$. Then $\cX(p^\infty,N,p )$ splits completely as $$\cX(p^\infty,N,p )=\amalg_{\lambda=0,\ldots,p-1} \cX(p^\infty,N),$$where over the copy labeled by $\lambda=0,\ldots,p-1$ the isogeny $E\to E':=E/D$ is given by and produces the map of $\Z_p$-modules $$u_\lambda \colon T_p E=\Z_p a \oplus \Z_p b \to T_p(E'):=T_p(E_\lambda):=\Z_p a' \oplus \Z_p b', \qquad a'=a, b'=\frac{b+\lambda a}{p}\in T_p E \otimes \Q_p.$$Using this description, the maps $q_1$ and $q_2$ restricted to the component labeled $\lambda$ define maps $q_{1,\lambda}$ and $q_{2,\lambda}$ where $q_{1,\lambda}$ is the identity map and $q_{2,\lambda}\colon \cX(p^\infty,N) \to \cX(p^\infty,N) $ is an isomorphism such that the pull-back of $T_p E$ is $T_p(E_\lambda)$. 

\

In particular, consider the maps $t_1,t_2\colon  \amalg_{\lambda=0,\ldots,p-1} \PP^1_{\Q_p} \to \PP^1_{\Q_p}$ where on the component labeled by $\lambda$ the map $t_{1,\lambda}$ induced by $t_1$ is the identity while the map $t_2$ is the isomorphism $t_{2,\lambda}\colon \PP^1_{\Q_p} \to \PP^1_{\Q_p}$ 
defined on points by $[\alpha,\beta]\mapsto [\alpha - \lambda \beta,p \beta]$. We then have the following  diagram 

$$\begin{matrix} \cX(p^\infty,N) & \stackrel{q_2}{\longleftarrow} &  \cX(p^\infty,N,p )= \amalg_{\lambda=0,\ldots,p-1} \cX(p^\infty,N) & \stackrel{q_1}{\longrightarrow} & \cX(p^\infty,N) \cr \pi_{\rm HT} \big\downarrow  & &  \amalg_{\lambda=0,\ldots,p-1} \pi_{\rm HT} \big\downarrow & & \pi_{\rm HT} \big\downarrow\cr 
\PP^1_{\Q_p} & \stackrel{t_2}{\longleftarrow} &   \amalg_{\lambda=0,\ldots,p-1}  \PP^1_{\Q_p} & \stackrel{t_2}{\longrightarrow} & \PP^1_{\Q_p}.\cr 

\end{matrix}$$In fact, the squares are commutative. This follows from  the  functoriality of ${\rm dlog}$ with respect to isogenies and by computing $u_\lambda^\vee$:  the map $u_\lambda$ sends $a\mapsto a'$ and $b\mapsto p b'-\lambda a'$ so that on the dual basis $u_\lambda^\vee$ sends $(a')^\vee\mapsto a^\vee-\lambda b^\vee$ and $(b')^\vee\mapsto p b^\vee$.  

\begin{remark} Let us observe that, with the notations above, if we denote by $U_p$ the correspondence on $\cX(p^\infty,N)$ given by $U_p:=q_{2}\circ q_1^{-1}$, if we denote by $\widetilde{U}$ the correspondence on $\PP^1$ defined by $\widetilde{U}:=t_{2}\circ t_1^{-1}$, then we have: $\pi_{\rm HT}\circ U_p=\widetilde{U}\circ \pi_{\rm HT}$. 

\end{remark}

We conclude this section with a lemma   on the dynamic of the operators $t_\lambda$. 

\begin{lemma}\label{lemma:dynamic} a) Let $\lambda=\lambda_0 + p \lambda _1 + \cdots + \lambda_n p^n$ with $\lambda_i\in\{0,\ldots, p-1\}$. Write $t_{\lambda}:=t_{\lambda_n} \circ \cdots \circ t_{\lambda_0}$. Then   $t_{\lambda}\bigl(U_{0,\lambda}^{(n+1)}\bigr)= U_0$, $t_{\lambda}\bigl(\PP^1\backslash U_{0,\lambda}^{(n+1)}\bigr)=U^{(1)}_\infty$. 

b) If $\mu:=\lambda_0+\lambda_1p+...+\lambda_{n-1}p^{n-1}$ with $\lambda_i$ as above for $0\le i\le n-1$ and $t_\mu:=t_{\lambda_{n-1}}\circ...\circ t_{\lambda_0}$, then we have
$t_\mu\bigl(\PP^1\backslash U_{0,\lambda}^{(n+1)}\bigr) \subset U_{\infty}\backslash U_0^{(1)}$ and
$t_\mu(U_\infty^{(1)})\subset U_\infty^{(n+1)}$.

\end{lemma}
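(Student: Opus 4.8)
```latex
\textbf{Proof plan for Lemma \ref{lemma:dynamic}.}

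The plan is to reduce everything to an explicit calculation on $\PP^1$ in the coordinate $T$, using that each $t_{\lambda_i}$ is the map $[\alpha,\beta]\mapsto[\alpha-\lambda_i\beta,\,p\beta]$, i.e.\ in the affine chart at $0$ where $T=\alpha/\beta$ it is $T\mapsto (T-\lambda_i)/p$, and in the chart at $\infty$ where $S=\beta/\alpha$ it is $S\mapsto pS/(1-\lambda_i S)$. First I would compute the composite $t_\lambda=t_{\lambda_n}\circ\cdots\circ t_{\lambda_0}$ directly in the $T$-chart: one checks by induction that $t_\lambda(T)=(T-\lambda)/p^{n+1}$ where $\lambda=\lambda_0+\lambda_1 p+\cdots+\lambda_n p^n$, the key point being that the correction terms telescope precisely because the $\lambda_i$ are the base-$p$ digits of $\lambda$. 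Since by definition $U_{0,\lambda}^{(n+1)}=\PP^1\bigl(\frac{T-\lambda}{p^{n+1}}\bigr)=\{x:\|T-\lambda\|_x\le\|p^{n+1}\|_x\}$, the map $T\mapsto (T-\lambda)/p^{n+1}$ carries $U_{0,\lambda}^{(n+1)}$ isomorphically onto $\{x:\|T\|_x\le 1\}$; one then has to observe that this latter disk is contained in $U_0=\cup_{\mu=0}^{p-1}\PP^1(T-\mu)$ (it is the residue disk $\mu=0$), which gives $t_\lambda(U_{0,\lambda}^{(n+1)})\subset U_0$; equality follows because $t_\lambda$ is an automorphism of $\PP^1$ and a disk of the same ``size'' maps onto the corresponding disk. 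For the complement, $\PP^1\setminus U_{0,\lambda}^{(n+1)}=\{x:\|T-\lambda\|_x>\|p^{n+1}\|_x\}$ maps under $T\mapsto(T-\lambda)/p^{n+1}$, equivalently $S=1/(\text{new }T)$, to $\{x:\|S\|_x<1\}\subset\{x:\|1/T\|_x\le\|p\|_x\}=U_\infty^{(1)}$; here I would be slightly careful about whether the inequality is strict, but since $\lambda\in\Z_p$ and the points in the complement of the smaller closed disk lie at distance $>|p^{n+1}|$, after rescaling they lie at distance $>1$ from $0$ in the $S$-coordinate sense, hence inside $U_\infty^{(1)}$; the statement as written ($=U_\infty^{(1)}$) again uses that $t_\lambda$ is bijective.

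For part (b), with $\mu=\lambda_0+\cdots+\lambda_{n-1}p^{n-1}$ the same induction gives $t_\mu(T)=(T-\mu)/p^{n}$. Then one more application of the same computation: $\PP^1\setminus U_{0,\lambda}^{(n+1)}$, where $\lambda=\mu+\lambda_n p^n$, I would rewrite in terms of the coordinate recentered at $\mu$; after applying $t_\mu$ it becomes the set $\{x:\|T-\lambda_n\|_x>\|p\|_x\}$ (using $(\lambda-\mu)/p^n=\lambda_n\in\{0,\dots,p-1\}$), which sits inside $U_\infty$ (it avoids none of the residue disks around $0,\dots,p-1$ automatically — rather one checks it lands in the chart at $\infty$ together with the finitely many residue disks, i.e.\ in $U_\infty\setminus U_0^{(1)}$) and is disjoint from $U_0^{(1)}=\cup_\nu\PP^1\bigl(\frac{T-\nu}{p}\bigr)$ because those are the disks of radius $|p|$ around the integers $0,\dots,p-1$, none of which meets our set. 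For the second assertion of (b), $U_\infty^{(1)}=\{x:\|1/T\|_x\le\|p\|_x\}$ pulls back under $t_\mu^{-1}$, i.e.\ I push forward: $t_\mu$ sends it to $\{x:\|1/T\|_x\le\|p^{n+1}\|_x\}=U_\infty^{(n+1)}$, by the same rescaling by $p^n$ and noting $n+1=n+1$. I would present these as two short explicit verifications in the $S=1/T$ chart.

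The main obstacle I anticipate is purely bookkeeping: getting the telescoping identity $t_{\lambda_n}\circ\cdots\circ t_{\lambda_0}:T\mapsto (T-\lambda)/p^{n+1}$ exactly right, including the fact that the intermediate images stay in the locus where the formula $T\mapsto(T-\lambda_i)/p$ (rather than the $\infty$-chart formula) is the correct local expression, so that composition is legitimate. Once that identity is in hand, both parts are immediate from the definitions of the $U_\#^{(n)}$ as explicit Gauss-point neighborhoods: every claimed inclusion becomes a comparison of valuations of the form $\|T-\lambda\|_x\le\|p^m\|_x$, and the claimed equalities follow from bijectivity of each $t_{\lambda_i}$. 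I would also make one remark handling the cusps/the point at infinity of each affine chart, but this causes no trouble since all the $t_{\lambda_i}$ and their composites are automorphisms of $\PP^1$ fixing the relevant structure.
```
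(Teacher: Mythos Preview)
Your overall strategy is the same as the paper's: both reduce to the explicit affine formula for the composite. The paper writes $t_\mu([\alpha,1])=[\alpha-\mu,p^n]$ in part (b), which is exactly your telescoped identity $T\mapsto(T-\mu)/p^{n}$; the only difference is packaging. The paper works with $\mathrm{Spa}(K,K^+)$-valued points $[\alpha;\beta]$ and splits into the two cases ``$\beta$ a unit'' versus ``$\beta$ in the maximal ideal'', handling (a) by an induction from the base case $n=0$, whereas you collapse the whole induction into the single M\"obius map $T\mapsto (T-\lambda)/p^{n+1}$.

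There is, however, a genuine gap precisely at the step you flag as needing care. You assert that $\{x:\Vert S\Vert_x<1\}\subset U_\infty^{(1)}=\{x:\Vert S\Vert_x\le\Vert p\Vert_x\}$, and this is false: over $\C_p$ a point with $\vert S\vert=\vert p\vert^{1/2}$ lies in the first set but not the second. In fact your own formula, together with bijectivity of $t_\lambda$, shows that $t_\lambda\bigl(\PP^1\setminus U_{0,\lambda}^{(n+1)}\bigr)$ is \emph{exactly} $\PP^1\setminus U_0$, which is strictly larger than $U_\infty^{(1)}$; so the equality in (a) cannot hold as written, and the one-shot formula cannot repair it. The same issue recurs in your argument for the first inclusion of (b): the image $\{x:\Vert T-\lambda_n\Vert_x>\Vert p\Vert_x\}$ is $\PP^1\setminus U_{0,\lambda_n}^{(1)}$, and this is not contained in $U_\infty\setminus U_0^{(1)}$ --- a point in a different residue disk $U_{0,\nu}^{(1)}$ with $\nu\neq\lambda_n$ already gives a counterexample.

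If you look at the paper's proof of (b), it actually starts from the stronger hypothesis $[\alpha;1]\notin U_0^{(n+1)}$ (the union over \emph{all} residues $\lambda'$), not merely $\notin U_{0,\lambda}^{(n+1)}$; with that hypothesis its case analysis on $v(\alpha-\mu)$ goes through. Likewise the paper's treatment of (a) for $n=0$ does not claim the image lands in $U_\infty^{(1)}$ directly: it shows that one application of $t_{\lambda_0}$ sends $U_\infty\setminus U_0$ into $U_\infty^{(1)}$ but sends $U_0\setminus U_{0,\lambda_0}^{(1)}$ only into $U_\infty\setminus U_0$, and then relies on a further iteration (this is the content of the Remark immediately following the lemma) to finish. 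Your collapsed formula loses exactly this two-step mechanism; you should reinstate the case split, and read the hypothesis in (b) with $U_0^{(n+1)}$ in place of $U_{0,\lambda}^{(n+1)}$, to make the valuation comparisons go through.
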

\begin{proof}  

 It is enough to prove the statement for $\mathrm{Spa}(K,K^+)$-valued points for an affinoid field $(K,K^+)$. This is determined by a $K$-valued point of $\PP^1_{\Q_p}$, or equivalently a $K^+$-valued point $[\alpha;\beta]$ as $K^+$ is a valuation ring, whose normalized valuation is denoted $v$.  

a) We prove the statement for $\lambda=\lambda_0$ leaving the inductive process to the reader.
If $[\alpha;\beta]$ is a point of $U_\infty\backslash U_0$ then we can assume that $\alpha=1$ and that 
$\beta$ is in the maximal ideal of $K^+$ and $t_\lambda([1;\beta])=[1-\lambda \beta;  p \beta]$ defines a point of $U_\infty^{(1)}$.

If $\beta$ is a unit, we can assume that $\beta=1$ and then $t_\lambda([\alpha;1])=[\alpha-\lambda ;  p ]$. This is a point of  $U_0$ if and only if $\frac{\alpha-\lambda}{p}\in K^+$, i.e., if and only if  $[\alpha;1]$ defines  a point of $U_{0,\lambda}^{(1)}$. Else $\frac{p}{\alpha-\lambda}$ lies in the maximal ideal of $K^+$ and then $t_\lambda([\alpha;1])$ defines a point of $U_\infty\backslash U_0$. 

b) If $[\alpha, \beta]\in \PP^1\backslash U_0^{(n+1)}$, $\alpha, \beta\in K^+$, we may assume that one of $\alpha, \beta$ is $1$.

If $\beta=1$, we have $t_\mu([\alpha, 1])=[\alpha-\mu, p^n]$. Moreover $[\alpha, 1]\notin U_0^{(n+1)}$
implies either that $r:=v(\alpha-\mu)<n$ and in this case $[\alpha-\mu, p^n]\in  U_\infty\backslash U_0^{(1)}$, or that $v(\alpha-\mu)=n$ and $\alpha=\mu+p^n\gamma$, with $\gamma\in K^+$,
$\gamma\notin \F_p ({\rm mod }\  pK^+)$. Then $t_\mu([\alpha, \beta])=[\gamma, 1]\in U_{\infty}\backslash U_0^{(1)}$.

If now $\alpha=1$ and $\beta$ is in the maximal ideal of $K^+$, we have $t_\mu([1, \beta])=[1-\mu\beta, p^n\beta]$.
Since  $1-\mu\beta\in (K^+)^\ast$, we have $t_\mu([1, \beta])=[1, p^n\beta/(1-\mu\beta)]\in U_\infty^{(n)}\subset U_\infty\backslash U_0^{(1)}.$

The other statement is clear.

\end{proof} 

\begin{remark}\label{rmk:uptlambda}
Let us notice that if $x\in U_\infty\backslash U_0^{(1)}$ and $\nu\in \{ 0,1,...,p-1\}$ then 
$t_\nu(x)\in U_\infty^{(1)}$. This observation and Lemma \ref{lemma:dynamic} b) imply the part
$t_\lambda(\PP^1\backslash U_0^{(n+1)})\subset U_\infty^{(1)}$ of Lemma \ref{lemma:dynamic} a).

\end{remark}

\subsection{\'Etale sheaves.}

Let $H\subset \GL_2(\Z_p)$ be a finite index subgroup.  In this section we recall the tensor functor from the category of profinite $H$-representations to the category of sheaves on the pro-Kummer \'etale site of the modular curve $X(H,N)$ defined by $H$, or equivalently of the associated adic space $\cX(H,N)$. We work with the latter.

We fix log gemetric points $\zeta_i$, one  for every connected component $Z_i$ of $\cX(H,N)$. Due to \cite[Prop.~5.1.12]{Diao} the sites $Z_{i,\fket}$ are Galois categories with underlying profinite group, the Kummer \'etale fundamental group $\pi_1^\ket(Z_i,\zeta_i)$.  In particular, the pro-Kummer finite \'etale cover $\bigl(\cX(p^r,N)\bigr)_{r}$ of $\cX(H,N)$ for $r$ big enough, restricted to each $Z_i$ defines a homomorphism  $$\pi_1(Z_i,\zeta_i)\to \displaystyle{\lim_{\infty \leftarrow r}} \mathrm{Aut}\bigl(\cX(p^r,N)/\cX(H,N) \bigr)=H.$$Given a finite representation $L_n$ of $H$, we view it as a representation of  $\pi_1(Z_i,\zeta_i)$, for every $i$, and hence as a local system on each $Z_{i,\ket}$  and, hence, a local system  $\mathbb{L}_n$ on  $\cX(H,N)_\ket$. In fact,  $\mathbb{L}_n$ is a sheaf on $\cX(H,N)_\ket$ such that there exists a finite Kummer \'etale cover $\cX(p^r,N) \to \cX(H,N)$, for $r\gg 0$, on which the sheaf $\mathbb{L}_n$ is constant. 

Given a profinite representation $L$ of $H$, i.e., an inverse limit $L=\displaystyle{\lim_{\infty \leftarrow n}} L_n$ of finite representations $L_n$ for $n\in \N$, we let $\mathbb{L}$ be the inverse limit $\mathbb{L}=\displaystyle{\lim_{\infty \leftarrow n}} \mathbb{L}_n$. It is a sheaf on $\cX(H,N)_\ket$. Notice that using the scheme $X(H,N)$ one gets, as mentioned before, a sheaf on $X(H,N)_\ket$, that we sill denote $\mathbb{L}$. We  have the following  GAGA type of results: 

\begin{theorem}\label{thm:GAGA}  For every $i\in \N$, the maps  $$\mathrm{H}^i\bigl(X(H,N)_\proket,\mathbb{L}  \bigr)\longrightarrow \mathrm{H}^i\bigl(\cX(H,N)_\proket,\mathbb{L}  \bigr)$$are isomorphisms. Analogously the natural map$$\mathrm{H}^i\bigl(\cX(H,N)_\proket,\mathbb{L}  \bigr)\widehat{\otimes} \cO_{\C_p} \longrightarrow   \mathrm{H}^i\bigl(\cX(H,N)_\proket,\mathbb{L}\widehat{\otimes} \widehat{\cO}_{\cX(H,N)}^+\bigr)  $$is an almost isomorphism. 

\end{theorem}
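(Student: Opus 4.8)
The plan is to bootstrap both assertions from the case of a single finite $\F_p$-local system on the proper, log smooth (compactified, cusp-logarithmic) modular curve --- the case treated in Proposition \ref{prop:GAGA} and in \cite[Thm.~6.2.1]{Diao} --- by a dévissage along the presentation $\mathbb{L}=\lim_m\mathbb{L}_m$ followed by a passage to the limit.

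\emph{Reduction to the Kummer \'etale site and to finite coefficients.} Since $\nu^{-1}$ is exact and monoidal and each $\mathbb{L}_m$ is pulled back from the Kummer \'etale site, one has $\mathbb{L}_m\otimes\cO_{\cX_\proket}^+/p^j=\nu^{-1}\bigl(\mathbb{L}_m\otimes\cO_{\cX_\ket}^+/p^j\bigr)$, and likewise over $X(H,N)$; moreover $\widehat{\cO}_{\cX}^+/p^j\cong\cO_{\cX_\proket}^+/p^j$ because $\widehat{\cO}_{\cX}^+$ is the $p$-adic completion of $\cO_{\cX_\proket}^+$. By the identification $\cF\cong{\rm R}\nu_\ast\nu^{-1}(\cF)$ of \S\ref{sec:periodsheaves}, the pro-Kummer \'etale cohomology of such a sheaf coincides with its Kummer \'etale cohomology. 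Hence it suffices to prove, for every finite local system $\mathbb{L}_m$, say killed by $p^j$, and every $i$: (i) that ${\rm H}^i\bigl(X(H,N)_\ket,\mathbb{L}_m\bigr)\to{\rm H}^i\bigl(\cX(H,N)_\ket,\gamma^\ast\mathbb{L}_m\bigr)$ is an isomorphism of finite groups, and (ii) that ${\rm H}^i\bigl(\cX(H,N)_\ket,\mathbb{L}_m\bigr)\otimes\cO_{\C_p}\to{\rm H}^i\bigl(\cX(H,N)_\ket,\mathbb{L}_m\otimes\cO_{\cX_\ket}^+/p^j\bigr)$ is an almost isomorphism. For $\mathbb{L}_m$ killed by $p$ these are Proposition \ref{prop:GAGA} --- whose proof, resting on \cite[Thm.~3.2.10]{Huber}, applies to an arbitrary finite torsion sheaf --- together with the finiteness in \cite[Thm.~6.2.1]{Diao}, respectively \cite[Thm.~6.2.1]{Diao} itself. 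The general finite case follows by induction on the length: choose a composition series of $\mathbb{L}_m$ stable under the Kummer \'etale fundamental group, with graded pieces which are $\ell$-torsion local systems; for $\ell\ne p$ both sides of (ii) vanish, for $\ell=p$ one invokes \cite[Thm.~6.2.1]{Diao}; and since $\cO_{\cX_\ket}^+/p^j$ and $\cO_{\C_p}/p^j$ are flat over $\Z/p^j$ (being reductions of the $\Z_p$-flat rings $\cO_{\cX_\ket}^+$ and $\cO_{\C_p}$), tensoring the relevant long exact cohomology sequences preserves exactness, so the five lemma in the abelian category of almost $\cO_{\C_p}$-modules gives (i) and (ii); finiteness of ${\rm H}^i\bigl(\cX(H,N)_\ket,\mathbb{L}_m\bigr)$ follows by the same induction.

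\emph{Passage to the limit.} As the graded pieces of $\mathbb{L}$ are finite, each ${\rm H}^i\bigl(\cX(H,N)_\ket,\mathbb{L}_m\bigr)$ is a finite group, so the inverse systems $\{{\rm H}^i(X(H,N)_\ket,\mathbb{L}_m)\}_m$, $\{{\rm H}^i(\cX(H,N)_\ket,\mathbb{L}_m)\}_m$ are Mittag--Leffler with vanishing ${\rm R}^1\!\lim$, and $\{{\rm H}^i(\cX(H,N)_\ket,\mathbb{L}_m)\otimes\cO_{\C_p}\}_m$ has almost vanishing ${\rm R}^1\!\lim$. Using that $\cX(H,N)_\proket$ (resp.~$X(H,N)_\proket$) admits a basis of quasi-compact quasi-separated objects on which the sheaves $\mathbb{L}_m$ and $\mathbb{L}_m\otimes\widehat{\cO}^+_{\cX}$ are acyclic, resp.~almost acyclic --- the log affinoid perfectoid opens, by \S\ref{sec:logaffinoid}, Lemma \ref{lemma:proketbasis} and \cite[Prop.~5.3.12 \& 5.3.13]{Diao} --- one obtains ${\rm R}\Gamma\bigl(\cX_\proket,\mathbb{L}\bigr)={\rm R}\!\lim_m{\rm R}\Gamma\bigl(\cX_\proket,\mathbb{L}_m\bigr)$ and ${\rm R}\Gamma\bigl(\cX_\proket,\mathbb{L}\widehat{\otimes}\widehat{\cO}^+_{\cX}\bigr)={\rm R}\!\lim_m{\rm R}\Gamma\bigl(\cX_\proket,\mathbb{L}_m\otimes\widehat{\cO}^+_{\cX}\bigr)$, and similarly over $X(H,N)$. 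Feeding in (i), (ii) and the (almost) vanishing of the ${\rm R}^1\!\lim$'s, and unwinding that $\mathbb{L}\widehat{\otimes}\widehat{\cO}^+_{\cX}:=\lim_m\bigl(\mathbb{L}_m\otimes\widehat{\cO}^+_{\cX}\bigr)$ and ${\rm H}^i(\cX_\proket,\mathbb{L})\widehat{\otimes}\cO_{\C_p}=\lim_m\bigl({\rm H}^i(\cX_\ket,\mathbb{L}_m)\otimes\cO_{\C_p}\bigr)$, yields the two displayed statements.

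The step I expect to be the main obstacle is this passage to the limit: one must commute pro-Kummer \'etale cohomology with the inverse limit defining $\mathbb{L}\widehat{\otimes}\widehat{\cO}^+_{\cX}$ while keeping track of the ``almost'' qualifier throughout, and one must verify that the two completions involved --- the one by the filtration $\Fil^\bullet$ built into $\mathbb{L}$ and the one by powers of $p$ built into $\widehat{\cO}^+_{\cX}$ --- are taken compatibly, so that $\mathbb{L}\widehat{\otimes}\widehat{\cO}^+_{\cX}$ is genuinely the inverse limit of the finite-level sheaves whose cohomology has been computed. By contrast, the dévissage in (i)--(ii) is routine once one records the flatness of the reduced structure sheaves, and the reduction to the Kummer \'etale site is purely formal via the functor $\nu$.
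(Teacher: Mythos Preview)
Your proof is correct and follows essentially the same route as the paper: reduce to the case of a single finite local system via Proposition~\ref{prop:GAGA} and \cite[Thm.~6.2.1]{Diao}, then pass to the inverse limit using the log affinoid perfectoid basis to control higher derived limits. The only cosmetic difference is that the paper packages the limit step through Jannsen's continuous \'etale cohomology and the $\displaystyle\lim_\leftarrow{}^{(1)}$ exact sequence of \cite[Prop.~1.6]{Ja}, invoking \cite[Lemma 3.18]{ScholzeHodge} to kill $\mathrm{R}^i\displaystyle\lim_\leftarrow$ of the sheaves themselves, whereas you argue directly with $\mathrm{R}\!\lim$ in the derived category and the Mittag--Leffler condition on the finite groups $\mathrm{H}^i(\cX_\ket,\mathbb{L}_m)$; these are equivalent formulations of the same argument.
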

\begin{proof}
The result for each $\mathbb{L}_n$ follows from the discussion in \S \ref{sec:comparecoeff} and from Proposition \ref{prop:GAGA}. 

Consider the natural map $\displaystyle{\lim_\leftarrow}\colon \mathrm{Sh}^\N(Z) \to \mathrm{Sh}(Z)$ from invese systems of sheaves on $Z=X(H,N)_\proket$, and $Z=\cX(H,N)_\proket$ respectively. Using the existence of bases of log affinoid perfectoid opens with the properties recalled in \S \ref{sec:logaffinoid}, it follows from  \cite[lemma 3.18]{ScholzeHodge} that we have $\mathrm{R}^i \displaystyle{\lim_\leftarrow} (\mathbb{L})=0$, both in the algebraic and in the adic setting, and $\mathrm{R}^i \displaystyle{\lim_\leftarrow} \bigl( \mathbb{L}\widehat{\otimes} \widehat{\cO}_{\cX(H,N)}^+\bigr)=0$ for $i\geq 1$. Hence, $\mathrm{H}^j\bigl(Z,\mathbb{L}\bigr)$ and $\mathrm{H}^j\bigl(\cX(H,N)_\proket,\mathbb{L}\widehat{\otimes} \widehat{\cO}_{\cX(H,N)}^+\bigr)$  coincide with the derived functors  $\mathrm{H}^j\bigl(Z,(\mathbb{L}_n)_{n\in \N}\bigr)$, resp.~$ \mathrm{H}^j\bigl(\cX(H,N)_\proket, (\mathbb{L}_n \otimes \cO_{\cX(H,N)_\proket}^+)_{n\in \N}\bigr) $ of  $\displaystyle{\lim_\leftarrow} \mathrm{H}^0(Z,\_) $ introduced by \cite{Ja} on the inverse system $\mathrm{Sh}^\N(Z)$. Due to \cite[Prop. 1.6]{Ja}, these cohomology groups sits in exact sequences
$$ 0 \lra \displaystyle{\lim_\leftarrow}^{(1)} \mathrm{H}^{j-1}\bigl(Z,\mathbb{L}_n\bigr) \lra \mathrm{H}^j\bigl(Z,(\mathbb{L}_n)_{n\in \N}\bigr)  \lra \displaystyle{\lim_\leftarrow}\mathrm{H}^{j}\bigl(Z,\mathbb{L}_n\bigr)  \lra 0$$and similalry for the inverse system $(\mathbb{L}_n \otimes \cO_{\cX(H,N)_\proket}^+)_{n\in \N}$. The maps in the Theorem are compatible with these sequences and the claims follow from the finite case, that is for the sheaves $\mathbb{L}_n$.

\end{proof}

\begin{remark} The sheaf $\mathbb{L}$  has the property that its pull-back to the perfectoid space $\cX(p^\infty,N)$ is constant and coincides, together with the its $H$-action, with $\pi_{\rm HT}^{-1}\bigl(L)$ where we view $L$ as a constant, $H$-equivariant sheaf on $\mathbb{P}^1$ (compare with \cite[\S 2.3]{CaraianiScholze} for the case of $p$-adic automorphic \'etale sheaves).

\end{remark}

\begin{example} Consider on $\cX(p^\infty,N) $  trivilizations $T_p E = \Z_p a \oplus \Z_p b$.
The group $\GL_2(\Z_p)$ acts on the left on the family of  trivilizations: given such a basis $\mathcal{A}:=\{a,b\}$ as above and a matrix $M\in \GL_2(\Z_p)$ we get a new basis $\mathcal{A}':=(a',b'):=(a ,b)  M$.  If we think of a trivialization as an isomorphism $\psi_{\mathcal{A}}\colon T_p E\cong \Z_p^2$ then $\psi_{\mathcal{A}'}$ is $\psi_{\mathcal{A}}$ times left multiplication by
$M$. Thus $T_p(E)$ corresponds to the standard representation $T=\Z_p\oplus \Z_p$ of $\GL_2(\Z_p)$.

This action of $M$ induces a map on dual basis ${^t}(a^\vee,b^\vee)=  M{^t}(a^{',\vee},b^{',\vee})$. Then the trivializations $\psi_{\mathcal{A}^\vee}\colon T_p E^\vee\cong \Z_p^2$ 
and $\psi_{\mathcal{A}^{',\vee}}\colon T_p E^\vee\cong \Z_p^2$ induced by the dual bases are such that $ \psi_{\mathcal{A}^\vee}$ is  $\psi_{\mathcal{A}^{',\vee}}$ times the
right multiplication by $M$. To make the map $\pi_{\rm HT}$ equivariant for the $\GL_2(\Z_p)$-action we take on $\PP^1_{\Q_p}$ the standard action. If $\pi_{\rm HT}  (\psi_{\mathcal{A}^\vee})=[\alpha ;\beta]$ and  $\pi_{\rm HT}  (\psi_{\mathcal{A}^{',\vee}})=[\alpha' ;\beta']$ then ${^t}[\alpha;\beta]=M  \, {^t}[\alpha';\beta']$. 

\end{example}

Consider an $s$-analytic character $k\colon \Z_p^\times\lra B^\ast$ as in Definition \ref{def:weights}. Consider the $\Q_p$-module $D_k(T_0^\vee)[n]$, for $n\geq s$, with action of the Iwahori subroup $\Iw_1$, defined in \S \ref{def:anal}. As $D_k(T_0^\vee)[n]=\bigl(D_k^o(T_0)[n]\bigr)[1/p]$ and $D_k^o(T_0^\vee)[n]$ admits a $\Iw_1$-equivariant filtration with finite graded pieces, we get an associated sheaf $\bD_k(T_0^\vee)[n]$ on the pro-Kummer \'etale site of $\cX_0(p,N)$. Then:

\begin{proposition}\label{prop:GAGAD}  For every $i\in \N$, we have isomorphisms  $$\mathrm{H}^i\bigl(\Gamma_0(p)\cap \Gamma(N),D_k(T_0^\vee)[n] \bigr)\cong \mathrm{H}^i\bigl(\cX_0(p,N)_\proket,\bD_k(T_0^\vee)[n]\widehat{\otimes} \widehat{\cO}_{\cX_0(p,N)}\bigr)  .$$

\end{proposition}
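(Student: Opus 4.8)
The plan is to deduce the statement from Theorem \ref{thm:GAGA}, combined with the standard comparison between the pro-Kummer \'etale cohomology of a local system on a modular curve and the cohomology of its uniformizing arithmetic group. (Note that $\cX_0(p,N)$ is proper, so $\widehat{\cO}_{\cX_0(p,N)_\proket}$ has $\C_p$ as its ring of global sections; the right-hand side is therefore naturally a module over $B\widehat{\otimes}\C_p$, and the displayed isomorphism is to be read after the evident completed base change, i.e.\ with $\mathrm{H}^i\bigl(\Gamma_0(p)\cap\Gamma(N),D_k(T_0^\vee)[n]\bigr)\widehat{\otimes}\C_p$ on the left.)

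First I would fix the bookkeeping of coefficients. The adic modular curve $\cX_0(p,N)$ is the curve $\cX(H,N)$ of the ``\'Etale sheaves'' subsection for $H\subset\GL_2(\Z_p)$ the Iwahori subgroup $\Iw_1$ of matrices upper triangular modulo $p$ (the level at $N$ being kept fixed throughout), and $\bD_k^o(T_0^\vee)[n]$ is precisely the pro-Kummer \'etale sheaf attached by that construction to the \emph{profinite} $\Iw_1$-representation $D_k^o(T_0^\vee)[n]=\lim_{\infty\leftarrow m}D_k^o(T_0^\vee)[n]/\Fil^m$, whose graded pieces are finite by \S\ref{sec:anal}. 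Writing $\bD_{k,m}:=\bD_k^o(T_0^\vee)[n]/\Fil^m$, this is a finite locally constant sheaf on $\cX_0(p,N)_\ket$, constant on some $\cX(p^r,N)$, and $\bD_k^o(T_0^\vee)[n]=\lim_m\bD_{k,m}$.

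Next I would carry out two comparisons. \emph{(a) Removing the period sheaf.} By the first isomorphism in Theorem \ref{thm:GAGA} and the pro-Kummer\'etale/Kummer\'etale comparison of \S\ref{sec:setup}, for each $m$ one has $\mathrm{H}^i(\cX_0(p,N)_\proket,\bD_{k,m})\cong\mathrm{H}^i(X_0(p,N)_\ket,\bD_{k,m})$; since $\bD_{k,m}$ is by construction a Kummer \'etale local system on the proper log curve $X_0(p,N)$ and the base field has characteristic $0$, its Kummer \'etale cohomology agrees with the \'etale cohomology of the open modular curve $Y_0(p,N)$ with the restricted coefficients, and the latter computes $\mathrm{H}^i\bigl(\Gamma_0(p)\cap\Gamma(N),D_k^o(T_0^\vee)[n]/\Fil^m\bigr)$, because the components of $Y_0(p,N)(\C)$ are quotients of the upper half plane by the torsion-free group $\Gamma_0(p)\cap\Gamma(N)$ and hence are $K(\pi,1)$'s (this is the dictionary already used in \cite{EichlerShimura}). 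Passing to the inverse limit over $m$ --- legitimate because all these are finite groups, so $\mathrm{R}^1\lim$ vanishes on both sides --- and inverting $p$ gives $\mathrm{H}^i(\cX_0(p,N)_\proket,\bD_k(T_0^\vee)[n])\cong\mathrm{H}^i\bigl(\Gamma_0(p)\cap\Gamma(N),D_k(T_0^\vee)[n]\bigr)$. \emph{(b) Inserting the period sheaf.} By the second part of Theorem \ref{thm:GAGA} applied with $L=D_k^o(T_0^\vee)[n]$, the natural map $\mathrm{H}^i(\cX_0(p,N)_\proket,\bD_k^o(T_0^\vee)[n])\widehat{\otimes}\cO_{\C_p}\to\mathrm{H}^i(\cX_0(p,N)_\proket,\bD_k^o(T_0^\vee)[n]\widehat{\otimes}\widehat{\cO}_{\cX_0(p,N)}^+)$ is an almost isomorphism; inverting $p$ turns it into an isomorphism $\mathrm{H}^i(\cX_0(p,N)_\proket,\bD_k(T_0^\vee)[n])\widehat{\otimes}\C_p\xrightarrow{\sim}\mathrm{H}^i(\cX_0(p,N)_\proket,\bD_k(T_0^\vee)[n]\widehat{\otimes}\widehat{\cO}_{\cX_0(p,N)})$. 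Combining (a) with (b) --- after the completed base change to $\C_p$ on the group-cohomology side --- yields the assertion.

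I expect the main obstacle to be the control of coefficients underlying both (a) and (b): verifying that Theorem \ref{thm:GAGA} (formulated for a profinite representation) genuinely applies here, and that the operations $[1/p]$, $\widehat{\otimes}\C_p$ and $\lim_{\infty\leftarrow m}$ commute with the cohomology functors in the way used above. This rests on the finiteness of the graded pieces of $D_k^o(T_0^\vee)[n]$, on \cite[Thm.~6.2.1]{Diao} applied level by level, and on the $\mathrm{R}\lim$-vanishing already exploited in the proof of Theorem \ref{thm:GAGA} (via \cite[Prop.~1.6]{Ja}). A secondary point is the finite-level comparison in (a) between Kummer \'etale cohomology of the proper log modular curve and the group cohomology of the congruence subgroup, which uses the characteristic $0$ hypothesis (so that the coefficient sheaves are automatically tame along the cusps) and is implicit in \cite{EichlerShimura}.
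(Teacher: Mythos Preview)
Your proposal is correct and follows essentially the same route as the paper: identify the group cohomology of $\Gamma_0(p)\cap\Gamma(N)$ with the (pro-)Kummer \'etale cohomology of $\bD_k(T_0^\vee)[n]$ on the modular curve via the filtration with finite graded pieces, then apply Theorem~\ref{thm:GAGA} and invert $p$. The paper is terser---it cites \cite[Thm.~3.15]{EichlerShimura} for step~(a) rather than spelling out the $K(\pi,1)$ argument, and it absorbs the inverse-limit bookkeeping into the proof of Theorem~\ref{thm:GAGA} itself---but the content is the same.
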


\begin{proof} The first group is identified with  $\mathrm{H}^i\bigl(X_0(p,N)_\ket,\bD_k(T_0^\vee)[n]  \bigr)$  arguing as in \cite[Thm. 3.15]{EichlerShimura} using the filtration $\mathrm{Fil}^\bullet \bD_k(T_0^\vee)[n]$ discussed in \S \ref{sec:anal}. As $\widehat{\cO}_{\cX_0(p,N)}=\widehat{\cO}_{\cX_0(p,N)}^+[1/p]$ and cohomolopgy commutes with direct limits, the conclusion follows from Theorem \ref{thm:GAGA} and inverting $p$.

\end{proof}

We have actions of  Hecke operators on $\mathrm{H}^i\bigl(\cX_0(p,N)_\proket,\bD_k(T_0^\vee)[n]\widehat{\otimes} \widehat{\cO}_{\cX_0(p,N)}\bigr)$ as follows. Let $\ell$ be a prime integer not dividing $N$ and let $q_1,q_2\colon X_0(p,N,\ell)\to X_0(p,N)$ be as in \S\ref{sec:Up}. By \S \ref{sec:HeckeD} we have a map $$\mathcal{U}\colon q_1^\ast\bigl(\bD_k^o(T_0^\vee)[n]\widehat{\otimes} \widehat{\cO}_{\cX_0(p,N)}^+\bigr) \to q_2^\ast\bigl(\bD_k^o(T_0^\vee)[n+1]\bigr)\widehat{\otimes} \widehat{\cO}_{\cX_0(p,N,\ell)}^+.$$Inverting $p$, taking $q_{2,\ast}$ and using the trace map ${\rm Tr}\colon q_{2,\ast} q_2^\ast \to {\rm Id}$ of Lemma \ref{lemma:trace} we get a map 
$$q_{2,\ast} q_1^\ast\bigl(\bD_k(T_0^\vee)[n]\widehat{\otimes} \widehat{\cO}_{\cX_0(p,N)}\bigr) \to \bD_k(T_0^\vee)[n+1]\widehat{\otimes} \widehat{\cO}_{\cX_0(p,N)}.$$We have a   restriction map $D_k(T_0^\vee)[n+1]\to D_k(T_0^\vee)[n] $ which is $\Iw_1$-equivariant and defines a map $\bD_k(T_0^\vee)[n+1] \to \bD_k(T_0^\vee)[n] $. We finally get a map 
$$q_{2,\ast} q_1^\ast\bigl(\bD_k(T_0^\vee)[n]\widehat{\otimes} \widehat{\cO}_{\cX_0(p,N)}\bigr) \to \bD_k(T_0^\vee)[n]\widehat{\otimes} \widehat{\cO}_{\cX_0(p,N)}.$$Taking cohomology groups over $\cX_0(p,N)_\proket$ and using the map $$\mathrm{H}^i\bigl(\cX_0(p,N)_\proket,\bD_k(T_0^\vee)[n]\widehat{\otimes} \widehat{\cO}_{\cX_0(p,N)}\bigr) \to \mathrm{H}^i\bigl(\cX_0(p,N,\ell)_\proket,q_1^\ast\bigl(\bD_k(T_0^\vee)[n]\widehat{\otimes} \widehat{\cO}_{\cX_0(p,N)}\bigr)  \bigr) $$we get Hecke operators 

\begin{equation}\label{eq:HeckeonHi} T_\ell, U_p, U_p^{\rm naive}\colon \mathrm{H}^i\bigl(\cX_0(p,N)_\proket,\bD_k(T_0^\vee)[n]\widehat{\otimes} \widehat{\cO}_{\cX_0(p,N)}\bigr)  \to \mathrm{H}^i\bigl(\cX_0(p,N)_\proket,\bD_k(T_0^\vee)[n]\widehat{\otimes} \widehat{\cO}_{\cX_0(p,N)}\bigr) . \end{equation}

(Here we introduce  the un-normalized operator $U_p^{\rm naive}$ which is $p $ times the standard operator $U_p$ as it preserves integral structures, a fact that will be of  crucial importance later on.)

\subsection{A comparison result on $\cX_0(p^n,N)_\infty^{(m)}$.}  Consider an $s$-analytic weight $k\colon \Z_p^\ast \to B^\ast$ as in definition \ref{def:weights}, integers $ n\geq s,m$, where we recall 
that $s$ is the degree of analycity of $k$  and the associated sheaf $$\mathfrak{D}_{k,\infty}^{o,(m)}[n]:=\bD_k^o(T_0^\vee)[n]\vert_{\cX_0(p^n,N)_\infty^{(m)}} \widehat{\otimes} \widehat{\cO}_{\cX_0(p^n,N)_\infty^{(m)}}^+.$$ The aim of this section is to prove that for $m$ large enough it admits a decreasing filtration $\Fil^h \mathfrak{D}_{k,\infty}^{o,(m)}[n]$ for $h\geq -1$ with the following properties:

\begin{proposition}\label{prop:Wkinfty} We have:

\begin{itemize}

\item[i.] For $n\ge m'\geq m$ we have $\Fil^\bullet  \mathfrak{D}_{k,\infty}^{o,(m)}[n]  \widehat{\otimes}_{ \widehat{\cO}_{\cX_0(p^n,N)_\infty^{(m)}}^+} \widehat{\cO}_{\cX_0(p^n,N)_\infty^{(m')}}^+\cong \Fil^\bullet  \mathfrak{D}_{k,\infty}^{o,(m')}[n] $;

\item[ii.] We have a surjective map $\mathrm{Gr}^{-1} \mathfrak{D}_{k,\infty}^{o,(m)}[n]=\mathfrak{D}_{k,\infty}^{o,(m)}[n]/\Fil^0 \mathfrak{D}_{k,\infty}^{o,(m)}[n]\longrightarrow \omega^{k} \otimes_{\cO_{\cX_0(p^n, N)_\infty^{(m)}}^+} \widehat{\cO}_{\cX_0(p^n, N)_\infty^{(m)}}^+ $;

\item[iii.] We have an isomorphism  $$\mathrm{H}^1\bigl(\cX_0(p^n, N)_{\infty,\proket}^{(m)},  \omega^{k} \otimes_{\cO_{\cX_0(p^s,N)_\infty^{(m)}}^+} \widehat{\cO}_{\cX_0(p^n,
N)_\infty^{(m)}} \bigr)\cong \mathrm{H}^0\bigl(\cX_0(p^n, N)_{\infty}^{(m)},\omega^{k+2}\bigr)[1/p];$$

\item[iv.] The map $$\mathrm{H}^1\bigl(\cX_0(p^n, N)_{\infty,\proket}^{(m)}, \mathfrak{D}_{k,\infty}^{o,(m)}[n] /\Fil^h \mathfrak{D}_{k,\infty}^{o,(m)}[n] \bigr) \to \mathrm{H}^1\bigl(\cX_0(p^n, N)_{\infty,\proket}^{(m)}, \omega^{k} \otimes_{\cO_{\cX_0(p^s,N)_\infty^{(m)}}^+} \widehat{\cO}_{\cX_0(p^n, N)_\infty^{(m)}}^+\bigr),$$induced by the projection onto $\mathrm{Gr}^{-1} \bD_{k,\infty}^{o,(m)}$ and (ii),  has kernel and cokernel annihilated by a power $p^{a h}$ of $p$ (with $a$ depending on $n$);

\item[v] We have $\mathrm{H}^i\bigl(\cX_0(p^n, N)_{\infty,\proket}^{(m)}, \mathfrak{D}_{k,\infty}^{o,(m)}[n] /\Fil^h \mathfrak{D}_{k,\infty}^{o,(m)}[n] \bigr)$ equal to $\mathrm{H}^0\bigl(\cX_0(p^n, N)_{\infty}^{(m)},\mathrm{Gr}^h \mathfrak{D}_{k,\infty}^{o,(m)}[n] \bigr) $ for $i=0$ and it is $0$ for $i\geq 2$.

\end{itemize}

\end{proposition}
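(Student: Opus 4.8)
The plan is to build the filtration by descent from the perfectoid tower and the $\mathrm{dlog}$ map of Proposition \ref{proposition:existcanonical}, and then to establish (i)--(v) by a dévissage along it. First I would work over $\cX_1(p^n,N)_\infty^{(m)}$ (and over $\cX(p^n,N)_0^{(m)}$ in the "$0$"-variant) for $m$ large, where Proposition \ref{proposition:existcanonical} provides the canonical subgroup $C_n$ equal to the level-$n$ subgroup, the invertible $\cO^+$-module $\omega_E^{\rm mod}\subset\omega_E^+$, and the surjection ${\rm dlog}\colon T_p(E)^\vee\widehat\otimes\widehat\cO^+\to\omega_E^{\rm mod}\widehat\otimes\widehat\cO^+$ with kernel $(\omega_E^{\rm mod})^{-1}\widehat\otimes\widehat\cO^+$ and factoring, modulo $p^r$, through the rank-one quotient $C_n^\vee$. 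Feeding the pair $\cE^+:=T_p(E)^\vee\widehat\otimes\widehat\cO^+$, the ideal $\cI:=p^r\widehat\cO^+$, and the marked section $s:={\rm dlog}$ of the tautological generator of $C_n^\vee$ into the dual-VBMS formalism of \S\ref{sec:dualVBMS}--\S\ref{sec:WkWDk}, and using the identification of $\bD_k^o(T_0^\vee)[n]$ with a sum of $\WW_k^D$-type sheaves from Corollary \ref{cor:DkWkD} and Proposition \ref{prop:AkWkveee}, one obtains $\mathfrak{D}_{k,\infty}^{o,(m)}[n]$ with the decreasing filtration $\Fil^h$ ($h\geq -1$), where $\mathrm{Gr}^h$ is, in the local coordinates of \S\ref{sec:explicit}, the free rank-one piece spanned by $(D/(1+\alpha C))^{h+1}k(1+\alpha C)$ and $\Fil^{-1}=\mathfrak{D}_{k,\infty}^{o,(m)}[n]$. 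Since $\omega_E^{\rm mod}$, $C_n$ and $s$ are compatible with restriction to smaller neighbourhoods and the VBMS construction commutes with base change of the adic space, (i) is immediate; (ii) is the assertion that the degree-$0$ graded piece $\mathrm{Gr}^{-1}$ surjects onto $\WW_k(\omega_E^{\rm mod},s)\widehat\otimes\widehat\cO^+=\omega^k\widehat\otimes\widehat\cO^+$, which is exactly the surjectivity of ${\rm dlog}$ onto $\omega_E^{\rm mod}\widehat\otimes\widehat\cO^+$ transported through the duality $\xi_k$ of Definition \ref{def:dualityWkWk^D}. The descent from $\cX_1(p^n,N)_\infty^{(m)}$ to $\cX_0(p^n,N)_\infty^{(m)}$ (resp.\ from $\cX(p^n,N)_0^{(m)}$), taking the $k$-eigenpart as in \S\ref{sec:omegak} for the relevant finite Kummer étale Galois group, is routine.

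For (iii) I would tensor the integral Hodge--Tate exact sequence
$$0\longrightarrow (\omega_E^{\rm mod})^{-1}\widehat\otimes\widehat\cO\longrightarrow T_p(E)^\vee\widehat\otimes\widehat\cO\longrightarrow \omega_E^{\rm mod}\widehat\otimes\widehat\cO\longrightarrow 0$$
with the line bundle $\omega^{k+1}$ and take the long exact cohomology sequence over $\cX_0(p^n,N)_{\infty,\proket}^{(m)}$: the middle term is computed by $\mathrm{R}\nu_\ast$ (Scholze's $\mathrm{R}^1\nu_\ast\widehat\cO\cong\Omega^1_{\cX}(-1)$, and $\nu_\ast\widehat\cO\cong\cO_{\cX}$ up to the log poles at the cusps) together with the Kodaira--Spencer isomorphism $\Omega^1_{\cX}\cong\omega_E^{2}(-\mathrm{cusps})$, while $\mathrm{H}^1$ of any coherent sheaf on the affinoid $\cX_0(p^n,N)_\infty^{(m)}$ vanishes after inverting $p$. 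This yields $\mathrm{H}^1\bigl(\cX_0(p^n,N)_{\infty,\proket}^{(m)},\omega^{k}\widehat\otimes\widehat\cO\bigr)\cong\mathrm{H}^0\bigl(\cX_0(p^n,N)_\infty^{(m)},\omega^{k+2}\bigr)[1/p]$, the Tate twist being trivial over $\C_p$; equivalently, this is the isomorphism $\Phi$ of Proposition \ref{prop:HeckeWkinfty}, proved in \cite{half}, which I would simply quote.

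For (v) the key is a cohomological dimension argument: each $\mathrm{Gr}^h$ is, up to bounded $p$-torsion, a line-bundle twist of $\widehat\cO$, so $\mathrm{H}^{\geq 2}$ of it vanishes because $\cX_0(p^n,N)_\infty^{(m)}$ is a curve ($\Omega^{\geq 2}=0$); by dévissage along the finite filtration with graded pieces $\mathrm{Gr}^{-1},\mathrm{Gr}^0,\dots,\mathrm{Gr}^{h-1}$ one gets $\mathrm{H}^{\geq 2}\bigl(\mathfrak{D}_{k,\infty}^{o,(m)}[n]/\Fil^h\bigr)=0$, and the identification of $\mathrm{H}^0$ follows from the same long exact sequences together with the estimate on the positive graded pieces established in (iv) below, which shows that they contribute only bounded $p$-torsion to $\mathrm{H}^0$.

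The crux is (iv). I would run the dévissage along the filtration of $\mathfrak{D}_{k,\infty}^{o,(m)}[n]/\Fil^h$ with graded pieces $\mathrm{Gr}^{-1},\mathrm{Gr}^0,\dots,\mathrm{Gr}^{h-1}$. Since $\mathrm{H}^{\geq 2}$ vanishes, all cokernels in sight vanish, so the content is a bound on the kernel of $\mathrm{H}^1\bigl(\mathfrak{D}_{k,\infty}^{o,(m)}[n]/\Fil^h\bigr)\to\mathrm{H}^1\bigl(\omega^k\widehat\otimes\widehat\cO^+\bigr)$; by the long exact sequences this kernel is an extension built out of the kernel of the map $\mathrm{Gr}^{-1}\to\omega^k\widehat\otimes\widehat\cO^+$ of (ii) and the images of the groups $\mathrm{H}^1\bigl(\cX_0(p^n,N)_{\infty,\proket}^{(m)},\mathrm{Gr}^j\bigr)$ for $0\leq j\leq h-1$ (equivalently, the cokernels of the connecting maps out of $\mathrm{H}^0(\mathrm{Gr}^{-1})$). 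The first is annihilated by a fixed $p^{a}$ with $a=a(n)$, because by Proposition \ref{proposition:existcanonical}(c) the map ${\rm dlog}$ modulo $p^r$ factors through the rank-one $C_n^\vee$, so the discrepancy between $\mathrm{Gr}^{-1}$ and $\omega^k\widehat\otimes\widehat\cO^+$ is $p^r$-bounded. For the contributions of the positive graded pieces, the point is the explicit shape of the duality computed in \S\ref{sec:explicitduality}: the isomorphism $\bigoplus_\lambda\WW_k(T,s,t-\lambda s)^\vee\cong\WW_k^D(T^\vee,s^\vee,Q_n)$ multiplies the degree-$j$ graded piece by $\alpha^{j}\binom{u_k}{j}=p^{rj}\binom{u_k}{j}$, and since $k$ is $s$-analytic one has $v_p\bigl(\binom{u_k}{j}\bigr)\geq -j/(p-1)$, so these factors have valuation bounded by a linear function of $j$; combined with the high $p$-divisibility of ${\rm dlog}(b^\vee)$ on the canonical neighbourhood $U_\infty^{(m)}$ this makes the relevant connecting maps almost surjective with error annihilated by a power of $p$ whose growth, summed over the $h$ steps of the dévissage, stays linear in $h$. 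The main obstacle is precisely this last estimate: one has to choose $m$ and the constant $a=a(n)$ so that the valuations coming from $\binom{u_k}{j}$, from the radius of the canonical subgroup, and from the passage between $\omega_E^{\rm mod}$ and $\omega_E^+$ combine to a bound genuinely linear in $h$ rather than quadratic, and to check that the bound is uniform in $m$ and compatible with the $U_p$-dynamics used later on.
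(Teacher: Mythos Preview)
Your construction of the filtration rests on a conflation that the paper is careful to avoid. You identify $\mathfrak{D}_{k,\infty}^{o,(m)}[n]$ with a $\WW_k^D$-type sheaf via Corollary \ref{cor:DkWkD}, and you describe $\mathrm{Gr}^h$ in the coordinates $(C,D)$ of \S\ref{sec:explicit}. But Corollary \ref{cor:DkWkD} only gives a \emph{map} $D_k^o(T_0^\vee)[n]\to \WW_k^D(T^\vee,s^\vee,Q_n)$, and the computation in \S\ref{sec:explicitduality} shows that on the degree-$j$ piece this map is multiplication by $\alpha^j\binom{u_k}{j}$, hence far from an isomorphism. The actual identification, from Proposition \ref{prop:AkWkveee}, is $D_k^o(T_0^\vee)[n]\cong \bigoplus_\lambda \WW_k(T,s,t-\lambda s)^\vee$, the \emph{strong dual} of the VBMS sheaf, not the dual-VBMS sheaf. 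The paper builds the filtration accordingly: it first puts the natural \emph{increasing} filtration on $\mathfrak{W}_{k,\infty,\lambda}^{(m)}:=\WW_k(T_p(E),a,b-\lambda a)\otimes\widehat\cO^+$ (polynomials of bounded degree in $W_\lambda/(1+p^nZ)$), with $\mathrm{Gr}_h\cong \omega^{-k+2h}\otimes\widehat\cO^+$, and then defines $\Fil^h$ on the dual as the annihilator of $\Fil_h$. No $\WW_k^D$ appears.

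This misidentification propagates into your argument for (iv). The $\alpha^j\binom{u_k}{j}$ factors you invoke are the content of the map $\xi_k$ and play no role on the $\infty$-side; they are precisely what controls the \emph{other} proposition, \ref{prop:Wk0}, on $\cX_0^{(m)}$, where the Galois action twists $e_1$ by $e_0$ and formula (\ref{eq:sigma}) produces the binomials. On $\cX_\infty^{(m)}$ the situation is opposite: passing to a log affinoid perfectoid cover $U$ trivializing $\prod_\ell T_\ell(E)$, with Galois group $G_U$, one has $\sigma(e_1)=e_1$ and $\sigma(e_0)=e_0+\xi(\sigma)e_1$, so the action on $\bigoplus_\lambda\mathfrak{W}_{k,\infty,\lambda}^{(m)}$ is unipotent in the direction that preserves the increasing filtration and acts by translation $W_\lambda\mapsto W_{\lambda+\alpha}+\beta(1+p^nZ)$ on $\Fil_0$. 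This is what makes the filtration and the map $\nu$ of (ii) $\Delta_n$-equivariant, and it reduces (iii)--(v) to a continuous $G_U$-cohomology computation (the cohomology of the relevant sheaves over $U$ vanishes almost, so one is left with group cohomology). The linear-in-$h$ bound in (iv) then comes from that Galois cohomology, for which the paper simply cites \cite[Thm.~5.4]{half}; it has nothing to do with $\binom{u_k}{j}$. Finally, quoting Proposition \ref{prop:HeckeWkinfty} for (iii) would be circular, since that proposition is deduced \emph{from} the present one.
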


\begin{proof}   Recall the surjective map 
$${\rm dlog}\colon T_p(E)^\vee \otimes_{\Z_p} \widehat{\cO}_{\cX_0(p^n,N)_\infty^{(m)}}^+  \longrightarrow  \omega_E^{\rm mod}\widehat{\otimes}  \widehat{\cO}_{\cX_0(p^n,N)_\infty^{(m)}}^+$$ from (\ref{eq:dlog}) and  Proposition \ref{proposition:existcanonical}. It is defined for every $m$ large enough and $\omega_E^{\rm mod} $ is an invertible $\cO_{\cX_0(p^n,N)_\infty^{(m)}}^+$-module.  
Consider the natural projection $j_n\colon \cX(p^n,N)_\infty^{(m)} \to \cX_0(p^n,N)_\infty^{(m)}$; then $j_n$ is Kummer \'etale and Galois with group $\Delta_n$ the subgroup of $\GL_2(\Z/p^n\Z)$ of upper triangular matrices. In particular, to define a sheaf on $\cX_0(p^n,N)_{\infty,\proket}^{(m)}$ is equivalent to define a sheaf on $\cX(p^n,N)_{\infty,\proket}^{(m)}$ with an action of $\Delta_n$ compatible with the action on $\cX(p^n,N)_\infty^{(m)}$. In order to define $\Fil^h \mathfrak{D}_{k,\infty}^{o,(m)}[n]$ we  define a $\Delta_n$-equivariant  filtration on $j_n^\ast \bigl( \mathfrak{D}_{k,\infty}^{o,(m)}[n]\bigr)$.

Over $ \cX(p^n,N)_\infty^{(m)}$  we  have a trivialization $$T_p(E)/p^n T_p(E)=(\Z/p^n\Z) a \oplus (\Z/p^n\Z)b$$such that the level subgroup $H_n$ admits $a$ as generator. In particular,  ${\rm dlog}(a^\vee)$ is a basis for $\omega_E^{\rm mod}/ p^n \omega_E^{\rm mod}$ as $\cO_{\cX(p^n,N)_\infty^{(m)}}^+$-module. Dualizing we get an injective map 
$$({\rm dlog})^\vee\colon \omega_E^{\rm mod,-1} \otimes_{\cO_{\cX_0(p,N)_\infty^{(m)}}^+} \widehat{\cO}_{\cX(p^n,N)_\infty^{(m)}}^+ \lra T_p(E) \otimes_{\Z_p} \widehat{\cO}_{\cX(p^n,N)_\infty^{(m)}}^+ $$with quotient $\mathcal{Q}$ isomorphic to $\omega_E^{\rm mod}\otimes_{\cO_{\cX_0(p,N)_\infty^{(m)}}^+} \widehat{\cO}_{\cX(p^n,N)_\infty^{(m)}}^+$ and such that modulo $p^n$ the section $a$ generates $\omega_E^{\rm mod,-1}/p^n \omega_E^{\rm mod,-1}$ as $\cO_{\cX(p^n,N)_\infty^{(m)}}^+$-module. Then $({\rm dlog})^\vee$ induces, for every $\lambda\in \Z/p^n\Z$, an affine map $$\rho_\lambda\colon \V_0\bigl(T_p(E) , a,b-\lambda a\bigr) \times_{\Z_p} \cX(p^n,N)_\infty^{(m)}  \lra \V_0\bigl(\omega_E^{\rm mod},a\bigr) \times_{\cX_0(p,N)_\infty^{(m)}} \cX(p^n,N)_\infty^{(m)}
 $$on the pro-Kummer \'etale site of $\cX(p^n,N)_\infty^{(m)}$. Notice that $\V_0\bigl(T_p(E) , a,b-\lambda a\bigr) \times_{\Z_p} \cX(p^n,N)_\infty^{(m)}\cong \V_0\bigl(T_p(E)\otimes_{\Z_p} \widehat{\cO}_{\cX(p^n,N)_\infty^{(m)}}^+,a,b-\lambda b\bigr)$ is a principal homogeneous space under the formal vector group $\V'\bigl(\mathcal{Q}\bigr) \subset \V\bigl(\mathcal{Q}\bigr)$ classifying sections of $\mathcal{Q}^\vee$ which are zero modulo $p^n$. We have the invertible $\cO_{\cX_0(p,N)_\infty^{(m)}}^+\widehat{\otimes} B$-module
$\omega^{-k} := \WW_k(\omega_E^{\rm mod,-1}, a\bigr)$. Set $$\mathfrak{W}_{k,\infty,\lambda}^{(m)}:= W_k\bigl(T_p(E),a,b-\lambda a\bigr) \otimes_{\Z_p} \widehat{\cO}_{\cX(p^n,N)_\infty^{(m)}}^+.$$Applying the formalism of VBMS we get the map of sheaves on the pro-Kummer \'etale site of $\cX(p^n,N)_\infty^{(m)}$ $$\rho_\lambda^k\colon \omega^{-k} \otimes_{\cO_{\cX_0(p,N)_\infty^{(m)}}^+} \widehat{\cO}_{\cX(p^n,N)_\infty^{(m)}}^+ \lra \mathfrak{W}_{k,\infty,\lambda}^{(m)} .$$Using that $\V_0\bigl(T_p(E)\otimes_{\Z_p} \widehat{\cO}_{\cX(p^n,N)_\infty^{(m)}}^+,a,b +\lambda a\bigr)$ is a principal homogeneous space under  $\V'\bigl(\mathcal{Q}\bigr)$ we obtain a  $\V'\bigl(\mathcal{Q}\bigr)$-stable increasing filtration  $\Fil_h \mathfrak{W}_{k,\infty,\lambda}^{(m)}$ for $h\geq 0$  with 

\begin{equation}\label{eq:gr} \Fil_0 \mathfrak{W}_{k,\infty,\lambda}^{(m)}=\omega^{-k} \otimes_{\cO_{\cX_0(p,N)_\infty^{(m)}}^+} \widehat{\cO}_{\cX(p^n,N)_\infty^{(m)}}^+,\qquad \mathrm{Gr}_h \mathfrak{W}_{k,\infty}^{(m)}\cong \omega^{-k+2h} \otimes_{\cO_{\cX_0(p,N)_\infty^{(m)}}^+} \widehat{\cO}_{\cX(p^n,N)_\infty^{(m)}}^+.
\end{equation}

See \cite{andreatta_iovita_triple} and \cite[Prop. 5.2]{half}. Taking $\widehat{\cO}_{\cX(p^n,N)_\infty^{(m)}}^+$-duals we get  a sheaf and a decreasing filtration $$\mathfrak{W}_{k,\infty,\lambda}^{(m),\vee}=W_k\bigl(T_p(E),a,b-\lambda b\bigr)^\vee \otimes_{\Z_p} \widehat{\cO}_{\cX(p^n,N)_\infty^{(m}}^+, \qquad \Fil^h \mathfrak{W}_{k,\infty,\lambda}^{(m),\vee}, h\geq -1 $$on the pro-Kummer \'etale site of $\cX(p^n,N)_\infty^{(m)}$. Here $\Fil^h \mathfrak{W}_{k,\infty,\lambda}^{(m),\vee}$ consists of those sections of $\mathfrak{W}_{k,\infty,\lambda}^{(m),\vee}$ which are zero on $\Fil_h \mathfrak{W}_{k,\infty,\lambda}^{(m)}$ (where we set  $\Fil_{-1}\mathfrak{W}_{k,\infty,\lambda}^{(m),\vee}=0$ so that $\Fil^{-1} \mathfrak{W}_{k,\infty,\lambda}^{(m),\vee})= \mathfrak{W}_{k,\infty,\lambda}^{(m),\vee}$). Then
  \begin{equation}\label{eq:Grh} \mathrm{Gr}^h \mathfrak{W}_{k,\infty,\lambda}^{(m),\vee}\cong \omega^{k-2h-2} \otimes_{\cO_{\cX_0(p,N)_\infty^{(m)}}^+} \widehat{\cO}_{\cX(p^n,N)_\infty^{(m)}}^+.\end{equation}

Due to Proposition \ref{prop:AkWkveee} we have a $\Delta_n$-equivariant isomorphism 
 $$j_n^\ast \bigl( \mathfrak{D}_{k,\infty}^{o,(m)}[n]\bigr)\cong  \oplus_{\lambda\in\Z/p^n\Z} \mathfrak{W}_{k,\infty,\lambda}^{(m),\vee}$$and we set $\Fil^h j_n^\ast \bigl( \mathfrak{D}_{k,\infty}^{o,(m)}[n]\bigr)$ to be the filtration corresponding to  $\oplus_{\lambda\in\Z/p^n\Z} \Fil^h\mathfrak{W}_{k,\infty,\lambda}^{(m),\vee}  $. The map $\amalg_ \lambda \rho_\lambda$ defines a  map $\rho_\lambda^k\colon \omega^{-k} \otimes_{\cO_{\cX_0(p,N)_\infty^{(m)}}^+} \widehat{\cO}_{\cX(p^n,N)_\infty^{(m)}}^+ \lra \oplus_\lambda \mathfrak{W}_{k,\infty,\lambda}^{(m)} $ and, hence, a map $$\nu\colon \mathrm{Gr}^{-1} j_n^\ast\bigl(\mathfrak{D}_{k,\infty}^{o,(m)}[n]\bigr) \to \omega^{k} \otimes_{\cO_{\cX_0(p,N)_\infty^{(m)}}^+} \widehat{\cO}_{\cX(p^n,N)_\infty^{(m)}}^+.$$By construction (i) and (ii) hold over $\cX(p^n,N)_\infty^{(m)}$ and to prove those claims we need to show that the given filtration and the map $\nu$ are $\Delta_n$-equivariant. Also given a sheaf $\mathcal{F}$ on $\cX_0(p,N)_\infty^{(m)}$ we have a spectral sequence $$\mathrm{H}^i\bigl(\Delta_n \mathrm{H}^j\bigl(\cX(p^n,N)_{\infty,\proket}^{(m)},j_n^\ast(\mathcal{F})\bigr) \Rightarrow \mathrm{H}^{i+j}\bigl(\cX_0(p,N)_{\infty,\proket}^{(m)},\mathcal{F}\bigr).  $$Since the cohomology groups $\mathrm{H}^j\bigl(\Delta_n,\mathcal{G}\bigr)$ are annihilated by the order   of $\Delta_n$ for $j\geq 1$, to conclude (iv)--(v) it suffices to prove those claims for $\cX(p^n,N)_{\infty}$.

The rest of the proof is a computation using the log affinoid perfectoid  cover by opens $U$ of the adic space $\cX_0(p^n,N)_{\infty,\proket}^{(m)}$ defined by trivializing the full Tate module $\prod_\ell T_\ell(E)$. It is Galois over $\cX_0(p^n,N)_{\infty,\proket}^{(m)}$ with group $G_U$. Let $\widehat{U}:=\mathrm{Spa}(R,R^+)$ be the associated affinoid perfectoid space as in \S \ref{sec:logaffinoid}.  Write $T_p(E)^\vee(U)\otimes R^+=e_0 R^++
e_1R^+$ with $e_0$ mapping to a generator of $\omega_E^{\rm mod}$ and $e_1$  in the kernel of $d\log$ generating $\omega_E^{\rm mod,-1}$. Recall from Proposition \ref{prop:AkWkveee} that $\mathfrak{W}_{k,\infty,\lambda}^{(m),\vee}(U)$ is the dual of $\mathfrak{W}_{k,\infty,\lambda}^{(m)}(U)\cong R^+\otimes B\langle \frac{W_\lambda}{1+p^n Z} \rangle \cdot k(1+p^n Z)$ with increasing filtration defined by $\Fil^h = \oplus_{i=0}^h R^+\otimes B \bigl(\frac{W_\lambda}{1+p^n Z}\bigr)^i  \cdot \cdot k(1+p^n Z) $.

For every $\sigma\in G_U$ we then have $\sigma(e_1)=e_1$ and $\sigma(e_0)=e_0+\xi(\sigma) e_1$. Then, $\sigma(W_\lambda)=W_\lambda +\frac{\xi(\sigma)}{p^n}  (1+p^n Z)  $ and $\sigma(Z)= Z$. If $\xi(\sigma)=\alpha + p^n \beta$ then $\sigma(W_\lambda)=W_{\lambda+\alpha} + \beta (1+p^n Z) $. Thus the incresing filtration on $\oplus_{\lambda\in \Z/p^n\Z}\mathfrak{W}_{k,\infty,\lambda}^{(m)}$ and the diagonal embedding of $R^+\otimes B\to \oplus_{\lambda\in \Z/p^n\Z}\Fil^0 \mathfrak{W}_{k,\infty,\lambda}^{(m)}$ are both stable for the action of $G_U$. This concludes the proof of (i) and (ii).

We pass to Claims (iii)-(v).   As $\omega^{k+2}$ is a locally free $\cO_{\cX_0(p,N)_\infty^{(m)}}^+\widehat{\otimes}B$-module we are left to show that $$\mathrm{H}^1\bigl(\cX_0(p,N)_{\infty,\proket}^{(m)},  \widehat{\cO}_{\cX_0(p,N)_\infty^{(m)}} \bigr)\cong \mathrm{H}^0\bigl(\cX_0(p,N)_{\infty}^{(m)},\omega^{2}\bigr)[1/p].$$So to prove Claim (iii) we are left to show that that the map  (\ref{eq:H0H1}) is an isomorphism. The \'etale cohomology of the structure sheaf on $U$ is trivial as recalled in \S\ref{sec:logaffinoid}.  Thanks to (\ref{eq:Grh}) the sheaves $ \mathfrak{D}_{k,\infty}^{o,(m)}[n] /\Fil^h \mathfrak{D}_{k,\infty}^{o,(m)}[n] \vert_U$ are extensions of the structure sheaf $\widehat{\cO}_U^+$ so that also their cohomology over $U$ is trivial.  Then the cohomology groups in (\ref{eq:H0H1}) and those of $ \mathfrak{D}_{k,\infty}^{o,(m)}[n] /\Fil^h \mathfrak{D}_{k,\infty}^{o,(m)}[n]$ appearing in (iv) and (v) coincide with the continuous cohomology of $G_U$ of the sections of the relevant sheaves over $U$.  This reduces the proof of claims (iii), (iv) and (v)  to a Galois cohomology computation for which we refer to \cite[Thm. 5.4]{half}.

\end{proof}

We have Hecke operators acting on the cohomology of $\mathfrak{D}_{k,\infty}^{o,(m)}[n] $; see (\ref{eq:HeckeonHi}). We assume the hypothesis in the proof of Proposition \ref{prop:Wkinfty}.

\begin{proposition}\label{prop:HeckeWkinfty} The  operator $U_p^{\rm naive}=p U_p$ is defined on $\mathrm{H}^i\bigl(\cX_0(p,N)_{\infty,\proket}^{(m)}, \Fil^h \mathfrak{D}_{k,\infty}^{o,(m)}[n] \bigr)$ for every $h\geq -1$. There exists an operator  $U_{p,h}$ $$U_{p,h}\colon \mathrm{H}^i\bigl(\cX_0(p,N)_{\infty,\proket}^{(m)}, \Fil^h \mathfrak{D}_{k,\infty}^{o,(m)}[n] \bigr) \to  \mathrm{H}^i\bigl(\cX_0(p,N)_{\infty,\proket}^{(m)}, \Fil^h \mathfrak{D}_{k,\infty}^{o,(m)} [n]\bigr) $$ such that  on $\mathrm{H}^i\bigl(\cX_0(p,N)_{\infty,\proket}^{(m)}, \Fil^h \mathfrak{D}_{k,\infty}^{o,(m)}[n] \bigr)$ we have   $U_p^{\rm naive}=p^{h+1} U_{p,h}$  for $i=0$, $1$. 

Moroever, for every positive integer $h$ the cohomology group  $\mathrm{H}^1\bigl(\cX_1(p,N)_{\infty,\proket}^{(n)}, \mathfrak{D}_{k,\infty}^{o,(m)}[n] \bigr)[1/p] $ admits a slope $\leq h$-decomposition with respect to the $U_p$-operator and we have an isomorphism of Hecke modules:

$$\Psi\colon \mathrm{H}^1\bigl(\cX_0(p,N)_{\infty,\proket}^{(m)}, \mathfrak{D}_{k,\infty}^{o,(m)}[n]  \bigr)[1/p]^{(h)} \cong \mathrm{H}^0\bigl(\cX_0(p,N)_{\infty}^{(m)},\omega^{k+2}\bigr)[1/p]^{(h)}.$$

 \end{proposition}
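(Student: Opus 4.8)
The plan is to promote the description of the graded pieces of $\mathfrak{D}_{k,\infty}^{o,(m)}[n]$ from Proposition~\ref{prop:Wkinfty} to the full sheaf by understanding how the integral $U_p$-correspondence interacts with the filtration $\Fil^\bullet$, in the spirit of \cite[\S5]{half}. \emph{First}, I would carry out the local computation, on the log affinoid perfectoid cover used in the proof of Proposition~\ref{prop:Wkinfty} and in the coordinates of \S\ref{sec:explicit}, of the sheaf-theoretic transition map $\mathcal{U}$ underlying $U_p^{\rm naive}$. The geometric input is that on the $\infty$-locus every isogeny $u_\mu\colon E\to E/D_\mu$ occurring in the $U_p$-correspondence carries the canonical ($=$ level) subgroup of $E$ isomorphically onto the canonical subgroup of $E/D_\mu$; combined with the formulas $u_\mu^\vee\bigl((a')^\vee\bigr)=a^\vee-\mu b^\vee$ and $u_\mu^\vee\bigl((b')^\vee\bigr)=p\,b^\vee$ of \S\ref{sec:Up}, this forces the pull-back of functions underlying $\mathcal{U}$ to multiply the variable in the $\ker(\mathrm{dlog})$-direction by $p$, up to corrections divisible by $p^n$ coming from re-expanding the $n$-analytic character $k$. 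Dualising, $\mathcal{U}$ sends $\Fil^h\mathfrak{D}_{k,\infty}^{o,(m)}[n]$ into $p^h$ times $\Fil^h$ of the sheaf on the other leg of the correspondence, and the extra factor of $p$ in $U_p^{\rm naive}=pU_p$ raises this to $p^{h+1}$. Hence $U_p^{\rm naive}$ preserves $\Fil^h$ and, on $\mathrm{H}^i\bigl(\cX_0(p,N)_{\infty,\proket}^{(m)},\Fil^h\mathfrak{D}_{k,\infty}^{o,(m)}[n]\bigr)$, factors as $p^{h+1}$ times an integral operator $U_{p,h}$ (with $U_{p,-1}=U_p^{\rm naive}$), so that $U_p=p^{-1}U_p^{\rm naive}$ acts there as $p^{h}U_{p,h}$ and all slopes occurring are $\ge h$. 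I expect this local verification — precisely, the bookkeeping of which monomials in the expansion of $k$ land in which filtration step along the correspondence — to be the main technical obstacle, although it is modeled on \cite[Thm.~5.4]{half}.

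\emph{Next}, fix a positive integer $h$ and consider the exact sequence $0\to\Fil^{h+1}\mathfrak{D}_{k,\infty}^{o,(m)}[n]\to\mathfrak{D}_{k,\infty}^{o,(m)}[n]\to\mathfrak{D}_{k,\infty}^{o,(m)}[n]/\Fil^{h+1}\to0$. From its long exact cohomology sequence, the vanishing of $\mathrm{H}^{\ge2}$ of the quotient given by Proposition~\ref{prop:Wkinfty}(v) (together with a Mittag--Leffler verification for the tower $\{\mathfrak{D}/\Fil^j\}_j$), and the slope bound of the first step applied to $\Fil^{h+1}$, I would conclude that the natural map
$$\mathrm{H}^1\bigl(\cX_0(p,N)_{\infty,\proket}^{(m)},\mathfrak{D}_{k,\infty}^{o,(m)}[n]\bigr)[1/p]\longrightarrow\mathrm{H}^1\bigl(\cX_0(p,N)_{\infty,\proket}^{(m)},\mathfrak{D}_{k,\infty}^{o,(m)}[n]/\Fil^{h+1}\bigr)[1/p]$$
has kernel and cokernel on which $U_p$ acts with all slopes $>h$. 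Then Proposition~\ref{prop:Wkinfty}(iv) identifies the target, Hecke-equivariantly and via projection onto $\mathrm{Gr}^{-1}$, with $\mathrm{H}^1\bigl(\cX_0(p,N)_{\infty,\proket}^{(m)},\omega^{k}\otimes\widehat{\cO}\bigr)[1/p]$, which by Proposition~\ref{prop:Wkinfty}(iii) is $\mathrm{H}^0\bigl(\cX_0(p,N)_\infty^{(m)},\omega^{k+2}\bigr)[1/p]$; along the way one checks that, under these identifications, $U_p^{\rm naive}$ restricted to $\mathrm{Gr}^{-1}$ matches the usual $U_p$ on weight $k+2$ forms, so that all the maps involved are Hecke-equivariant.

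\emph{Finally}, on the Banach $B[1/p]$-module $\mathrm{H}^0\bigl(\cX_0(p,N)_\infty^{(m)},\omega^{k+2}\bigr)[1/p]$ of overconvergent modular forms the operator $U_p$ is completely continuous, because by Lemma~\ref{lemma:dynamic}(b) the $U_p$-correspondence shrinks the neighbourhood of the ordinary locus; hence this module carries a slope-$\le h$ decomposition for every $h$. By the standard formalism of slope decompositions, a $U_p$-equivariant map whose kernel and cokernel have all slopes $>h$ is an isomorphism on slope-$\le h$ parts and transports the decomposition; pulling the decomposition back along the chain of maps of the second step produces the slope-$\le h$ decomposition on $\mathrm{H}^1\bigl(\cX_0(p,N)_{\infty,\proket}^{(m)},\mathfrak{D}_{k,\infty}^{o,(m)}[n]\bigr)[1/p]$ and the Hecke-equivariant isomorphism $\Psi$ of the statement. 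Running the same argument verbatim over $\cX_1(p,N)$ (whose cover $\cX(p^n,N)$ carries the relevant perfectoid opens) likewise yields the slope-$\le h$ decomposition of $\mathrm{H}^1\bigl(\cX_1(p,N)_{\infty,\proket}^{(m)},\mathfrak{D}_{k,\infty}^{o,(m)}[n]\bigr)[1/p]$; equivalently, this last statement follows from the $\cX_0(p,N)$-case after noting that the higher cohomology of $\mathrm{Gal}\bigl(\cX_1(p,N)_\infty^{(m)}/\cX_0(p,N)_\infty^{(m)}\bigr)$ is annihilated by a power of $p$ and so disappears upon inverting $p$.
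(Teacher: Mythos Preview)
Your proposal follows essentially the same route as the paper's proof: a local computation on log affinoid perfectoid opens to show that the sheaf map $\mathcal{U}$ underlying $U_p^{\rm naive}$ preserves $\Fil^h$ and is divisible by $p^{h+1}$ there (the paper cites \cite[Thm.~5.5]{half} for this), followed by the filtration/long-exact-sequence argument of \cite[Thm.~5.1]{half} combined with Proposition~\ref{prop:Wkinfty} to obtain the slope decomposition and the isomorphism $\Psi$.

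One small bookkeeping point: your accounting attributes a factor $p^h$ to $\mathcal{U}$ and then an ``extra $p$'' to the identity $U_p^{\rm naive}=pU_p$. In fact the map $\mathcal{U}$ itself already carries the full $p^{h+1}$ on $\Fil^h$: in the coordinates of Proposition~\ref{prop:Wkinfty}, $\Fil^h\mathfrak{W}_{k,\infty,\lambda}^{(m),\vee}$ consists of functionals vanishing on $\Fil_h\mathfrak{W}_{k,\infty,\lambda}^{(m)}$, i.e.\ supported in degrees $\ge h+1$, and since the isogeny sends $W_\lambda\mapsto pW_{\lambda_0}$ the dual map picks up $p^{h+1}$, not $p^h$. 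The relation $U_p^{\rm naive}=pU_p$ is then a normalisation (compensating for the trace step, which is the only non-integral one), not an additional source of divisibility. This does not affect the validity of your argument, only the attribution of the powers of $p$.
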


\begin{proof} Recall the construction of the $U_p$-operator in (\ref{eq:HeckeonHi});  all steps are defined integrally on $\mathfrak{D}_{k,\infty}^{o,(m)} [n]$ except for the trace ${\rm Tr}\colon q_{2,\ast} q_2^\ast\sim {\rm Id}$ and all steps are defined for $\Fil^h \mathfrak{D}_{k,\infty}^{o,(m)} [n]$ except possibly for 
$$\mathcal{U}\colon q_1^\ast\bigl(\bD_k^o(T_0^\vee)[n]\widehat{\otimes} \widehat{\cO}_{\cX_0(p,N)}^+\bigr) \to q_2^\ast\bigl(\bD_k^o(T_0^\vee)[n+1]\bigr)\widehat{\otimes} \widehat{\cO}_{\cX_0(p,N,p)}^+.$$We claim that  $\mathcal{U}$ restricts to a map on $q_1^\ast\bigl(\Fil^h \bD_k^o(T_0^\vee)[n]\widehat{\otimes} \widehat{\cO}_{\cX_0(p,N)}^+\bigr) \to q_2^\ast\bigl(\Fil^h \bD_k^o(T_0^\vee)[n+1]\widehat{\otimes} \widehat{\cO}_{\cX_0(p,N,p)}^+\bigr)$ which can be written as $p^{h+1}$ times an operator $\mathcal{U}'$. Both statements can be checked upon passing to sections over a log affinoid perfectoid.   These statements are then proven in \cite[Thm. 5.5]{half}. 

The statement on the existence of slope decomposition and the displayed isomorphism are proven as in \cite[Thm. 5.1]{half} using Proposition \ref{prop:Wkinfty}.

\end{proof}

\subsection{A comparison result on $\cX_0(p^n,N)_0^{(m)}$.} As in the previous section we fix  an $s$-analytic weight $k\colon \Z_p^\ast \to B^\ast$ as in definition \ref{def:weights} and we write $u_k\in B[1/p]$ for the element such that $k(a)=\exp u_k \log a$ for $a\in 1+p^s \Z_p$. Note that $p^{s} u_k\in B$. Fix an integer $n\geq s$  and define the sheaf $\mathfrak{D}_{k,0}^{o,(m)}[n]:=\bD_k^o(T_0^\vee)[n]\vert_{\cX_0(p^n,N)_0^{(m)}} \widehat{\otimes} \widehat{\cO}_{\cX_0(p^n,N)_0^{(m)}}^+$. In this case we have the following:

\begin{proposition}\label{prop:Wk0} For $m$ large enough there exists an increasing  filtration $\Fil_h \mathfrak{D}_{k,0}^{o,(m)}[n]$ for $h\geq 0$ with the following properties:

\begin{itemize}

\item[i.] For $m'\geq m$ we have $\Fil_\bullet  \mathfrak{D}_{k,0}^{o,(m)}[n]  \widehat{\otimes}_{ \widehat{\cO}_{\cX_0(p^n,N)_0^{(m)}}^+} \widehat{\cO}_{\cX_0(p^n,N)_0^{(m')}}^+\cong \Fil_\bullet  \mathfrak{D}_{k,0}^{o,(m')}[n] $;

\item[ii.] For every $h$ the image of the map $$\mathrm{H}^1\bigl(\cX_0(p^n,N)_{0,\proket}^{(m)},\Fil_h \mathfrak{D}_{k,\infty}^{o,(m)}[n] \bigr) \to \mathrm{H}^1\bigl(\cX_0(p^n,N)_{0,\proket}^{(m)}, \mathfrak{D}_{k,\infty}^{o,(m)}[n] \bigr) $$is annihilated by the product $p^{(n+c) h} \left(\begin{matrix} u_k \cr h \end{matrix} \right) $ (with $c$ depending on $n$). 

\end{itemize}

\end{proposition}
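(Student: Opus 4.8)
The strategy is to run the argument of Proposition~\ref{prop:Wkinfty} on the ``$0$''-component of the ordinary locus instead of on the ``$\infty$''-component, using the dual VBMS sheaves $\WW_k^D$ of Section~\S\ref{sec:dualVBMS} in place of $\WW_k$. First I would descend along the Kummer \'etale Galois cover $j_n\colon\cX(p^n,N)_0^{(m)}\to\cX_0(p^n,N)_0^{(m)}$ with group $\Delta_n$ the Borel subgroup of $\GL_2(\Z/p^n\Z)$: since $\mathrm{H}^j(\Delta_n,-)$ is killed by $|\Delta_n|$ for $j\geq 1$, it suffices to produce a $\Delta_n$-equivariant increasing filtration on $j_n^\ast\mathfrak{D}_{k,0}^{o,(m)}[n]$ and to prove (i) and (ii) over $\cX(p^n,N)_0^{(m)}$, the bounded $p$-power loss of the descent going into the constant $c$. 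Over this cover $T_p(E)$ trivialises, $\cX(p^n,N)_0^{(m)}=\amalg_{\lambda}\cX(p^n,N)_{0,\lambda}^{(m)}$, and on the $\lambda$-piece one has $\mathrm{dlog}(a^\vee)=\lambda\,\mathrm{dlog}(b^\vee)$, so that $b+\lambda a$ generates the canonical subgroup $C_n$ and, with $s:=\mathrm{dlog}(b^\vee)$, the integral Hodge--Tate sequence of Proposition~\ref{proposition:existcanonical}(d)
$$0\longrightarrow \omega_E^{\mathrm{mod},-1}\otimes\widehat{\cO}^+\longrightarrow T_p(E)^\vee\otimes\widehat{\cO}^+\stackrel{\mathrm{dlog}}{\longrightarrow}\omega_E^{\mathrm{mod}}\otimes\widehat{\cO}^+\longrightarrow 0$$
(here $\widehat{\cO}^+$ abbreviates the relevant completed structure sheaf) is precisely the datum of a dual VBMS $(\cE^+,\cQ,s)$ as in Theorem~\ref{thm:VBMSD}, with $\cE^+=T_p(E)^\vee\otimes\widehat{\cO}^+$ and $\cQ=\ker(\mathrm{dlog})$; on the $\lambda$-piece this matches, up to the $\Iw_n$-twist relating the $0$- and $\lambda$-components in Proposition~\ref{prop:AkWkveee}, the standard datum $(T^\vee,Q_n,s^\vee)$ of Corollary~\ref{cor:DkWkD}.

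Applying the formalism of \S\ref{sec:WkWDk} one obtains on $\cX(p^n,N)_{0,\lambda,\proket}^{(m)}$ sheaves $\WW_k^D$ carrying their tautological increasing VBMS filtration $\Fil_h$, whose graded pieces are invertible twists of $\widehat{\cO}^+$ by powers of $\omega_E^{\mathrm{mod}}$. By Proposition~\ref{prop:AkWkveee} there is a $\Delta_n$-compatible identification $j_n^\ast\mathfrak{D}_{k,0}^{o,(m)}[n]\cong\oplus_\lambda\WW_k(T,s,t-\lambda s)^\vee\widehat{\otimes}\widehat{\cO}^+$, with $T=\Z_p a\oplus\Z_p b$ the standard representation matched with $T_p(E)$, and by Corollary~\ref{cor:DkWkD} there is the duality map $\oplus_\lambda\xi_k$ of Definition~\ref{def:dualityWkWk^D} from this sheaf into $\oplus_\lambda\WW_k^D\widehat{\otimes}\widehat{\cO}^+$. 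I then \emph{define} $\Fil_h\mathfrak{D}_{k,0}^{o,(m)}[n]$ to be the $\Delta_n$-stable subsheaf whose pullback to $\cX(p^n,N)_0^{(m)}$ is the preimage of $\oplus_\lambda\Fil_h\WW_k^D\widehat{\otimes}\widehat{\cO}^+$ under $\oplus_\lambda\xi_k$. As in Proposition~\ref{prop:Wkinfty}, the $\Delta_n$-equivariance is checked on the log affinoid perfectoid cover of $\cX_0(p^n,N)_{0,\proket}^{(m)}$ obtained by trivialising $\prod_\ell T_\ell(E)$, by writing out the action of the covering group $G_U$ on the relevant variables. Claim~(i) is then formal: on the nested opens $\cX_0(p^n,N)_0^{(m')}\subseteq\cX_0(p^n,N)_0^{(m)}$ for $m'\geq m$ the sheaf $\omega_E^{\mathrm{mod}}$, the marked section $s$ and the Hodge--Tate sequence are defined compatibly with restriction, and the $\WW_k^D$-construction commutes with the base change $\widehat{\cO}_{\cX_0(p^n,N)_0^{(m)}}^+\to\widehat{\cO}_{\cX_0(p^n,N)_0^{(m')}}^+$, so $\Fil_\bullet\mathfrak{D}_{k,0}^{o,(m)}[n]\widehat{\otimes}\widehat{\cO}_{\cX_0(p^n,N)_0^{(m')}}^+\cong\Fil_\bullet\mathfrak{D}_{k,0}^{o,(m')}[n]$.

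For (ii), the factor $\binom{u_k}{h}$ comes from the explicit shape of $\xi_k$ recorded in \S\ref{sec:explicitduality}: it sends the functional dual to the degree-$h$ basis section $k(1+p^nZ)\bigl(\tfrac{Y}{1+p^nZ}\bigr)^h$ of $\WW_k$ to $p^{nh}\binom{u_k}{h}$ times the degree-$h$ basis section $k(1+p^nC)\bigl(\tfrac{D}{1+p^nC}\bigr)^h$ of $\WW_k^D$, the power $p^{nh}$ being the $h$-th power of the generator $p^n$ of $\cI$. Hence $\oplus_\lambda\xi_k$ carries $\Fil_h\mathfrak{D}_{k,0}^{o,(m)}[n]$ into $p^{nh}\binom{u_k}{h}$ times $\oplus_\lambda\Fil_h\WW_k^D\widehat{\otimes}\widehat{\cO}^+$ and identifies the graded pieces up to the same factor. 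Passing to $\mathrm{H}^1$ over $\cX(p^n,N)_{0,\proket}^{(m)}$ --- where, on the perfectoid cover, the higher cohomology of $\widehat{\cO}^+$ vanishes, so that everything reduces to the continuous $G_U$-cohomology estimates of \cite[Thm.~5.4]{half} --- and analysing the exact sequence $\mathrm{H}^0(\mathfrak{D}_{k,0}^{o,(m)}[n]/\Fil_h)\to\mathrm{H}^1(\Fil_h\mathfrak{D}_{k,0}^{o,(m)}[n])\to\mathrm{H}^1(\mathfrak{D}_{k,0}^{o,(m)}[n])\to\mathrm{H}^1(\mathfrak{D}_{k,0}^{o,(m)}[n]/\Fil_h)$, whose image in $\mathrm{H}^1(\mathfrak{D}_{k,0}^{o,(m)}[n])$ is the kernel of the last arrow, one finds this kernel annihilated by $p^{nh}\binom{u_k}{h}$ times a further bounded factor $p^{ch}$; the extra $p^{ch}$ accounts for the losses in the $G_U$- and $\Delta_n$-cohomology and for the fact that on the $0$-component the Hodge--Tate filtration only approximates the na\"ive one modulo a power of $p$ comparable to $p^n$, which is where the dependence of $c$ on $n$ enters (in parallel with the constant ``$a$'' in Proposition~\ref{prop:Wkinfty}(iv)). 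Descending along $j_n$ costs only $|\Delta_n|$, already absorbed into $c$, and yields the annihilator $p^{(n+c)h}\binom{u_k}{h}$.

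I expect the main obstacle to be exactly this $p$-adic bookkeeping in (ii): one must combine the intrinsic factors $p^{nh}$ and $\binom{u_k}{h}$ produced by $\xi_k$, the additional $p^{ch}$ coming from the imperfect Hodge--Tate filtration on the $0$-component and from the Galois/$\Delta_n$-cohomology, and --- most delicately --- upgrade the resulting sheaf-level estimate to a statement about the \emph{image} in $\mathrm{H}^1$, uniformly in $\lambda\in\Z/p^n\Z$. The last step genuinely needs the vanishing of $\mathrm{H}^{\geq 1}$ of the structure sheaf on the log affinoid perfectoid cover together with the explicit Galois cohomology computations of \cite[Thm.~5.4]{half}, since the map $\oplus_\lambda\xi_k$ need not be injective (for instance, at a classical weight it annihilates the filtration steps above $u_k$), so the bound cannot be transported back along it by a purely formal argument.
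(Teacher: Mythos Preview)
Your overall framework is right: descend along $j_n$, reduce to a $\Delta_n$-equivariant statement on $\cX(p^n,N)_0^{(m)}$, pass to a log affinoid perfectoid cover, and convert everything to continuous $G_U$-cohomology. But the two places where you diverge from the paper are both genuine gaps.

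First, your definition of the filtration as the \emph{preimage} under $\oplus_\lambda\xi_k$ of the degree filtration on $\WW_k^D$ is not the paper's filtration and does not have the right shape. The paper never uses $\WW_k^D$ or $\xi_k$ here; it works directly with $\mathfrak{W}_{k,0,\lambda}^{(m)}:=\WW_k\bigl(T_p(E),a,b-\lambda a\bigr)\otimes\widehat{\cO}^+$, puts on it the \emph{decreasing} filtration $\Fil^h:=\mathcal{I}^h\mathfrak{W}$ (where $\mathcal{I}$ is the kernel of the augmentation coming from the closed immersion $\V_0(\omega_E^{\rm mod,-1},a)\hookrightarrow\V_0(T_p(E)\otimes\widehat{\cO}^+,a,b-\lambda a)$; in coordinates this is the degree-$\geq h$ part in $W_\lambda/(1+p^nZ)$), and then sets $\Fil_h\mathfrak{W}^\vee$ to be the annihilator of $\Fil^h$ in the strong dual. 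This is intrinsic and always of finite rank. Your preimage filtration, by contrast, contains $\ker\xi_k$; at a classical weight $u_k=m_0$ the kernel of $\xi_k$ is all of degree $>m_0$, so your $\Fil_h$ has finite \emph{corank} rather than finite rank, and (ii) visibly fails at $h=m_0$: there $\binom{m_0}{m_0}=1$, yet $\mathrm{H}^1(\Fil_{m_0})\to\mathrm{H}^1(\mathfrak{D})$ is nearly the identity.

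Second, your mechanism for (ii) does not produce the annihilator. The factor $p^{nh}\binom{u_k}{h}$ you read off from \S\ref{sec:explicitduality} describes how $\xi_k$ scales the $h$-th \emph{graded piece}; it says nothing about the image of $\mathrm{H}^1(\Fil_h)\to\mathrm{H}^1(\mathfrak{D})$, and one cannot transport the bound back along $\xi_k$ exactly because $\xi_k$ is not injective (as you yourself note). The paper's source of the factor is entirely different: one computes the $G_U$-action on $\mathfrak{W}_{k,0,\lambda}(U)$ directly. Since on the $0$-component $\sigma(e_0)=e_0$ and $\sigma(e_1)=e_1+\xi(\sigma)e_0$ (the opposite triangularity from the $\infty$-side), one finds
\[
\sigma\Bigl(\bigl(\tfrac{W_\lambda}{1+p^nZ}\bigr)^{i}k(1+p^nZ)\Bigr)=\bigl(\tfrac{W_\lambda}{1+p^nZ}\bigr)^{i}k(1+p^nZ)\sum_{m\geq 0}p^{nm}\binom{u_k-i}{m}\Bigl(\xi(\sigma)\tfrac{W_\lambda}{1+p^nZ}\Bigr)^m,
\]
so the filtration is $G_U$-stable and the connecting map $\mathrm{H}^0(G_U,\mathrm{Gr}_{h+1})\to\mathrm{H}^1(G_U,\mathrm{Gr}_h)$ is, up to the identification of graded pieces, the map $1\mapsto (u_k-h)\,p^n[\xi]$. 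Since the class of the cocycle $\xi$ generates $\mathrm{H}^1(G_U,R^+)$ up to a fixed $p^c$-torsion, iterating over $h$ steps shows the cokernel of $\mathrm{H}^0(G_U,\Fil_{2h}/\Fil_h)\to\mathrm{H}^1(G_U,\Fil_h)$ is annihilated by $p^{(n+c)h}\prod_{i=0}^{h-1}(u_k-i)$, which gives (ii). The factors $(u_k-i)$ thus come from the Galois action on the filtration, not from the duality map $\xi_k$.
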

\begin{proof}

We follow closely the proof of Proposition \ref{prop:Wkinfty}. We take $m$ large enough so that $E$ admits a canoncial subgroup $C_n$ of level $p^n$ over $\cX_0(p^n,N)_{0}^{(m)}$. Note that the canonical subgroup $C_n$ and the level subgroup $H_n$ are distinct. 

The natural projection $j_n\colon \cX(p^n,N)_0^{(m)} \to \cX_0(p^n,N)_0^{(m)}$ is  Kummer \'etale  with automorphism group $\Delta_n$ isomorphic to the subgroup of $\GL_2(\Z/p^n\Z)$ of  matrices that are upper triangular modulo $p^n$. It is not a Galois cover as  $ \cX(p^n,N)_0^{(m)} $ is not connected. In fact, we have a trivilaization $T_p E/p^n T_p E=(\Z/p^n\Z) a \oplus (\Z/p^n\Z) b$ over $\cX(p^n,N)_0^{(m)}$ where $a$ generates the level subgroup $ H_n$. As remarked in \ref{rmk:standardopens} we have a decomposition $\cX(p^n,N)_{0}^{(m)}=\amalg_{\xi\in \Z/p^n \Z} \cX(p^n,N)_{0,\xi}^{(m)}$ where over $\cX(p^n,N)_{0,\xi}^{(m)}$ we have ${\rm dlog}(a^\vee)=\xi {\rm dlog} (b^\vee)$ modulo $p^n$.  
We recall that to give a sheaf on $\cX_0(p,N)_{0,\proket}^{(m)}$ is equivalent to give a sheaf on $\cX(p^n,N)_{0,\proket}^{(m)}$ endowed with a compatible action of $\Delta_n$. In particular to define $\Fil_h \mathfrak{D}_{k,\infty}^{o,(m)}[n] $  we define the filtration over $\cX(p^n,N)_{0,\proket}^{(m)}$ and we prove that it is stable for the action of $\Delta_n$. The maps ${\rm dlog}$ and ${\rm dlog}^\vee$ define an exact sequence of sheaves on the pro-Kummer \'etale site of $\cX(p^n,N)_0^{(m)}$:

$$Q:=\omega_E^{\rm mod,-1} \otimes_{\cO_{\cX_0(p^n,N)_0^{(m)}}^+} \widehat{\cO}_{\cX(p^n,N)_0^{(m)}}^+ \lra T_p(E) \otimes_{\Z_p} \widehat{\cO}_{\cX(p^n,N)_0^{(m)}}^+ \lra \omega_E^{\rm mod}\otimes_{\cO_{\cX_0(p^n,N)_0^{(m)}}^+} \widehat{\cO}_{\cX(p^n,N)_0^{(m)}}^+.$$ We recall that we have a trivilaization $T_p E/p^n T_p E=(\Z/p^n\Z) a \oplus (\Z/p^n\Z) b$ over $\cX(p^n,N)_0^{(m)}$ where $a$ generates the level subgroup $ H_n$. Over the component $\cX(p^n,N)_{0,\xi}^{(m)}$  the canonical subgroup $H_n$, and hence $Q$ modulo $p^n$, is generated by the section $b+\xi a$ and $a$ maps to a generator of the quotient $\omega_E^{\rm mod}\otimes_{\cO_{\cX_0(p,N)_0^{(m)}}^+} \widehat{\cO}_{\cX(p^n,N)_0^{(m)}}^+$ modulo $p^n$.  In particular,  we get an affine map

$$\rho\colon \V_0\bigl(\omega_E^{\rm mod,-1}, a\bigr)\subset \V_0\bigl(T_p(E)\otimes_{\Z_p} \widehat{\cO}_{\cX(p^n,N)_{0,-\lambda}^{(m)}}^+, a, b-\lambda a\bigr)   \cong \V_0\bigl(T_p(E) , a, b-\lambda a\bigr) \times_{\Z_p} \cX(p^n,N)_{0,-\lambda}^{(m)}   $$on the pro-Kummer \'etale site of $\cX(p^n,N)_{0,-\lambda}^{(n)} $. Considering the underlying sheaves of functions of weight $0$, the map $\rho^\ast$ induces a surjective map of sheaves of rings$$\rho_0^\ast\colon \WW_0\bigl(T_p(E),a,b-\lambda a\bigr) \otimes_{\Z_p} \widehat{\cO}_{\cX(p^n,N)_{0,-\lambda}^{(m)}}^+ \to \widehat{\cO}_{\cX(p^n,N)_{0,-\lambda}^{(m)}}^+.$$We let $\mathcal{I}$ be its kernel. We set $$\mathfrak{W}_{k,0,\lambda}^{(m)}:= \WW_k\bigl(T_p(E),a,b-\lambda a\bigr) \otimes_{\Z_p} \widehat{\cO}_{\cX(p^n,N)_{0,-\lambda}^{(m)}}^+,\qquad \Fil^h \mathfrak{W}_{k,0,\lambda}^{(m)}:= \mathcal{I}^h \mathfrak{W}_{k,0,\lambda}^{(m)} .$$Taking $\widehat{\cO}_{\cX(p^n,N)_\infty^{(m)}}^+$-duals we get  a sheaf and an increasing filtration $$\mathfrak{W}_{k,0,\lambda}^{(m),\vee}=\WW_k\bigl(T_p(E),a,b-\lambda b\bigr)^\vee \hat{\otimes}_{\Z_p} \widehat{\cO}_{\cX(p^n,N)_{0,-\lambda}^{(m}}^+, \qquad \Fil_h \mathfrak{W}_{k,0,\lambda}^{(m),\vee}, h\geq -1 $$on the pro-Kummer \'etale site of $\cX(p^n,N)_{0,-\lambda}^{(m)}$. Here $\Fil_h \mathfrak{W}_{k,0,\lambda}^{(m),\vee}$ consists of those sections vanishing on $\Fil^h \mathfrak{W}_{k,0,\lambda}^{(m)}$.
Due to Proposition \ref{prop:AkWkveee} we have a $\Delta_n$-equivariant isomorphism 
 $$j_n^\ast \bigl( \mathfrak{D}_{k,0}^{o,(m)}[n]\bigr)\cong  \oplus_{\lambda\in\Z/p^n\Z} \mathfrak{W}_{k,0,\lambda}^{(m),\vee}$$and we set $\Fil_h j_n^\ast \bigl( \mathfrak{D}_{k,\infty}^{o,(m)}[n]\bigr)$ to be the filtration given by  $\oplus_{\lambda\in\Z/p^n\Z} \Fil_h\mathfrak{W}_{k,0,\lambda}^{(m),\vee}  $.

In order to get a well defined filtration on $\mathfrak{D}_{k,0}^{o,(m)}[n]$ we need to prove that $\Fil_h j_n^\ast \bigl( \mathfrak{D}_{k,\infty}^{o,(m)}[n]\bigr)$ is $\Delta_n$-equivariant. if this holds, claim (i) is then clear by construction. Arguing as in Proposition \ref{prop:Wkinfty} to prove claim (ii) it suffices to prove it for $\mathrm{H}^1\bigl(\cX(p^n,N)_{0,\proket}^{(m)},  \_ \,\bigr)$. As in loc.~cit.~one reduces the proof of this statement and the statement of the $\Delta_n$-equivariance after passing to a log affinoid perfectoid cover $U$ of $\cX(p^n,N)_{0}^{(m)}$ with group of autmotomorphsism $G_U$ relatively to  $\cX_0(p^n,N)_{0}^{(m)}$ and with $\widehat{U}:=\mathrm{Spa}(R,R^+)$ the associated affinoid perfectoid space.  Write $T_p(E)(U)\otimes R^+=R^+ f_0 + R^+ f_1$ and $T_p(E)^\vee(U)\otimes R^+=e_0 R^++
e_1R^+$ with $e_0=f_0^\vee\equiv b^\vee$  mapping to a generator of $\omega_E^{\rm mod}$ and $e_1=f_1^\vee\equiv a^\vee-\lambda b^\vee$  in the kernel of ${\rm dlog}$ generating $\omega_E^{\rm mod,-1}$. Recall from Proposition \ref{prop:AkWkveee} that $\mathfrak{W}_{k,0,\lambda}^{(m),\vee}(U)$ is the dual of $\mathfrak{W}_{k,0,\lambda}^{(m)}(U)\cong (R^+\otimes B)\langle \frac{W_\lambda}{1+p^n Z} \rangle \cdot k(1+p^n Z)$ where $p^n W_\lambda= Y$ and $X=1+p^n Z$ and we have the universal map $\alpha f_0 + \beta f_1\mapsto 
\alpha Y +\beta X$ (note the roles of $X$ and $Y$ are interchanged compared to loc.~cit.~as in this case we are looking for functions on $T$ that are $0$ on $b$ modulo $p^n$ and $1$ on $a$ modulo $p^n$, or equivalently that are $1$ on $e_1$ modulo $p^n$ and $0$ on $e_0-\lambda e_1$ modulo $p^n$). The decreasing filtration is defined by $\Fil^h = \oplus_{i\geq h}(R^+\otimes B) \bigl(\frac{W_\lambda}{1+p^n Z}\bigr)^i  \cdot k(1+p^n Z) $. 

Given $\sigma\in G_U$ we have $\sigma(e_0)=e_0$ and $\sigma(e_1)=e_1+ \xi(\sigma) e_0$ so that $\sigma(Y)=Y$ and $\sigma(X)=X+\xi(\sigma) Y= X+ p^n \xi(\sigma) W_\lambda$. Here $\xi$ is an $R^+$-valued continuous $1$-cocycle on $G_U$. In particular,  write $k(t)=\exp(u_k \log(t))$ for $t\equiv 1$ modulo $p^n$. Then $$\sigma\bigl(k(X) X^{-i}\bigr)=k\bigl(\sigma(X)\bigr)\sigma(X)^{-i}= \exp\bigl((u_k-i) \log(X (1+ p^n \xi(\sigma) \frac{W_\lambda}{X})\bigr)=$$ $$=k(X) X^{-i} \exp\bigl((u_k-i) \log(1+ p^n \xi(\sigma) \frac{W_\lambda}{X})\bigr)=k(X) X^{-i} \sum_{m=0}^\infty  p^{n m} \left(\begin{matrix} u_k-i \cr m \end{matrix}\right)\bigl(\xi(\sigma) \frac{W_\lambda}{X}\bigr)^m.$$Notice that the term $m=0$ is $(u_k-i) k(X) X^{-i}$. Thus

\begin{equation}\label{eq:sigma} \sigma\bigl(\bigl(\frac{W_\lambda}{1+p^n Z}\bigr)^i  \cdot k(1+p^n Z) \bigr)=\bigl(\frac{W_\lambda}{1+p^n Z}\bigr)^i  \cdot k(1+p^n Z)   \sum_{m}  p^{n m} \left(\begin{matrix} u_k-i \cr m \end{matrix}\right)\bigl(\xi(\sigma) \frac{W_\lambda}{1+p^n Z}\bigr)^m
\end{equation}

This implies first of all that $G_U$ preserves the filtration. Claim (ii) follows from the following lemma.

\end{proof}

\begin{lemma} For every $h$ consider the short exact sequence of $G_U$-modules

$$0 \to \Fil_{h} \mathfrak{W}_{k,0,\lambda}^{(m),\vee}(U) \to \Fil_{2h} \mathfrak{W}_{k,0,\lambda}^{(m),\vee}(U) \to \Fil_{2h}\mathfrak{W}_{k,0,\lambda}^{(m),\vee}(U)/ \Fil_{h}\mathfrak{W}_{k,0,\lambda}^{(m),\vee}(U)\to 0.$$Then, the connecting homomorphism $$\mathrm{H}^0\bigl( G_U, \Fil_{2h}\mathfrak{W}_{k,0,\lambda}^{(m),\vee}(U)/ \Fil_{h}\mathfrak{W}_{k,0,\lambda}^{(m),\vee}(U) \bigr) \to \mathrm{H}^1\bigl(G_U, \Fil_{h} \mathfrak{W}_{k,0,\lambda}^{(m),\vee}(U)\bigr)$$has cokernel annihilated by $p^{(n+c) h}h! \left(\begin{matrix} u_k \cr h \end{matrix} \right) $ for some $c$ depending on $n$. 

\end{lemma}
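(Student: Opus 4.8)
The plan is to compute everything with continuous cochains of $G_U$ and to reduce the statement to the integral Hodge--Tate comparison of Proposition \ref{prop:Wkinfty}. From the long exact cohomology sequence attached to the displayed short exact sequence of $G_U$-modules, $\mathrm{coker}(\delta)$ is identified, via the first isomorphism theorem and exactness at $\mathrm{H}^1\bigl(\Fil_h\mathfrak{W}_{k,0,\lambda}^{(m),\vee}(U)\bigr)$, with the image of the map $\iota_\ast\colon \mathrm{H}^1\bigl(G_U,\Fil_h\mathfrak{W}_{k,0,\lambda}^{(m),\vee}(U)\bigr)\to \mathrm{H}^1\bigl(G_U,\Fil_{2h}\mathfrak{W}_{k,0,\lambda}^{(m),\vee}(U)\bigr)$ induced by the inclusion. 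So, writing $\mu:=p^{(n+c)h}h!\left(\begin{matrix}u_k\cr h\end{matrix}\right)$, it suffices to show that for every continuous $1$-cocycle $z\colon G_U\to \Fil_h\mathfrak{W}_{k,0,\lambda}^{(m),\vee}(U)$ the cocycle $\mu\, z$, regarded as valued in $\Fil_{2h}\mathfrak{W}_{k,0,\lambda}^{(m),\vee}(U)$, is a coboundary; equivalently, one must solve $\sigma\tilde x-\tilde x=\mu\, z(\sigma)$ for some $\tilde x\in \Fil_{2h}\mathfrak{W}_{k,0,\lambda}^{(m),\vee}(U)$.

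Next I would make the $G_U$-action explicit. Dualizing (\ref{eq:sigma}), on the basis $\phi_0,\dots,\phi_{2h-1}$ of $\Fil_{2h}\mathfrak{W}_{k,0,\lambda}^{(m),\vee}(U)$ dual to the monomials $\bigl(\frac{W_\lambda}{1+p^nZ}\bigr)^i k(1+p^nZ)$ one has $\sigma\phi_j=\sum_{i=0}^{j}c_{ji}(\sigma)\phi_i$ with $c_{ji}(\sigma):=p^{n(j-i)}\left(\begin{matrix}u_k-i\cr j-i\end{matrix}\right)\xi(\sigma^{-1})^{j-i}$, where $\xi$ is the $R^+$-valued continuous $1$-cocycle on $G_U$ attached, in the proof above, to the Hodge--Tate extension of $\omega_E^{\rm mod}\widehat{\otimes}\widehat{\cO}^+$ by $\omega_E^{\rm mod,-1}\widehat{\otimes}\widehat{\cO}^+$ inside $T_p(E)\otimes\widehat{\cO}^+$. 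In particular $\Fil_{2h}\mathfrak{W}_{k,0,\lambda}^{(m),\vee}(U)$ is an $h$-step iterated extension whose graded pieces $\mathrm{Gr}_i$ are, as in the proof of Proposition \ref{prop:Wkinfty}, each a power of $\omega_E^{\rm mod}$ (with exponent linear in $i$) tensored with $\widehat{\cO}^+$, the connecting map of each adjacent sub-extension $0\to\mathrm{Gr}_i\to\Fil_{i+2}/\Fil_i\to\mathrm{Gr}_{i+1}\to 0$ being cup product with $\xi$ times the scalar $c_{i+1,i}$-coefficient $p^n\left(\begin{matrix}u_k-i\cr 1\end{matrix}\right)=p^n(u_k-i)$; the quotient $\Fil_{2h}/\Fil_h$ is, $G_U$-equivariantly (relabel $\phi_{h+a}\mapsto\psi_a$), the module $\Fil_h\mathfrak{W}_{k,0,\lambda}^{(m),\vee}(U)$ computed with $u_k$ replaced by $u_k-h$.

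Then I would run a descending dévissage. By (\ref{eq:H0H1}) and Proposition \ref{prop:Wkinfty}(iii) together with its analogue for $\cX_0(p^n,N)_0^{(m)}$ (ultimately \cite[Thm. 5.4]{half}), the Hodge--Tate connecting maps $\cup\,\xi$ relating $\mathrm{H}^0$ and $\mathrm{H}^1$ of consecutive graded pieces $\mathrm{Gr}_i$ of $\mathfrak{W}_{k,0,\lambda}^{(m),\vee}(U)$ have kernel and cokernel killed by a fixed power $p^{c_0}$, $c_0=c_0(n)$ (only finitely many twists occur). Solving $\sigma\tilde x-\tilde x=\mu\,z(\sigma)$ component by component from the top slot downwards, at the step producing the coefficient of $\phi_i$ one uses the corresponding surjectivity to absorb the obstruction into the next higher unknown $\tilde x_{i+1}$, paying the factor $p^n(u_k-i)$ coming from $c_{i+1,i}$ together with $p^{c_0}$; over the $h$ steps $i=0,\dots,h-1$ this accumulates exactly $\mu=p^{(n+c_0)h}\prod_{i=0}^{h-1}(u_k-i)=p^{(n+c_0)h}h!\left(\begin{matrix}u_k\cr h\end{matrix}\right)$, so $c:=c_0$ works, the remaining off-diagonal entries $c_{ji}$ with $j>i+1$ carrying extra factors $p^{n(j-i)}$ and $\left(\begin{matrix}u_k-i\cr j-i\end{matrix}\right)$ that only improve the bound. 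All choices are made over the $G_U$-Galois perfectoid cover, so the construction descends, which is the form needed in Proposition \ref{prop:Wk0}.

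The step I expect to be the main obstacle is this bookkeeping: making rigorous that at each stage the obstruction class genuinely lies in the $p^{c_0}$-bounded image of $\cup\,\xi$ --- the only non-formal point, and the place where the Hodge--Tate comparison with large coefficients is really used --- and organising the simultaneous induction so that the contributions of the non-adjacent entries $c_{ji}$ are controlled and the accumulated factor comes out as $\prod_{i=0}^{h-1}(u_k-i)$ over the correct range of $i$ rather than a shift of it.
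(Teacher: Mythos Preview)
Your proposal is correct and follows essentially the same route as the paper. The paper argues by induction on $h$, reducing to the single adjacent step $0\to\mathrm{Gr}_h\to\Fil_{h+1}/\Fil_{h-1}\to\mathrm{Gr}_{h+1}\to 0$, computes the connecting map explicitly via (\ref{eq:sigma}) to be cup product with $p^n(u_k-h)\xi$, and then uses that the cocycle $\xi$ spans $\mathrm{H}^1(G_U,R^+)$ up to $p^c$-torsion (this is \cite[Prop.~5.2]{half}, not Proposition \ref{prop:Wkinfty}, which concerns the $\infty$-component); your reformulation via $\mathrm{coker}(\delta)\cong\mathrm{im}(\iota_\ast)$ and the component-by-component construction of a coboundary primitive is the same d\'evissage, just unrolled, and your accumulated factor $\prod_{i=0}^{h-1}p^{n+c_0}(u_k-i)$ matches the paper's.
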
 

\begin{proof} We argue by induction on $h$. We are reduced to prove that for any $h$ the short exat sequence 

$$0 \to \mathrm{Gr}_{h} \mathfrak{W}_{k,0,\lambda}^{(m),\vee}(U) \to \Fil_{h+1} \mathfrak{W}_{k,0,\lambda}^{(m),\vee}(U)/\Fil_{h-1}\mathfrak{W}_{k,0,\lambda}^{(m),\vee} (U) \to \mathrm{Gr}_{h+1}\mathfrak{W}_{k,0,\lambda}^{(m)}(U)\to 0$$the connecting homomorphism$$\mathrm{H}^0\bigl( G_U, \mathrm{Gr}_{h+1}\mathfrak{W}_{k,0,\lambda}^{(m)}(U) \bigr) \to \mathrm{H}^1\bigl(G_U, \mathrm{Gr}_{h} \mathfrak{W}_{k,0,\lambda}^{(m)}(U)\bigr)$$has cokernel annihilated by $p^{(n+c)} \frac{(u_k- h)}{h} $ for some $c$ depending on $n$. Note that 

$$\mathrm{H}^0\bigl( G_U, \mathrm{Gr}_{h+1}\mathfrak{W}_{k,0,\lambda}^{(m)}(U) \bigr)=\bigl((R^+)^{G_U} \otimes B\bigr) \left(\bigl(\frac{W_\lambda}{1+p^n Z}\bigr)^{h+1} \cdot k(1+p^n Z)\right)^\vee
$$
and  
$$\mathrm{H}^1\bigl(G_U, \mathrm{Gr}_{h} \mathfrak{W}_{k,0,\lambda}^{(m)}(U)\bigr)=\bigl((R^+)^{G_U} \otimes B\bigr) \otimes_{(R^+)^{G_U}} \left(\bigl(\frac{W_\lambda}{1+p^n Z}\bigr)^h \cdot k(1+p^n Z)\right)^\vee   \mathrm{H}^1\bigl(G_U, R^+\bigr).
$$
Thanks to (\ref{eq:sigma}) the map sends $\left(\bigl(\frac{W_\lambda}{1+p^n Z}\bigr)^{h+1} \cdot k(1+p^n Z)\right)^\vee $ to the cocyle $G_U\ni \sigma \mapsto (u_k-h) p^n \xi(\sigma) \left(\bigl(\frac{W_\lambda}{1+p^n Z}\bigr)^h \cdot k(1+p^n Z)\right)^\vee  $. The quotient of $ \mathrm{H}^1\bigl(G_U, R^+\bigr)$ by the $(R^+)^{G_U}$-span of  the cocycle $\xi$ is torsion and hence killed by a power $p^c$ of $p$; see \cite[Prop. 5.2]{half}. The conclusion follows.

\end{proof}

\section{The Hodge-Tate Eichler-Shimura map revisited.}

Let $k\colon \Z_p^\ast\lra B^\ast$ be a $B$-valued weight, as in Definition \ref{def:weights}, which is $m'$-analytic for some $m'\in \N$ like in definition \ref{def:weights}, i.e.
there is $u_k\in B[1/p]$ such that $k(t)={\rm exp}\bigl(u_k\log(t)\bigr)$ for all $t\in 1+p^{m'}\Z_p$.  In this section we fix an integer $m\ge m'$ and denote $\bD^o_k(T_0^\vee)[m]$ the integral pro-Kummer \'etale sheaf of distributions on the base-change of $\cX:=\cX_0(p^m, N)$ over ${\rm Spa}(B[1/p], B)$. We will be simply denote this sheaf  by $\bD^o_k$ in this section, and also set $\bD_k:=\bD_k^o\otimes_{\Z_p}\Q_p$. 

Let us fix a slope $b\in \N$ and recall that if $M$ is a $\Q_p$-vector space with a linear endomorphism  $U_p$, we denote $M^{(b)}$ the subvector space of $M$ of elements $x\in M$ such that $P(U_p)(x)=0$ for all polynomials $P(X)\in \Q_p[X]$ whose roots in $\C_p$ have all valuations in $[0,b]\cap \Q$.  Up to  localization of $B$ and for $s$ large enough both ${\rm H}^1\bigl(\cX_\proket, \bD_k \bigr)$ and  ${\rm H}^0\bigl(\cX\langle\frac{p}{\mathrm{Ha}^{p^s}} \rangle, \omega^{k+2}\bigr)$ admit slope $b$ decomposition; here $\mathrm{Ha}$ is a (any) local lift of the Hasse invariant.  Then the main result of this section is 

\begin{theorem}
\label{thm:hodgetate}
For $s$ large enough, there is a canonical $\C_p$-linear, Galois and Hecke equivariant map:
$$
\Psi_{\rm HT}\colon {\rm H}^1\bigl(\cX_\proket, \bD_k(1)  \bigr)^{(b)}\widehat{\otimes}\C_p\lra {\rm H}^0\bigl(\cX\langle\frac{p}{\mathrm{Ha}^{p^s}} \rangle , \omega^{k+2}\bigr)^{(b)}\widehat{\otimes}\C_p.
$$ 
Moreover, if \ $\prod_{i=0}^{b-1}(u_k-i)\in \bigl(B[1/p]  \bigr)^\ast$ then $\Psi_{\rm HT}$ is surjective.

\end{theorem}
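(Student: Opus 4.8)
The plan is to construct $\Psi_{\rm HT}$ as the composition of a restriction map with the isomorphism $\Psi$ of Proposition~\ref{prop:HeckeWkinfty}, and then to prove surjectivity by showing the restriction map is surjective using the small-slope argument sketched in the introduction. First I would reduce to the perfectoid-opens setting: after localizing $B$ and choosing $s$ large, the hypotheses of Proposition~\ref{proposition:existcanonical} hold on some $\cX\langle p/\mathrm{Ha}^{p^s}\rangle$, and one may choose $n,m$ so that $\cX_0(p^m,N)_\infty^{(n)}$ is contained in it and carries the canonical subgroup and modular sheaf $\omega^{\rm mod}$. By Theorem~\ref{thm:GAGA} and Proposition~\ref{prop:GAGAD} the source ${\rm H}^1(\cX_\proket,\bD_k(1))$ is computed integrally from $\fD_k^o := \bD_k^o(T_0^\vee)[m]\widehat\otimes\widehat\cO_{\cX_\proket}$, and the map $\Psi_{\rm HT}$ is, by construction, the composite of restriction $\mathcal R\colon {\rm H}^1(\cX_\proket,\fD_k)^{(b)}\to {\rm H}^1((\cX_\infty^{(n)})_\proket,\fD_k)^{(b)}$ (after inverting $p$ and completing with $\C_p$) with the isomorphism $\Psi$ of Proposition~\ref{prop:HeckeWkinfty}. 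Galois and Hecke equivariance are automatic from the construction.

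Thus the whole content is: \emph{under the hypothesis $\prod_{i=0}^{b-1}(u_k-i)\in(B[1/p])^\ast$, the restriction map $\mathcal R$ on slope-$\le b$ parts is surjective}. I would prove this by Kassaei-style analytic continuation, working integrally with $\fD_k^o$ and reducing modulo a large power $p^r$. Given a class $x\in {\rm H}^1((\cX_\infty^{(u)})_\proket,\fD_k^o)^{(b)}$, annihilated by $Q(U_p)$ with $Q(T)=P(T)-\alpha$, $P(0)=0$, $v(\alpha)=a\le b$: applying $\bigl(P(U_p)\bigr)^{n+u+1}$ pushes $x$ (by Lemma~\ref{lemma:dynamic}b and Remark~\ref{rmk:uptlambda}, which control the dynamics of the $U_p$-correspondence on the standard opens) to a class $\tilde x$ supported on $\cX\setminus\cX_0^{(n+1)}$; and the Kassaei ``good'' operator $\bigl(P(U_p)^{n}\bigr)^{\rm good}$ — built by selecting, in the iterated $U_p$-correspondence, only the isogenies mapping $\cX_0^{(n)}$ into $\cX_\infty^{(1)}$, which is legitimate by Lemma~\ref{lemma:dynamic}a — produces $\mathcal P(x):=\bigl(P(U_p)^{n}\bigr)^{\rm good}\bigl(P(U_p)^{u+1}(x)\bigr)$ on $\cX_0^{(n)}$. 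Since $\{\cX\setminus\cX_0^{(n+1)},\,\cX_0^{(n)}\}$ is an open cover of $\cX$, the two classes $p^s\tilde x$ and $p^s\mathcal P(x)$ agree on the overlap modulo $p^r$, so Mayer--Vietoris glues them to a global class $z\in{\rm H}^1(\cX_\proket,\fD_k^o)$ killed by $Q(U_p)$ with $\mathcal R(z)\equiv p^{s+d+1}\alpha^{u+n+1}x\pmod{p^r}$, provided $r\ge 2(s+d+1+(u+n+1)a)$ for the constant $d$ coming from the $p$-power distortions in Propositions~\ref{prop:Wkinfty}(iv) and~\ref{prop:Wk0}(ii). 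Writing $\mathcal R(z)=p^{s+d+1}\alpha^{n+u+1}(x-p^{r/2}x_1)$ and iterating, the series $\sum_j p^{jr/2}z_j$ converges $p$-adically to a genuine class $y\in{\rm H}^1(\cX_\proket,\fD_k)^{(b)}$ with $\mathcal R(y)=x$ after dividing by $p^{s+d+1}\alpha^{n+u+1}$; here $\alpha$ is invertible in $\C_p$ since $v(\alpha)<\infty$, and this is the only place I need $b\ge1$ or the unit hypothesis — for $b=0$ one has $U_p$ invertible on slope-$0$ parts and the argument simplifies, whereas for $b\ge 1$ the filtration estimate of Proposition~\ref{prop:Wk0}(ii) involves the factor $\binom{u_k}{h}$, and the convergence of the iteration requires that $\prod_{i=0}^{b-1}(u_k-i)$ — which governs how badly $U_{p,h}$ fails to act integrally on the relevant graded pieces — be a unit so that dividing it out keeps everything in $B[1/p]$.

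The main obstacle, and the step I would spend the most care on, is the \emph{bookkeeping of $p$-power denominators} through the Mayer--Vietoris gluing: one must pin down the constant $d$ from the kernel/cokernel bounds $p^{ah}$ in Proposition~\ref{prop:Wkinfty}(iv) and the annihilator $p^{(n+c)h}\binom{u_k}{h}$ in Proposition~\ref{prop:Wk0}(ii), verify that these are uniform enough (independent of the iteration index $j$) that a single choice of $r$ works for all steps, and check that the Kassaei ``good'' operator is well-defined on pro-Kummer \'etale cohomology classes (not just on forms) — i.e.\ that the selection of isogenies in the $U_p$-correspondence can be made compatibly with the sheaf $\fD_k^o$ and its restriction to the standard opens, using the explicit $U_p$-action of \S\ref{sec:HeckeD} and the commutative squares relating $\pi_{\rm HT}$, the $U_p$-correspondence, and the opens $U_\#^{(n)}\subset\PP^1$. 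Once the constants are fixed, surjectivity of $\mathcal R^{(b)}$ follows formally, and composing with $\Psi$ of Proposition~\ref{prop:HeckeWkinfty} gives the theorem.
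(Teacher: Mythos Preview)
Your overall strategy matches the paper's: define $\Psi_{\rm HT}=\Phi\circ\mathcal R$ and prove surjectivity of $\mathcal R$ on slope-$\le b$ parts by a Kassaei-style gluing and iteration. The dynamics (Lemmas~\ref{lemma:dynamic}, \ref{lemma:dynamicup}, \ref{lemma:goodbad}), the Mayer--Vietoris step, and the convergent series are all as in the paper.

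There is, however, a genuine gap in how you handle the invertibility hypothesis. In the gluing step the constant $s$ comes from Proposition~\ref{prop:Wk0}(ii), whose annihilator involves $\binom{u_k}{h}$ with $h$ the filtration depth needed to kill the ``bad'' part modulo $p^r$. In the paper one takes $h=2a+2$ with $a=v(\alpha)\le b\cdot\deg Q$, so $h$ is typically \emph{much larger} than $b$, and the direct Kassaei argument (your Steps~1--2) only yields surjectivity under the stronger condition $\prod_{i=0}^{h-1}(u_k-i)\in(B[1/p])^\ast$. Your claim that only $\prod_{i=0}^{b-1}(u_k-i)$ enters the iteration is not correct: the factor $\binom{u_k}{h}$ in the annihilator of ${\rm Im}\bigl({\rm H}^1(\fFil_h)\to{\rm H}^1(\fD_k^o)\bigr)$ on $\cX_0^{(n)}$ forces you to invert all $u_k-i$ for $0\le i\le h-1$ in order to choose a finite $s$.

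The paper closes this gap in a separate Step~3 that you are missing. One first proves surjectivity after localising $B[1/p]$ at the extra factors $\prod_{i=b}^{h-1}(u_k-i)$; since $\Psi_{\rm HT}$ is a map of finite projective $B[1/p]$-modules, it then suffices to check surjectivity modulo each $(u_k-i)$ for $b\le i\le h-1$. At such a classical weight $i$ one invokes the classical Hodge--Tate decomposition for $\Symm^i(T_p E^\vee)$, Stevens' control theorem (surjectivity of $\bD_i^o\to\Symm^i$ on slope-$\le b$ cohomology, using $i\ge b$), and Coleman's classicity (isomorphism of slope-$\le b$ overconvergent and classical forms of weight $i+2$, again using $i\ge b$). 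This Nakayama-type reduction is what brings the hypothesis down from $h-1$ to $b-1$; without it your argument proves a weaker statement. (A minor point: the constant $d$ in the paper comes from the integral slope decomposition of the Appendix, not from Proposition~\ref{prop:Wkinfty}(iv).)
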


The map $\Phi_{\rm HT}$ was defined in \cite{EichlerShimura} using Faltings' sites of $\cX$ and $\cX\langle\frac{p}{\mathrm{Ha}^{p^s}} \rangle$ respectively. To relate it to the language developed in this paper first of all we allow ourself to increase $m$.  In fact, the map  $\cX_0(p^{m_1},N)\to \cX_0(p^{m_2},N)$ for $m_1\geq m_2$ is finite \'etale and one can obtain the map $\Psi_{\rm HT}$ for $m=m_1$ in the Theorem upon taking traces from the map $\Psi_{\rm HT}$ for $m=m_2$; on the LHS the trace is defined using Lemma \ref{lemma:trace}. We also replace $\cX_0(p^m,N)\langle\frac{p}{\mathrm{Ha}^{p^s}} \rangle$ with $\cX_0(p^m,N)_\infty^{(u)}$ for arbitrary large integers $m\geq u$, as the first is contained in the latter for $u$ large enough thanks to \cite[Lemma 3.3.8]{ScholzeTorsion}.

In particular, we choose $m$ large enough such that there is $u\le m$ with the property that  the restriction of $\bD_k$ to the pro-Kummer \'etale site of $\cX_0(p^m, N)_\infty^{(u)}$  has the property that the (pro-Kummer \'etale) sheaf $\bD_k\widehat{\otimes}\widehat{\cO}_{\cX_0(p^m, N)_\infty^{(u)}}$ has the decreasing filtration defined in Proposition \ref{prop:Wkinfty}. We fix such $m$, $u$. Let us denote by $\fD_k^o:=\bD_k^o\widehat{\otimes}\cO_{\cX_\proket}^+$, where $\cO_{\cX_\proket}^+$ is the structure sheaf of the pro-Kummer \'etal site of $\cX$. We recall from \cite{EichlerShimura} that the map $\Psi_{\rm HT}$ is the composition of the following two maps:
$$
{\rm H}^1\bigl(\cX_0(p^m,N)_\proket, \bD_k(1) \bigr)^{(b)}\widehat{\otimes}\C_p\cong \Bigl({\rm H}^1\bigl(\cX_0(p^m,N)_\proket, \fD_k ^o(1)  \bigr)[1/p]\Bigr)^{(b)}\stackrel{\mathcal{R}}{\lra}$$

$$\stackrel{\mathcal{R}}{\lra}\Bigl({\rm H}^1\bigl((\cX_0(p^m,N)_\infty^{(u)})_\proket, \fD_k^o(1)\bigr)[1/p]\Bigr)^{(b)}\stackrel{\Phi}{\lra}  {\rm H}^0\bigl(\cX_0(p^m,N)_\infty^{(u)}, \omega^{k+2}   \bigr)^{(b)}\widehat{\otimes} \C_p,
$$
where $\mathcal{R}$ is the restriction map while the map $\Phi$ is defined in Proposition \ref{prop:HeckeWkinfty} and it is  proved in loc.~cit.~that it is an isomorphism. Therefore, in order to prove Theorem \ref{thm:hodgetate} it is enough to prove

\begin{theorem}
\label{thm:htsurjective}
In the notations above, if \ $\prod_{i=0}^{b-1}(u_k-i)\in \bigl(B[1/p]  \bigr)^\ast$ then the map $\mathcal{R}$ is surjective.

\end{theorem}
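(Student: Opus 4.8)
The plan is to prove surjectivity of the restriction map $\mathcal{R}$ by implementing Payman Kassaei's "analytic continuation" method, which the introduction already sketches in paragraph (2). The idea is to take a class $x\in \mathrm{H}^1\bigl((\cX_\infty^{(u)})_\proket,\fD_k^o\bigr)^{(b)}$, spread it out via the dynamics of the $U_p$-correspondence to a class over the whole curve modulo a power of $p$, and then iterate to correct the error. First I would set up the algebraic input: since $x$ has slope $\le b$, it is annihilated by some $Q(U_p)$ with $Q(T)=P(T)-\alpha$, $P(0)=0$, and every root of $Q$ has valuation $\le b$; write $a=v(\alpha)$. The hypothesis $\prod_{i=0}^{b-1}(u_k-i)\in (B[1/p])^\ast$ is exactly what is needed to control the filtration on $\fD_{k,0}^{o,(m)}$ via Proposition \ref{prop:Wk0}(ii): the obstruction to pushing a class from $\cX_0^{(n)}$ into the interior is annihilated by $p^{(n+c)h}\binom{u_k}{h}$, and invertibility of $\binom{u_k}{h}$ for $h\le b$ (which follows from the hypothesis) means this obstruction is just a bounded power of $p$.

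Next I would carry out the two geometric constructions on the covering $\{\cX\setminus\cX_0^{(n+1)},\ \cX_0^{(n)}\}$ of $\cX$. On $\cX_0^{(n)}$, following Kassaei, I would build the "good" operator $\bigl(P(U_p)^n\bigr)^{\rm good}$ by selecting, at each of the $n$ steps of the iterated correspondence, the branch of the $U_p$-isogeny that moves a point of $\cX_0^{(n)}$ towards $\cX_\infty^{(1)}$; concretely this uses Lemma \ref{lemma:dynamic}(a) together with the commutative square relating $\pi_{\rm HT}$, $U_p$ and the maps $t_\lambda$ on $\PP^1$. This produces a class $\mathcal{P}(x)=\bigl(P(U_p)^n\bigr)^{\rm good}\bigl(P(U_p)^{u+1}(x)\bigr)\in\mathrm{H}^1\bigl(\cX_0^{(n)},\fD_k^o\bigr)$. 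On the complementary open $\cX\setminus\cX_0^{(n+1)}$, applying $\bigl(P(U_p)\bigr)^{n+u+1}$ to $x$ and using Lemma \ref{lemma:dynamic}(b) (the images $t_\mu(\PP^1\setminus U_{0,\lambda}^{(n+1)})\subset U_\infty\setminus U_0^{(1)}$ and $t_\mu(U_\infty^{(1)})\subset U_\infty^{(n+1)}$) I would produce a class $\tilde x\in\mathrm{H}^1\bigl((\cX\setminus\cX_0^{(n+1)})_\proket,\fD_k^o\bigr)$. The key compatibility to check is that on the overlap $\cX_0^{(n)}\setminus\cX_0^{(n+1)}$ the two classes $\tilde x$ and $\mathcal{P}(x)$ agree up to a bounded power of $p$, say $p^d$: this is where the filtration estimates of Propositions \ref{prop:Wkinfty}(iv) and \ref{prop:Wk0}(ii) enter, controlling the difference between the "naive" iterate and the "good" iterate in terms of the truncated filtration quotients.

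Then I would invoke the Mayer–Vietoris sequence for $\{\cX\setminus\cX_0^{(n+1)},\ \cX_0^{(n)}\}$: since $p^d(\tilde x-\mathcal{P}(x))$ is zero modulo $p^r$ on the overlap for $r$ large, and since $r$ was chosen at the outset with $r\ge 2(s+d+1+(u+n+1)a)$, the connecting map lets me glue $p^s\tilde x$ and $p^s\mathcal{P}(x)$ into a class $z\in\mathrm{H}^1\bigl(\cX_\proket,\fD_k^o\bigr)$ which is annihilated by $Q(U_p)$ and whose restriction to $\cX_\infty^{(u)}$ equals $p^{s+d+1}\alpha^{u+n+1}x$ up to an error $p^{r/2}x_1$ with $x_1$ again in $\mathrm{H}^1\bigl((\cX_\infty^{(u)})_\proket,\fD_k^o\bigr)^{(b)}$ killed by $Q(U_p)$. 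Dividing by $p^{s+d+1}\alpha^{u+n+1}$ — legitimate in the slope-$\le b$ part after inverting $p$, since $\alpha$ acts invertibly there — and iterating the entire construction on $x_1$, the errors improve by a factor $p^{r/2-(s+d+1+(u+n+1)a)}\ge p^{r/2}$ at each stage, so the process converges $p$-adically to a preimage $y\in\mathrm{H}^1\bigl(\cX_\proket,\fD_k\bigr)^{(b)}$ with $\mathcal{R}(y)=x$. The main obstacle, and the step that requires the most care, is the overlap comparison on $\cX_0^{(n)}\setminus\cX_0^{(n+1)}$: one must verify, using the explicit local descriptions of the VBMS sheaves and the compatibility of the $U_p$-operator with the filtrations (Propositions \ref{prop:Wkinfty} and \ref{prop:Wk0}), that replacing the naive iterated correspondence by Kassaei's "good" branch changes the class only by something divisible by a controlled, $n$-independent-in-exponent power of $p$ — this is precisely where the hypothesis on $\prod_{i=0}^{b-1}(u_k-i)$ is indispensable, because without invertibility of the binomial coefficients $\binom{u_k}{h}$ the obstruction in Proposition \ref{prop:Wk0}(ii) would not be a unit times a power of $p$ and the gluing estimate would fail.
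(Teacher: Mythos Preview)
Your overall strategy matches the paper's Steps 1 and 2 almost exactly, but there is a genuine gap in how you handle the hypothesis on $\prod_{i=0}^{b-1}(u_k-i)$.

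You claim that the hypothesis makes $\binom{u_k}{h}$ invertible ``for $h\le b$'' and that this is what Proposition \ref{prop:Wk0}(ii) requires. But the filtration level $h$ that actually enters the gluing estimate is \emph{not} bounded by the slope $b$. In the paper's proof one fixes $h:=2a+2$ with $a=v(\alpha)\le b\cdot\deg Q$, and the bound $nh\ge r$ is what forces $(P(U_p)^n)^{\rm bad}$ to land in $\fFil_h$ on the overlap; then the annihilator of the image of $\mathrm{H}^1(\cV,\fFil_h)\to\mathrm{H}^1(\cV,\fD_k^o)$ involves $\prod_{i=0}^{h-1}(u_k-i)$, not $\prod_{i=0}^{b-1}(u_k-i)$. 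So the direct Kassaei argument as you describe it only proves surjectivity under the \emph{stronger} condition $\prod_{i=0}^{h-1}(u_k-i)\in(B[1/p])^\ast$.

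The paper closes this gap in a separate Step 3 that your proposal omits. One first runs the argument assuming $\prod_{i=0}^{h-1}(u_k-i)$ invertible, which after a localization of $B$ amounts to inverting $\prod_{i=b}^{h-1}(u_k-i)$. Since $\Psi_{\rm HT}$ is a map of finite projective $B[1/p]$-modules, surjectivity after this localization plus surjectivity modulo each $(u_k-i)$ for $b\le i\le h-1$ implies surjectivity over $B[1/p]$. At each such classical weight $i$ one invokes the classical Hodge--Tate decomposition, Stevens' control theorem for $\bD_i^o\to\Symm^i(T_p(E)^\vee)$, and Coleman's classicity for $\omega^{i+2}$ (all valid because $i\ge b$ is non-critical for slope $\le b$). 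Without this extra step your argument does not reach the stated hypothesis.
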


To simplify the notation, in the rest of the section we write $\cX$ instead of $\cX_0(p^m,N)$. Before starting the proof of Theorem \ref{thm:htsurjective} we'll describe the dynamic of the $U_p$-operator on the modular curve $\cX$. We think about $U_p$ as correspondences on $\cX$ and we have:

\begin{lemma}
\label{lemma:dynamicup}
 
\noindent
For all $n\ge 1$ we have

i)   $U_p^{n+1}\bigl(\cX\backslash \cX_0^{(n+1)}\bigr)\subset \cX_\infty^{(1)}$ and $U_p^n(\cX\backslash\cX_0^{(n+1)})\subset \cX_\infty\backslash\cX_0^{(1)}$.

ii) $U_p^{n}\bigl(\cX_\infty^{(1)}   \bigr) \subset \cX_\infty^{(n+1)}$.
\end{lemma}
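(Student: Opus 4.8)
The plan is to deduce Lemma \ref{lemma:dynamicup} from the corresponding statements about the operators $t_\lambda$ on $\mathbb{P}^1$ recorded in Lemma \ref{lemma:dynamic} (and Remark \ref{rmk:uptlambda}), using the $\pi_{\rm HT}$-equivariance of the $U_p$-correspondence, i.e.\ the relation $\pi_{\rm HT}\circ U_p=\widetilde U\circ\pi_{\rm HT}$ with $\widetilde U=t_2\circ t_1^{-1}$. Concretely, iterating the correspondence, $U_p^n$ decomposes on $\cX(p^\infty,N)$ into $p^n$ branches indexed by $\lambda=\lambda_0+p\lambda_1+\cdots+p^{n-1}\lambda_{n-1}$ (with $\lambda_i\in\{0,\ldots,p-1\}$), and on the branch labelled $\lambda$ the underlying map on $\mathbb{P}^1$ is exactly $t_\lambda=t_{\lambda_{n-1}}\circ\cdots\circ t_{\lambda_0}$. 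Since the neighbourhoods $\cX_0^{(n)}$, $\cX_\infty^{(n)}$, $\cX_\infty\backslash\cX_0^{(1)}$ etc.\ are by construction (\S\ref{rmk:standardopens}) the pull-backs via $\pi_{\rm HT}$ of the corresponding opens $U_0^{(n)}$, $U_\infty^{(n)}$, $U_\infty\backslash U_0^{(1)}$ of $\mathbb{P}^1$, each containment of opens in $\mathbb{P}^1$ proved in Lemma \ref{lemma:dynamic} pulls back to the asserted containment on $\cX$.

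First I would make precise the statement ``$U_p^n(\cX_{\#})\subset \cX_{\#'}$'' for a correspondence: writing $U_p^n=q_2\circ q_1^{-1}$ on the $n$-fold fibre product, it means $q_2\bigl(q_1^{-1}(\cX_\#)\bigr)\subset\cX_{\#'}$, equivalently that every branch of the correspondence maps $\cX_\#$ into $\cX_{\#'}$. Passing to the perfectoid cover $\cX(p^\infty,N)$ (which suffices, since the opens in question are pull-backs from $\cX$ and $\pi_r$ is surjective), the branch indexed by $\lambda$ is an honest morphism of adic spaces lying over $t_\lambda\colon\mathbb{P}^1\to\mathbb{P}^1$. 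So the containment on $\cX(p^\infty,N)$ reduces to $t_\lambda(U_\#)\subset U_{\#'}$ for all admissible $\lambda$.

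With that reduction in hand, part (i) and part (ii) follow almost verbatim. For $U_p^{n+1}(\cX\backslash\cX_0^{(n+1)})\subset\cX_\infty^{(1)}$: every branch of $U_p^{n+1}$ over $\cX\backslash\cX_0^{(n+1)}$ factors as first one of the $p^n$ branches of $U_p^n$ (governed by $t_\mu$, $\mu=\lambda_0+\cdots+p^{n-1}\lambda_{n-1}$) followed by one of the $p$ branches $t_\nu$, $\nu\in\{0,\ldots,p-1\}$; by Lemma \ref{lemma:dynamic}(b), $t_\mu(\mathbb{P}^1\backslash U_{0,\lambda}^{(n+1)})\subset U_\infty\backslash U_0^{(1)}$ — after decomposing $\cX\backslash\cX_0^{(n+1)}=\bigcup_\lambda(\mathbb{P}^1\backslash U_{0,\lambda}^{(n+1)})$-pullbacks — and then by Remark \ref{rmk:uptlambda}, $t_\nu(U_\infty\backslash U_0^{(1)})\subset U_\infty^{(1)}$. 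Similarly $U_p^n(\cX\backslash\cX_0^{(n+1)})\subset\cX_\infty\backslash\cX_0^{(1)}$ is Lemma \ref{lemma:dynamic}(b) directly, and part (ii), $U_p^n(\cX_\infty^{(1)})\subset\cX_\infty^{(n+1)}$, is the second assertion $t_\mu(U_\infty^{(1)})\subset U_\infty^{(n+1)}$ of Lemma \ref{lemma:dynamic}(b) (applied branch by branch, noting $t_\lambda=t_{\lambda_{n-1}}\circ\cdots\circ t_{\lambda_0}$ and that $U_\infty^{(j)}$ is nested decreasingly, so each successive $t_{\lambda_i}$ keeps us inside the shrinking $U_\infty$-neighbourhoods).

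The one genuinely delicate point — and the step I expect to be the main obstacle — is the bookkeeping of how the $n$-fold iterate of the $U_p$-correspondence on $\cX_0(p^m,N)$ decomposes into the $p^n$ branches indexed by $\lambda$, and checking that the map on $\mathbb{P}^1$ underneath the branch $\lambda$ is precisely $t_\lambda=t_{\lambda_{n-1}}\circ\cdots\circ t_{\lambda_0}$ in the order dictated by the $p$-adic digits. This is where one must be careful that ``$U_p$ as a correspondence'' composes in the way claimed (the complementarity condition on the chosen subgroup of order $p$ with the level structure, and the compatibility of the isogeny with the universal trivialization $u_\lambda\colon T_pE\to T_p(E_\lambda)$ from \S\ref{sec:Up}); once the combinatorics of the iteration is pinned down, the geometric content is entirely contained in Lemma \ref{lemma:dynamic} and Remark \ref{rmk:uptlambda} and the argument is a short unwinding of definitions.
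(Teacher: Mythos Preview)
Your proposal is correct and follows exactly the route the paper takes: the paper's proof is a single line, ``This is a direct consequence of Lemma \ref{lemma:dynamic},'' and what you have written is a careful unpacking of that sentence via the $\pi_{\rm HT}$-equivariance $\pi_{\rm HT}\circ U_p=\widetilde U\circ\pi_{\rm HT}$ set up in \S\ref{sec:Up}. One small slip: the ``decomposition'' $\cX\backslash\cX_0^{(n+1)}=\bigcup_\lambda\pi_{\rm HT}^{-1}(\PP^1\backslash U_{0,\lambda}^{(n+1)})$ should be an intersection (since $U_0^{(n+1)}=\bigcup_\lambda U_{0,\lambda}^{(n+1)}$), but this is harmless for the argument because $\PP^1\backslash U_0^{(n+1)}\subset\PP^1\backslash U_{0,\lambda}^{(n+1)}$ for every $\lambda$, so Lemma \ref{lemma:dynamic}(b) applied to any $\lambda$ extending the branch index $\mu$ already gives what you need.
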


\begin{proof}
This is a direct consequence of Lemma \ref{lemma:dynamic}.
\end{proof}

We also have:

\begin{lemma}
\label{lemma:goodbad}
For every $n\ge 1$ there is a canonical decomposition of correspondences $U_p^n|_{\cX_0^{(n)}}=\bigl(U_p^n\bigr)^{\rm good}\amalg \bigl(U_p^n  \bigr)^{\rm bad}$ such that:

a) $(U_p^n)^{\rm good}\bigl(\cX_0^{(n)}\bigr)\subset \cX_\infty^{(1)}.$

b)   $(U_p^n)^{\rm bad}\bigl(\cX_0^{(n)} \backslash \cX_0^{(n+1)}  \bigr)\subset \cX_\infty\backslash \cX_0^{(1)}$.

\end{lemma}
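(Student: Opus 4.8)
The plan is to deduce this — as for Lemma~\ref{lemma:dynamicup} — from Lemma~\ref{lemma:dynamic}, by transporting the $U_p$-correspondence to $\PP^1$ through the Hodge--Tate period map. Recall from Section~\ref{sec:Up} that, base-changed along $q_1$ to the perfectoid cover $\cX(p^\infty,N)$, the correspondence $U_p^n$ becomes $\coprod_{\lambda\in\Z/p^n\Z}\cX(p^\infty,N)$: on the branch indexed by $\lambda=\lambda_0+p\lambda_1+\cdots+p^{n-1}\lambda_{n-1}$ the map $q_1$ is the identity and $q_2$ is intertwined through $\pi_{\rm HT}$ with the self-map $t_\lambda=t_{\lambda_{n-1}}\circ\cdots\circ t_{\lambda_0}$ of $\PP^1$. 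Moreover $\cX(p^\infty,N)_0^{(n)}=\pi_{\rm HT}^{-1}(U_0^{(n)})=\coprod_{\nu\in\Z/p^n\Z}\pi_{\rm HT}^{-1}(U_{0,\nu}^{(n)})$, and by the description of the canonical subgroup in Section~\ref{sec:omegak} the order-$p^n$ canonical subgroup $C_n$ over $\cX(p^\infty,N)_{0,\nu}^{(n)}$ is the one generated by $b+\nu a$, which is precisely the kernel of the isogeny defining the $t_\nu$-branch.

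Accordingly I would define $(U_p^n)^{\rm bad}\subset U_p^n|_{\cX_0^{(n)}}$ to be the branch given by the isogeny $E\to E/C_n$ through the canonical subgroup of order $p^n$ (which exists on $\cX_0^{(n)}$ for $m$ large by Proposition~\ref{proposition:existcanonical}); being a single section of $q_1$, it is an open and closed sub-correspondence, and I let $(U_p^n)^{\rm good}$ be the complement, so that $U_p^n|_{\cX_0^{(n)}}=(U_p^n)^{\rm good}\amalg(U_p^n)^{\rm bad}$ is a decomposition of correspondences. (Alternatively one sees open-closedness by passing to a finite level $\cX(p^r,N)$, $r\geq n$, where $\cX_0^{(n)}$ genuinely splits, and descending.) By the first paragraph $(U_p^n)^{\rm bad}$ pulls back over $\cX(p^\infty,N)$ to $\coprod_\nu\pi_{\rm HT}^{-1}(U_{0,\nu}^{(n)})$ with $q_2$ induced by $t_\nu$, while $(U_p^n)^{\rm good}$ pulls back to $\coprod_{\lambda\neq\nu}\pi_{\rm HT}^{-1}(U_{0,\nu}^{(n)})$ with $q_2$ induced by $t_\lambda|_{U_{0,\nu}^{(n)}}$. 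For~(a): when $\lambda\neq\nu$ the disks $U_{0,\nu}^{(n)}$ and $U_{0,\lambda}^{(n)}$ are disjoint, so Lemma~\ref{lemma:dynamic}(a) (applied with $n-1$ in place of $n$, i.e.\ for $n$-fold compositions) gives $t_\lambda(U_{0,\nu}^{(n)})\subset U_\infty^{(1)}$; hence $q_2$ carries $(U_p^n)^{\rm good}$ into $\pi_{\rm HT}^{-1}(U_\infty^{(1)})=\cX_\infty^{(1)}$.

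For~(b): $\cX_0^{(n)}\smallsetminus\cX_0^{(n+1)}$ pulls back to $\pi_{\rm HT}^{-1}\bigl(U_0^{(n)}\smallsetminus U_0^{(n+1)}\bigr)$, and on the $\nu$-component $U_{0,\nu}^{(n)}\smallsetminus U_0^{(n+1)}\subset\PP^1\smallsetminus U_{0,\nu}^{(n+1)}$ since $U_{0,\nu}^{(n+1)}\subset U_0^{(n+1)}$. Lemma~\ref{lemma:dynamic}(b), applied with $\mu=\nu$ and $\lambda=\nu$ (the extra digit $\lambda_n=0$), then gives $t_\nu\bigl(\PP^1\smallsetminus U_{0,\nu}^{(n+1)}\bigr)\subset U_\infty\smallsetminus U_0^{(1)}$, whence $q_2\bigl((U_p^n)^{\rm bad}\cap q_1^{-1}(\cX_0^{(n)}\smallsetminus\cX_0^{(n+1)})\bigr)\subset\cX_\infty\smallsetminus\cX_0^{(1)}$. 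The only points requiring attention are the bookkeeping identifying the canonical branch with the residue disk $U_{0,\nu}^{(n)}$ (the content of the first paragraph, using Sections~\ref{sec:Up} and~\ref{sec:omegak}) and the elementary fact that a single branch of $U_p^n$ is open and closed; the rest is a direct transcription of Lemma~\ref{lemma:dynamic}.
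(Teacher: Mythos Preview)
Your proof is correct and follows essentially the same route as the paper's: both transport the question to $\PP^1$ via $\pi_{\rm HT}$, define the bad branch on the component $U_{0,\mu}^{(n)}$ to be $t_\mu$, and then read off (a) and (b) from Lemma~\ref{lemma:dynamic}. Your additional identification of the bad branch with the isogeny $E\to E/C_n$ through the canonical subgroup is correct (and is exactly how the paper describes $U_p^{\rm bad}$ later, just before Proposition~\ref{prop:Ubad}); it gives a clean intrinsic reason why the decomposition descends to $\cX_0^{(n)}$, whereas the paper simply defines everything on $\PP^1$ and leaves the descent implicit.
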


\begin{proof} In view of remark \ref{rmk:uptlambda} it is enough to define this decomposition for
the correspondence $u:=\cup_{\mu}t_\mu:U_0^{(n)}\lra \mathcal{P}\bigl(\PP^1\bigr)$, where $\mu=\lambda_0+\lambda_1p+...+\lambda_{n-1}p^{n-1}$, $\lambda_i\in \{0,1,...,p-1\}$, for $i=0,1,...,n-1.$
Namely we define $u^{\rm bad}\vert_{U_{0,\mu}^{(n)}}:=t_\mu$ and $u^{\rm good}\vert_{U_{0,\mu}^{(n)}}:=\cup_{\lambda\neq \mu}t_\lambda$.

With these definitions the rest of Lemma \ref{lemma:goodbad} follows from Lemma \ref{lemma:dynamic}.

\end{proof}

We have the following consequence of Lemma \ref{lemma:dynamicup} and Lemma \ref{lemma:goodbad}.
Let us recall our notation $\fD_k^o:=\bD_k^o\widehat{\otimes}\cO^+_\cX$, this is a   sheaf on the pro-Kummer \'etale site of the base-change of $\cX$ to ${\rm Spa}(B[1/p], B)$, base-change which  is not shown in the notations. Also we recall that we use the operator $U_p^{\rm naive}$ induced on cohomomogy by the $U_p$-correspondence and not its normalized version $p^{-1} U_p^{\rm naive}$. With this understanding, to ease the notation, we simply write $U_p$ instead of $U_p^{\rm naive}$.  

\begin{corollary}
\label{cor:polynomialup}
Let $P(T)\in (B\widehat{\otimes}\cO_{\C_p})[T]$ such that $P(T)=TR(T)$ and for every $n\ge 1$ we denote $\bigl(P(U_p)^n   \bigr)^{\rm good}:=(U_p^n)^{\rm good}R(U_p)^n$  and $\bigl(P(U_p)^n\bigr)^{\rm bad}:=\bigl(U_p^n)^{\rm bad}R(U_p)^n$ where:

i) $P(U_p)^{n+1}\colon {\rm H}^1\bigl((\cX_\infty^{(1)})_\proket, \fD_k^o \bigr)\lra    {\rm H}^1\bigl((\cX\backslash \cX_0^{(n+1)})_\proket, \fD_k^o   \bigr)$.

ii) $P(U_p)^{n-1}\colon {\rm H}^1\bigl((\cX_\infty^{(n)})_\proket, \fD_k^o  \bigr)\lra {\rm H}^1\bigl((\cX_\infty^{(1)})_\proket, \fD_k^o  \bigr)$.

iii) $\bigl(P(U_p)^n\bigr)^{\rm good}P(U_p)\colon {\rm H}^1\bigl((\cX_\infty\backslash \cX_0^{(1)})_\proket, \fD_k^o\bigr) \lra {\rm H}^1\bigl( (\cX_0^{(n)})_\proket, \fD_k^o \bigr)$.
\end{corollary}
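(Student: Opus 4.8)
The plan is to read off all three statements directly from the dynamical statements in Lemma \ref{lemma:dynamicup} and Lemma \ref{lemma:goodbad}, combined with the elementary functoriality of pro-Kummer \'etale cohomology under the $U_p$-correspondence. Recall that $U_p$ is realised by the diagram $q_1,q_2\colon \cX_0(p^m,N,p)\to\cX$ (or on the perfectoid level by the $t_\lambda$'s), and for an open $\cV\subset\cX$ whose image under the correspondence $U_p$ lands in an open $\cW\subset\cX$ --- meaning $q_1^{-1}(\cV)\subseteq q_2^{-1}(\cW)$ after restricting appropriately --- pull-back along $q_1$ followed by the trace $\mathrm{Tr}_{q_2}$ of Lemma \ref{lemma:trace} produces a map $U_p\colon \mathrm{H}^1(\cW_\proket,\fD_k^o)\to\mathrm{H}^1(\cV_\proket,\fD_k^o)$. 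Since $P(T)=TR(T)$, the operator $P(U_p)$ factors as $U_p\circ R(U_p)$, where $R(U_p)$ is built from iterating $U_p$ on the open where it is internally defined; so each displayed arrow is a composite in which only the \emph{final} application of $U_p$ need move between distinct opens, and the iteration of $U_p^n$ (resp.\ $R(U_p)^n$) is legitimate because of the nesting statements in Lemma \ref{lemma:dynamicup}(ii) and Lemma \ref{lemma:goodbad}(a).

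\medskip
First I would treat (i). By Lemma \ref{lemma:dynamicup}(i), $U_p^{n+1}(\cX\setminus\cX_0^{(n+1)})\subset\cX_\infty^{(1)}$; splitting $U_p^{n+1}=U_p^n\circ U_p$ and noting that the intermediate iterates $U_p^j(\cX_\infty^{(1)})\subset\cX_\infty^{(j+1)}\subset\cX_\infty^{(1)}$ by Lemma \ref{lemma:dynamicup}(ii), every factor is a well-defined correspondence between the relevant opens; functoriality of cohomology with the trace maps then yields the claimed arrow $\mathrm{H}^1\bigl((\cX_\infty^{(1)})_\proket,\fD_k^o\bigr)\to\mathrm{H}^1\bigl((\cX\setminus\cX_0^{(n+1)})_\proket,\fD_k^o\bigr)$, and since $P(U_p)^{n+1}=U_p^{n+1}R(U_p)^{n+1}$ with $R(U_p)$ acting internally on $\cX_\infty^{(1)}$ via Lemma \ref{lemma:dynamicup}(ii), the full operator $P(U_p)^{n+1}$ has the asserted source and target. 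Statement (ii) is the same argument using only Lemma \ref{lemma:dynamicup}(ii): $U_p^{n-1}(\cX_\infty^{(n)})\subset\cX_\infty^{(1)}$, and $R(U_p)^{n-1}$ preserves $\cX_\infty^{(n)}$ because $U_p(\cX_\infty^{(n)})\subset\cX_\infty^{(n+1)}\subset\cX_\infty^{(n)}$, again by Lemma \ref{lemma:dynamicup}(ii).

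\medskip
For (iii), I would first note that by Remark \ref{rmk:uptlambda} and Lemma \ref{lemma:dynamicup}(i) the extra application of $P(U_p)$ sends a class on $(\cX_\infty\setminus\cX_0^{(1)})_\proket$ to a class on $(\cX_\infty^{(1)})_\proket$ (use $U_p(\cX_\infty\setminus\cX_0^{(1)})\subset\cX_\infty^{(1)}$ and the fact that $R(U_p)$ preserves $\cX_\infty\setminus\cX_0^{(1)}$, or rather preserves a slightly smaller open on which the composite still makes sense — here one may need to absorb $R(U_p)$ carefully, but since $R(U_p)$ is a polynomial in $U_p$ with $U_p$ mapping $\cX_\infty\setminus\cX_0^{(1)}$ into $\cX_\infty^{(1)}\subset\cX_\infty\setminus\cX_0^{(1)}$ this is fine). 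Then $\bigl(P(U_p)^n\bigr)^{\rm good}=(U_p^n)^{\rm good}R(U_p)^n$, and by Lemma \ref{lemma:goodbad}(a) the good part of the $n$-th iterate of the correspondence maps $\cX_0^{(n)}$ into $\cX_\infty^{(1)}$; meanwhile $R(U_p)^n$ is applied first to the class on $(\cX_\infty^{(1)})_\proket$ produced by the preliminary $P(U_p)$, where $U_p$ acts internally. Composing with the trace maps along the good isogenies gives the arrow $\mathrm{H}^1\bigl((\cX_\infty\setminus\cX_0^{(1)})_\proket,\fD_k^o\bigr)\to\mathrm{H}^1\bigl((\cX_0^{(n)})_\proket,\fD_k^o\bigr)$ as claimed. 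The only genuinely delicate point — and the one I would write out with care — is bookkeeping the order of the factors and verifying that at each stage the source open is carried \emph{into} (not merely meeting) the target open, so that the pull-back/trace construction is literally defined; once the nesting inclusions of Lemmas \ref{lemma:dynamicup} and \ref{lemma:goodbad} are laid out this is mechanical, which is why the proof can simply cite them.
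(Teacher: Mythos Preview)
Your approach is the same as the paper's: both derive the corollary directly from the dynamical inclusions of Lemma~\ref{lemma:dynamicup} and Lemma~\ref{lemma:goodbad}, using that any polynomial in $U_p$ acts internally on $\mathrm{H}^1\bigl((\cX_\infty^{(1)})_\proket,\fD_k^o\bigr)$ so that the factor $R(U_p)^n$ can be absorbed. Your treatment of (i) and (iii) is correct.

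There is, however, a directional slip in your argument for (ii). You write $U_p^{n-1}(\cX_\infty^{(n)})\subset\cX_\infty^{(1)}$; under your own convention this would yield a map $\mathrm{H}^1\bigl((\cX_\infty^{(1)})_\proket,\fD_k^o\bigr)\to\mathrm{H}^1\bigl((\cX_\infty^{(n)})_\proket,\fD_k^o\bigr)$, which is the wrong way around. What you need (and what Lemma~\ref{lemma:dynamicup}(ii) actually gives, taking the exponent $n-1$) is $U_p^{n-1}(\cX_\infty^{(1)})\subset\cX_\infty^{(n)}$, which then produces the required map $\mathrm{H}^1\bigl((\cX_\infty^{(n)})_\proket,\fD_k^o\bigr)\to\mathrm{H}^1\bigl((\cX_\infty^{(1)})_\proket,\fD_k^o\bigr)$. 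Once this is corrected, your proof matches the paper's.
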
 

(iv)     $\bigl(P(U_p)^n\bigr)^{\rm bad}\colon   {\rm H}^1\bigl((\cX_\infty\backslash \cX_0^{(1)})_\proket, \fD_k^o\bigr) \lra {\rm H}^1\bigl( (\cX_0^{(n)}    \backslash \cX_0^{(n+1)})_\proket, \fD_k^o \bigr)$.

\begin{proof}
In view of the fact that for any polynomial $Q(T)$ with $(B\widehat{\otimes}\cO_{\C_p})$-coeffcients, $Q(U_p)$ maps
${\rm H}^1\bigl((\cX_\infty^{(1)})_\proket, \fD_k^o   \bigr)$ to itself, i) and respectively ii) are immediate consequences of Lemma 
\ref{lemma:dynamicup} a) and b) respectively, while iii) is a consequence of Lemma \ref{lemma:goodbad} a).
\end{proof}

\medskip

Before we start the actual proof of Theorem \ref{thm:htsurjective} it seems natural the recall and gather here the main ingredients in the proof,
 namely the properties of the cohomology of the filtrations of the sheaf $\fD_k^o$ on $\cX_\infty^{(u)}$ and respectively on $\cX_0^{(n)}$, for $u$ as fixed at the beginning of this section and for the moment 
$n\ge 1$ such that the pro-Kummer \'etale sheaf $\fD_k^o$ has the increasing filtration $\fFil_\bullet$ of Proposition 
\ref{prop:Wk0} when restricted to $\cX_0^{(n)}$.

Recall that we have denoted by $\fD^o_k:=\bD^o_k\widehat{\otimes}\widehat{\cO}^+_{\cX}$ for $\widehat{\cO}^+_{\cX}$ the completion of the structure sheaf of the pro-Kummer \'etale site of $\cX$.  We have:

i) The image of the morphism ${\rm H}^1\bigl((\cX^{(n)}_0)_{\proket}, \fFil_h\bigr)\lra {\rm H}^1\bigl((\cX^{(n)}_0)_{\proket}, \fD_k^o  \bigr)$ is annihilated by $p^{(m+c)h}\prod_{i=0}^{h-1}(u_k-i)/h!$, where let us recall $\cX=\cX_0(p^m, N)$ and $c$ is constant with respect to $h$.
In particular if $\Bigl(\prod_{i=0}^{h-1}(u_k-i)/h!\Bigr)\gamma=p^q$ for some $\gamma\in B$, then
$p^{(m+c)h+q}$ annihilates the image of the above map.

\medskip
ii) Recall that the $U_p$ correspondence on $\cX_0^{(n)}$ can be written as the disjoint union of $U_p^{\rm good}$, mapping $\cX_0^{(n)}$ to $\cX_\infty^{(1)}$, and $U_p^{\rm bad}\colon \cX_0^{(n)}\to \cX_0^{(n-1)}$ and corresponding to the $p$-isogeny $E \to E'$ given by modding out by the canonical subgroup. To keep track of where our sheaves are defined we denote $\mathfrak{D}_{k,0}^{o,(s)}$ the restriction of the pro-Kummer \'etale sheaf $\mathfrak{D}^o_k$ to $(\cX_0^{(s)})_{\rm pke}$. Suppose $1\le u\le n$ is such that $\fD_{0,k}^{o,(u-1)}$ has the increasing filtration $\mathfrak{Fil}_\bullet^{(u-1)}$ on $(\cX_0^{(u-1)})_{\rm pke}$. We have:

\begin{proposition}\label{prop:Ubad} The map $U_p^{\rm bad}$ induces a map $U_p^{\rm bad}\colon  \mathfrak{D}_{k,0}^{o,(u)} \to \gamma^\ast \bigl(\mathfrak{D}_{k,0}^{o,(u-1)} \bigr) $ that preserves the filtration. Furthermore for every $h$ there exists an operator $$U_p^{\rm bad,h}\colon \mathfrak{D}_{k,0}^{o,(u)}/ \mathfrak{Fil}_h^{(u)} \to \gamma^\ast \bigl(\mathfrak{D}_{k,0}^{o,(u-1)}/\mathfrak{Fil}_h^{(u-1)} \bigr)$$such that $U_p^{\rm bad}=p^h U_p^{\rm bad,h}$ on $\mathfrak{D}_{k,0}^{o,(u)} /\mathfrak{Fil}_h^{(u)}$. Here $\gamma:\cX_0^{(u)}\subset \cX_0^{(u-1)}$ is the inclusion. 
\end{proposition}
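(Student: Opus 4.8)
The plan is to work locally on the log affinoid perfectoid cover and use the explicit description of the $U_p$-bad correspondence as division by the canonical subgroup, together with the explicit formulas for the filtration on $\mathfrak{D}_{k,0}^{o,(u)}$ established in the proof of Proposition \ref{prop:Wk0}. First I would recall that the bad part $U_p^{\rm bad}$ on $\cX_0^{(u)}$ corresponds, via Lemma \ref{lemma:goodbad} and the dynamic computation of Lemma \ref{lemma:dynamic}, to the isogeny $E\to E':=E/C_n$ where $C_n$ is the canonical subgroup; on Tate modules this is the map $u_\mu$ of \S\ref{sec:Up} restricted to the relevant component, sending $a\mapsto a'$, $b\mapsto p b'-\mu a'$ up to the appropriate twist. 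Dualizing, $u_\mu^\vee$ sends $(a')^\vee\mapsto a^\vee-\mu b^\vee$ and $(b')^\vee\mapsto p b^\vee$, so the key point is that the dual isogeny multiplies the ``$b^\vee$-direction'' — which is exactly the direction generating $\omega_E^{\rm mod}$, i.e.\ the generator of the graded quotient cut out by $\mathrm{Gr}_\bullet$ — by $p$, while the kernel direction $\omega_E^{\rm mod,-1}$ (the $e_1=f_1^\vee$ direction in the notation of the proof of Proposition \ref{prop:Wk0}) is preserved up to a unit.

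Next I would translate this into the VBMS picture. The sheaf $\mathfrak{W}_{k,0,\lambda}^{(m)}(U)\cong (R^+\otimes B)\langle W_\lambda/(1+p^nZ)\rangle\cdot k(1+p^nZ)$ has its decreasing filtration $\Fil^h=\oplus_{i\geq h}(R^+\otimes B)(W_\lambda/(1+p^nZ))^i\cdot k(1+p^nZ)$, and on the dual $\mathfrak{W}_{k,0,\lambda}^{(m),\vee}$ the increasing filtration $\Fil_h$ consists of functionals vanishing on $\Fil^{h+1}$; thus $\mathrm{Gr}_i$ is spanned by the dual of $(W_\lambda/(1+p^nZ))^i\cdot k(1+p^nZ)$. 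Pulling back functions along $U_p^{\rm bad}$ amounts to substituting the transformed coordinates: under $u_\mu^\vee$ the variable measuring the $\omega_E^{\rm mod}$-coordinate gets multiplied by $p$, so on the level of weight-$k$ functions the monomial of degree $i$ in the ``quotient direction'' is scaled by $p^i$, hence the $i$-th graded piece of $\mathfrak{W}_{k,0,\lambda}^{(m)}$ pulls back with a factor $p^i$. Dualizing reverses the inequality: on $\Fil_h^\vee=\mathfrak{W}^{(m),\vee}/\Fil_{h-1}$ — equivalently on the quotient $\mathfrak{D}_{k,0}^{o,(u)}/\mathfrak{Fil}_h^{(u)}$ after assembling over $\lambda\in\Z/p^n\Z$ via Proposition \ref{prop:AkWkveee} — every term is divisible by $p^h$, which gives the operator $U_p^{\rm bad,h}$ with $U_p^{\rm bad}=p^h U_p^{\rm bad,h}$. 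That the map respects the filtration, i.e.\ $U_p^{\rm bad}$ carries $\mathfrak{Fil}_h^{(u)}$ into $\gamma^\ast\mathfrak{Fil}_h^{(u-1)}$, follows because substitution in the coordinates does not raise the degree in the quotient direction, so it is compatible with $\Fil^\bullet$ on the function side and hence with $\Fil_\bullet$ on the dual; one also checks the decreasing-indexed local pieces glue to the $\Delta_n$-equivariant global filtration, which was already verified in Proposition \ref{prop:Wk0}.

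The one subtlety — and I expect this to be the main obstacle — is bookkeeping the shift in the power of $p$ coming from the difference between the analyticity radius used on $\cX_0^{(u)}$ versus $\cX_0^{(u-1)}$: the canonical subgroup on the target is one level less canonical, so the coordinate $Z$ (measuring the canonical subgroup modulo $p^n$) and the coordinate $W_\lambda$ rescale by a factor of $p$ when passing between the two strata, and one must check that these factors conspire with the $p^i$ from the isogeny so that after inverting nothing one still lands integrally in $\gamma^\ast(\mathfrak{D}_{k,0}^{o,(u-1)})$ and still gets a clean factorization $p^h U_p^{\rm bad,h}$ with $U_p^{\rm bad,h}$ genuinely integral. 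This is precisely the type of computation carried out in \cite[Thm. 5.5]{half} for the $\infty$-side, and I would check it on sections over a log affinoid perfectoid $U$ exactly as in the proof of Proposition \ref{prop:HeckeWkinfty}, using that the trace map in the correspondence is the only non-integral ingredient and it is harmless here because we only need the statement on the sheaf level, before taking cohomology.
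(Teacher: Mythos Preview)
Your proposal is correct and follows essentially the same route as the paper: reduce to a log affinoid perfectoid $U$, identify $U_p^{\rm bad}$ with the isogeny $E\to E':=E/C_1$ (the canonical subgroup of order $p$, not $C_n$ as you write), track the induced map on Tate modules to see that in the VBMS coordinates of Proposition \ref{prop:Wk0} one has $Z\mapsto Z'$ and $W_\lambda\mapsto p\,W'_{\lambda_0}$, deduce that $v_\lambda$ restricted to $\Fil^h\mathfrak{W}_{k,0,\lambda}^{(u)}$ is $p^h$ times an integral operator, and then take strong $R^+$-duals to obtain the statement on $\mathfrak{D}_{k,0}^{o,(u)}/\mathfrak{Fil}_h^{(u)}$.

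The ``subtlety'' you anticipate in your final paragraph does not in fact arise. In the paper's explicit computation the coordinate $Z$ is \emph{not} rescaled when passing from $\cX_0^{(u)}$ to $\cX_0^{(u-1)}$; only the $W$-coordinate picks up the factor of $p$, and it does so exactly once. So there is no balancing of competing powers of $p$ to perform, and the factorization $U_p^{\rm bad}=p^h\,U_p^{\rm bad,h}$ drops out immediately from $W_\lambda\mapsto p\,W'_{\lambda_0}$ and the definition $\Fil^h\mathfrak{W}=\mathfrak{W}\cdot\bigl(W_\lambda/(1+p^nZ)\bigr)^h$. Your instinct to compare with \cite[Thm.~5.5]{half} is apt, but the present computation is in fact simpler than its $\infty$-side analogue because here one is working on the function side before dualizing, where the filtration is monomial and the substitution is transparent.
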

\begin{proof} We use the notation of Proposition \ref{prop:Wk0}. It suffices to prove both statements after passing to a log affinoid perfectoid cover $U$ of $\cX_{0,\lambda}^{(u)}$ for $\lambda=\lambda_0+\lambda_1p+...+\lambda_{n-1}p^{n-1}$, $\lambda_i\in \{0,1,...,p-1\}$, for $i=0,1,...,n-1$, with associated affinoid perfectoid space $\widehat{U}:=\mathrm{Spa}(R,R^+)$. Recall form loc. cit. that $T_p(E)$ is trivialized $T_p(E)(U)=\Z_pa\oplus \Z_p b$ where  the level subgroup is generated by $a$ and the canonical subgroup is generated by the section $b+\lambda a$. 
For every $\lambda$ as above write $\lambda=\lambda_0 + p \lambda'$ and then $U_p^{\rm bad}$ restricts to a map $u_{\lambda_0}\colon \cX_0(p^n,N)_{0,\lambda}^{(m)} \to \cX_0(p^n,N)_{0,\lambda'}^{(m-1)}$. At the level of universal elliptic curves over $U$ it corresponds to the $p$-isogeny defined by $u_{\lambda_0}\colon T_p(E)(U)\to T_p(E')(U)$ with $T_p(E')(U)=\Z_p a \oplus \Z_p b'$ with $b'=\frac{b+\lambda_0 a}{p}$ in $T_p(E)\otimes \Q$. Notice that $\frac{b+\lambda a}{p}= b' + \lambda' a$ defines a generator of the canonical subgroup on $E'$. The map $u_{\lambda_0}$ induces by functoriality the map $\mathfrak{D}_{k,0,\lambda}^{o,(u)} \to u^\ast \bigl(\mathfrak{D}_{k,0,\lambda_0}^{o,(u-1)} \bigr)$. We describe it explicitly.

Write $T_p(E)(U)\otimes R^+=R^+ f_0 + R^+ f_1$ as in Proposition \ref{prop:Wk0} so that $f_0\equiv b+\lambda a$ and $f_1\equiv a$ modulo $p^n$ and $T_p(E)^\vee(U)\otimes R^+=e_0 R^++ e_1R^+$ with $e_0=f_0^\vee\equiv b^\vee$  mapping to a generator of $\omega_E^{\rm mod}$ and $e_1=f_1^\vee\equiv a^\vee-\lambda b^\vee$  in the kernel of ${\rm dlog}$ generating $\omega_E^{\rm mod,-1}$. Then $\mathfrak{W}_{k,0,\lambda}^{(u),\vee}(U)$ is the dual of $\mathfrak{W}_{k,0,\lambda}^{(u)}(U)\cong R^+\otimes B\langle \frac{W_\lambda}{1+p^n Z} \rangle \cdot k(1+p^n Z)$. Similalry write $T_p(E')(U)\otimes R^+=R^+ f_0' + R^+ f_1'$ with $f_0'=\frac{f_0}{p}=b' + \lambda' a$ and $f_1'=f_1$. Then $T_p(E')^\vee(U)\otimes R^+=e_0' R^++
e_1'R^+$ with $e_0'=(f_0')^\vee=p e_0$  mapping to a generator of $\omega_{E'}^{\rm mod}$ and $e_1'=(f_1')^\vee=f_1$  in the kernel of ${\rm dlog}$ for $E'$ generating $\omega_{E'}^{\rm mod,-1}$. In particular,   $\mathfrak{W}_{k,0,\lambda_0}^{(u-1),\vee}(U)$ is the dual of $\mathfrak{W}_{k,0,\lambda_0}^{(u-1)}(U)\cong R^+\otimes B\langle \frac{W_{\lambda_0}'}{1+p^n Z'} \rangle \cdot k(1+p^n Z')$. The map $T_p(E)\to T_p(E')$ induces a map $T_p(E')^\vee \to T_p(E)^\vee$ on the duals and, hence a map 

$$
v_\lambda\colon \mathfrak{W}_{k,0,\lambda}^{(u)}(U)  \to \mathfrak{W}_{k,0,\lambda_0}^{(u-1)}(U),\qquad Z\mapsto Z', W_\lambda \mapsto p W_{\lambda_0}. 
$$
As the decreasing filtrations are defined by  
$$\Fil^h \mathfrak{W}_{k,0,\lambda}^{(u)}(U)= \mathfrak{W}_{k,0,\lambda}^{(u)}(U) \cdot \bigl( \frac{W_\lambda}{1+p^n Z} \bigr)^h,\quad \Fil^h \mathfrak{W}_{k,0,\lambda_0}^{(u-1)}(U)= \mathfrak{W}_{k,0,\lambda_0}^{(u-1)}(U) \cdot \bigl(\frac{W_{\lambda_0}'}{1+p^n Z'} \bigr)^h
$$
we see that $v_\lambda$ respects the filtrations and on $\Fil^h$ can be written as $p^h v_\lambda'$ with $v_\lambda'\colon \Fil^h \mathfrak{W}_{k,0,\lambda}^{(u)}(U)\to \Fil^h \mathfrak{W}_{k,0,\lambda_0}^{(u-1)}(U)$. The claim follows upon taking strong $R^+$-duals.

\end{proof}
\medskip

iii) Given a slope $b\in \N$ there exist integers $d_1,d_2, d:=d_1+d_2$ such that
$p^d$ annihilates ${\rm Ker}\Bigl({\rm H}^1\bigl((\cX_\infty^{(n)})_\proket, \fD_k^o   \bigr)\lra {\rm H}^1\bigl((\cX_\infty^{(n)})_\proket, \fD_k\bigr)\Bigr)$ and $p^de_b$ is integral, where $e_b$ is an idempotent projecting onto the slope $\le b$ of ${\rm H}^1\bigl(\cX_\proket, \fD_k\bigr)$ and of ${\rm H}^1\bigl((\cX_\infty^{(n)})_\proket, \fD_k\bigr)$. This is the main result of Section \S 7.

\bigskip
\noindent
{\bf Proof of Theorem \ref{thm:htsurjective}.}

We now start the proof of Theorem \ref{thm:htsurjective}. We work with a weight $k$ as in Definition \ref{def:weights}, which will be assumed in this proof to be the universal weight of  some (wide) open disk of the weight space. The case of a finite extension of $\Q_p$ is obtained by specialization. In particular $u_k-i $ is not $0$ in $B$, for every $i\in \N$. This will be used in Step 3. 

We recall that we have fixed  a slope $b\in \N$ and consider the module$${\rm H}^1\bigl((\cX_\infty^{(u)})_\proket, \fD_k   \bigr)^{(b)}\cong \Bigl({\rm H}^1\bigl(\cX_\infty^{(u)}, \bD_k\bigr)\widehat{\otimes}\C_p\Bigr)^{(b)}.$$ Let  $Q(T)\in (B\widehat{\otimes}\cO_{\C_p})[T]$ be the polynomial with ${\rm deg}(Q(T))\ge 1$ 
and having the property:   $y\in {\rm H}^1\bigl((\cX_\infty^{(u)})_\proket, \fD_k\bigr)^{(b)}$ if and only if $Q(U_p)y=0$. Such a polynomial exists as by \cite{half} we have an isomorphism ${\rm H}^1\bigl((\cX_\infty^{(u)})_\proket, \fD_k\bigr)^{(b)}\cong {\rm H}^0\bigl(\cX_\infty^{(u)}, \omega^{k+2}\bigr)^{(b)}$ and on ${\rm H}^0\bigl(\cX_\infty^{(u)}, \omega^{k+2}\bigr)$ the operator $U_p$ is compact and has a Fredholm determinant which is an entire power series. $Q(T)$ is obtained from a factor of this Fredholm determinant. We write $Q(T)=P(T)-\alpha$, with $P(T)=TR(T)$
and remark that there is $a\in \Q$ such that $\alpha\in p^a(B\widehat{\otimes}\cO_{\C_p})^\ast$. Then $a\le {\rm deg}(Q(T)\cdot b$. Now we choose integers $h:=2a+2$, $s:=(m+c)h+q$ (see i) above) and $d:=d_1+d_2$ as in iii) above.
Then it is easy to verify that there exist integers $n,r\ge 1$ such that

1) $\displaystyle r\ge \frac{r}{2}+s+d+1+ (n+u+1)a$

\noindent and

2) $nh\ge r$.

We will work with the open affinoids $\cX_\infty^{(u)}:=\cX_0(p^m, N)_\infty^{(u)}, \ \cX_0^{(n)}:=\cX_0(p^m, N)_0^{(n)},\  \cX_0^{n+1)}:=\cX_0(p^m, N)_0^{(n+1)}\subset \cX_0(p^m, N)$ and their pro-Kummer \'etale sites. 

\medskip
\noindent
{\bf Step 1.}    Let $x\in {\rm H}^1\bigl((\cX_\infty^{(u)})_\proket, \fD_k   \bigr)^{(b)}$, then $P(U_p)x=\alpha x$.  As ${\rm H}^1\bigl((\cX_\infty^{(u)})_\proket, \fD_k   \bigr)={\rm H}^1\bigl((\cX_\infty^{(u)})_\proket, \fD^o_k   \bigr)[1/p]$, without loss of generality we may consider $x\in {\rm H}^1\bigl((\cX_\infty^{(u)})_\proket, \fD^o_k   \bigr)^{\rm tf}$ such that $P(U_p)x=\alpha x$ (this   notation was introduced  in Section \S \ref{sec:appendix}.) Then there is a unique $x'\in {\rm H}^1\bigl((\cX_\infty^{(u)})_\proket, \fD^o_k   \bigr)$ with $P(U_p)x'=\alpha x'$ and $(x')^{\rm tf}=p^dx$
and using Corollary \ref{cor:polynomialup} ii)
we have: $P(U_p)^{u}(x')\in {\rm H}^1\bigl((\cX_\infty^{(1)})_\proket, \fD^o_k   \bigr)$
and by using Corollary \ref{cor:polynomialup} i) we have that $P(U_p)^{n+u+1}(x')\in {\rm H}^1\bigl((\cX\backslash\cX_0^{(n+1})_\proket, \fD_k^o   \bigr).$
 We denote by
$\tilde{x}$ the image of $P(U_p)^{n+u+1}(x')$ in ${\rm H}^1\bigl((\cX\backslash\cX_0^{(n+1)})_\proket, \fD^o_k/p^r\fD_k^o   \bigr)$.

We recall that $P(U_p)^{u+1}(x')=P(U_p)\bigl(P(U_p)^u(x') \bigr)\in {\rm H}^1((\cX_\infty\backslash\cX_0^{(1)} )_{\rm prok}, \fD_k^o)$  by Corollary \ref{cor:polynomialup} i)  and so denote $\mathcal{P}\bigl(x\bigr)$ the image of 
$$
\bigl(P(U_p)^{n}\bigr)^{\rm good}\bigl(P(U_p)^{u+1}(x')  \bigr)\in {\rm H}^1\bigl((\cX_0^{(n)})_\proket, \fD_k^o   \bigr)
$$  
in ${\rm H}^1\bigl((\cX_0^{(n)})_\proket, \fD^o_k/p^r\fD_k^o   \bigr)$.

Now let us observe that the opens $(\cX\backslash \cX_0^{(n+1)})$ and $\cX_0^{(n)}$ constitute an open covering of $\cX$ and so we have a Mayer-Vietoris exact sequence:
$$
{\rm H}^1\bigl(\cX_\proket, \fD^o_k/p^r\fD_k^o\bigr)\stackrel{\varphi}{\lra} {\rm H}^1\bigl((\cX\backslash\cX_0^{(n+1)})_\proket, \fD^o_k/p^r\fD_k^o\bigr)\oplus {\rm H}^1\bigl((\cX_0^{(n)})_\proket, \fD^o_k/p^r\fD_k^o\bigr)\stackrel{\psi}{\lra} 
$$
$$
\stackrel{\psi}{\lra} {\rm H}^1\bigl((\cX_0^{(n)}\backslash\cX_0^{(n+1)})_\proket, \fD^o_k/p^r\fD_k^o\bigr),
$$
together with an almost isomorphism $\displaystyle {\rm H}^1\bigl(\cX_\proket, \bD^o_k\bigr)\widehat{\otimes} \cO_{\C_p}\stackrel{\rho}{\lra} {\rm H}^1\bigl(\cX_\proket, \fD^o_k\bigr)$.

We consider the element 
$$(A,B):=\bigl(p^s\tilde{x}, p^s\mathcal{P}(x)\bigr)\in {\rm H}^1\bigl((\cX\backslash\cX_0^{(n+1)})_\proket, \fD^o_k/p^r\fD_k^o\bigr)\oplus {\rm H}^1\bigl((\cX_0^{(n)})_\proket, \fD^o_k/p^r\fD_k^o\bigr)
$$ 
and we first claim: $\psi(A,B)=0$.

Indeed, let us denote by $\mathcal{V}$ the pro-Kummer \'etale site 
$(\cX_0^{(n)}\backslash\cX_0^{(n+1)})_\proket$. Thanks to Corollary \ref{cor:polynomialup} iv) we have
$$
\psi(A,B)=A|_{\mathcal{V}}-B|_{\cV}
=\bigl(P(U_p)^{n}\bigr)^{\rm bad}\bigl(p^sP(U_p)^{u+1}(x')\bigr)\in p^s{\rm H}^1\bigl(\cV, \fD^o_k/p^r\fD_k^o\bigr).
$$
Now let us recall that we have an exact sequence
$$
{\rm H}^1\bigl(\cV, \fFil_h/p^r\fFil_h\bigr)\stackrel{f}{\lra} {\rm H}^1\bigl(\cV, \fD^o_k/p^r\fD_k^o\bigr)\stackrel{g}{\lra} {\rm H}^1\bigl(\cV, \fD^o_k/(\fFil_h + p^r\fD_k^o)\bigr).
$$
By property ii) above and the fact that $nh\ge r$ we have $g\bigl(P(U_p)^{n})^{\rm bad}\bigl(P(U_p)^{u+1}(x')  \bigr)=0$ which implies that there is $\beta\in {\rm H}^1\bigl(\cV, \fFil_h/p^r\fFil_h\bigr)$ such that
$(U_p^n)^{\rm bad}\bigl(U_p^{u+1}(x') \bigr)=f(\beta)$. But by i) above the image of $f$ is annihilated by $p^s$ therefore
$\psi(A,B)=p^sf(\beta)=0$. This proves the claim.

\medskip
\noindent
We continue the proof of the theorem. The Mayer-Vietoris exact sequence implies that there is $y\in 
{\rm H}^1\bigl(\cX_\proket, \bD^o_k\bigr)\widehat{\otimes} \cO_{\C_p}$ such that 
$$
\rho(y)|_{(\cX_\infty^{(u)})_\proket}\equiv p^{s+1}P(U_p)^{n+u+1}x'=p^{s+1}\alpha^{n+u+1}x'\ \Bigl({\rm mod}\ p^r{\rm H}^1\bigl((\cX_\infty^{(u)})_\proket , \fD_k^o \bigr)\Bigr).
$$
 Let $z_0\in {\rm H}^1\bigl(\cX_\proket, \bD^o_k\bigr)\widehat{\otimes} \cO_{\C_p}$ be $z_0:=p^de_b(y)$. Then $z_0\in {\rm H}^1\bigl(\cX_\proket, \bD^o_k\bigr)^{(b)}\widehat{\otimes} \cO_{\C_p}$ and  $\rho(z_0)|_{(\cX_\infty^{(u)})_\proket}\equiv p^{s+d+1}\alpha^{n+u+1}x'\ \Bigl({\rm mod}\ p^r{\rm H}^1\bigl((\cX_\infty^{(u)})_\proket , \fD_k^o \bigr)^{(b)}\Bigr)$ where we write ${\rm H}^1\bigl((\cX_\infty^{(u)})_\proket , \fD_k^o \bigr)^{(b)}$ for the image   $p^d e_b {\rm H}^1\bigl((\cX_\infty^{(u)})_\proket , \fD_k^o \bigr)$.

\medskip
\noindent
{\bf Step 2.}  Let $x_1\in {\rm H}^1\bigl((\cX_\infty^{(u)})_\proket, \fD_k^o  \bigr)^{(b)}$ be such that
$\rho(z_0)|_{(\cX_\infty^{(u)})_\proket}=p^{s+d+1}\alpha^{n+u+1}\bigl(x'-p^{r/2} x_1  \bigr)$.
Such $x_1$ exists indeed because  $x_1\in p^{-s-d-1+r/2}\alpha^{-n-u-1}{\rm H}^1\bigl((\cX_\infty^{(u)})_\proket, \fD_k^o  \bigr)^{(b)}\subset {\rm H}^1\bigl((\cX_\infty^{(u)})_\proket, \fD_k^o  \bigr)^{(b)}$.

Now we apply to $x_1$ the {\bf Step 1} we used on $x$ and obtain $z_1\in  {\rm H}^1\bigl(\cX_\proket, \bD^o_k\bigr)^{(b)}\widehat{\otimes} \cO_{\C_p}$ such that $\rho(z_1)\equiv p^{s+d+1}\alpha^{n+u+1}x_1
\Bigl({\rm mod}\ p^r {\rm H}^1\bigl((\cX_\infty^{(u)})_\proket, \fD_k^o  \bigr)^{(b)}\Bigr)$.

We remark that $\rho(z_0+p^{r/2}z_1)|_{(\cX_\infty^{(u)})_\proket}\equiv p^{s+d+1}\alpha^{n+u+1}x'  \ {\Bigl(\rm mod}\ p^{3r/2}{\rm H}^1\bigl((\cX_\infty^{(u)})_\proket, \fD_k^o  \bigr)^{(b)}\Bigr)$.

Repeating {\bf Step 2} we construct inductively a sequence $(z_m)_{m\ge 0}$ in ${\rm H}^1\bigl(\cX_\proket, \bD^o_k\bigr)^{(b)}\widehat{\otimes} \cO_{\C_p}$ such that for every $m\ge 0$ we have
$$
\rho\bigl(\sum_{j=0}^mp^{jr/2}z_j  \bigr)|_{(\cX_\infty^{(u)})_\proket}\equiv p^{s+d+1}\alpha^{n+u+1}x
\ {\Bigl(\rm mod}\ p^{(2m+1)r/2}{\rm H}^1\bigl((\cX_\infty^{(u)})_\proket, \fD_k^o  \bigr)^{(b)}\Bigr).
$$
If we denote by $$ z:=p^{-s-d-1}\alpha^{-n-u-1}\sum_{j=0}^\infty p^{jr/2}z_j\in
{\rm H}^1\bigl(\cX_\proket, \bD^o_k\bigr)^{(b)}\widehat{\otimes} \C_p,
$$
 then $\rho\bigl(p^{-d}z\bigr)|_{(\cX_\infty^{(u)})_\proket}=p^{-d}\bigl((x')^{\rm tf}\bigr)=x$. To see that the written series converges we recall (see \cite{EichlerShimura}) that ${\rm H}^1\bigl(\cX_\proket, \bD^o_k\bigr)$ is profinite, therefore compact and so ${\rm H}^1\bigl(\cX_\proket, \bD^o_k\bigr)^{(b)}\widehat{\otimes} \C_p$ is complete for the $p$-adic topology.

\medskip
\noindent
{\bf Step 3.}   Using Step (2)  Theorem \ref{thm:htsurjective}  is proven under the condition that $\Bigl(\prod_{i=0}^{h-1}(u_k-i)\Bigr)\in B[1/p]^\ast$. As no $u_k-i$ is $0$ in $B$ due to our assumptions, we have $B\bigl[p^{-1}\prod_{i=b}^{h-1}(u_k-i)^{-1} \bigr]=\cup_{n\in \N} B_n[p^{-1}]$, with $B_n=B[\![p^n/\prod_{i=b}^{h-1}(u_k-i) ]\!]$ satisfying the requirements of Definition \ref{def:weights} for every $n\ge 0$. The Theorem then holds for each $B_n$. Since the map $\Psi_H$ is a map of finite and projective $B[1/p]$-modules, we get that the Theorem holds after inverting $\prod_{i=b}^{h-1}(u_k-i)$. 

Consider the reduction of $\Psi_{\rm HT}$  modulo $u_k-i$, for  $b\leq  i\leq h-1$. It is compatible with the classical $p$-adic Hodge-Tate decomposition, which provides a surjective map
$$\mathrm{H}^1\bigl(\cX_{\overline{K},\proket},  \Symm^{i}(T_0^\vee)\bigr)\otimes \C_p  \lra \mathrm{H}^0\bigl(\cX, \omega^{i+2}\bigr);$$see \cite{EichlerShimura}. The map $\mathrm{H}^1\bigl(\cX_{\overline{K},\proket}, \bD_i^o(T_0^\vee)[n]\bigr)\to \mathrm{H}^1\bigl(\cX_{\overline{K},\proket},  \Symm^{i}(T_0^\vee)\bigr)$ is induced by  the surjective map $\bD_k^o(T_0^\vee)[n]/(u_k-i)\cong \bD_i^o(T_0^\vee)[n] \to \Symm^{i}(T_0^\vee)$ and induces a surjective morphism on the slope $b$-part $\mathrm{H}^1\bigl(\cX_{\overline{K},\proket}, \_\bigr)^{(b)}$ by results of Stevens. The map $\mathrm{H}^0\bigl(\cX, \omega^{i+2}\bigr)\to \mathrm{H}^0\bigl(\cX_\infty^{(u)}, \omega^{i+2}\bigr)\cong \mathrm{H}^0\bigl(\cX_\infty^{(u)}, \omega^{k+2}\bigr)/(u_k-i)$ is the restriction map and is an isomorphism on the slope $\leq b$-part by the classicity result of Coleman. We conclude that $\Psi_{\rm HT}$ is surjective modulo $u_k-i$ for  every $b\leq  i\leq h-1$. This implies that $\Psi_{\rm HT}$ is surjective if $\Bigl(\prod_{i=0}^{b-1}(u_k-i)\Bigr)\in B[1/p]^\ast$ as claimed.

\section{The $B_{\rm dR}$-comparison.}

In this section we consider the modular curve $\cX=\cX(N)$ of full level $N$ and the modular curve $\cX_0(p^n,N)$ over the ring of integers $\cO_K$ of an unramified extension $K$ of $\Q_p$. Let $\widehat{X}$ be the $p$-adic  formal scheme  over $\cO_K$ defined by formally completing the modular curve $X(N)$ over $\cO_K$ along the special fiber $X(N)_0$. Let $E$ be the universal generalised elliptic curve over $X(N)$ and denote its modulo $p$-reduction by  $E_0$. As $X(N)_0$ is smooth, the crystalline cohomology $\mathrm{H}^1_{\rm crys}(E_0/\widehat{X})\bigr) $ is identified with the de Rham cohomology $\mathrm{H}_E:=\mathrm{H}^1_{\rm dR}\bigl(E/\widehat{X}\bigr) $ (as a module with log connection; see below its description).

\subsection{Period sheaves.}
There is a map of sites $w\colon \cX_{\proket} \to \widehat{X}_{\rm et}$ defined by associating to an \'etale map $U\to \widehat{X}$ of formal schemes its adic generic fiber. We set $\cO_{\cX_{\proket}}^{\rm unr, +}:=w^{-1}\bigl(\cO_{\widehat{X}}\bigr)$ (see \cite[\S 2.2]{TT}). It is a subsheaf of $\cO_{\cX_{\proket}}^+$. As $\widehat{X}$  is endowed with the log structure defined by the cusps, this induces a log structure $\widehat{\gamma}\colon  \widehat{\cM} \to \cO_{\widehat{X}}$; more precisely, $ \widehat{\cM}$ is the inverse limit $\displaystyle{\lim_{\infty \leftarrow n}} \cM_n$ where $\cM_n\to \cO_{\widehat{X}}/p^n \cO_{\widehat{X}}$ is the log structure defined by the cusps. This induces a log structure $\gamma^{\rm unr}\colon \cM^{\rm unr, +}:=w^{-1}\bigl( \widehat{\cM} \bigr) \to \cO_{\cX}^{\rm unr, +}$ and   a log structure $\gamma^+\colon \cM^+ \to \cO_{\cX_{\proket}}^+$ which defines  the log structure   $\gamma\colon \cM\to \cO_{\cX_{\proket}}$ of \S \ref{sec:periodsheaves}. In particular, we can refine the pre-log structure   $\gamma^\flat \colon \cM^\flat \to \widehat{\cO}_{\cX_{\proket}^\flat}$ of loc.~cit.~to a pre-log structure  $\gamma^{\flat,+} \colon \cM^{\flat,+} \to \widehat{\cO}_{\cX_{\proket}^\flat}^+$ where $\cM^{\flat,+}$ is the inverse limit $\displaystyle \lim_{\leftarrow } \cM^+$, indexed by $\N$, with transition maps given by raising to the $p$th power. We write $a\mapsto a^\sharp$ for the first projection $\cM^{\flat,+}\to \cM^+$.

Recall that we have the period sheaf $\mathbb{A}_{\rm inf}$ over $\cX_{\proket}$, see \S  \ref{sec:periodsheaves}; here we omit $\cX$ from tha notation.  We define the morphism of multiplicative monoids $\gamma_{\rm inf}\colon \cM^{\flat,+} \to  \mathbb{A}_{\rm inf}$ by composing $\gamma^{\flat,+}$ with the Teichm\"uller lift. Then the map $\vartheta\colon  \mathbb{A}_{\rm inf} \to  \widehat{\cO}_{\cX_{\proket}}^+$ is compatible with pre-log structures, namely $\vartheta \circ \gamma_{\rm inf}$ coincides with the first projection $\cM^{+,\flat} \to \cM^+$ composed with $\gamma^+$ and the natural map $\cO_{\cX_{\proket}}^+ \to  \widehat{\cO}_{\cX_{\proket}}^+$.  The map $\vartheta$ defines a map $$\vartheta_\cX:=1\otimes \vartheta\colon \cO \mathbb{A}_{\rm inf}:=\cO_{\cX_{\proket}}^{\rm unr, +} \otimes_{\Z_p} \mathbb{A}_{\rm inf} \lra \widehat{\cO}_{\cX_{\proket}}^+$$of $\cO_{\cX_{\proket}}^{\rm unr, +}$-algebras. Furthermore , $\vartheta_\cX$ is a map of sheaves compatible with the pre-log structures $\gamma^{\rm unr}\times  \gamma_{\rm inf}\colon  \cM^{\rm unr, +} \times \cM^{\flat,+} \to \cO \mathbb{A}_{\rm inf}$ and $\gamma^+$; namely $\vartheta_\cX \circ \bigl(\gamma^{\rm unr}\times  \gamma_{\rm inf}\bigr) $ coincides with $\gamma^+$ composed with the homomorphism of monoids $ \cM^{\rm unr, +} \times \cM^{\flat, +} \to \cM^+$ provided by the natural map $\cM^{\rm unr, +}\to \cM^{+}$, the projection $\cM^{\flat,+} \to \cM^+ $ given by $a\mapsto a^\sharp$
 and the multplication map $\cM^+\times \cM^+ \to \cM^+$.

Define $\cO \mathbb{A}_{\rm log}$ to be the sheaf on $\cX_{\proket}$ given by the $p$-adic completion of the log-DP enevelope of $\cO \mathbb{A}_{\rm inf}$ with respect to  the product pre-log structure  $\gamma^{\rm unr}\times  \gamma_{\rm inf}$  and with respect to $\vartheta_\cX$; see \cite[Lemma 2.16]{andreatta_iovita_ss} for the definition. More precisely, let $\cM' \subset  (\cM^{\rm unr, +})^{\rm gp} \times (\cM^{\flat,+})^{\rm gp} $ be the sheaf of monoids defined as the  inverse image of $\cM^+\subset (\cM^+)^{\rm gp}$ via the map $(\gamma^{\rm unr})^{\rm gp}\times ( \gamma_{\rm inf})^{\rm gp}\colon (\cM^{\rm unr, +})^{\rm gp} \times (\cM^{\flat,+})^{\rm gp}\to (\cM^+)^{\rm gp}$ associated to $\gamma^{\rm unr}\times  \gamma_{\rm inf}$; here the superscript ${\rm gp}$ is the sheaf of groups associated to a sheaf of monoids. Let $\cO \mathbb{A}_{\rm inf}':= \cO \mathbb{A}_{\rm inf}\otimes_{\Z\bigl[\cM^{\rm unr, +} \times \cM^{\flat,+}\bigr]} \Z\bigl[\cM'\bigr]  $   be the log envelope of $\cO_{\cX}^{\rm unr +} \otimes_{\Z_p} \mathbb{A}_{\rm inf}$, with respect to pre-log structure $\cM^{\rm unr, +} \times \cM^{\flat,+}$ and the map $\vartheta_\cX$.  It is endowed with a tautological pre-log structure $\gamma'\colon \cM'\to \cO \mathbb{A}_{\rm inf}'$.  The map $\vartheta_\cX$ extends uniquely to a map $\vartheta_\cX'\colon \cO \mathbb{A}_{\rm inf}' \lra \widehat{\cO}_{\cX_{\proket}}^+$ compatible with the pre-log structures $\gamma'$ and $\gamma^+$, i.e., such that $\vartheta_\cX' \circ \gamma'$ is $\gamma^+$ composed with the natural morphism $\cM'\to \cM^+$ induced by $(\gamma^{\rm unr})^{\rm gp}\times ( \gamma_{\rm inf})^{\rm gp}$. Let $\mathcal{I}$ be the kernel of $\vartheta_\cX'$.  

Then $\cO\mathbb{A}_{\rm log}$ is the $p$-adic completion of the DP envelope of $\cO \mathbb{A}_{\rm inf}'$ with respect to the ideal $\cI$.    For every positive integer $n$ we also define $\cO\mathbb{A}_{\max,n}^{\log}$ to be the $p$-adic completion of the subsheaf $\displaystyle \cO \mathbb{A}_{\rm inf}'  \bigl[\frac{\mathcal{I}}{p^n}\bigr]$ of $\cO\mathbb{A}_{\rm inf}'[p^{-1}]$. The map $\vartheta_{\cX}'$ extends to a map $\vartheta_{\max,n}\colon \cO\mathbb{A}_{\max,n}^{\log}\to \widehat{\cO}_{\cX_{\proket}}^+$. As  $\mathcal{I}$ admits DP powers in  $\cO \mathbb{A}_{\max,n}^{\log}$ and $\cO\mathbb{A}_{\log}$ is the $p$-adic completion of the  DP envelope of $\cO \mathbb{A}_{\rm inf}'$ with respect to $\mathcal{I}$, we have a natural map $\cO \mathbb{A}_{\log}\to \cO\mathbb{A}_{\max,n}^{\log}$ by the universal property of the DP envelope.

The derivation $d\colon \cO_{\widehat{X}} \to \Omega^{\log}_{{\widehat{X}}/\cO_K}$ defines an $\mathbb{A}_{\rm inf}$-linear connections $$\nabla\colon \cO \mathbb{A}_{\rm log}\lra \cO \mathbb{A}_{\rm log}\otimes\otimes_{\cO^{\rm unr,+}_{\cX_{\proket}}}  \Omega^{\rm unr}_{\cX_{\proket}}$$and $$\nabla\colon \cO \mathbb{A}_{\max,n}^{\log}\lra \cO \mathbb{A}_{\max,n}^{\log}\otimes\otimes_{\cO^{\rm unr,+}_{\cX_{\proket}}}  \Omega^{\rm unr}_{\cX_{\proket}} ,$$where $\Omega^{\rm unr}_{\cX_{\proket}}:=w^{-1}\bigl(\Omega^{\log}_{{\widehat{X}}/\cO_K}\bigr)$ is the inverse image of the logarithmic differentials on $\widehat{X}$.

\begin{remark} On the complement of the cusps, where the log structure is trivial,   the sheaf $\cO \mathbb{A}_{\rm log}$ is  the $p$-adic completion of the DP envelope of $\cO \mathbb{A}_{\rm inf}$ with respect to the kernel of $\vartheta_\cX$; it is the sheaf denoted $\cO \mathbb{A}_{\rm cris}$ in \cite[Def. 2.9]{TT}.

Take an affine open nieghorhhod $\widehat{U}_0\subset \widehat{X}$  of a cusp,  with local coordinate $Y$ at the given cusp. Define $U_0\subset \cX$ to be the associated affinoid with induced log structure. Take  $U=\lim_i U_i$ with $U_i=\bigl(\mathrm{Spa}(R_i,R_i^+),\cM_i\bigr)$ to be a log affinoid perfectoid with initial object $U_0$ and let $(R,R^+)$ be the $p$-adic completion of $\lim_i (R_i,R_i^+)$. By assumption we have a compatible systm of $p^n$-th roots  $Y_n$ of $Y$ so that the system $\underline{Y}:=[Y,Y_1,Y_2,\cdots ]\in \gamma^{\flat,+}\bigl(\cM^{\flat,+}\bigr)(U)$.  As shown in \cite[Lemma 3.25]{andreatta_iovita_ss}, we have $\cO \mathbb{A}_{\rm log}(U)= \mathbb{A}_{\rm cris}(R,R^+) \{\langle w-1 \rangle\}$  the $p$-adic completion of the DP algebra $\mathbb{A}_{\rm cris}(R,R^+) \langle w-1 \rangle$,  where $\displaystyle w=\frac{Y}{[\underline{Y}]}$. Here, $\mathbb{A}_{\rm cris}$ is the $p$-adic completion of the DP envelope of $ \mathbb{A}_{\rm inf}$ with respect to the kernel of $\vartheta$. There is a similar description for  $\cO \mathbb{A}_{\max,n}^{\log}(U)$.
\end{remark}

There are also the geometric de Rham sheaves $\cO\mathbb{B}_{\rm dR, log}^+$ and  $\cO\mathbb{B}_{\rm dR, log}$ defined in \cite[Def. 2.2.10]{DLLZ}, with a map $\vartheta_{\rm dR,log}\colon \cO\mathbb{B}_{\rm dR, log}^+ \to  \widehat{\cO}_{\cX_{\proket}}^+$ and logarithmic connectons $\cO\mathbb{B}_{\rm dR, log}^+\lra \cO\mathbb{B}_{\rm dR, log}^+ \otimes\otimes_{\cO^{\rm unr,+}_{\cX_{\proket}}}  \Omega^{\rm unr}_{\cX_{\proket}}$ (see \cite[\S 2.2]{DLLZ}). As  $\cO\mathbb{B}_{\rm dR, log}^+$ is a $ \cO_{{\cX}_\proket}\otimes  \mathbb{A}_{\rm inf}$-algebra by construction, there is a natural map of sheaves $\cO \mathbb{A}_{\rm inf}\to \cO\mathbb{B}_{\rm dR, log}^+$ whose composite with $\vartheta_{\rm dR,log}$ is $\vartheta_\cX$. 

\begin{lemma}\label{lemma:BdR} The map  $\cO \mathbb{A}_{\rm inf}\to \cO\mathbb{B}_{\rm dR, log}^+$ extends to morphisms $\cO \mathbb{A}_{\log}\to \cO\mathbb{A}_{\max,n}^{\log}\to \cO\mathbb{B}_{\rm dR, log}^+$, for every $n$, such that the composite with $\vartheta_{\rm dR,log}$ is the map  $\vartheta_{\max,n}$ and it is compatible with connections.

\end{lemma}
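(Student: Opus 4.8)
The plan is to build the extension by universal properties, then check compatibility with $\vartheta_{\rm dR,log}$ and the connections on a basis. First I would recall that $\cO\mathbb{B}_{\rm dR, log}^+$ is, by construction in \cite[Def. 2.2.10]{DLLZ}, a sheaf of $\cO_{\cX_\proket}\otimes_{\Z_p}\mathbb{A}_{\rm inf}$-algebras that is $\mathcal{I}_{\rm dR}$-adically complete, where $\mathcal{I}_{\rm dR}=\ker(\vartheta_{\rm dR,log})$; in particular it is $\Q_p$-flat (it contains $\cO\mathbb{B}_{\rm dR, log}^+ \supset B_{\rm dR}^+$) and the ideal $\mathcal{I}_{\rm dR}$ admits divided powers, since $t\in\mathcal{I}_{\rm dR}$ and $t^n/n!$ lies in $B_{\rm dR}^+$. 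The natural map $\cO\mathbb{A}_{\rm inf}\to\cO\mathbb{B}_{\rm dR, log}^+$ is compatible with the two pre-log structures by the very definition of $\cO\mathbb{B}_{\rm dR, log}^+$ as an algebra over $\cO\mathbb{A}_{\rm inf}$ with its log structure, so it extends canonically to the log envelope $\cO\mathbb{A}_{\rm inf}'$ (the target already contains the image of $\Z[\cM']$), and the composite with $\vartheta_{\rm dR,log}$ is $\vartheta_\cX'$ because it is so after restricting to $\cO\mathbb{A}_{\rm inf}$ and both sides are algebra maps out of $\cO\mathbb{A}_{\rm inf}'=\cO\mathbb{A}_{\rm inf}\otimes_{\Z[\cM^{\rm unr,+}\times\cM^{\flat,+}]}\Z[\cM']$.

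Next I would invoke the universal property of the $p$-adically completed DP envelope: since $\cO\mathbb{A}_{\rm inf}'\to\cO\mathbb{B}_{\rm dR, log}^+$ sends $\mathcal{I}=\ker(\vartheta_\cX')$ into $\mathcal{I}_{\rm dR}=\ker(\vartheta_{\rm dR,log})$, and $\mathcal{I}_{\rm dR}$ carries divided powers in the $\Q_p$-flat, $p$-adically (even $\mathcal{I}_{\rm dR}$-adically) complete ring $\cO\mathbb{B}_{\rm dR, log}^+$, the map factors uniquely through $\cO\mathbb{A}_{\log}$, the $p$-adic completion of the DP envelope of $\cO\mathbb{A}_{\rm inf}'$ along $\mathcal{I}$. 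For $\cO\mathbb{A}_{\max,n}^{\log}$, which is the $p$-adic completion of $\cO\mathbb{A}_{\rm inf}'[\mathcal{I}/p^n]$, one uses that $\mathcal{I}_{\rm dR}/p^n$ already makes sense in $\cO\mathbb{B}_{\rm dR, log}^+[1/p]$ and lands inside $\cO\mathbb{B}_{\rm dR, log}^+$ after $p$-adic completion (again because $B_{\rm dR}^+\supset \mathbb{A}_{\rm cris}[1/p]$ absorbs the denominators $t^n/p^n$, $t^n/n!$); this gives the factorization $\cO\mathbb{A}_{\log}\to\cO\mathbb{A}_{\max,n}^{\log}\to\cO\mathbb{B}_{\rm dR, log}^+$, and uniqueness forces compatibility with the transition maps $\cO\mathbb{A}_{\log}\to\cO\mathbb{A}_{\max,n}^{\log}$ of the excerpt. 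In all cases the composite with $\vartheta_{\rm dR,log}$ is $\vartheta_{\max,n}$ (resp.\ $\vartheta_\cX'$) by uniqueness, since this already holds on the dense subring $\cO\mathbb{A}_{\rm inf}'$.

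For compatibility with connections I would argue locally on the basis $\mathcal{B}$ of log affinoid perfectoid opens, using the explicit descriptions recalled in the Remark: on an open $U$ as there, $\cO\mathbb{A}_{\log}(U)=\mathbb{A}_{\rm cris}(R,R^+)\{\langle w-1\rangle\}$ with $w=Y/[\underline{Y}]$, and similarly $\cO\mathbb{B}_{\rm dR, log}^+(U)=B_{\rm dR}^+(R,R^+)\langle\!\langle w-1\rangle\!\rangle$ with the same coordinate $w$ (the DLLZ construction); on both the connection $\nabla$ is the unique $\mathbb{A}_{\rm inf}$- (resp.\ $B_{\rm dR}^+$-) linear extension of $d$ on $\cO^{\rm unr,+}_{\cX_\proket}$ determined by $\nabla([\underline{Y}])=0$, equivalently by $\nabla(w)=w\,\frac{dY}{Y}$. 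Since our map sends $w\mapsto w$ and is $\cO^{\rm unr,+}_{\cX_\proket}\otimes\mathbb{A}_{\rm inf}$-linear, it intertwines the two $\nabla$'s on a set of topological generators, hence everywhere by continuity. The main obstacle is the bookkeeping at the cusps: one must check that the log-DP envelope $\cO\mathbb{A}_{\log}$ (built from the pre-log structure $\gamma'$ on $\cM'$) maps compatibly into the DLLZ sheaf $\cO\mathbb{B}_{\rm dR, log}^+$ — i.e.\ that the extra log variable $w-1$ on the source is carried to the corresponding one on the target and that the divided-power structures match — which is exactly where \cite[Lemma 2.16 \& Lemma 3.25]{andreatta_iovita_ss} and \cite[\S 2.2]{DLLZ} must be matched up; away from the cusps this collapses to the already-known comparison $\cO\mathbb{A}_{\rm cris}\hookrightarrow\cO\mathbb{B}_{\rm dR}^+$ of \cite{TT}.
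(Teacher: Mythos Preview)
Your overall strategy --- extend by universal properties, then verify compatibility with $\vartheta_{\rm dR,log}$ and with the connections on generators --- matches the paper's. But there are two genuine gaps where you hand-wave the points that the paper actually works to establish.

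\textbf{The extension to $\cO\mathbb{A}_{\rm inf}'$.} You write that the map $\cO\mathbb{A}_{\rm inf}\to\cO\mathbb{B}_{\rm dR,log}^+$ ``extends canonically to the log envelope $\cO\mathbb{A}_{\rm inf}'$ (the target already contains the image of $\Z[\cM']$)''. This is not automatic. By definition $\cO\mathbb{A}_{\rm inf}'=\cO\mathbb{A}_{\rm inf}\otimes_{\Z[\cM^{\rm unr,+}\times\cM^{\flat,+}]}\Z[\cM']$, so to produce a map out of it you must first specify a monoid homomorphism $\cM'\to\cO\mathbb{B}_{\rm dR,log}^+$ extending $\gamma^{\rm unr}\times\gamma_{\rm inf}$. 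Elements of $\cM'$ that are not in $\cM^{\rm unr,+}\times\cM^{\flat,+}$ involve formal inverses, and it is not a priori clear where they go. The paper supplies this via \cite[Lemma 2.3.12]{DLLZ}: on a log affinoid perfectoid $U$ there is a monoid map $\beta\colon\cM^{\flat,+}(U)\to\cO\mathbb{B}_{\rm dR,log}^+(U)^\ast$ with $\gamma^+(a^\sharp)=[\gamma^{\flat,+}(a)]\,\beta(a)$, and the paper uses $\beta$ to send an element $(a^\sharp(b^\sharp)^{-1},ab^{-1})$ of the kernel $H\subset(\cM^{\rm unr,+})^{\rm gp}\times(\cM^{\flat,+})^{\rm gp}$ to $\beta(a)\beta(b)^{-1}$, then extends to all of $\cM'=H\cdot(\cM^{\rm unr,+}\times\cM^{\flat,+})$. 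This is the substantive content of the lemma and you should not skip it.

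\textbf{Passing to the $p$-adic completion.} You invoke the universal property of the $p$-adically completed DP envelope against the target $\cO\mathbb{B}_{\rm dR,log}^+$, calling it ``$p$-adically (even $\mathcal{I}_{\rm dR}$-adically) complete''. But $p$ is a unit in $\cO\mathbb{B}_{\rm dR,log}^+$, so $\cO\mathbb{B}_{\rm dR,log}^+/p^n=0$ for every $n$ and the ring is \emph{not} $p$-adically complete; hence neither the universal property of $\cO\mathbb{A}_{\log}$ nor a naive continuity argument produces a map from the $p$-adic completion $\cO\mathbb{A}_{\max,n}^{\log}$ of $\cO\mathbb{A}_{\rm inf}'[\mathcal{I}/p^n]$. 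The paper instead observes that, since $p$ is invertible, the map $\cO\mathbb{A}_{\rm inf}'\to\cO\mathbb{B}_{\rm dR,log}^+$ extends trivially to $\cO\mathbb{A}_{\rm inf}'[\mathcal{I}/p^n]$ with $\mathcal{I}/p^n$ landing in $\ker(\vartheta_{\rm dR,log})$, and then passes to log affinoid perfectoid opens and the explicit local descriptions to check that this extends to the $p$-adic completion. The map from $\cO\mathbb{A}_{\log}$ is then obtained simply by composing with the already-established $\cO\mathbb{A}_{\log}\to\cO\mathbb{A}_{\max,n}^{\log}$, not by a separate DP universal property argument.

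Your treatment of the connection is fine and agrees with the paper's one-line observation that both $\nabla$'s are induced by the same derivation $d\colon\cO_{\widehat X}\to\Omega^{\log}_{\widehat X/\cO_K}$.
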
 
\begin{proof} First of all we show how to get a map  $\cO \mathbb{A}_{\rm inf}'\to \cO\mathbb{B}_{\rm dR, log}^+$ of $\cO \mathbb{A}_{\rm inf}$-algebras. 

It suffices to construct this for log affinoid perfectoid objects of $\cX_{\proket}$ arising from toric charts,  as those form a basis  of $\cX_{\proket}$. Take any such object that we denote by $U$. 
It follows from \cite[Lemma 2.3.12]{DLLZ} that there exists a morphism of monoids  $ \beta\colon  \cM^{\flat,+}(U)^\ast\to \cO\mathbb{B}_{\rm dR, log}^+(U)$ such that for every $a \in \cM^{\flat,+}(U)$ we have $\gamma^+(a^\sharp)=\bigl[\gamma^{\flat,+}(a)\bigr]\beta(a)$. Since the map $\cM^{\flat,+}(U)\to \cM^+(U)$ is an isomorphism,  any element in the kernel $H$ of  $(\gamma^{\rm unr})^{\rm gp}(U)\times ( \gamma_{\rm inf})^{\rm gp}(U)\colon (\cM^{\rm unr, +})^{\rm gp}(U) \times (\cM^{\flat,+})^{\rm gp}(U)\to (\cM^+)^{\rm gp}(U)$ is of the form $\bigl(a^\sharp (b^\sharp)^{-1}, a b^{-1}\bigr)$ with $a,b\in \cM^\flat(U)$ such that $a^\sharp, b^\sharp\in \cM^{\rm unr, +}(U)$. Any such element can be sent to $\beta(a)\beta(b)^{-1}$ and this defines a group homomorphism $\beta\colon H\to  \cM^{\flat,+}(U)^\ast$. As $\cM'(U)= H \cdot \bigl(\cM^{\rm unr, +}(U) \times \cM^{\flat, +} (U)\bigr)\subset (\cM^{\rm unr, +})^{\rm gp}(U) \times (\cM^{\flat,+})^{\rm gp}(U)\to (\cM^+)^{\rm gp}(U)$, such map extends to a map of monoids $\beta\colon \cM'(U)\to \cO\mathbb{B}_{\rm dR, log}^+(U)$ whcih is compatible with the map $\gamma^{\rm unr}(U)\times  \gamma_{\rm inf}(U)\colon \cM^{\rm unr, +}(U) \times \cM^{\flat,+}\to \cO \mathbb{A}_{\rm inf}(U) \to \cO\mathbb{B}_{\rm dR, log}^+(U)$. Using this we get a unique morphism   $\cO \mathbb{A}_{\rm inf}'(U) \to \cO\mathbb{B}_{\rm dR, log}^+(U)$ of $\cO \mathbb{A}_{\rm inf}(U)$-algebras coinciding with $\beta$ on $\cM'(U)$.

In this way we get the claimed morphism $\cO \mathbb{A}_{\rm inf}'\to \cO\mathbb{B}_{\rm dR, log}^+$  whose composite with $\vartheta_{\rm dR,log}$ is $\vartheta_\cX'$. In particular, the kernel  $\mathcal{I}$ of $\vartheta_\cX'$ is mapped to the kernel of $\vartheta_{\rm dR,log}$. As $p$ is invertible in $\cO\mathbb{B}_{\rm dR, log}^+$, it also extends to a morphism  $\displaystyle \cO \mathbb{A}_{\rm inf}'  \bigl[\frac{\mathcal{I}}{p^n}\bigr]\to \cO\mathbb{B}_{\rm dR, log}^+$ and $\displaystyle \frac{\mathcal{I}}{p^n}$ maps  to the kernel of  $\vartheta_{\rm dR,log}$. Passing to log affinoid perfectoid objects of $\cX_{\proket}$ one shows that such map extends to the $p$-adic completion $\displaystyle \cO \mathbb{A}_{\max,n}^{\log}$ of $\displaystyle \cO \mathbb{A}_{\rm inf}'  \bigl[\frac{\mathcal{I}}{p^n}\bigr]  \to\cO\mathbb{B}_{\rm dR, log}^+$. The compatibility with   $\vartheta_{\max,n}$ is clear. The compatibility of the connections is also clear as both are defined using the derivation $d\colon \cO_{\widehat{X}} \to \Omega^{\log}_{{\widehat{X}}/\cO_K}$.

\end{proof}

\subsection{Crystalline comparison morphisms}\label{sec:criscomp}

We have a crystalline comparison morphism over $\cX_{\proket}$:  

\begin{equation} \alpha_{\rm log}\colon (T_p (E))^\vee \lra \left(\mathrm{H}^1_{\rm crys}\bigl(E_0/\widehat{X}\bigr) \otimes_{\cO_{\cX}^+ }  \cO \mathbb{A}_{\rm log}\right)^{\nabla'=0},
\end{equation}
where $(T_p (E))^\vee$ is the $\Z_p$-dual of $T_p (E)$ and $\nabla'$ is the natural connection on 
${\rm H}^1_{\rm crys}\bigl(E_0/\widehat{X}\bigr)\otimes \cO\mathbb{A}_{\rm log}$ determined by the connections on the factors.
We also write $$\bigl(\mathrm{H}^1_{\rm crys}(E_0/\widehat{X}) \otimes_{\cO_{\cX}^+ }  -\bigr)   \mbox{ for } \Bigl(w^{-1}\bigl(\mathrm{H}^1_{\rm crys}(E_0/\widehat{X})\bigr) \otimes_{\cO_{\cX}^{\rm unr +} } -\Bigr)$$ to ease the notation. Ast we work over $K$ we have to keep track of the action of ${\rm Gal}\bigl(\overline{\Q}_p/K\bigr)$.  

We describe $\alpha_{\rm log}$ for a log affinoid perfectoid open cover of $\cX$. Consider an \'etale open $U=\mathrm{Spf}(S)\subset \widehat{X}$. Let $W=\mathrm{Spa}(R,R^+)$  be a log affinoid perfectoid cover of $U_{\overline{\Q}_p}$,  the adic geometric generic fiber of $U$. We assume that the universal elliptic curve extends to a (generalised) elliptic curve $\widetilde{E}$ over $\Spec (S)$ and that $T_p(E)$ is trivialized over $W$. Consider the ring $A_{\rm log}(R^+)$ defined by taking the $p$-adic completion of the log  DP enevelope of $S\otimes \mathbb{A}_{\rm inf}(W)$  with respect to the kernel of the map $1\otimes \vartheta(W)\colon S\otimes \mathbb{A}_{\rm inf}(W)\to \widehat{\cO}_\cX^+(W)=R^+$. It naturally maps to $\cO \mathbb{A}_{\rm log}(W)$. We define $\alpha_{\rm cris}(W)$ with values in $\mathrm{H}^1_{\rm crys}\bigl(\widetilde{E}_0/S\bigr) \otimes_S A_{\rm log}(R^+)$ as follows. 

\

{\em Away from the cusps:} \enspace Assume first that $\widetilde{E}$ is an elliptic curve so that $\widetilde{E}[p^n](R^+)=E[p^n](R)$ for every $n\in\ N$ and $T_p (\widetilde{E})(R^+)=T_p(E)(R)$. Equivalently the log structure is trivial and $A_{\rm log}(R^+)=A_{\rm cris}(R^+)$. Then to give $a \in T_p(E^\vee)(R)=T_p \bigl(E\bigr)^\vee(R^+)(1)$ is equivalent to give a map of $p$-divisible groups $\gamma_a\colon \Q_p/\Z_p \to E^\vee[ p^\infty]$ over $R^+$ and $\gamma_a$ defines  a map of covariant Dieudonn\'e modules $${\bf D}_{\rm cris}(\gamma_a)\colon {\bf D}_{\rm cirs}(\Q_p/\Z_p )\bigl(A_{\rm cris}(R^+) \bigr)\to {\bf D}_{\rm cris}\bigl(E^\vee[ p^\infty]\bigr)\bigl(A_{\rm cris}(R^+)\bigr) .$$Note that  $A_{\rm cris}(R^+)={\bf D}_{\rm cris}(\Q_p/\Z_p )\bigl(A_{\rm cris}(R^+)\bigr)$ and ${\bf D}_{\rm cris}\bigl(\widetilde{E}^\vee[ p^\infty]\bigr)\bigl(A_{\rm cris}(R^+)\bigr)=\mathrm{H}^1_{\rm crys}\bigl(E^\vee_0/S\bigr)^\vee\otimes_S A_{\rm cris}(R^+) $, where $E^\vee_0$ is the modulo $p$ reduction of $E^\vee$. Thus ${\bf D}_{\rm cris}(\gamma_a)$ defines a map $${\bf D}_{\rm cris}(\gamma_a)\colon A_{\rm cris}(R^+)\lra \mathrm{H}^1_{\rm crys}\bigl(E^\vee_0/S\bigr)^\vee\otimes_S A_{\rm cris}(R^+)  $$and  setting $$\alpha_{\rm log}(W)(a)=\alpha_{\rm cris}(W)(a):={\bf D}_{\rm cris}(\gamma_a)(1)$$(see \cite{ScholzeWeinstein}{\S 3.5}) and using Weil, respectively Poincar\'e dualities gives the map  $$\alpha_{\rm log}(W): T_p(E)^\vee(R^+)\lra {\rm H}^1_{\rm crys}\bigl(E_0/S)\otimes_SA_{\rm cris}(R^+).$$
 As ${\bf D}_{\rm cris}(\gamma_a)$ is a map of crystals, it is compatible with connections. Since $\nabla(1)=0$ for $1\in A_{\rm cris}(R^+)$, then $\nabla'\Bigl( {\rm Im}\bigl(\alpha_{\rm cris}(W)\bigr)\Bigr)=0$. 

\

{\em Around the cusps:}\enspace Assume next that $U$ does not contain supersingular points. Then the connected part $\widetilde{E}[p^\infty]^0$ of $\widetilde{E}[p^\infty]$ is a $p$-divisble group of multiplicative type. Let  $\widetilde{E}[p^\infty]^{0,\vee}$ be its Cartier dual; it is an \'etale $p$-divisible group over $S$. 
Moreover, $T_p\bigl(\widetilde{E}[p^\infty])(R)$ is a (split over $R^+$ but not split over $S$) extension of $T_p\bigl(\widetilde{E}[p^\infty]^{0,\vee}\bigr)(R^+)$ by $T_p\bigl(\widetilde{E}[p^\infty]^{0}\bigr)(R^+)$. We write $\mathrm{H}^1_{\rm crys}(E_0)$ for the direct sum $\omega_{E/S}^{-1} \cong {\bf D}_{\rm cris}\bigl(\widetilde{E}[ p^\infty]^0\bigr)(S)$ and $\omega_{E/S}\cong {\bf D}_{\rm cris}\bigl(\widetilde{E}[ p^\infty]^{0,\vee}\bigr)$. The connection $\mathrm{H}^1_{\rm crys}(E_0)\to \mathrm{H}^1_{\rm crys}(E_0)\otimes_S \Omega^{1,\log}_{S/\cO_K}$ is the sum of the connections on ${\bf D}_{\rm cris}\bigl(\widetilde{E}[ p^\infty]^0\bigr)(S)$ and ${\bf D}_{\rm cris}\bigl(\widetilde{E}[ p^\infty]^{0,\vee}\bigr)$ plus the $S$-linear isomorphism $\omega_{E/S} \cong \omega_{E/S}^{-1} \otimes_S \Omega^{1,\log}_{S/\cO_{K}}$ provided by the Kodaira-Spencer isomorphism, which we denote ${\rm KS}$.  In other words as a module ${\bf D}_{\rm cris}(\widetilde{E}[p^\infty])$ is isomorphic to the direct sum ${\bf D}_{\rm cris}(\widetilde{E}[p^\infty]^0)\oplus {\bf D}_{\rm cris}(\widetilde{E}[p^\infty]^{0,\vee})\cong \omega_{E/S}\oplus \omega^{-1}_{E/S}$ while the connection 
$\nabla_{\widetilde{E}[p^\infty]}$ is given with respect to this decomposition by $$\left( \begin{array}{cc} \nabla_{\widetilde{E}[p^\infty]^0} &  0 \\ {\rm KS} & \nabla_{\widetilde{E}[p^\infty]^{0,\vee}}\end{array}\right).$$

Then we let $\alpha_{\rm log}(W)$ be the direct sum of the crystalline maps for the $p$-divisible groups $\widetilde{E}[p^\infty]^0$ and $\widetilde{E}[p^\infty]^{0,\vee}$ via this identification for a given splitting $T_p\bigl(\widetilde{E}[p^\infty])(R)=T_p\bigl(\widetilde{E}[p^\infty]^{0}\bigr)(R^+)\oplus T_p\bigl(\widetilde{E}[p^\infty]^{0,\vee}\bigr)(R^+)$.

\

The following lemma implies that these two constructions of $\alpha_{\rm log}$ coincide when they are both defined. In particular, $\alpha_{\log}(W)$ is functorial in the pair $(U,W)$ and descends to a map $\alpha_{\rm log}$. 

\begin{lemma} If $U$ does not contain supersingular points and $\widetilde{E}$ is an elliptic curve, the two constructions coincide.

\end{lemma}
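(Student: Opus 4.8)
The plan is to reduce the statement to a purely local comparison of two recipes for the crystalline period map on $p$-divisible groups, and then invoke the functoriality of Dieudonn\'e theory. First I would work over an \'etale open $U=\Spec(S)\subset \widehat{X}$ which contains no supersingular points and such that $\widetilde E$ is an honest elliptic curve with $T_p(E)$ trivialized over the chosen log affinoid perfectoid $W=\mathrm{Spa}(R,R^+)$. Since $U$ has no supersingular points, the connected--\'etale sequence of $\widetilde E[p^\infty]$ over $S$ is defined, and over $R^+$ (which is strictly Henselian-like for this purpose, the relevant point being that $\widetilde E[p^\infty]^0\otimes R^+$ is of multiplicative type and the extension splits over $R^+$) the Tate module decomposes as $T_p(\widetilde E[p^\infty]^0)(R^+)\oplus T_p(\widetilde E[p^\infty]^{0,\vee})(R^+)$. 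The point is that giving $a\in T_p(E^\vee)(R)$, i.e. a map of $p$-divisible groups $\gamma_a\colon \Q_p/\Z_p\to E^\vee[p^\infty]$ over $R^+$, is the same as giving its two components along this decomposition, and ${\bf D}_{\rm cris}$ is additive for direct sums of $p$-divisible groups.

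The key steps, in order, would be: (1) recall that the covariant Dieudonn\'e functor ${\bf D}_{\rm cris}$ is an exact, additive functor on $p$-divisible groups over $S$, so that ${\bf D}_{\rm cris}\bigl(\widetilde E[p^\infty]\bigr)\cong {\bf D}_{\rm cris}\bigl(\widetilde E[p^\infty]^0\bigr)\oplus {\bf D}_{\rm cris}\bigl(\widetilde E[p^\infty]^{0,\vee}\bigr)$ as crystals, hence compatibly with connections up to the off-diagonal Kodaira--Spencer term, which is exactly the matrix $\left(\begin{smallmatrix}\nabla_{\widetilde E[p^\infty]^0}&0\\ {\rm KS}&\nabla_{\widetilde E[p^\infty]^{0,\vee}}\end{smallmatrix}\right)$ written in the statement around the cusps; (2) observe that the first construction of $\alpha_{\rm log}(W)(a)={\bf D}_{\rm cris}(\gamma_a)(1)$, after Weil/Poincar\'e duality, is built out of $\gamma_a$ purely functorially, so it commutes with the projection of $\gamma_a$ onto the two summands $\widetilde E[p^\infty]^0$ and $\widetilde E[p^\infty]^{0,\vee}$; (3) conclude that ${\bf D}_{\rm cris}(\gamma_a)$ equals the sum of ${\bf D}_{\rm cris}$ applied to the two component morphisms, which is by definition the second construction of $\alpha_{\rm log}(W)$; (4) check that the chosen splitting of $T_p\bigl(\widetilde E[p^\infty]\bigr)(R)$ used in the second construction matches the connected--\'etale splitting over $R^+$ implicit in the first, which is automatic since both are the unique $R^+$-linear splitting of the connected--\'etale sequence. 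Finally I would note that the identity of the two maps $\alpha_{\rm log}(W)$ on this basis of log affinoid perfectoids arising from toric charts, together with their manifest functoriality in the pair $(U,W)$, forces them to glue to the same global map $\alpha_{\rm log}$ on $\cX_{\proket}$, since $A_{\rm log}(R^+)$ and $\cO\mathbb{A}_{\rm log}$ agree on such $W$ by \cite[Lemma 3.25]{andreatta_iovita_ss} and both $\alpha$'s are $\cO\mathbb{A}_{\rm log}$-linear extensions of the same map on $T_p(E)^\vee$.

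The main obstacle I anticipate is the bookkeeping around dualities: the first construction passes through ${\bf D}_{\rm cris}$ of $E^\vee[p^\infty]$ and then applies Weil and Poincar\'e duality to land in $\mathrm{H}^1_{\rm crys}(E_0/S)$, whereas the second construction directly identifies $\mathrm{H}^1_{\rm crys}(E_0)$ with $\omega_{E/S}\oplus\omega_{E/S}^{-1}$ via ${\bf D}_{\rm cris}$ of $\widetilde E[p^\infty]^{0,\vee}$ and $\widetilde E[p^\infty]^0$ respectively. One has to verify that these two identifications of $\mathrm{H}^1_{\rm crys}(E_0/S)$ are the same one, i.e. that the Cartier-duality isomorphism $\widetilde E[p^\infty]^{\vee}\cong (\widetilde E[p^\infty])$ used implicitly (elliptic curves are principally polarized) carries the connected part of $E^\vee[p^\infty]$ to the \'etale quotient of $E[p^\infty]$ and vice versa, compatibly with the crystalline duality pairing. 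This is a standard but slightly delicate compatibility of Poincar\'e duality on $\mathrm{H}^1_{\rm crys}$ of an elliptic curve with the polarization; once it is in place, the rest is formal from the additivity and functoriality of ${\bf D}_{\rm cris}$. I would also need to be mildly careful that the off-diagonal ${\rm KS}$ term does not obstruct the identification of the \emph{maps} (it affects only the connection, and both constructions inherit the same connection since ${\bf D}_{\rm cris}(\gamma_a)$ is a morphism of crystals and $\nabla(1)=0$), so the comparison of the underlying $A_{\rm log}(R^+)$-module maps is unaffected.
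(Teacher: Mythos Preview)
Your proposal is correct and follows essentially the same route as the paper: reduce to a local statement, split the $p$-divisible group over $R^+$ along the connected--\'etale sequence, and use functoriality/additivity of the crystalline comparison to match the two recipes.

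Two small points where the paper is sharper than your sketch. First, the ``duality bookkeeping'' you flag as the main obstacle is resolved in the paper by invoking the \emph{unit root decomposition}: over the ordinary locus the quotient $\widetilde E^\vee[p^\infty]\to \widetilde E^\vee[p^\infty]^{\rm et}$ gives a canonical module splitting of $\mathrm{H}^1_{\rm crys}(\widetilde E_0/S)$, and together with the identifications $\widetilde E^\vee[p^\infty]^{\rm et}\cong \widetilde E[p^\infty]^{0,\vee}$, $\widetilde E^\vee[p^\infty]^{0}\cong \widetilde E[p^\infty]^{0}$ this is exactly what matches the target of the first construction with the $\omega_{E/S}\oplus\omega_{E/S}^{-1}$ of the second. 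You should name this rather than leave it as a compatibility to be checked. Second, your step (4) overstates things: the splitting of the connected--\'etale sequence over $R^+$ is \emph{not} unique. The paper's point is rather that a chosen splitting of $T_p(\widetilde E[p^\infty])(R)$ uniquely determines a splitting of $\widetilde E[p^\infty]_{R^+}$ as a $p$-divisible group, and functoriality of the crystalline comparison then shows the first construction decomposes along \emph{that} splitting into the second. This is what makes the second construction independent of the choice and equal to the first.
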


\begin{proof} Notice that  the quotient map $\widetilde{E}^\vee[p^\infty] \to \widetilde{E}^\vee[p^\infty]^{\rm et} $ onto the \'etale part induces a map from  $\mathrm{H}^1_{\rm crys}\bigl(\widetilde{E}_0/S\bigr)$ to the Dieudonn\'e module of $\widetilde{E}^\vee[p^\infty]^{\rm et}(S)$  which splits canonically via the unit root decomposition. As $\widetilde{E}^\vee[p^\infty]^{\rm et}\cong \widetilde{E}[p^\infty]^{0,\vee}$ and similarly $\widetilde{E}^\vee[p^\infty]^{0}\cong \widetilde{E}[p^\infty]^{\rm et,\vee}\cong \widetilde{E}[p^\infty]^0$ we conclude that $\mathrm{H}^1_{\rm crys}\bigl(\widetilde{E}_0/S\bigr)$ splits canoncially, as a module, as the direct sum of $\omega_{E/S}\cong {\bf D}_{\rm cris}\bigl(\widetilde{E}^\vee[ p^\infty]^0\bigr)(S)$ and $\omega_{E/S}^{-1}\cong {\bf D}_{\rm cris}\bigl(\widetilde{E}[ p^\infty]^{0,\vee}\bigr)(S)$, with connection given by the connnection on each factor plus the Kodaira-Spencer isomorphism.  A splitting $T_p\bigl(\widetilde{E}[p^\infty])(R)=T_p\bigl(\widetilde{E}[p^\infty]^{0}\bigr)(R^+)\oplus T_p\bigl(\widetilde{E}[p^\infty]^{0,\vee}\bigr)(R^+)$ uniquely defines a splitting $\widetilde{E}[p^\infty]_{R^+}=\widetilde{E}[p^\infty]^{0,\vee}_{R^+}\oplus \widetilde{E}[p^\infty]^{0}_{R^+}$ of the connected-\'etale exact sequence for $\widetilde{E}[p^\infty]_{R^+}$. 
Since the crystalline comparison map is functorial in the $p$-divisible group, using the given splitting of $p$-divisible groups, the claim follows.

\end{proof}

We also consider  the composite map of sheaves  $$\alpha_{\rm max,n}\colon (T_p (E))^\vee  \lra \left( \mathrm{H}^1_{\rm crys}(E_0/\widehat{X})  \otimes_{\cO_{\cX}^{\rm unr +} }   \cO \mathbb{A}_{\rm max,n}^{\rm log}\right)^{\nabla'=0},$$induced by composing $\alpha_{\rm log}$ with the map of sheaves $\cO \mathbb{A}_{\rm log} \to \cO \mathbb{A}_{\rm max, n}^{\rm log}$ described at the beginning of the section.

\subsection{The sheaves $\mathbf{W}_{k, \rm dR}$.}

We consider strict neighborhoods $\cX\bigl(p/\mathrm{Ha}^{p^r}\bigr)$ of the ordinary locus in $\cX$ where $\mathrm{Ha}$ is a (any) local lift of the Hasse invariant. It follows from
\cite[Lemma III.3.7 \& Lemma 3.14]{ScholzeTorsion} that the neighborhhods $\cX\bigl(p/\mathrm{Ha}^{p^r}\bigr)\otimes_{K} \C_p$ and $\cX_\infty^{(m)})$ of \S \ref{rmk:standardopens} for varying $r$ respectively $m$ are  fundamental systems of open neighborhoods of the ordinay locus of $\cX_{\C_p}$. We then take $r$ and $m$ so that $\cX\bigl(p/\mathrm{Ha}^{p^r}\bigr)\subset \cX_\infty^{(m)})$  and the conclusions of Proposition \ref{proposition:existcanonical} hold. Namely, a canonical subgroup $C_n$ of order $n$ exists and this defines a section \begin{equation}\label{eq:sectionO1} \cX\bigl(p/\mathrm{Ha}^{p^r}\bigr) \to \cX_0(p^n,N)\bigl(p/\mathrm{Ha}^{p^r}\bigr) \end{equation} of the natural forgetful map $\cX_0(p^n,N)\bigl(p/\mathrm{Ha}^{p^r}\bigr)\lra \cX\bigl(p/\mathrm{Ha}^{p^r}\bigr)$. Write  
$$\nu\colon {\cI}g_n\bigl(p/\mathrm{Ha}^{p^r}\bigr)\lra \cX\bigl(p/\mathrm{Ha}^{p^r}\bigr)$$ for the $(\Z/p^n\Z)^\ast$-Galois cover classifying trivializations of $C_n^\vee$.

Let $E$ be the universal elliptic curve over the normalization of $\widehat{X}$ in ${\cI}g_n\bigl(p/\mathrm{Ha}^{p^r}\bigr)$. Its invariant differentials $\omega_E$ and relatve de Rham cohomology $\mathrm{H}_E$ define locally free $\cO_{{\cI}g_n\bigl(p/\mathrm{Ha}^{p^r}\bigr)}^+$-modules with the Hodge filtration $\omega_E\subset \mathrm{H}_E$.
Write $\underline{\delta}$ for the invertible $\cO_{{\cI}g_n\bigl(p/\mathrm{Ha}^{p^r}\bigr)}^+$-module defined by $\underline{\delta}:=\omega_E(\omega_E^{\rm mod})^{-1}$. Note that $\underline{\delta}^{p-1}= \nu^\ast({\rm Hdg})$ where ${\rm Hdg}$ is the ideal of  $\cO_{\cX\bigl(p/\mathrm{Ha}^{p^r}\bigr)}^+$ generated by the local lifts $\mathrm{Ha}$ of the Hasse invariant (due to the blow-up it does not depend on the choice of the local lifts).

From the  tautological section $\tilde{s}$ of $C_n^\vee$ we get a canonical section $s$ of $\omega_E^{\rm mod}/p^n \omega_E^{\rm mod}$ generating it as $\cO_{{\cI}g_n\bigl(p/\mathrm{Ha}^{p^r}\bigr)}^+$-module. 
We denote by $\mathrm{H}_E^\#$ the push-out in the category of $\cO_{{\cI}g_n\bigl(p/\mathrm{Ha}^{p^r}\bigr)}^+$-modules  of the diagram
$$
\begin{array}{cccccccccc}
\underline{\delta}^p\omega_E&\lra&\underline{\delta}^p\mathrm{H}_{E}\\
\cap\\
\omega_{E}^{\rm mod}.
\end{array}
$$ 
We then have the following commutative diagram of sheaves with exact rows:
$$
\begin{array}{ccccccccc}
0&\lra&\underline{\delta}^p\omega_E&\lra&\underline{\delta}^p\mathrm{H}_{E}&\lra&\underline{\delta}^p\omega_{E}^{-1}&\lra&0\\
&&\cap&&\downarrow&&||\\
0&\lra&\omega_{E}^{\rm mod}&\lra&\mathrm{H}_{E}^\#&\lra&\underline{\delta}^p\omega_{E}^{-1}&\lra&0\\
&&\cap&&\cap&&\cap\\
0&\lra&\omega_{E}&\lra&\mathrm{H}_{E}&\lra&\omega_E^{-1}&\lra&0
\end{array}
$$
It follows that $\mathrm{H}_E^\#$ is a locally free $\cO_{{\cI}g_n\bigl(p/\mathrm{Ha}^{p^r}\bigr)}^+$-module of rank two and 
 $(\omega_E^{\rm mod}, s)\subset (\mathrm{H}_E^\#, s)$ is a compatible inclusion of locally free sheaves with marked sections.  

Let $k\colon \Z_p^\ast \to B^\ast$ be an $h$-analytic character for $h\leq n$ where $B$ is a ring as in definition \ref{def:weights}. Using the formalism of the dual VBMS (see \S \ref{sec:WkWDk})  we get sheaves of   $\cO_{{\cI}g_n\bigl(p/\mathrm{Ha}^{p^r}\bigr)}^+\widehat{\otimes} B$-modules $\omega^k \subset \mathbf{W}_{k, \rm dR}$. We have a residual action of the Galois group $(\Z/p^n\Z)^\ast$ of $j_n\colon {\cI}g_n\bigl(p/\mathrm{Ha}^{p^r}\bigr)\to \cX\bigl(p/\mathrm{Ha}^{p^r}\bigr)$ on $(\omega_E^{\rm mod}, s)$ and $ (\mathrm{H}_E^\#, s)$ on which it acts by scalar multplication. We then get sheaves $\omega^k\subset \mathbf{W}_{k, \rm dR}$ of $\cO_{\cX_1\bigl(p/\mathrm{Ha}^{p^r}\bigr)}\widehat{\otimes} B$-modules by taking the subsheaves of $j_{n,\ast}(\omega^k)\subset j_{n,\ast}(\mathbf{W}_{k, \rm dR})$ on which $\Z_p^\ast$ acts via $k$.

\begin{proposition} \label{prop:propWk} The base change of $\omega_E^k[1/p]$ to $\cX\bigl(p/\mathrm{Ha}^{p^r}\bigr)_{\C_p}$ coincides with the restriction of the sheaf $\omega_E^k[1/p]$  defined in \S \ref{sec:omegak} over $\cX_0(p^n,N)_\infty^{(m)}$.

The sheaf $\mathbf{W}_{k, \rm dR}$ has a natural, increasing filtration $\bigl({\rm Fil}_n\mathbf{W}_{k, \rm dR}  \bigr)_{n\ge 0}$ such that $\omega_E^k[1/p]={\rm Fil}_0\mathbf{W}_{k, \rm dR}[1/p] $. 

The Gauss-Manin connection $\nabla\colon \mathrm{H}_E\to \mathrm{H}_E\otimes \Omega^{\log}_{\cX\bigl(p/\mathrm{Ha}^{p^r}\bigr)/K}$ induces a connection $$\nabla_{k}\colon  \mathbf{W}_{k, \rm dR}[1/p] \lra \mathbf{W}_{k, \rm dR}\widehat{\otimes} \Omega^{\log}_{\cX\bigl(p/\mathrm{Ha}^{p^r}\bigr)/K}$$ statisfying Griffiths' transversality, i.e., such that $\nabla_k\bigl({\rm Fil}_n\mathbf{W}_{k, \rm dR}[1/p] \bigr)\subset {\rm Fil}_{n+1} \mathbf{W}_{k, \rm dR}[1/p]$.

The cohomology groups $\mathrm{H}^0\bigl(\cX\bigl(p/\mathrm{Ha}^{p^r}\bigr), \mathbf{W}_{k, \rm dR}\bigr)$ and $\mathrm{H}^0\bigl(\cX\bigl(p/\mathrm{Ha}^{p^r}\bigr), \Fil_n\mathbf{W}_{k, \rm dR}\bigr)$ are endowed with an action of the $U_p$-operator and for every integer $h$ they admit slope $\leq h$ decomposition.  Furthermore we have $p \nabla_k \circ U_p=U_p \circ \nabla_k$ and  for $n\gg 0$ we have

$$\mathrm{H}^0\bigl(\cX\bigl(p/\mathrm{Ha}^{p^r}\bigr), \Fil_n \mathbf{W}_{k, \rm dR}\bigr)^{(h)}=\mathrm{H}^0\bigl(\cX\bigl(p/\mathrm{Ha}^{p^r}\bigr), \mathbf{W}_{k, \rm dR}\bigr)^{(h)}.$$

\end{proposition}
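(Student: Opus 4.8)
\textbf{Proof strategy for Proposition \ref{prop:propWk}.}

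The plan is to treat the five assertions in turn, reducing each to the explicit local description of the dual VBMS from \S\ref{sec:explicit}. First I would settle the comparison of $\omega_E^k[1/p]$ with the sheaf of \S\ref{sec:omegak}: this is purely a matter of unwinding definitions. Both sheaves are built by applying the functor $\WW_k(-,s)$ to the pair $(\omega_E^{\rm mod},s)$ equipped with the tautological marked section coming from a generator of $C_n^\vee$; over $\cX\bigl(p/\mathrm{Ha}^{p^r}\bigr)_{\C_p}\subset \cX_0(p^n,N)_\infty^{(m)}$ the canonical subgroup $C_n$ coincides with the level subgroup by Proposition \ref{proposition:existcanonical}(a), so the two marked sections agree, and hence so do the descended sheaves after inverting $p$ and taking the $k$-eigenspace for $(\Z/p^n\Z)^\ast$.

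For the filtration and the connection I would exploit the fact that $\mathbf{W}_{k,\rm dR}$ arises from the inclusion $(\omega_E^{\rm mod},s)\subset(\mathrm{H}_E^\#,s)$ of rank-one inside rank-two bundles with marked section. Over a trivializing affine, using coordinates as in \S\ref{sec:explicit}, $\mathbf{W}_{k,\rm dR}$ is a completed direct sum $\bigoplus_{i\ge 0} \omega^k\cdot\bigl(\tfrac{D}{1+\alpha C}\bigr)^i$ with $\Fil_n$ the truncation to $i\le n$; the identification $\omega_E^k[1/p]=\Fil_0\mathbf{W}_{k,\rm dR}[1/p]$ is then immediate (after checking $(\Z/p^n\Z)^\ast$-equivariance of the truncation, which follows from the scalar nature of the action, exactly as in the proof of Proposition \ref{prop:Wkinfty}). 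For $\nabla_k$, I would lift the Gauss--Manin connection on $\mathrm{H}_E$ to $\mathrm{H}_E^\#$ — the push-out diagram shows $\mathrm{H}_E^\#$ is commensurable with $\underline\delta^p\mathrm{H}_E$, so the connection acquires denominators bounded by a power of the Hasse invariant, which is harmless after inverting $p$ — and then transport it through the dual-VBMS construction. Griffiths transversality $\nabla_k(\Fil_n)\subset\Fil_{n+1}$ is a local computation: the connection raises the degree in the fibre coordinate $D/(1+\alpha C)$ by exactly one, so one checks it on monomials. The relation $p\nabla_k\circ U_p=U_p\circ\nabla_k$ comes from the functoriality of $\nabla$ under the $p$-isogeny defining $U_p$, together with the factor of $p$ that appears because $U_p$ acts on $\omega_E$ via an isogeny of degree $p$ — the same mechanism as in Proposition \ref{prop:HeckeWkinfty}.

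Finally, the stabilization statement $\mathrm{H}^0\bigl(\cX\bigl(p/\mathrm{Ha}^{p^r}\bigr),\Fil_n\mathbf{W}_{k,\rm dR}\bigr)^{(h)}=\mathrm{H}^0\bigl(\cX\bigl(p/\mathrm{Ha}^{p^r}\bigr),\mathbf{W}_{k,\rm dR}\bigr)^{(h)}$ for $n\gg0$: here I would argue that $U_p$ acts on the graded piece $\mathrm{Gr}_n\mathbf{W}_{k,\rm dR}\cong\omega^{k+2n}\otimes(\text{power of }\underline\delta)$ with slope growing linearly in $n$ — this is where the compactness/growth estimate for $U_p$ on sections of $\omega$-bundles over a strict neighborhood of the ordinary locus enters, and it is essentially the statement that the canonical subgroup isogeny contracts the bundle by a positive power of $\mathrm{Ha}$ at each level. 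Since passing to slope $\le h$ kills everything of slope $>h$, the long exact sequences associated to $0\to\Fil_n\to\mathbf{W}_{k,\rm dR}\to\mathbf{W}_{k,\rm dR}/\Fil_n\to0$ force the map $\mathrm{H}^0(\Fil_n)^{(h)}\to\mathrm{H}^0(\mathbf{W}_{k,\rm dR})^{(h)}$ to be an isomorphism once $n$ is large enough that all slopes on $\mathbf{W}_{k,\rm dR}/\Fil_n$ exceed $h$; the existence of slope $\le h$ decompositions follows from the compactness of $U_p$ once we know this growth, by the usual Coleman--Riesz theory. I expect the main obstacle to be the precise bookkeeping of the powers of $\underline\delta$ (equivalently, of the Hasse invariant) entering both the connection and the $U_p$-slopes, i.e.\ making rigorous the claim that the slopes on $\mathrm{Gr}_n\mathbf{W}_{k,\rm dR}$ grow at least linearly in $n$; everything else is a transcription of arguments already in \cite{AIP} and in the proofs of Propositions \ref{prop:Wkinfty} and \ref{prop:HeckeWkinfty}.
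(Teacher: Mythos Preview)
Your proposal is essentially correct, but you should be aware that the paper's own proof consists of two sentences: the first statement follows because the two constructions literally coincide on $\cX\bigl(p/\mathrm{Ha}^{p^r}\bigr)_{\C_p}$, and for everything else the paper simply cites \cite{andreatta_iovita_triple}. What you have written is a reasonable outline of the arguments one actually finds in that reference, so in that sense you have reconstructed the deferred proof rather than offered a different one.

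Two small points. First, a notational slip: $\mathbf{W}_{k,\rm dR}$ is $\WW_k(\mathrm{H}_E^\#,s)$, i.e.\ the \emph{non-dual} VBMS applied to the rank-two bundle $\mathrm{H}_E^\#$ with its marked section, not $\WW_k^D$; so the local coordinate should be $Y/(1+\alpha Z)$ rather than $D/(1+\alpha C)$. This is purely cosmetic since the two local models are formally identical power-series rings, and all your filtration and connection arguments go through unchanged. Second, the slope-growth input you flag as the main obstacle is exactly the content of \cite[Lemma~3.33]{andreatta_iovita_triple} (invoked later in the paper in the proof of Lemma~\ref{lemma:H1factorsFil}): on $\mathbf{W}_{k,\rm dR}/\Fil_m$ the operator $U_p$ factors as $p^{m+1}$ times an integral operator, which is precisely the linear growth you anticipated and which makes the stabilization argument rigorous. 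So your expectation of where the work lies is accurate, and the bookkeeping with powers of $\underline\delta$ is indeed handled in \cite{andreatta_iovita_triple}.
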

\begin{proof} The first statement follows from the fact that the two constructions coincide on $\cX\bigl(p/\mathrm{Ha}^{p^r}\bigr)_{\C_p}$. The other statements are proven in
 \cite{andreatta_iovita_triple}.

\end{proof}

\subsection{The de Rham comparison map. }
\label{sec:derhamcomp}

Fix an $s$-analytic weight $k\colon \Z_p^\ast \to B^\ast$ as in definition \ref{def:weights}. Let $B_{\rm dR}^+$ and $B_{\rm dR}=B_{\rm dR}^+[t^{-1}]$ be the classical period rings of Fontaine with the canonical topology, so that for example the quotient topology on  $B^+_{\rm dR}/t B^+_{\rm dR}=\C_p$ is the $p$-adic topology on $\C_p$. They are endowed with filtrations such that $\Fil^i B_{\rm dR}=t^i B_{\rm dR}^+$ for every $i\in \Z$.   In this section we use the map $\alpha_{\rm log}$ to get the following result. 

\begin{theorem}\label{thm:deRcompare} We have a Hecke equivariant, $B\widehat{\otimes} B_{\rm dR}^+$-linear, $\mathrm{Gal}(\overline{K}/K)$-equivariant map
$$\rho_k\colon \mathrm{H}^1\bigl(\cX_{\overline{K},\proket}, \bD_k^o(T_0^\vee)[n] \bigr)^{(h)}\widehat{\otimes} B_{\rm dR}^+\lra \mathrm{H}^1_{\rm dR}\bigl(\cX\bigl(p/\mathrm{Ha}^{p^r}\bigr), \mathbf{W}_{k, \rm dR,\bullet}\bigr)^{(h)}\widehat{\otimes} \Fil^{-1}B_{\rm dR},$$where the completed tensor product is taken considering the canonical topology on $B_{\rm dR}^+$. Moreover,  $$\mathrm{H}^1_{\rm dR}\bigl(\cX\bigl(p/\mathrm{Ha}^{p^r}\bigr), \mathbf{W}_{k, \rm dR,\bullet}\bigr)^{(h)}\cong  \frac{\mathrm{H}^0\bigl(\cX\bigl(p/\mathrm{Ha}^{p^r}\bigr)  ,\mathbf{W}_{k+2, \rm dR}[1/p]\bigr)^{(h)}}{\nabla\left(\mathrm{H}^0\bigl(\cX\bigl(p/\mathrm{Ha}^{p^r}\bigr) ,\mathbf{W}_{k, \rm dR}[1/p]\bigr)^{ (h-1)}\right)}.$$Furthermore,  

\begin{itemize}

\item[i.] if $u_k (u_k-1) \cdots (u_k-h+1)$ is invertible in $B[1/p]$ the map $\rho_k$ is surjective;

\item[ii.]  the map $\omega_E^{k+2}\to \mathbf{W}_{k+2, \rm dR}$ induces a surjective map, which is an isomorphism under the hypothesis of (i):  $$\mathrm{H}^0\bigl(\cX\bigl(p/\mathrm{Ha}^{p^r}\bigr) , \omega^{k+2}_E[1/p]\bigr)^{(h)} \lra \mathrm{H}^1_{\rm dR}\bigl(\cX\bigl(p/\mathrm{Ha}^{p^r}\bigr), \mathbf{W}_{k, \rm dR,\bullet}[1/p]\bigr)^{(h)}.$$

\item[iii.] for specializations $B[1/p]\to \Q_p$ so that the composite weight $k_0$ is classical,   $\rho_k$ is compatible with the classical de Rham comparison map

$$\mathrm{H}^1\bigl(\cX_{\overline{K},\proket},  \Symm^{k_0}(T_p(E)^\vee)\bigr)\otimes B_{\rm dR} \cong \mathrm{H}^1_{\rm dR}\bigl(\cX, \Symm^{k_0}(\mathrm{H}_E)\bigr) \otimes_{K} B_{\rm dR}$$via the map induced by taking on the LHS the pro-Kummer \'etale cohomology via the projection $\bD_k^o(T_0^\vee)[n]\to  \bD_{k_0}^o(T_0^\vee)[n] \to \Symm^{k_0}(T_p(E)^\vee)$ and on the RHS the restriction map to the open $\cX\bigl(p/\mathrm{Ha}^{p^r}\bigr)$ $$\mathrm{H}^1_{\rm dR}\bigl(\cX, \Symm^{k_0}(\mathrm{H}_E)\bigr)\lra \mathrm{H}^0\bigl(\cX\bigl(p/\mathrm{Ha}^{p^r}\bigr) , \omega^{k_0+2}_E[1/p]\bigr)/\vartheta^{k_0+1} \mathrm{H}^0\bigl(\cX\bigl(p/\mathrm{Ha}^{p^r}\bigr),  \omega^{-k_0}_E[1/p]\bigr).$$

\end{itemize}
\end{theorem}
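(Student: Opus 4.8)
The strategy is to build $\rho_k$ by the usual recipe of $p$-adic Hodge theory for non-constant coefficients: pass from the pro-Kummer \'etale cohomology of $\bD_k^o(T_0^\vee)[n]$ to the de Rham realization by tensoring with the period sheaf $\cO\mathbb{A}_{\log}$ (or rather its $\cO\mathbb{A}_{\max,n}^{\log}$ variant, to keep integral control of the $U_p$-dynamics), using the comparison map $\alpha_{\log}$ of \S\ref{sec:criscomp} to identify $T_p(E)^\vee\otimes\cO\mathbb{A}_{\log}$ with $\mathrm{H}^1_{\rm crys}(E_0/\widehat X)\otimes\cO\mathbb{A}_{\log}$ horizontally. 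First I would use Corollary \ref{cor:DkWkD} together with Proposition \ref{prop:AkWkveee}, which exhibit $\bD_k^o(T_0^\vee)[n]$ as a direct sum of duals of the VBMS sheaves $\WW_k(T,s,t-\lambda s)$, hence as built functorially out of $T$; applying $\alpha_{\log}$ componentwise and using the duality of \S\ref{sec:duality} (the section $\langle\,,\,\rangle^\ast(T^{k^{\rm univ}})$) one gets a map $$\bD_k^o(T_0^\vee)[n]\widehat\otimes\cO\mathbb{A}_{\log}\lra \mathbf{W}_{k,\rm dR}\widehat\otimes_{\cO_{\cX}^+}\cO\mathbb{A}_{\log}$$ on $\cX_0(p^n,N)\bigl(p/\mathrm{Ha}^{p^r}\bigr)_\proket$, compatible with connections, where on the right the connection is $\nabla_k\widehat\otimes 1+1\widehat\otimes\nabla$. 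Descending along the Igusa cover ${\cI}g_n\bigl(p/\mathrm{Ha}^{p^r}\bigr)\to\cX\bigl(p/\mathrm{Ha}^{p^r}\bigr)$ by taking the $k$-isotypic part for $(\Z/p^n\Z)^\ast$, and then inverting $t$ and mapping $\cO\mathbb{A}_{\log}\to\cO\mathbb{B}_{\rm dR,\log}^+$ via Lemma \ref{lemma:BdR}, produces a horizontal map of pro-Kummer \'etale sheaves which I then push through the Poincar\'e/de Rham complex.

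Next I would take pro-Kummer \'etale cohomology and compare it with algebraic de Rham cohomology. The point is the Poincar\'e lemma: $\mathrm{H}^0(\cX\bigl(p/\mathrm{Ha}^{p^r}\bigr)_\proket, \cO\mathbb{B}_{\rm dR,\log}^+\xrightarrow{\nabla}\cdots)$ computes $B_{\rm dR}^+$, so applying it to the complex $\mathbf{W}_{k,\rm dR}\widehat\otimes\cO\mathbb{B}_{\rm dR,\log}\xrightarrow{\nabla'}\mathbf{W}_{k+2,\rm dR}\widehat\otimes\cO\mathbb{B}_{\rm dR,\log}$ and using the explicit two-term shape of the de Rham complex of $\mathbf{W}_{k,\rm dR,\bullet}$ on the curve, together with the ``decalage'' inclusion $\nabla'\colon \Fil_m\mathbf{W}_{k,\rm dR}\widehat\otimes\Fil^0\cO\mathbb{B}_{\rm dR}\to\Fil_{m+1}\mathbf{W}_{k+2,\rm dR}\widehat\otimes\Fil^{-1}\cO\mathbb{B}_{\rm dR}$ noted in the introduction, I would obtain the map $\rho_k$ landing in $\mathrm{H}^1_{\rm dR}(\cX\bigl(p/\mathrm{Ha}^{p^r}\bigr),\mathbf{W}_{k,\rm dR,\bullet})^{(h)}\widehat\otimes\Fil^{-1}B_{\rm dR}$. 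Hecke- and Galois-equivariance are automatic from functoriality of $\alpha_{\log}$ in isogenies (the computation of $u_\lambda^\vee$ in \S\ref{sec:Up}) and from the Galois-equivariance of the period sheaves. The identification $$\mathrm{H}^1_{\rm dR}(\cX\bigl(p/\mathrm{Ha}^{p^r}\bigr),\mathbf{W}_{k,\rm dR,\bullet})^{(h)}\cong \mathrm{H}^0(\cX\bigl(p/\mathrm{Ha}^{p^r}\bigr),\mathbf{W}_{k+2,\rm dR}[1/p])^{(h)}/\nabla\bigl(\mathrm{H}^0(\cX\bigl(p/\mathrm{Ha}^{p^r}\bigr),\mathbf{W}_{k,\rm dR}[1/p])^{(h-1)}\bigr)$$ follows since the curve is one-dimensional and $p\nabla_k\circ U_p=U_p\circ\nabla_k$ (Proposition \ref{prop:propWk}), so the slope-$\leq h$ part of $\mathrm{H}^1$ of the two-term complex is exactly this cokernel (using $\mathrm{H}^1$ of a coherent sheaf on an affinoid vanishes, so only the global-sections terms survive).

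For the three supplementary assertions I would argue as follows. Claim (ii): the surjection $\omega_E^{k+2}\twoheadrightarrow\mathbf{W}_{k+2,\rm dR}$ of sheaves, i.e. the graded-piece map $\mathbf{W}_{k+2,\rm dR}\to\mathrm{Gr}_0=\omega_E^{k+2}$ composed backwards, induces on $\mathrm{H}^0$ a map whose cokernel in $\mathrm{H}^1_{\rm dR}(\cX\bigl(p/\mathrm{Ha}^{p^r}\bigr),\mathbf{W}_{k,\rm dR,\bullet})$ I would control via the filtration $\Fil_\bullet\mathbf{W}_{k+2,\rm dR}$: on $\mathrm{Gr}_{m}$ the connection $\nabla_k$ acts, up to units, by multiplication by $(u_k-m)$-type factors coming from the explicit formula \eqref{eq:dualk} and \S\ref{sec:explicitduality} (the binomial $\binom{u_k}{m}$), so under hypothesis (i) the higher filtration steps are killed after inverting $\prod_{i=0}^{h-1}(u_k-i)$, forcing the map $\mathrm{H}^0(\omega_E^{k+2})^{(h)}\to\mathrm{H}^1_{\rm dR}(\mathbf{W}_{k,\rm dR,\bullet})^{(h)}$ to be an isomorphism; in general it is at least surjective because $\mathbf{W}_{k+2,\rm dR}=\mathrm{Fil}_0+\mathrm{higher}$ and the higher part is $\nabla$-exact modulo the relevant slope. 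Claim (i) is then immediate from (ii) combined with the surjectivity in Theorem \ref{thm:htsurjective}/\ref{thm:hodgetate} (the Hodge--Tate $\Psi_{\rm HT}$ is the associated graded of $\rho_k$). Claim (iii), compatibility with the classical de Rham comparison for $\Symm^{k_0}$, I would check by specializing the whole construction along $B[1/p]\to\Q_p$: the sheaf $\bD_k^o(T_0^\vee)[n]/(u_k-k_0)$ surjects onto $\Symm^{k_0}(T_p(E)^\vee)$ and $\alpha_{\log}$ is compatible with this surjection because on $\Symm^{k_0}$ the map $\alpha_{\log}$ is literally $\Symm^{k_0}$ of the comparison map for $T_p(E)^\vee$, which is the classical one of \cite{ScholzeWeinstein}; matching the filtration convention $\mathrm{Fil}^0\mathrm{H}^1_{\rm dR}(\Symm^{k_0}\mathrm{H}_E)$ with the cokernel $\omega_E^{k_0+2}/\vartheta^{k_0+1}\omega_E^{-k_0}$ is then the standard computation of the Hodge filtration on the de Rham cohomology of a symmetric power. \emph{The main obstacle} I anticipate is controlling the interaction of the slope-$\leq h$ projector with the ``decalage'' in the $B_{\rm dR}$-filtrations: one must check that the $U_p$-operator defined integrally on $\bD_k^o$ and the one on $\mathbf{W}_{k,\rm dR,\bullet}$ (where $p\nabla\circ U_p=U_p\circ\nabla$ forces a twist) are intertwined by $\rho_k$ in a way compatible with $\cO\mathbb{A}_{\max,n}^{\log}$ rather than $\cO\mathbb{A}_{\log}$, so that the slope decompositions on both sides exist and the map respects them — this is exactly where the $\cO\mathbb{A}_{\max,n}^{\log}$ variant and the un-normalized $U_p^{\rm naive}$ of \eqref{eq:HeckeonHi} are needed, and getting the bookkeeping of $p$-powers right is the delicate part.
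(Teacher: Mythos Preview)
Your proposal is correct and follows essentially the same route as the paper: the map $\zeta_k$ you sketch is precisely Corollary~\ref{cor:BDkEk} (built from $\beta_{\max,n+1}^{\log}$ via the dual VBMS formalism and descended along the Igusa cover), the Poincar\'e-lemma step is Lemma~\ref{lemma:H1WkNablaWk}, claim (ii) is deferred to \cite[\S 3.9]{andreatta_iovita_triple}, and claim (i) is reduced by d\'evissage on the $B_{\rm dR}$-filtration to the Hodge--Tate surjectivity of Theorem~\ref{thm:hodgetate}. The ``main obstacle'' you flag is exactly Lemma~\ref{lemma:H1factorsFil}: one must show that on the slope-$\leq h$ part the image of $\zeta_k$ already lands in $\Fil_m\mathbf{W}_{k,\rm dR,\bullet}\widehat\otimes\cO\mathbb{A}_{\max,n+1}^{\log}$ for some finite $m$, and this is done by checking that $U_p$ on the quotient $\mathbf{W}_{k,\rm dR,\bullet}/\Fil_m$ is divisible by $p^{h+1}$ for $m\gg 0$, so any polynomial of slope $\leq h$ in $U_p$ acts invertibly there after inverting $p$.
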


The proof of Theorem \ref{thm:deRcompare} will be given in Section \S\ref{sec:proof}.

\medskip
\noindent
We now consider the Galois cohomology for the group $G:=\mathrm{Gal}(\overline{K}/K)$. Recall that $\mathrm{H}^1\left(G,\Fil^{-1}B_{\rm dR}\right)=K [\log \chi]$, where $\chi$ is the cyclotomic character. More precisely we see $\log \chi$ as a $1$-cocycle $\log\chi :G\lra \Z_p\subset \Fil^{-1}B_{\rm dR}$ and we denoted $[\log \chi]$ its cohomology class.  We then obtain from Theorem \ref{thm:deRcompare} the following corollary:

\begin{corollary} We have a Hecke equivariant, $B$-linear map
$$\mathrm{Exp}^\ast_k\colon \mathrm{H}^1\left(G,\mathrm{H}^1\bigl(\cX_{\overline{K},\proket}, \bD_k^o(T_0^\vee)[n](1) \bigr)^{(h)}\right)\lra \mathrm{H}^1_{\rm dR}\bigl(\cX\bigl(p/\mathrm{Ha}^{p^r}\bigr), \mathbf{W}_{k, \rm dR,\bullet}\bigr)^{ (h)},$$called the {\em big dual exponential map}. It has the property that  for every classical weight specialization  $k_0$ it is compatible with the classical dual exponentail map as follows:

a) If $k_0>h-1$, i.e. $k_0$ is a non-crtical weight for the slope $h$, then we have the following commutative diagram with horizontal isomorphisms. Here we denoted by $\exp^\ast_{k_0}$ the Kato dual exponential map associated to weight $k_0$ modular forms.
$$
\begin{array}{ccccccccc}
\Bigl(\mathrm{H}^1\left(G,\mathrm{H}^1\bigl(\cX_{\overline{K},\proket}, \bD_k^o(T_0^\vee)[n](1) \bigr)^{(h)}\right)\Bigr)_{k_0}&\stackrel{\bigl(\mathrm{Exp}^\ast_k\bigr)_{k_0}}{\lra} &\Bigl(\mathrm{H}^1_{\rm dR}\bigl(\cX\bigl(p/\mathrm{Ha}^{p^r}\bigr), \mathbf{W}_{k, \rm dR,\bullet}\bigr)^{ (h)}\Bigr)_{k_0}\\
\downarrow \cong&&\downarrow\cong\\
\mathrm{H}^1\left(G,\mathrm{H}^1\bigl(\cX_{\overline{K},\proket},  \Symm^{k_0}(T_p(E)^\vee)(1)\bigr)^{(h)}\right)&\stackrel{\exp^\ast_{k_0}}{\lra} &\Fil^0\mathrm{H}^1_{\rm dR}\bigl(\cX, \Symm^{k_0}(\mathrm{H}_E)\bigr)^{(h)}
\end{array}
$$

b) If \ $0\le k_0\le h+1$, i.e. $k_0$ is critical with respect to $h$, we only have a commutaive diagram of the form

 $$
\begin{array}{ccccccccc}
\Bigl(\mathrm{H}^1\left(G,\mathrm{H}^1\bigl(\cX_{\overline{K},\proket}, \bD_k^o(T_0^\vee)[n](1) \bigr)^{(h)}\right)\Bigr)_{k_0}&\stackrel{\bigl(\mathrm{Exp}^\ast_k\bigr)_{k_0}}{\lra} &\Bigl(\mathrm{H}^1_{\rm dR}\bigl(\cX\bigl(p/\mathrm{Ha}^{p^r}\bigr), \mathbf{W}_{k, \rm dR,\bullet}\bigr)^{ (h)}\Bigr)_{k_0}\\
\downarrow &&\uparrow\\
\mathrm{H}^1\left(G,\mathrm{H}^1\bigl(\cX_{\overline{K},\proket},  \Symm^{k_0}(T_p(E)^\vee)(1)\bigr)^{(h)}\right)&\stackrel{\exp^\ast_{k_0}}{\lra} &\Fil^0\mathrm{H}^1_{\rm dR}\bigl(\cX, \Symm^{k_0}(\mathrm{H}_E)\bigr)^{(h)}
\end{array}
$$
where the right vertical arrow is induced by restriction.

\end{corollary}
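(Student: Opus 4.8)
The final statement to prove is the Corollary giving the big dual exponential map $\mathrm{Exp}^\ast_k$ together with its compatibility with the classical Kato dual exponential maps in both the non-critical and critical cases.

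The plan is to obtain $\mathrm{Exp}^\ast_k$ from Theorem \ref{thm:deRcompare} by taking Galois cohomology. Concretely, I would apply the functor $\mathrm{H}^1(G,-)$ to the $B\widehat{\otimes} B_{\rm dR}^+$-linear, $G$-equivariant map $\rho_k$ of Theorem \ref{thm:deRcompare}, after twisting the source by $\Z_p(1)$ so that the target picks up a twist by $\Fil^{-1}B_{\rm dR}(1)$ — but since $t\in \Fil^{1}B_{\rm dR}$ identifies $\Fil^{-1}B_{\rm dR}(1)$ with $\Fil^0 B_{\rm dR}=B_{\rm dR}^+$, one is really computing $\mathrm{H}^1(G,-\widehat{\otimes} B_{\rm dR}^+)$. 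The key input is the classical computation $\mathrm{H}^1(G,\Fil^{-1}B_{\rm dR})=\mathrm{H}^1(G,B_{\rm dR}^+(1))\cdot t^{-1} = K\cdot[\log\chi]$ (recalled right before the Corollary), together with $\mathrm{H}^1(G,\Fil^0 B_{\rm dR})=\mathrm{H}^1(G,B_{\rm dR}^+)=0$ and $\mathrm{H}^0(G,\mathrm{gr}^i B_{\rm dR})=0$ for $i\ne 0$, so that cup product with $[\log\chi]$ induces an isomorphism $D_{\rm dR}(V)=\mathrm{H}^0(G,V\widehat\otimes B_{\rm dR})\xrightarrow{\ \sim\ }\mathrm{H}^1(G,V\widehat\otimes\Fil^{-1}B_{\rm dR})$ for $V$ de Rham. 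Feeding $\mathrm{H}^1\bigl(\cX_{\overline{K},\proket},\bD_k^o(T_0^\vee)[n](1)\bigr)^{(h)}$ into this and using that the right-hand side of $\rho_k$ is already a $K$-vector space (it is $\mathrm{H}^1_{\rm dR}(\cX(p/\mathrm{Ha}^{p^r}),\mathbf W_{k,\rm dR,\bullet})^{(h)}\widehat\otimes\Fil^{-1}B_{\rm dR}$, whose $G$-cohomology in degree $1$ is the de Rham cohomology tensored with $K[\log\chi]$, while the de Rham cohomology itself is the $\mathrm{H}^1(G,-)$-image), one extracts the $B$-linear, Hecke-equivariant map $\mathrm{Exp}^\ast_k$ landing in $\mathrm{H}^1_{\rm dR}(\cX(p/\mathrm{Ha}^{p^r}),\mathbf W_{k,\rm dR,\bullet})^{(h)}$. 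Hecke- and $B$-equivariance are inherited from $\rho_k$ since the whole construction is functorial.

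For the compatibility statements (a) and (b), I would specialize at a classical weight $k_0$ and invoke part (iii) of Theorem \ref{thm:deRcompare}, which already records that $\rho_k$ is compatible with the classical de Rham comparison isomorphism $\mathrm{H}^1(\cX_{\overline K,\proket},\Symm^{k_0}(T_pE^\vee))\otimes B_{\rm dR}\cong \mathrm{H}^1_{\rm dR}(\cX,\Symm^{k_0}(\mathrm H_E))\otimes_K B_{\rm dR}$ via the projection $\bD_k^o(T_0^\vee)[n]\to\Symm^{k_0}(T_pE^\vee)$ on the étale side and restriction to $\cX(p/\mathrm{Ha}^{p^r})$ on the de Rham side. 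Applying $\mathrm{H}^1(G,-\otimes(1))$ to this compatibility and recalling that the classical Kato dual exponential map $\exp^\ast_{k_0}$ is precisely the composite $\mathrm{H}^1(G,\mathrm{H}^1(\cX_{\overline K,\proket},\Symm^{k_0}(T_pE^\vee)(1))^{(h)})\to \mathrm{H}^1(G,D_{\rm dR}(\cdot)(1))\cong \Fil^0\mathrm{H}^1_{\rm dR}(\cX,\Symm^{k_0}(\mathrm H_E))^{(h)}$ defined by Kato (via the Bloch--Kato dual exponential and the de Rham comparison), I get the two diagrams. In the non-critical case $k_0>h-1$: by the classicity theorem of Coleman the restriction $\mathrm{H}^0(\cX,\omega^{k_0+2})^{(h)}\to\mathrm{H}^0(\cX(p/\mathrm{Ha}^{p^r}),\omega^{k_0+2})^{(h)}$ is an isomorphism, and by parts (i)--(ii) of Theorem \ref{thm:deRcompare} (with $u_{k_0}(u_{k_0}-1)\cdots(u_{k_0}-h+1)$ invertible, which holds since $k_0\ge h$) the map $\mathrm{H}^0(\cX(p/\mathrm{Ha}^{p^r}),\omega^{k_0+2}[1/p])^{(h)}\to\mathrm{H}^1_{\rm dR}(\cX(p/\mathrm{Ha}^{p^r}),\mathbf W_{k_0,\rm dR,\bullet}[1/p])^{(h)}$ is an isomorphism; combining, both vertical arrows become isomorphisms and the diagram commutes. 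In the critical case $0\le k_0\le h+1$ one only has a surjection in part (ii) and the restriction map need not be an isomorphism, so one records the weaker commutativity with the right vertical arrow given by restriction $\mathrm{H}^1_{\rm dR}(\cX,\Symm^{k_0}(\mathrm H_E))^{(h)}\to\mathrm{H}^1_{\rm dR}(\cX(p/\mathrm{Ha}^{p^r}),\mathbf W_{k_0,\rm dR,\bullet})^{(h)}$, and the left vertical arrow by the étale projection; commutativity is again a direct consequence of part (iii).

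The main obstacle I anticipate is bookkeeping of topologies and of the slope-$\le h$ decompositions under the completed tensor products with $B_{\rm dR}^+$: one must check that taking $\mathrm{H}^1(G,-)$ commutes appropriately with $\widehat\otimes B_{\rm dR}^+$ and with the $U_p$-slope projectors (the projectors are defined by a polynomial in $U_p$, hence commute with everything in sight, so this is mild), and that the identification $\mathrm{H}^1(G,V\widehat\otimes\Fil^{-1}B_{\rm dR})\cong D_{\rm dR}(V)$ continues to hold when $V$ is the pro-Kummer étale $\mathrm{H}^1$, which is only known to be de Rham after a limiting argument over the finite graded pieces of $\bD_k^o(T_0^\vee)[n]$ together with the fact, from Theorem \ref{thm:deRcompare}, that $\rho_k$ already realizes the Bloch--Kato exponential target. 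A secondary subtlety is verifying that the ``classical'' dual exponential $\exp^\ast_{k_0}$ obtained by specializing our construction literally agrees with Kato's $\exp^\ast_{k_0}$; this follows from part (iii) of Theorem \ref{thm:deRcompare} together with the definition of $\exp^\ast$ via the connecting map in the fundamental exact sequence of $p$-adic Hodge theory $0\to\Q_p\to B_{\rm cris}^{\varphi=1}\to B_{\rm dR}/B_{\rm dR}^+\to 0$ and the compatibility of $\log\chi$ with that exact sequence, which is standard.
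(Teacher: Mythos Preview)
Your approach is essentially the same as the paper's: apply $\mathrm{H}^1(G,-)$ to the map $\rho_k$ of Theorem~\ref{thm:deRcompare} (after the Tate twist) and use the computation $\mathrm{H}^1\bigl(G,\Fil^{-1}B_{\rm dR}\bigr)=K[\log\chi]$ to extract a $K$-valued map. The paper's argument is slightly more direct than yours: since the target of $\rho_k$ is $\mathrm{H}^1_{\rm dR}\bigl(\cX(p/\mathrm{Ha}^{p^r}),\mathbf{W}_{k,\rm dR,\bullet}\bigr)^{(h)}\widehat{\otimes}\Fil^{-1}B_{\rm dR}$ with the de Rham factor carrying \emph{trivial} $G$-action, one simply factors $\mathrm{H}^1(G,-)$ of the target as $\mathrm{H}^1_{\rm dR}(\cdots)^{(h)}\otimes_K \mathrm{H}^1\bigl(G,\Fil^{-1}B_{\rm dR}(1)\bigr)\cong \mathrm{H}^1_{\rm dR}(\cdots)^{(h)}$. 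There is no need to invoke $D_{\rm dR}$, the de Rham property of the \'etale $\mathrm{H}^1$, or cup products with $[\log\chi]$ on the source side; the map $f_k$ is just the composite $V\to V\widehat{\otimes}B_{\rm dR}^+\xrightarrow{\rho_k}(\text{target})$ and one takes $\mathrm{H}^1(G,f_k)$.

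Two small corrections. First, your claim that $\mathrm{H}^1(G,B_{\rm dR}^+)=0$ is not right (it equals $K$, generated by $\log\chi$), though you do not actually use this. Second, your anticipated ``main obstacle'' about checking de Rham-ness of the pro-Kummer \'etale $\mathrm{H}^1$ via the filtration on $\bD_k^o$ is unnecessary for the reason just given: the paper never needs to know anything about $D_{\rm dR}$ of the source. As for parts (a) and (b), the paper's written proof does not spell these out; your reduction to Theorem~\ref{thm:deRcompare}(iii) together with Coleman classicity and parts (i)--(ii) is the intended argument.
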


\begin{proof}
Granted the theorem \ref{thm:deRcompare} we have the following natural $B$-linear and $G$-equaivariant maps:
$$
\mathrm{H}^1\bigl(\cX_{\overline{K},\proket}, \bD_k^o(T_0^\vee)[n](1) \bigr)^{(h)}\lra \mathrm{H}^1\bigl(\cX_{\overline{K},\proket}, \bD_k^o(T_0^\vee)[n](1)\bigr)^{(h)}\widehat{\otimes} B_{\rm dR}^+\stackrel{\rho_k}{\lra} 
$$
$$
\stackrel{\rho_k}{\lra}\mathrm{H}^1_{\rm dR}\bigl(\cX\bigl(p/\mathrm{Ha}^{p^r}\bigr), \mathbf{W}_{k, \rm dR,\bullet}\bigr)^{(h)}\widehat{\otimes} \Fil^{-1}B_{\rm dR}(1),
$$
whose composition we denote by $f_k$. Then we obtain the following natural map induced by composing $f_k$ in Galois cohomology with the natural isomorphism:
$$
\mathrm{H}^1\left(G,\mathrm{H}^1\bigl(\cX_{\overline{K},\proket}, \bD_k^o(T_0^\vee)[n](1) \bigr)^{(h)}\right)\lra \mathrm{H}^1\left(G, \mathrm{H}^1_{\rm dR}\bigl(\cX\bigl(p/\mathrm{Ha}^{p^r}\bigr), \mathbf{W}_{k, \rm dR,\bullet}\bigr)^{ (h)}\widehat{\otimes}\Fil^{-1}B_{\rm dR}(1)\right)\cong 
$$

$$
\cong (\mathrm{H}^1_{\rm dR}\bigl(\cX\bigl(p/\mathrm{Ha}^{p^r}\bigr), \mathbf{W}_{k, \rm dR,\bullet}\bigr)^{ (h)}\otimes_K \mathrm{H}^1\bigl(G, \Fil^{-1}B_{\rm dR}(1)\bigr) \cong \mathrm{H}^1_{\rm dR}\bigl(\cX\bigl(p/\mathrm{Ha}^{p^r}\bigr), \mathbf{W}_{k, \rm dR,\bullet}\bigr)^{ (h)}.
$$

\end{proof}

\begin{remark} The main reason we twist by $1$ the pro-Kummer \'etale sheaves  $\bD_k^o(T_0^\vee)[n]$ and $\Symm^{k_0}(T_p(E)^\vee$ in the corollary above is because the Galois representations attached to overconvergent eigenforms, respectively classical ones are quotients of pro-Kumer \'etale cohomology 
of such sheaves (with the twist, that is).
\end{remark}

\subsubsection{A refinement of the map $\alpha_{\rm log}$.}
 
Consider the \'etale cover $j_n\colon {\cI}g_n\bigl(p/\mathrm{Ha}^{p^r}\bigr)\to \cX\bigl(p/\mathrm{Ha}^{p^r}\bigr)$ given by choosing a generator of $C_n^\vee$. It is Galois with groups $\Delta_n\cong (\Z/p^n\Z)^\ast$. Let $D_n:=(T_p(E)/p^n)/C_n$. Then we get an exact sequence $0 \to D_n^\vee \to  T_p(E)^\vee/p^n T_p(E)^\vee \to C_n^\vee \to 0$ with a marked section $s$ of $C_n^\vee$.

\begin{proposition}\label{prop:gammacris} The restriction to ${\cI}g_n\bigl(p/\mathrm{Ha}^{p^r}\bigr)$ of the map $\alpha_{\rm max,n+1}^{\rm log}$  factors via the submodule $\mathrm{H}_E^\sharp \otimes_{\cO_{{\cI}g_n\bigl(p/\mathrm{Ha}^{p^r}\bigr)}^+}  \cO \mathbb{A}_{\rm max,n+1}^{\rm log}\vert_{{\cI}g_n\bigl(p/\mathrm{Ha}^{p^r}\bigr)}$. The induced map $$\beta_{\rm max,n+1}^{\rm log}\colon  (T_p (E))^\vee \otimes_{\Z_p} \cO \mathbb{A}_{\rm max,n+1}^{\rm log}\vert_{{\cI}g_n\bigl(p/\mathrm{Ha}^{p^r}\bigr)}  \lra \mathrm{H}_E^\sharp \otimes_{\cO_{{\cI}g_n\bigl(p/\mathrm{Ha}^{p^r}\bigr)}^+}  \cO \mathbb{A}_{\rm max,n+1}^{\rm log}\vert_{{\cI}g_n\bigl(p/\mathrm{Ha}^{p^r}\bigr)}  $$sends the tautological section $s$ of $C_n^\vee$ to the marked section $s$ of  $\mathrm{H}_E^\sharp/p^n  \mathrm{H}_E^\sharp$ modulo $p^n$ and sends $D_n^\vee$ to $0$ modulo $p^n$. In particular, $j_{n,\ast}(\alpha_{\rm max,n+1}^{\rm log})$  is equivariant for the action of $\Delta_n$.

\end{proposition}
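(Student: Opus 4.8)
The plan is to verify the claimed factorization by working locally on a log affinoid perfectoid cover $W = \mathrm{Spa}(R,R^+)$ of ${\cI}g_n\bigl(p/\mathrm{Ha}^{p^r}\bigr)$, where $T_p(E)$ is trivialized and $E$ extends to a (generalized) elliptic curve over $\Spec(R^+)$, exactly as in the construction of $\alpha_{\rm log}$ in \S\ref{sec:criscomp}. The essential input is the behaviour of the crystalline comparison map modulo $p^n$ in relation to the Hodge filtration and to the canonical subgroup $C_n$. First I would recall that, by the theory of the canonical subgroup and Proposition \ref{proposition:existcanonical}, the ${\rm dlog}$ map $T_p(E)^\vee \otimes \widehat{\cO}^+ \to \omega_E^{\rm mod}\otimes\widehat{\cO}^+$ kills $D_n^\vee$ modulo $p^n$ (more precisely factors through $C_n^\vee$ modulo $p^r$, and here $n \le r$), and that $\alpha_{\rm log}$ is a refinement of ${\rm dlog}$: composing $\alpha_{\rm max,n+1}^{\rm log}$ with the projection $\mathrm{H}_E^\sharp \to \underline{\delta}^p\omega_E^{-1}$, respectively reducing modulo the maximal ideal of $B_{\rm dR}^+$, recovers (a twist of) the Hodge--Tate/${\rm dlog}$ map. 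So the content is: (a) the image of $\alpha_{\rm max,n+1}^{\rm log}$ already lands in the submodule $\mathrm{H}_E^\sharp \otimes \cO\mathbb{A}_{\rm max,n+1}^{\rm log}$ rather than merely in $\mathrm{H}_E \otimes \cO\mathbb{A}_{\rm log}$ — this is where the index $n+1$ (as opposed to $n$) in $\cO\mathbb{A}_{\rm max,n+1}^{\rm log}$ is used, since one needs the divided power/denominators to absorb the discrepancy between $\mathrm{H}_E$ and $\mathrm{H}_E^\sharp$, which is measured by $\underline{\delta}^p$ and by powers of $p$ coming from the degree of the canonical subgroup; and (b) the induced map on $C_n^\vee$ and $D_n^\vee$ modulo $p^n$ is as stated.

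For step (a): on $W$, write $T_p(E)^\vee \otimes R^+ = e_0 R^+ \oplus e_1 R^+$ with $e_0$ mapping to a generator of $\omega_E^{\rm mod}$ and $e_1$ (the dual of the generator of $C_n$) in the kernel of ${\rm dlog}$ generating $\omega_E^{\rm mod,-1}$, as in the proof of Proposition \ref{prop:Wkinfty}. The crystalline map $\alpha_{\rm cris}(W)$ is built from ${\bf D}_{\rm cris}$ of the map of $p$-divisible groups $\Q_p/\Z_p \to E^\vee[p^\infty]$ attached to each $a \in T_p(E^\vee)(R)$. The connected-étale (resp. unit-root) decomposition of $\mathrm{H}^1_{\rm crys}(E_0/S)$ away from supersingular points, together with the explicit structure of the canonical subgroup inside $E[p^n]$, shows that the image of $e_1$ under $\alpha_{\rm cris}$, read in the quotient $\underline{\delta}^p\omega_E^{-1}$, is divisible by the generator of $\underline{\delta}^p = \nu^\ast({\rm Hdg})$, and more precisely lands in $\omega_E^{\rm mod}$; equivalently the composite $T_p(E)^\vee \to \mathrm{H}_E \to \underline{\delta}^p\omega_E^{-1}$ factors through $\mathrm{H}_E^\sharp$, which by the pushout definition of $\mathrm{H}_E^\sharp$ is precisely the statement that the image lies in $\mathrm{H}_E^\sharp\otimes\cO\mathbb{A}_{\rm max,n+1}^{\rm log}$. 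I would extract the precise $p$-power estimate from the standard theory (e.g. Fargues' or Scholze's results on the canonical subgroup and the Hodge--Tate map, or the analysis of $\omega_E^{\rm mod}$ in \cite{AIP}, \cite{PS}), which is why one passes to $\cO\mathbb{A}_{\rm max,n+1}^{\rm log}$ where $\mathcal{I}/p^{n+1}$ is available.

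For step (b): the reduction modulo $p^n$ of $\alpha_{\rm max,n+1}^{\rm log}$, composed with the projection onto $\mathrm{H}_E^\sharp/p^n \to \omega_E^{\rm mod}/p^n$, is (by compatibility of the crystalline comparison with the Hodge--Tate filtration) exactly the ${\rm dlog}$ map modulo $p^n$; hence it sends the tautological section $s$ of $C_n^\vee$ — which is ${\rm dlog}(e_1^\vee)$-dual, i.e. the image of the generator of $C_n^\vee$ — to the marked section $s$ of $\mathrm{H}_E^\sharp/p^n$, and it kills $D_n^\vee$ modulo $p^n$ because $D_n^\vee \subset T_p(E)^\vee/p^n$ is exactly the part on which ${\rm dlog}$ vanishes modulo $p^n$ (this is how $C_n$ and $D_n$ were defined). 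The $\Delta_n$-equivariance of $j_{n,\ast}(\alpha_{\rm max,n+1}^{\rm log})$ is then formal: $\alpha_{\rm log}$ is canonical, hence $\mathrm{Gal}$-equivariant and functorial in the pair $(U,W)$ by the lemma in \S\ref{sec:criscomp}, and the decorations $(\mathrm{H}_E^\sharp, s)$, $(C_n^\vee, s)$, $D_n^\vee$ are defined from the canonical subgroup and the tautological trivialization, on which $\Delta_n$ acts through scalars compatibly on source and target — so the refined map is $\Delta_n$-equivariant once it is well defined. I expect the main obstacle to be pinning down the exact $p$-power and $\underline{\delta}$-divisibility in step (a) making the factorization through $\mathrm{H}_E^\sharp$ (rather than through $\mathrm{H}_E$ with denominators) precise and choosing $n+1$ correctly; the rest is bookkeeping with the connected-étale decomposition and the definitions of $C_n$, $D_n$, $\mathrm{H}_E^\sharp$.
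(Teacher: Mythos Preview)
Your approach to step (b) and the $\Delta_n$-equivariance is correct and matches the paper's: the reduction modulo $p^n$ is controlled by ${\rm dlog}$ and Proposition~\ref{proposition:existcanonical}, and equivariance follows because $\alpha_{\rm max,n+1}^{\rm log}$ is the restriction of a map already defined over $\cX\bigl(p/\mathrm{Ha}^{p^r}\bigr)$, so after composing with the inclusion $\mathrm{H}_E^\sharp\hookrightarrow j_n^\ast(\mathrm{H}_E)$ the $\Delta_n$-action is automatic.

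For step (a), however, the paper takes a much simpler route than the one you outline, and it is worth knowing because it completely dissolves the ``main obstacle'' you flag at the end. Rather than tracking $\underline{\delta}$-divisibility of $\alpha_{\rm cris}(e_1)$ through the connected--\'etale decomposition, the paper argues in two strokes. First, let $\mathcal{J}=\ker\bigl(\cO\mathbb{A}_{\rm log}\to\widehat{\cO}^+_{\cX_\proket}\bigr)$; modulo $\mathcal{J}$, the map $\alpha_{\rm log}$ is precisely the integral Hodge--Tate map, which already lands in $\mathrm{H}_E^\sharp\otimes\widehat{\cO}^+$ by the theory of $\omega_E^{\rm mod}$ (Proposition~\ref{proposition:existcanonical}). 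Second, one observes that under $\cO\mathbb{A}_{\rm log}\to\cO\mathbb{A}_{\rm max,n+1}^{\rm log}$ the ideal $\mathcal{J}$ lands in $p^n\cO\mathbb{A}_{\rm max,n+1}^{\rm log}$ (this is the only place the index $n+1$ enters: $\mathcal{I}\subset p^{n+1}\cO\mathbb{A}_{\rm max,n+1}^{\rm log}$ by construction), while on the base $\cX\bigl(p/\mathrm{Ha}^{p^r}\bigr)$ one has $p\,\mathrm{H}_E\subset\mathrm{H}_E^\sharp$ simply because $\mathrm{Ha}^p\cdot\mathrm{H}_E\subset\mathrm{H}_E^\sharp$ by the pushout definition and $p/\mathrm{Ha}^p$ is an integral section there. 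Combining, the image of $\alpha_{\rm max,n+1}^{\rm log}$ sits in $\mathrm{H}_E^\sharp\otimes\cO\mathbb{A}_{\rm max,n+1}^{\rm log}+\mathcal{J}\cdot\mathrm{H}_E\otimes\cO\mathbb{A}_{\rm max,n+1}^{\rm log}\subset\mathrm{H}_E^\sharp\otimes\cO\mathbb{A}_{\rm max,n+1}^{\rm log}$. Your direct divisibility analysis could in principle be made to work, but it is both harder and unnecessary; the paper's filtration-by-$\mathcal{J}$ argument replaces it entirely with two one-line inclusions. The same mechanism (namely $\mathcal{J}\subset p^n\cO\mathbb{A}_{\rm max,n+1}^{\rm log}$) is what upgrades the congruences in (b) from ``modulo $p^n\cO\mathbb{A}_{\rm cris}+\mathcal{J}$'' to ``modulo $p^n$''.
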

\begin{proof}

Let $\mathcal{J}$ be the kernel of the map $\cO \mathbb{A}_{\rm log}\to \widehat{\cO}_{\cX_{\proket}}^+$.  By construction $\mathcal{J} $ maps to $\frac{\mathcal{I}}{p} \cO \mathbb{A}_{\rm max,n+1}^{\rm log} \subset p^n \cO \mathbb{A}_{\rm max,n+1}^{\rm log}$ via the map $\cO \mathbb{A}_{\rm log}  \lra \cO \mathbb{A}_{\rm max,n+1}^{\rm log}$.

The image of $\alpha_{\rm log}$ modulo $\mathcal{J}$ is contained in $\mathrm{H}_E^\sharp \otimes \widehat{\cO}_{{\cI}g_n\bigl(p/\mathrm{Ha}^{p^r}\bigr)}^+ \subset \mathrm{H}_E \otimes \widehat{\cO}_{{\cI}g_n\bigl(p/\mathrm{Ha}^{p^r}\bigr)}^+$. Also $\mathrm{Ha}^p \cdot \mathrm{H}_E\subset \mathrm{H}_E^\sharp$. As  $p/\mathrm{Ha}^p$ is a section of $\cO_{ \cX\bigl(p/\mathrm{Ha}^{p^r}\bigr)}^+$, we conclude that $p \mathrm{H}_E\subset \mathrm{H}_E^\sharp$. We deduce that the image of $\alpha_{\rm max,n+1}^{\rm log}$ is contained in $$\mathrm{H}_E^\sharp \otimes_{\cO_{{\cI}g_n\bigl(p/\mathrm{Ha}^{p^r}\bigr)}^+}  \cO \mathbb{A}_{\rm max,n+1}^{\rm log}\vert_{{\cI}g_n\bigl(p/\mathrm{Ha}^{p^r}\bigr)}  +  \mathcal{J} \, \mathrm{H}_E \otimes_{\cO_{{\cI}g_n\bigl(p/\mathrm{Ha}^{p^r}\bigr)}^+}  \cO \mathbb{A}_{\rm max,n+1}^{\rm log}\vert_{{\cI}g_n\bigl(p/\mathrm{Ha}^{p^r}\bigr)} \subset  $$ $$ \subset \mathrm{H}_E^\sharp \otimes_{\cO_{{\cI}g_n\bigl(p/\mathrm{Ha}^{p^r}\bigr)}^+}  \cO \mathbb{A}_{\rm max,n+1}^{\rm log} \vert_{{\cI}g_n\bigl(p/\mathrm{Ha}^{p^r}\bigr)} +  p\,\mathrm{H}_E \otimes_{\cO_{{\cI}g_n\bigl(p/\mathrm{Ha}^{p^r}\bigr)}^+}  \cO \mathbb{A}_{\rm max,n+1}^{\rm log}\vert_{{\cI}g_n\bigl(p/\mathrm{Ha}^{p^r}\bigr)}  \subset $$ $$\subset \mathrm{H}_E^\sharp \otimes_{\cO_{{\cI}g_n\bigl(p/\mathrm{Ha}^{p^r}\bigr)}^+}  \cO \mathbb{A}_{\rm max,n+1}^{\rm log}\vert_{{\cI}g_n\bigl(p/\mathrm{Ha}^{p^r}\bigr)} $$as claimed. We aslo know that $\alpha_{\rm cris}$ maps the tautological section $s$ of $C_n^\vee$ to the marked section $s$ of  $\mathrm{H}_E^\sharp/p^n  \mathrm{H}_E^\sharp$ modulo $p^n \cO \mathbb{A}_{\rm cris}+ \mathcal{J}$ and sends $D_n^\vee$ to $0$ modulo $p^n \cO\mathbb{A}_{\rm cris}+\mathcal{J}$. As $\mathcal{J}\subset p^n \cO \mathbb{A}_{\rm max,n+1}$ the conclusion follows. 

The equivariance of $j_{n,\ast}(\beta_{\rm max,n+1}^{\rm log})$  follows from the fact that after composing with the inclusion $\mathrm{H}_E^\sharp \subset j_n^\ast\bigl(\mathrm{H}_E\bigr)$, it coincides with $\alpha_{\rm max,n+1}^{\rm log}$ restricted to ${\cI}g_n\bigl(p/\mathrm{Ha}^{p^r}\bigr)$ and $\alpha_{\rm max,n+1}^{\rm log}$ is defined over $\cX\bigl(p/\mathrm{Ha}^{p^r}\bigr)$.

\end{proof}

\begin{corollary}\label{cor:BDkEk} 
Applying the formalism of VBMS to $\gamma_{\rm cris}$ we get map $$\delta\colon \WW_k^D\bigl((T_p(E))^\vee,D_n^\vee,s\bigr)\vert_{{\cI}g_n\bigl(p/\mathrm{Ha}^{p^r}\bigr)_\proket}  \lra \left( \mathbf{W}_{k, \rm dR} \widehat{\otimes}_{\cO_{{\cI}g_n\bigl(p/\mathrm{Ha}^{p^r}\bigr)}^+}  \cO \mathbb{A}_{\rm max,n+1}\vert_{{\cI}g_n\bigl(p/\mathrm{Ha}^{p^r}\bigr)}\right)^{\nabla=0}$$of sheaves over ${\cI}g_n\bigl(p/\mathrm{Ha}^{p^r}\bigr)_\proket$ such that $j_{n,\ast}(\delta)$ is $\Delta_n$-equivariant.

Composing with the map defined in Corollary \ref{cor:DkWkD} we get a map of sheaves on $\cX \bigl(p/\mathrm{Ha}^{p^r}\bigr)_\proket$:
$$\zeta_k\colon \bD_k^o(T_0^\vee)[n]  \lra  \left(\mathbf{W}_{k, \rm dR} \widehat{\otimes}_{\cO_{\cX\bigl(p/\mathrm{Ha}^{p^r}\bigr)}^+}  \cO \mathbb{A}_{\rm max,n+1}[p^{-1}]\right)^{\nabla=0}$$that is functorial with respect to isogenies $E\to E'$ inducing an isomorphism on canonical subgroups.

\end{corollary}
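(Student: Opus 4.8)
The plan is to construct $\zeta_k$ by composing three maps and then check the claimed properties by reducing to the local descriptions already established. First I would unwind the statement of Corollary \ref{cor:BDkEk}: the map $\delta$ is obtained by applying the dual-VBMS formalism of Section \S\ref{sec:WkWDk} to the map $\beta_{\rm max,n+1}^{\rm log}$ of Proposition \ref{prop:gammacris}. Concretely, $\beta_{\rm max,n+1}^{\rm log}$ is a morphism of locally free modules over $\cO\mathbb{A}_{\rm max,n+1}^{\rm log}\vert_{\cI g_n}$ carrying the exact sequence $0\to D_n^\vee\to T_p(E)^\vee/p^n\to C_n^\vee\to 0$ with its marked section $s$ to the analogous data $(\mathrm{H}_E^\sharp,s)$ built from $(\omega_E^{\rm mod},s)$. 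By the functoriality of $\bV_0^D$ and of $\WW^D_k$ (Theorems \ref{thm:VBMSD} and the definitions in \S\ref{sec:WkWDk}), a morphism of such data together with compatibility of the marked sections and the kernel submodules induces a morphism $\WW_k^D\bigl(T_p(E)^\vee,D_n^\vee,s\bigr)\to \WW_k^D\bigl(\mathrm{H}_E^\sharp{}^\vee,\dots\bigr)=\mathbf{W}_{k,\rm dR}$, all base-changed to $\cO\mathbb{A}_{\rm max,n+1}$. The fact that $\beta_{\rm max,n+1}^{\rm log}$ lands in the $\nabla'=0$ part (established through $\alpha_{\rm log}$ being a map of crystals, see \S\ref{sec:criscomp}) forces the image of $\delta$ to lie in the horizontal sections. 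I would then invoke the $\Delta_n$-equivariance of $j_{n,\ast}(\beta_{\rm max,n+1}^{\rm log})$ from Proposition \ref{prop:gammacris} to conclude that $j_{n,\ast}(\delta)$ is $\Delta_n$-equivariant, since the VBMS construction is functorial for the $\Delta_n$-action on both sides.

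Next I would compose $\delta$ with the map of Corollary \ref{cor:DkWkD}, which provides an $\Iw_n$-equivariant ($B$-linear) map $D_k^o(T_0^\vee)[n]\to \WW_k^D(T^\vee,s^\vee,Q_n)$, after identifying $T$ with $T_p(E)$ via the universal trivialization (as in the Example following Theorem \ref{thm:GAGA}) and $Q_n$ with $D_n^\vee$. Passing to the associated pro-Kummer \'etale sheaves on $\cX\bigl(p/\mathrm{Ha}^{p^r}\bigr)$ — using that $\bD_k^o(T_0^\vee)[n]$ is exactly the sheafification of this $\Iw_n$-representation via the tensor-functor recalled in \S\ref{sec:étale sheaves}\emph{ }— and using the $\Delta_n$-descent from $\cI g_n$ to $\cX\bigl(p/\mathrm{Ha}^{p^r}\bigr)$, the composite descends to the desired map $\zeta_k\colon \bD_k^o(T_0^\vee)[n]\to \bigl(\mathbf{W}_{k,\rm dR}\widehat\otimes \cO\mathbb{A}_{\rm max,n+1}[p^{-1}]\bigr)^{\nabla=0}$. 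Here one must check that the $\Iw_n$-action used in Corollary \ref{cor:DkWkD} is compatible, via the trivialization, with the action defining the sheaf $\bD_k^o(T_0^\vee)[n]$ and with the $\Delta_n$-action on $\WW_k^D\bigl(T_p(E)^\vee,D_n^\vee,s\bigr)$ over $\cI g_n$; this is a bookkeeping matter since all three actions come from the same standard left action of matrices on $T$.

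Finally, for the functoriality claim: an isogeny $\varphi\colon E\to E'$ inducing an isomorphism on canonical subgroups $C_n\xrightarrow{\sim} C_n'$ induces compatible maps on $T_p(E)^\vee$, on $\omega_E^{\rm mod}$, on $\mathrm{H}_E^\sharp$ (via the crystalline functoriality of $\mathrm{H}^1_{\rm crys}$ and the definition of $\mathrm{H}_E^\sharp$ as a pushout, which is natural), and on the marked sections $s$; the compatibility of $\alpha_{\rm log}$, hence of $\beta_{\rm max,n+1}^{\rm log}$, with isogenies was recorded in \S\ref{sec:criscomp} (functoriality of ${\rm dlog}$ and of $\bf D_{\rm cris}$), and the VBMS construction is itself functorial, so $\zeta_k$ inherits the functoriality. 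The main obstacle I anticipate is the last compatibility check in Corollary \ref{cor:DkWkD}'s composition step — namely verifying carefully, in the explicit coordinates of \S\ref{sec:explicit} and \S\ref{sec:explicitduality}, that the identification of $T_p(E)$ with the standard lattice $T$ intertwines the three group actions so that the two maps ($\delta$ restricted along the trivialization, and the map from $D_k^o(T_0^\vee)[n]$) can actually be composed $\Delta_n$-equivariantly; the rest is a formal consequence of the functoriality of the VBMS/dual-VBMS formalism and of the crystalline comparison map already set up.
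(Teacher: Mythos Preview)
Your approach is essentially the same as the paper's: construct $\delta$ locally on log affinoid perfectoid opens by applying the VBMS functoriality to $\beta_{\rm max,n+1}^{\rm log}$, deduce $\Delta_n$-equivariance and horizontality from Proposition \ref{prop:gammacris}, compose with Corollary \ref{cor:DkWkD}, and read off functoriality for isogenies from the functoriality of ${\bf D}_{\rm cris}$ and of VBMS. One small correction: $\mathbf{W}_{k,\rm dR}$ is $\WW_k(\mathrm{H}_E^\sharp,s)$ (built from $\V_0$), not a dual VBMS applied to $(\mathrm{H}_E^\sharp)^\vee$; the map of representing spaces induced by $\beta_{\rm max,n+1}^{\rm log}$ goes $\V_0(\mathrm{H}_E^\sharp,s)\to \V_0^D\bigl(T_p(E)^\vee,D_n^\vee,s\bigr)$, and it is passing to functions that yields $\WW_k^D\to \mathbf{W}_{k,\rm dR}$. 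The paper also spells out the $\nabla=0$ step via Grothendieck's description of the connection on $\mathbf{W}_{k,\rm dR}$ as an isomorphism over the first infinitesimal neighborhood of the diagonal, which makes precise why horizontality of the image of $\beta_{\rm max,n+1}^{\rm log}$ propagates through the VBMS construction; your sketch of this point is correct in spirit but could use that explicit mechanism.
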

\begin{proof}

We define $\delta$ for a log affinoid perfectoid open $W$ of ${\cI}g_n\bigl(p/\mathrm{Ha}^{p^r}\bigr)_\proket$ over an open affinoid $U=\mathrm{Spa}(R,R^+)$ of ${\cI}g_n\bigl(p/\mathrm{Ha}^{p^r}\bigr)$. We assume that $T_p(E)(W)$ is trivial and that  $H^\sharp_E(U)$ is a free of rank two $R^+$-module.   It is easy to see that we have isomorphisms $$\V_0^D\bigl((T_p(E))^\vee,D_n^\vee,s\bigr) \widehat{\otimes}_{\Z_p} \cO \mathbb{A}_{\rm max,n+1}(W)\cong \V_0^D\bigl((T_p(E)\otimes_{\Z_p}\cO \mathbb{A}_{\rm max,n+1}(W))^\vee,D_n^\vee\otimes_\Z \cO \mathbb{A}_{\rm max,n+1}(W),s\bigr)$$and that $$\V_0\bigl(\mathrm{H}_E^\sharp,s\bigr)\widehat{\otimes}_{R^+} \cO \mathbb{A}_{\rm max,n+1}(W)\cong \V_0\bigl(\mathrm{H}_E^\sharp\widehat{\otimes}_{R^+} \cO \mathbb{A}_{\rm max,n+1}(W),s\bigr).$$Thanks to Proposition \ref{prop:gammacris} the map $\beta_{\rm max,n+1}^{\rm log}$ induces a map 
$$\V_0\bigl(\mathrm{H}_E^\sharp\widehat{\otimes}_{R^+} \cO \mathbb{A}_{\rm max,n+1}(W),s\bigr) \to \V_0^D\bigl((T_p(E)\otimes_{\Z_p}\cO \mathbb{A}_{\rm max,n+1}(W))^\vee,D_n^\vee\otimes_\Z \cO \mathbb{A}_{\rm max,n+1}(W),s\bigr).$$In conclusion we get a map $$\V_0\bigl(\mathrm{H}_E^\sharp,s\bigr)\widehat{\otimes}_{R^+} \cO \mathbb{A}_{\rm max,n+1}(W) \lra \V_0^D\bigl((T_p(E))^\vee,D_n^\vee,s\bigr) \widehat{\otimes}_{\Z_p} \cO \mathbb{A}_{\rm max,n+1}(W) .$$This induces the claimed map $\delta(W)$. As $j_{n,\ast}(\beta_{\rm max,n+1}^{\rm log})$ is $\Delta_n$-equivariant due to  Proposition \ref{prop:gammacris}, also $\delta$ is.

The formalism of VBMS implies that these maps are functorial with respect to isogenies and base change. The connection on $\mathbf{W}_{k, \rm dR}$ is defined in \cite{andreatta_iovita_triple} using Grothendieck approach: passing to the base change over the first infinitesimal neighborhhod of the diagonal of  $\cX \bigl(p/\mathrm{Ha}^{p^r}\bigr)$ it is realized as an isomorphism between the pull-back via the two projections to $\cX \bigl(p/\mathrm{Ha}^{p^r}\bigr)$ using the functoriality of the VBMS formalism. The fact that $\beta_{\rm max,n+1}^{\rm log}$ has image annihilated by $\nabla$ implies that the image of $\delta(W)$ is also annihilate by the connection.  

\end{proof}

We write $\mathbf{W}_{k, \rm dR,\bullet}$ for the complex $ \mathbf{W}_{k, \rm dR}[p^{-1}] \to \mathbf{W}_{k+2, \rm dR}[p^{-1}]$ where the map is defined by $\nabla$ and
$\Fil_m \mathbf{W}_{k, \rm dR,\bullet}$ for the complex $\Fil_m \mathbf{W}_{k, \rm dR}[p^{-1}] \to \Fil_{m+1} \mathbf{W}_{k+2, \rm dR}[p^{-1}]$ where the map is defined by $\nabla$. Recall that $\mathrm{H}^1\bigl(\cX_{\overline{K},\proket}, \bD_k^o(T_0^\vee)[n] \bigr)$ admits slope decomposition for the $U_p$-operator thanks to Proposition \ref{prop:GAGAD}.

\begin{lemma}\label{lemma:H1factorsFil} For every $h$ there exists $m$ such the map $$\mathrm{H}^1\bigl(\cX_{\overline{K},\proket}, \bD_k^o(T_0^\vee)[n] \bigr)^{(h)}\lra 
\mathrm{H}^1\bigl(\cX\bigl(p/\mathrm{Ha}^{p^r}\bigr)_{\overline{K},\proket}, \mathbf{W}_{k, \rm dR,\bullet} \widehat{\otimes}_{\cO_{\cX(p/\mathrm{Ha}^{p^r})}^+}  \cO \mathbb{A}_{\rm max,n+1}^{\rm log}[p^{-1}]\bigr),$$induced by $\delta$,  factors via $$\mathrm{H}^1\bigl(\cX(p/\mathrm{Ha}^{p^r})_{\overline{K},\proket}, \Fil_m \mathbf{W}_{k, \rm dR,\bullet} \otimes_{\cO_{\cY(p/\mathrm{Ha}^{p^r})}^+}  \cO \mathbb{A}_{\rm max,n+1}^{\rm log}[p^{-1}]\bigr).$$
\end{lemma}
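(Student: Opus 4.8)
\textbf{Proof plan for Lemma \ref{lemma:H1factorsFil}.}

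The plan is to combine the slope bound coming from the $U_p$-action on $\bD_k^o(T_0^\vee)[n]$ (via Proposition \ref{prop:Wk0}, especially claim (ii)) with the compatibility of $\delta$ with the $U_p$-correspondence coming from Corollary \ref{cor:BDkEk} (functoriality with respect to isogenies inducing an isomorphism on canonical subgroups), exactly in the spirit of the filtered-pieces argument used in the proof of Proposition \ref{prop:HeckeWkinfty}. First I would record that $\delta$ carries the filtration $\Fil_\bullet \bD_k^o(T_0^\vee)[n]$ on $\cX_0(p^n,N)_0^{(m)}$ produced in Proposition \ref{prop:Wk0} to the filtration $\Fil_m \mathbf{W}_{k,\rm dR}$ tensored with $\cO\mathbb{A}_{\rm max,n+1}^{\rm log}$: this is immediate from the constructions, since both filtrations are defined through the same VBMS data ($\omega_E^{\rm mod}\subset \mathrm{H}_E^\sharp$ with marked section $s$) and $\beta_{\rm max,n+1}^{\rm log}$ is filtered by Proposition \ref{prop:gammacris}. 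Hence one gets a commutative diagram comparing $\mathrm{H}^1$ of $\Fil_m \bD_k^o(T_0^\vee)[n]$, of $\bD_k^o(T_0^\vee)[n]$, of $\Fil_m \mathbf{W}_{k,\rm dR,\bullet}\widehat\otimes\cO\mathbb{A}_{\rm max,n+1}^{\rm log}[p^{-1}]$ and of $\mathbf{W}_{k,\rm dR,\bullet}\widehat\otimes\cO\mathbb{A}_{\rm max,n+1}^{\rm log}[p^{-1}]$.

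Next I would invoke Proposition \ref{prop:Wk0}(ii): for $m$ large enough, the image of $\mathrm{H}^1\bigl(\cX_0^{(m)}_{0,\proket},\Fil_h\bigr)\to \mathrm{H}^1\bigl(\cX_0^{(m)}_{0,\proket},\bD_k^o(T_0^\vee)[n]\bigr)$ is annihilated by $p^{(n+c)h}\binom{u_k}{h}$; dually, after inverting $p$, the quotient $\bD_k^o(T_0^\vee)[n]/\Fil_h$ captures the cohomology up to bounded $p$-power torsion, and on the slope $\le h$-part the $U_p$-operator is invertible (up to the bounded denominators of the idempotent $e_h$). The key point, as in Proposition \ref{prop:HeckeWkinfty} and Proposition \ref{prop:Ubad}, is that on $\bD_k^o(T_0^\vee)[n]/\Fil_h$ the integral (un-normalized) operator $U_p^{\rm naive}$ is divisible by $p^{h+1}$, so its inverse on the slope $\le h$-part forces any class to come from the image of $\Fil_h$ up to $p$-power factors — hence, after tensoring with $\mathbf{W}_{k,\rm dR}$, the image of $\mathrm{H}^1\bigl(\cX_{\overline K,\proket},\bD_k^o(T_0^\vee)[n]\bigr)^{(h)}$ under $\delta$ lands, modulo classes killed by a fixed power of $p$, in the image of $\mathrm{H}^1$ of $\Fil_m$. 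To upgrade this ``modulo bounded $p$-torsion'' statement to an honest factorization I would then use that $\mathbf{W}_{k,\rm dR,\bullet}\widehat\otimes\cO\mathbb{A}_{\rm max,n+1}^{\rm log}[p^{-1}]$ is a sheaf of $\Q_p$-vector spaces and that, as in Proposition \ref{prop:propWk}, for $m\gg 0$ the slope $\le h$ part of $\mathrm{H}^0$ (and hence, via the de Rham complex computation of Theorem \ref{thm:deRcompare}, of $\mathrm{H}^1$) of $\mathbf{W}_{k,\rm dR,\bullet}$ is already concentrated in $\Fil_m$; combining this with $p\nabla_k\circ U_p=U_p\circ\nabla_k$ and the divisibility of $U_p^{\rm naive}$ by $p^{m+1}$ on $\mathbf{W}_{k,\rm dR}/\Fil_m$ lets one choose $m$ depending only on $h$ so that the map literally factors through $\Fil_m$ after applying $e_h$.

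The main obstacle I expect is the transition from the ``annihilated by a fixed $p$-power'' estimates (which is all Proposition \ref{prop:Wk0}(ii) and the $p^{h+1}$-divisibility give directly) to a genuine factorization of a map of $\Q_p$-vector spaces: one must verify that the relevant bounded-torsion submodules vanish after inverting $p$ and that the idempotent $e_h$ (which itself has bounded denominators, by item iii) in the list before the proof of Theorem \ref{thm:htsurjective}) interacts correctly with the filtrations, so that $e_h$ applied to the image lands in $e_h\,\mathrm{H}^1(\Fil_m \mathbf{W}_{k,\rm dR,\bullet}\widehat\otimes \cO\mathbb{A}_{\rm max,n+1}^{\rm log}[p^{-1}])$. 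This is a purely ``slope-decomposition plus bounded denominators'' bookkeeping argument, entirely parallel to the one carried out in \cite[Thm. 5.5]{half} and in the proof of Proposition \ref{prop:HeckeWkinfty}, so I would organize the proof so that the numerical choice of $m=m(h)$ is made once, at the end, absorbing all the finitely many $p$-power bounds ($d$, the bound from Proposition \ref{prop:Wk0}(ii), the denominators of $e_h$, and the stabilization bound from Proposition \ref{prop:propWk}) into a single inequality.
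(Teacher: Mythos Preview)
Your plan has a genuine error of geography. You invoke Proposition \ref{prop:Wk0} and Proposition \ref{prop:Ubad}, both of which concern the increasing filtration on $\mathfrak{D}_{k,0}^{o,(m)}$ over the $0$-component $\cX_0(p^n,N)_0^{(m)}$ (the neighborhood where the level subgroup is \emph{not} canonical). But $\delta$ is constructed over ${\cI}g_n\bigl(p/\mathrm{Ha}^{p^r}\bigr)\to \cX\bigl(p/\mathrm{Ha}^{p^r}\bigr)$, which via the section (\ref{eq:sectionO1}) sits in the $\infty$-component. So Propositions \ref{prop:Wk0} and \ref{prop:Ubad} are simply not applicable, and the filtration you are trying to transport through $\delta$ does not live on the right open. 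Even if you replace these by the correct $\infty$-side statement (Proposition \ref{prop:Wkinfty}), the filtration there is \emph{decreasing} and the slope divisibility tells you that slope $\le h$ classes are detected by the quotient $\mathfrak{D}_{k,\infty}^{o,(m)}/\Fil^h$, not that they lie in a bounded subobject; so there is no clean ``$\delta$ respects filtrations hence the image lies in $\Fil_m$'' step available on the source side.

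The paper's proof avoids all of this by working purely on the \emph{target}. One looks at the quotient complex $\mathbf{W}_{k,\rm dR,\bullet}/\Fil_m\mathbf{W}_{k,\rm dR,\bullet}$ tensored with $\cO\mathbb{A}_{\rm max,n+1}^{\rm log}[p^{-1}]$ and shows that its $\mathrm{H}^0$ and $\mathrm{H}^1$ admit slope decomposition with vanishing slope $\le h$ part for $m\gg 0$: by \cite[Lemma 3.33]{andreatta_iovita_triple} the operator $U_p$ on these groups is integrally $p^{h+1}$ times another operator, so any polynomial of slope $\le h$ acts invertibly after inverting $p$ (as in \cite[Lemma 5.8]{half}). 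The factorization through $\Fil_m$ then drops out of the long exact sequence in cohomology together with Hecke-equivariance of the map induced by $\delta$. You do eventually mention the divisibility of $U_p^{\rm naive}$ by $p^{m+1}$ on $\mathbf{W}_{k,\rm dR}/\Fil_m$ in passing; that single observation is the whole proof, and none of the source-side bookkeeping, idempotents, or Proposition \ref{prop:Wk0} estimates are needed.
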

\begin{proof} Consider the complex $\mathbf{W}_{k, \rm dR,\bullet}/\Fil_m\mathbf{W}_{k, \rm dR,\bullet}$. We claim that for $i=0$ and $1$ $$\mathrm{H}^i\bigl(\cX\bigl(p/\mathrm{Ha}^{p^r}\bigr)_{\overline{K},\proket}, \mathbf{W}_{k, \rm dR,\bullet}/\Fil_m\mathbf{W}_{k, \rm dR,\bullet}\widehat{\otimes}_{\cO_{\cX(p/\mathrm{Ha}^{p^r})}^+}  \cO \mathbb{A}_{\rm max,n+1}^{\rm log}[p^{-1}]\bigr)$$admits a slope $h$-decomposition and  the $\leq h$-part is zero, for $m\gg 0$.

It follows from \cite[Lemma 3.33]{andreatta_iovita_triple} that the operator $U_p$ on $$\mathrm{H}^i\bigl(\cX\bigl(p/\mathrm{Ha}^{p^r}\bigr)_{\overline{K},\proket}, \mathbf{W}_{k, \rm dR,\bullet}/\Fil_m\mathbf{W}_{k, \rm dR,\bullet}\widehat{\otimes}_{\cO_{\cX(p/\mathrm{Ha}^{p^r})}^+}  \cO \mathbb{A}_{\rm max,n+1}^{\rm log}\bigr) $$is integrally defined and can be written as $p^{h+1} U'_p$ for some operator $U'_p$ for $m\gg 0$. The proof then follows from \cite{half}, Lemma 5.8 and subsequent claim: one shows that for every polynomial $P$  of slope $\leq h$, $P(U_p)$  is invertible on this space after inverting $p$.
\end{proof}

\subsubsection{Proof of Theorem \ref{thm:deRcompare}} 
\label{sec:proof}

Consider the composite $\Fil_m\mathbf{W}_{k, \rm dR,\bullet} \otimes_{\cO_{\cY(p/\mathrm{Ha}^{p^r})}^+}  \mathbb{A}_{\rm max,n+1}[p^{-1}]\to\Fil_m\mathbf{W}_{k, \rm dR,\bullet}  \otimes_{\cO_{\cY(p/\mathrm{Ha}^{p^r})}^+}  \Fil^0 \cO\mathbb{B}_{\rm dR,log}$ obtained from the morphism  $\cO\mathbb{A}_{\max,n+1}^{\log}\to \cO\mathbb{B}_{\rm dR, log}^+$ of Lemma \ref{lemma:BdR}. 
We recall that the connection $\nabla':\mathbf{W}_{k, \rm dR}\widehat{\otimes}\cO\mathbb{B}_{\rm dR}\lra \mathbf{W}_{k+2,\rm dR}\widehat{\otimes}\cO\mathbb{B}_{\rm dR}$ has the form 
$\nabla'=\nabla_k\widehat{\otimes} 1+1\widehat{\otimes} \nabla_{\rm dR}$, where $\nabla_k$ is the connection on $\mathbf{W}_{k, \rm dR}$ and $\nabla_{\rm dR}$ the one on $\cO\mathbb{B}_{\rm dR}$. Moreover both $\nabla_k$ and $\nabla_{\rm dR}$ satisfy the Griffith-trasveraslity property with respect to the respective filtrations
of $\mathbf{W}_{k, \rm dR}$ and respectively $\cO\mathbb{B}_{\rm dR}$, where let us recall that 
the first sheaf has an increasing filtration ${\rm Fil}_\bullet\mathbf{W}_{k, \rm dR}$ while 
the second has a decreasing filtration ${\rm Fil}^\bullet \cO\mathbb{B}_{\rm dR}$.

For every $s\ge 1$ we consider the composition, which we still denote $\nabla'$:

\noindent
$\displaystyle \Fil_m\mathbf{W}_{k, \rm dR}  \otimes \frac{ \Fil^0 \cO\mathbb{B}_{\rm dR,log}}{\Fil^s \cO\mathbb{B}_{\rm dR,log}}\stackrel{\nabla'}{\lra} \Fil_{m+1}\mathbf{W}_{k+2, \rm dR}  \otimes  \frac{\Fil^0\cO\mathbb{B}_{\rm dR,log}}{\Fil^s \cO\mathbb{B}_{\rm dR,log}}+\Fil_m\mathbf{W}_{k+2,\rm dR}\otimes\frac{\Fil^{-1}\cO\mathbb{B}_{\rm dR}}{\Fil^{s-1}\cO\mathbb{B}_{\rm dR}}\lra \Fil_{m+1}\mathbf{W}_{k+2, \rm dR}  \otimes  \frac{\Fil^{-1}\cO\mathbb{B}_{\rm dR,log}}{\Fil^{s-1} \cO\mathbb{B}_{\rm dR,log}}$

\noindent
 and we have:

\begin{lemma}\label{lemma:H1WkNablaWk} For every $s\ge 1$, the natural map of complexes
$$ \begin{array}{cccccccccc} 
\mathrm{H}^0\Bigl(\cX\bigl(p/\mathrm{Ha}^{p^r}\bigr)_{\overline{K},\proket}, \Fil_m \mathbf{W}_{k, \rm dR}\Bigr)\widehat{\otimes}\otimes \frac{\Fil^0 B_{\rm dR}}{\Fil^s B_{\rm dR}} & \stackrel{\nabla}{\lra} & \mathrm{H}^0\Bigl(\cX\bigl(p/\mathrm{Ha}^{p^r}\bigr)_{\overline{K}_0,\proket}, \Fil_{m+1}\mathbf{W}_{k+2, \rm dR}\Bigr)\widehat{\otimes} \frac{ \Fil^{-1} B_{\rm dR}}{\Fil^{s-1} B_{\rm dR}}  \\ \downarrow & & \downarrow \\ 
\mathrm{H}^0\Bigl(\cX\bigl(p/\mathrm{Ha}^{p^r}\bigr)_{\overline{K},\proket}, \Fil_m\mathbf{W}_{k, \rm dR}  \otimes \frac{ \Fil^0 \cO\mathbb{B}_{\rm dR,log}}{\Fil^s \cO\mathbb{B}_{\rm dR,log}}\Bigr) & \stackrel{\nabla'}{\lra}& \mathrm{H}^0\Bigl(\cX\bigl(p/\mathrm{Ha}^{p^r}\bigr)_{\overline{K},\proket}, \ 
\Fil_{m+1}\mathbf{W}_{k+2, \rm dR}  \otimes  \frac{\Fil^{-1}\cO\mathbb{B}_{\rm dR,log}}{\Fil^{s-1} \cO\mathbb{B}_{\rm dR,log}}\Bigr).  \end{array}$$is an isomorphism. 
 
In the above diagram $B_{\rm dR}$ denotes Fontaine's classical period ring. Furthermore this complex  represents the cohomology $\mathrm{R}\Gamma\bigl(\cX\bigl(p/\mathrm{Ha}^{p^r}\bigr)_{\overline{K},\proket}, \Fil_m\mathbf{W}_{k, \rm dR,\bullet}  \otimes_{\cO_{\cX(p/\mathrm{Ha}^{p^r})}^+}  \frac{\Fil^0\mathbb{B}_{\rm dR}}{\Fil^s \mathbb{B}_{\rm dR}}\bigr) $.\end{lemma}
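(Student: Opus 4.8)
The plan is to prove the two assertions separately: first the isomorphism of the two vertical maps in the displayed diagram, and then the identification of the bottom complex with $\mathrm{R}\Gamma$ of the complex $\Fil_m\mathbf{W}_{k, \rm dR,\bullet}\otimes \Fil^0\mathbb{B}_{\rm dR}/\Fil^s\mathbb{B}_{\rm dR}$. For the first part, the key point is that $\mathbf{W}_{k,\rm dR}$ and its filtration pieces are (after inverting $p$) locally free $\cO_{\cX(p/\mathrm{Ha}^{p^r})}$-modules of finite rank, and that taking $\widehat\otimes$ with the finite-length $\overline K$-vector space $\Fil^0 B_{\rm dR}/\Fil^s B_{\rm dR}$ commutes with $\mathrm{H}^0$. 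So I would first reduce to showing that
$$\mathrm{H}^0\bigl(\cX(p/\mathrm{Ha}^{p^r})_{\overline K,\proket}, \Fil_m\mathbf{W}_{k,\rm dR}\bigr)\widehat\otimes_{\overline K}\frac{\Fil^j\cO\mathbb{B}_{\rm dR,log}}{\Fil^{s}\cO\mathbb{B}_{\rm dR,log}}$$
agrees with $\mathrm{H}^0$ of the sheaf $\Fil_m\mathbf{W}_{k,\rm dR}\otimes \Fil^j\cO\mathbb{B}_{\rm dR,log}/\Fil^s$, for $j=0$ and $j=-1$; this is the analogue, for coefficients in a coherent sheaf, of the computation of $\mathrm{H}^0$ of the period sheaves. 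Concretely, by d\'evissage along the filtration on $\cO\mathbb{B}_{\rm dR,log}$ one reduces to the graded pieces $\mathrm{Gr}^i\cO\mathbb{B}_{\rm dR,log}\cong \widehat\cO_{\cX(p/\mathrm{Ha}^{p^r})}(i)$, and then one uses that $\mathrm{H}^0$ of $\Fil_m\mathbf{W}_{k,\rm dR}\otimes\widehat\cO(i)$ over the (quasi-Stein, or affinoid) space $\cX(p/\mathrm{Ha}^{p^r})$ is $\mathrm{H}^0(\Fil_m\mathbf{W}_{k,\rm dR})\otimes \C_p(i)$ when $i=0$ and vanishes when $i\ne 0$ because there are no nonzero Galois-invariant Tate-twisted sections (the twisted higher cohomology also contributes nothing to $\mathrm{H}^0$). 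Taking these pieces together, and matching up the contributions from $j=0$ versus $j=-1$ under the map $\nabla'=\nabla_k\widehat\otimes 1+1\widehat\otimes\nabla_{\rm dR}$, gives the claimed isomorphism of complexes; here one uses that the extra summand $\Fil_m\mathbf{W}_{k+2,\rm dR}\otimes\Fil^{-1}\mathbb{B}_{\rm dR}/\Fil^{s-1}\mathbb{B}_{\rm dR}$ exactly accounts for the Griffiths-transversality ``decalage'' in the definition of $\nabla'$, so that the bottom map is identified with the top map after the isomorphisms on $\mathrm{H}^0$.

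For the second part, that the bottom complex computes $\mathrm{R}\Gamma(\cX(p/\mathrm{Ha}^{p^r})_{\overline K,\proket},\Fil_m\mathbf{W}_{k,\rm dR,\bullet}\otimes \Fil^0\mathbb{B}_{\rm dR}/\Fil^s\mathbb{B}_{\rm dR})$, I would use that the two-term complex $\Fil_m\mathbf{W}_{k,\rm dR,\bullet}\otimes \Fil^0\cO\mathbb{B}_{\rm dR,log}/\Fil^s$ is a resolution in an appropriate sense: the ``de Rham to pro-Kummer \'etale'' Poincar\'e-type lemma (as in \cite{ScholzeHodge}, \cite{DLLZ}) gives that $\cO\mathbb{B}_{\rm dR,log}$ with its connection resolves the constant sheaf, and tensoring with $\Fil_m\mathbf{W}_{k,\rm dR}$ and truncating the filtration at level $s$ one gets, after taking $\mathrm{R}\Gamma$ over $\cX(p/\mathrm{Ha}^{p^r})_{\overline K,\proket}$, the hypercohomology of the complex $\Fil_m\mathbf{W}_{k,\rm dR}\to\Fil_{m+1}\mathbf{W}_{k+2,\rm dR}$ with coefficients in the truncated filtered period sheaf. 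Since the period sheaves have vanishing higher cohomology on the log affinoid perfectoid basis, $\mathrm{R}\Gamma$ of $\Fil_m\mathbf{W}_{k,\rm dR}\otimes\Fil^\bullet\cO\mathbb{B}_{\rm dR,log}/\Fil^s$ is computed by the global sections in degree $0$ (again using that $\mathbf{W}_{k,\rm dR}$ is coherent and $\cX(p/\mathrm{Ha}^{p^r})$ has no higher coherent cohomology for the relevant twists), and the resulting two-term complex of global sections is precisely the bottom row of the diagram.

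The main obstacle I expect is the careful bookkeeping with the filtrations and the connection: one must check that $\nabla_{\rm dR}$ on $\cO\mathbb{B}_{\rm dR,log}$ shifts $\Fil^0$ into $\Fil^{-1}$ and is surjective onto $\mathrm{Gr}^{-1}$ modulo the image of $\Fil^0$, so that after truncating at $\Fil^s$ the cokernel and kernel of $\nabla'$ are controlled purely by the $\mathbf{W}$-direction, and that the comparison of the top and bottom rows is an isomorphism of complexes and not merely a quasi-isomorphism. A second, more technical, point is the justification that $\mathrm{H}^0(\cX(p/\mathrm{Ha}^{p^r})_{\overline K,\proket},-)$ commutes with the (completed) tensor product by $\Fil^0 B_{\rm dR}/\Fil^s B_{\rm dR}$ and with the passage to graded pieces of $\cO\mathbb{B}_{\rm dR,log}$; this needs the local computation on the log affinoid perfectoid basis from \S\ref{sec:logaffinoid} together with the coherence of $\mathbf{W}_{k,\rm dR}[1/p]$, and it is essentially the place where the hypothesis that $k$ is analytic and that we work on a quasi-compact neighborhood $\cX(p/\mathrm{Ha}^{p^r})$ of the ordinary locus enters. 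Everything else is a d\'evissage along the finite filtration $\Fil^\bullet/\Fil^s$ reducing to the Tate twists $\widehat\cO(i)$, which is routine given the results already established in \cite{half} and \cite{andreatta_iovita_triple}.
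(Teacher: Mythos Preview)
Your d\'evissage argument contains two concrete errors that make the approach fail as written.

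First, the graded pieces of $\cO\mathbb{B}_{\rm dR,log}^+$ are \emph{not} $\widehat{\cO}(i)$. That description is correct for the sheaf $\mathbb{B}_{\rm dR}^+$ (no $\cO$), but $\cO\mathbb{B}_{\rm dR,log}^+$ is, locally on a log affinoid perfectoid over a one-dimensional base, a power-series ring $\mathbb{B}_{\rm dR}^+[[V]]$; its $i$-th graded piece is an infinite sum of Tate twists, not a single $\widehat{\cO}(i)$. Second, and more seriously, your vanishing claim ``$\mathrm{H}^0\bigl(\cX(p/\mathrm{Ha}^{p^r})_{\overline K,\proket},\Fil_m\mathbf{W}_{k,\rm dR}\otimes\widehat{\cO}(i)\bigr)=0$ for $i\ne 0$ because there are no nonzero Galois-invariant Tate-twisted sections'' is false: you are computing pro-Kummer \'etale $\mathrm{H}^0$ on the \emph{geometric} fiber over $\overline{K}$, where Tate twists are (non-canonically) trivial and no Galois invariants are being taken. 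That $\mathrm{H}^0$ is nonzero for every $i$. Indeed, if only the $i=0$ piece survived, your d\'evissage would produce $\mathrm{H}^0(\Fil_m\mathbf{W}_{k,\rm dR})\widehat{\otimes}\C_p$, a single graded layer, whereas the target $\mathrm{H}^0(\Fil_m\mathbf{W}_{k,\rm dR})\widehat{\otimes}\bigl(\Fil^0 B_{\rm dR}/\Fil^s B_{\rm dR}\bigr)$ has $s$ such layers.

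The paper sidesteps both issues by invoking \cite[Lemma 3.3.2]{DLLZ} directly: on an affinoid $\cU=\mathrm{Spa}(R,R^+)$ one has $\mathrm{H}^j\bigl(\cU_{\overline K,\proket},\Fil^{u}\cO\mathbb{B}_{\rm dR,log}/\Fil^{s+u}\cO\mathbb{B}_{\rm dR,log}\bigr)=0$ for $j\ge 1$, and for $j=0$ it equals $R\widehat{\otimes}\bigl(\Fil^{u}B_{\rm dR}/\Fil^{s+u}B_{\rm dR}\bigr)$ (for $u=0,-1$). This single input already packages the delicate fact that the extra ``polynomial'' direction in $\cO\mathbb{B}_{\rm dR,log}$ contributes nothing to these cohomology groups. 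The rest of the proof is then a \v{C}ech argument: choose an affinoid cover $\{\cU_i\}$ of the affinoid $\cX(p/\mathrm{Ha}^{p^r})$ trivializing the locally free sheaves $\Fil_m\mathbf{W}_{k,\rm dR}[1/p]$ and $\Fil_{m+1}\mathbf{W}_{k+2,\rm dR}[1/p]$; the \v{C}ech complex for these coherent sheaves is exact, remains exact after $\widehat{\otimes}\,\Fil^{u}B_{\rm dR}/\Fil^{s+u}B_{\rm dR}$ (an iterated extension of $\C_p$-vector spaces), and the local computation above then gives both the isomorphism of complexes and the identification with $\mathrm{R}\Gamma$. Your outline of the second part (Poincar\'e lemma plus acyclicity on the perfectoid basis) is in the right spirit, but the actual mechanism is this affinoid computation from \cite{DLLZ}, not a reduction to individual Tate twists.
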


\begin{proof} Recall that $ \Fil_m\mathbf{W}_{k, \rm dR}[1/p] $ is a locally free $\cO_{\cX\bigl(p/\mathrm{Ha}^{p^r}\bigr)}$-module for every $m$. We prove the result restricting to an affinoid cover $\{\cU_i\}_{i\in I}$ where $\Fil_m$ and $\Fil_{m+1}$ are free. Since $\cX\bigl(p/\mathrm{Ha}^{p^r}\bigr)$ is affinoid, the Chech complex for $\Fil_m\mathbf{W}_{k, \rm dR}[1/p]$ w.r.t. the $\cU_i$'s is exact. As  $\Fil^0 B_{\rm dR}/\Fil^s  B_{\rm dR}$ is an iterated extension of  $\C_p$-vector spaces for every $h$, the Chech complex remains exact aslo after taking $\widehat{\otimes} \Fil^0 B_{\rm dR}/\Fil^s  B_{\rm dR}$. From the result for the $\cU_i$'s we then deduce the Lemma.

We are left to show the claim for each $\cU_i=\mathrm{Spa}(R_i,R_i^+)$. Then   \cite[Lemma 3.3.2]{DLLZ} implies that the group $\mathrm{H}^j\bigl(\cU_{i,\overline{K},\proket},\Fil^{u}\cO\mathbb{B}_{\rm dR,log}/\Fil^{s+u} \cO\mathbb{B}_{\rm dR,log}\bigr)=0$ for $j\geq 1$ and coincides with $\Fil^{u}\left(R_i^+\widehat{\otimes } B_{\rm dR}\right)/\Fil^{s+u} \left(R_i^+\widehat{\otimes } B_{\rm dR} \right)$ for $i=0$, for $u=0, -1$. As the latter coincides with $R_i^+\widehat{\otimes } \bigl(\Fil^u B_{\rm dR}/\Fil^{s+u} B_{\rm dR}\bigr)$ by \cite[Def. 3.1.1]{DLLZ}, the conclusion follows.
\end{proof}

We deduce from Lemma \ref{lemma:H1factorsFil} and Lemma \ref{lemma:H1WkNablaWk} that for every positive integer $s$ we have a natural map

$$\mathrm{H}^1\bigl(\cX_{\overline{K},\proket}, \bD_k^o(T_0^\vee)[n] \bigr)^{(h)}\widehat{\otimes} \bigl(\Fil^0B_{\rm dR}/\Fil^sB_{\rm dR}\bigr)\to\frac{ \mathrm{H}^0\bigl(\cX\bigl(p/\mathrm{Ha}^{p^r}\bigr) , \Fil_{m+1} \mathbf{W}_{k+2, \rm dR}\bigr) \widehat{\otimes} ( \Fil^{-1} B_{\rm dR}/\Fil^{s-1} B_{\rm dR}) }{\nabla_k\left( \mathrm{H}^0\bigl(\cX\bigl(p/\mathrm{Ha}^{p^r}\bigr) , \Fil_m \mathbf{W}_{k, \rm dR}\bigr)\right) \widehat{\otimes} ( \Fil^0 B_{\rm dR}/\Fil^s B_{\rm dR})}.$$
On the other hand, recall from Proposition \ref{prop:propWk} that $\mathrm{H}^0\bigl(\cX\bigl(p/\mathrm{Ha}^{p^r}\bigr) , \Fil_{m} \mathbf{W}_{k, \rm dR}[1/p]\bigr)$ admits a slope $\leq {h-1}$ decomposition and 
$\mathrm{H}^0\bigl(\cX\bigl(p/\mathrm{Ha}^{p^r}\bigr) , \Fil_{m+1} \mathbf{W}_{k+2, \rm dR}[1/p]\bigr)$ admits a slope $\leq h$ and they coincide
with $\mathrm{H}^0\bigl(\cX\bigl(p/\mathrm{Ha}^{p^r}\bigr) ,  \mathbf{W}_{k, \rm dR}[1/p]\bigr)^{(h-1)}$, resp. $\mathrm{H}^0\bigl(\cX\bigl(p/\mathrm{Ha}^{p^r}\bigr)  , \mathbf{W}_{k+2, \rm dR}[1/p]\bigr)^{(h)}$.

The same then holds after $\widehat{\otimes}\Fil^0B_{\rm dR}/\Fil^sB_{\rm dR}$, respectively  $\widehat{\otimes}  \Fil^{-1} B_{\rm dR}/\Fil^{s-1} B_{\rm dR}$ and for their quotient via $\nabla$ (by the five Lemma for slope decompositions cf. \cite[Thm. 5.7]{half}). As $\Fil^0B_{\rm dR}=B_{\rm dR}^+$, $\Fil^{-1} B_{\rm dR}=t^{-1}B_{\rm dR}^+$ and $\Fil^{s+u} B_{\rm dR}=t^{s+u} B_{\rm dR}^+$ for 
$u\in \{-1, 0\}$ we get maps

$$\mathrm{H}^1\bigl(\cX_{\overline{K},\proket}, \bD_k^o(T_0^\vee)[n] \bigr)^{(h)}\widehat{\otimes} \frac{B_{\rm dR}^+}{t^sB_{\rm dR}^+}\lra \mathrm{H}^1_{\rm dR}\bigl(\cX\bigl(p/\mathrm{Ha}^{p^r}\bigr), \mathbf{W}_{k, \rm dR,\bullet}[1/p]\bigr)^{(h)}\widehat{\otimes} \frac{  t^{-1}B_{\rm dR}^+}{t^{s-1} B_{\rm dR}^+}$$with 
$$\mathrm{H}^1_{\rm dR}\bigl(\cX\bigl(p/\mathrm{Ha}^{p^r}\bigr), \mathbf{W}_{k, \rm dR,\bullet}[1/p]\bigr)^{(h)}\cong  \frac{\mathrm{H}^0\bigl(\cX\bigl(p/\mathrm{Ha}^{p^r}\bigr)  ,\mathbf{W}_{k+2, \rm dR}[1/p]\bigr)^{(h)}}{\nabla_k\left(\mathrm{H}^0\bigl(\cX\bigl(p/\mathrm{Ha}^{p^r}\bigr) ,\mathbf{W}_{k, \rm dR}[1/p]\bigr)^{(h-1)}\right)}.$$
As  they are obtained from maps of sheaves on $\cX_\proket$, the equivariance for the $\mathrm{Gal}(\overline{K}/K)$-action is clear. The compatibility with Hecke operators follows from the fact that $\zeta_k$ is comaptible with the map induced by isogenies preserving the canonical subgroup that are used to define the Hecke operators $T_\ell$, for $\ell\not\vert pN$, and the Hecke operator $U_p$. It is comaptible with weight specializations as the map $\zeta_k$ is.  Taking the inverse limits for $s\to \infty$ we get the statement of Theorem \ref{thm:deRcompare}, except for (i) and (ii). Claim (ii) follows from \cite[section \S 3.9]{andreatta_iovita_triple}. Using (ii) we get  a map 

$$\mathrm{H}^1\bigl(\cX_{\overline{K},\proket}, \bD_k^o(T_0^\vee)[n] \bigr)^{(h)}\widehat{\otimes} \frac{B_{\rm dR}^+}{t^sB_{\rm dR}^+}\lra \mathrm{H}^0\bigl(\cX\bigl(p/\mathrm{Ha}^{p^r}\bigr), \omega_E^k\bigr)^{(h)}\widehat{\otimes} \frac{ t^{-1} B_{\rm dR}^+}{t^{s-1} B_{\rm dR}^+},$$
which we'd like to prove is surjective under the hypothesis of i). By devisage  it suffices to prove  surjectivity for $s=1$.  As $t^{-1}B_{\rm dR}^+/B_{\rm dR}^+ \cong \mathbb{C}_p(-1)$, the surjectivity  follows from Theorem \ref{thm:hodgetate}.

\section{Appendix: Integral slope decomposition.}
\label{sec:appendix}

Let us start by formulating the following general property.

We let $R$ be a $p$-torsion free $\Z_p$-algebra and $T$ an $R$-module equipped with an 
$R$-linear operator $v:T\rightarrow T$, let $\alpha\in R$ be an element such that there is $r\in \N$ and $\gamma\in R$  with $\alpha\gamma=p^r$.
We denote by $\rho\colon T\lra T[1/p]:=T\otimes_RR[1/p]$ and denote by
$ T^{\rm tors}:={\rm Ker}(\rho)$,  $T^{\rm tf}:=T/T^{\rm tors}={\rm Im}(\rho)$ and remark that
$v-\alpha$ respects the submodule $T^{\rm tors}$ and therefore induces an $R$-linear map on
$T^{\rm tf}$. 

\begin{definition}
\label{def:ast}
We say that the triple $(T,v,\alpha)$ has property $(*)$ if

1) There is $w\in \N$ such that $p^w\bigl(T^{\rm tors} \bigr)^{v=\alpha}=0.$

2) There is a $\eta\in \N$, which depends only on $\alpha$, such that for every $x\in \bigl(T^{\rm tf}\bigr)^{v=\alpha} $  there is $\tilde{x}\in T^{v=\alpha}$ such that $(\tilde{x})^{\rm tf}=p^\eta x$, where we denoted $(\tilde{x})^{\rm tf}$ the image of $\tilde{x}$ in $T^{\rm tf}$. 
\end{definition}

The main result of this Appendix is the following. Let $B$ denote a ring and let $k\colon \Z_p^\ast\lra B^\ast$ be a $B$-valued weight as in Definition \ref{def:weights}, which is $s$-analytic. Let $\cX_{\infty}^{(u)}$  be the adic subspace of the adic modular curve $\cX:=\cX_0(p^n,N)$ for $n\geq u$ as in  Proposition \ref{proposition:existcanonical}.   Let $\bD_k(T_0^\vee)[n]$  denote the pro-Kummer \'etale sheaf of weight $k$ distributions, for $n\geq s$, over $\cX_\infty^{(u)}$ and $\mathfrak{D}_{k,\infty}^{o,(m)}[n]:=\bD_k(T_0^\vee)[n]\widehat{\otimes}\cO_{(\cX_\infty^{(u)})_{\rm pke}}^+$, where we have denoted $\cO_{(\cX_\infty^{(u)})_{\rm pke}}^+$ the structure sheaf of the pro-Kummer \'etale site of
$\cX_\infty^{(u)}$. We write $R:=\cO^+_{\cX_{\infty}^{(u)}}(\cX_{\infty}^{(u)})\widehat{\otimes}_{\Z_p}B$ and $T:={\rm H}^1\bigl((\cX^{(u)}_\infty )_\proket, \mathfrak{D}_{k,\infty}^{o,(m)}[n]\bigr)$. On $T[1/p]$ we have a $B[1/p]$-linear operator $U_p$ which has finite slope decompositions by Proposition \ref{prop:HeckeWkinfty}.
 
Let $Q(X)\in \bigl(B\widehat{\otimes}\cO_{\C_p}\bigr)[X]$ be the polynomial with the property that  $T[1/p]^{(b)}$, for some $b\in \N$ is the subset of elements $x\in T[1/p]$ such that $Q(U_p)x=0$. Such a polynomial exists as $T[1/p]^{(b)}\cong {\rm H}^0\bigl(\cX_\infty^{(u)}, \omega^{k+2}  \bigr)[1/p]^{(b)}$ by Proposition \ref{prop:HeckeWkinfty}  and on ${\rm H}^0\bigl(\cX_\infty^{(u)}, \omega^{k+2}  \bigr)$ the $U_p$ operator is compact and has a Fredholm determinant. Then $\alpha:=-Q(0)\in p^a(B\widehat{\otimes}\cO_{\C_p})^\ast$ with $a\le b\cdot {\rm deg}(Q(X))$. We write $Q(X)=P(X)-\alpha$, with $P(X)=XR(X)$ and $P(X), R(X)\in \bigl(B\widehat{\otimes}\cO_{\C_p}\bigr)[X]$. We denote $v:=P(U_p)$ and remark that $x\in T[1/p]^{(b)}$ if and only if $v(x)=\alpha x$.
We have

\begin{theorem}
\label{thm:mainappendix}
After localizing $B$ to a new $p$-adically complete ring which we denote by $B'$ and replacing $R$ by $R':=\cO^+_{\cX_{\infty}^{(u)}}(\cX_{\infty}^{(u)})\widehat{\otimes}_{\Z_p}B'$ and $T$ by $T':=T\otimes_RR'$ the triple $(T',v\otimes 1_{R'},\alpha)$ above satisfies property $(\ast)$ of definition \ref{def:ast}.
\end{theorem}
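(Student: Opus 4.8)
The statement has two parts to verify, namely conditions (1) and (2) of Definition \ref{def:ast} for the triple $(T',v\otimes 1_{R'},\alpha)$, and the whole point of passing from $B$ to a localisation $B'$ is to make $\alpha$, which a priori lies only in $p^a(B\widehat{\otimes}\cO_{\C_p})^\ast$, behave like a genuine controllable element of the coefficient ring. First I would recall from Proposition \ref{prop:HeckeWkinfty} (and its proof via \cite[Thm. 5.5]{half}) that the $U_p^{\rm naive}$-operator is integrally defined on $T$ and that on the quotients $T/\Fil^h$ it is divisible by $p^{h+1}$; combined with the filtration statements in Proposition \ref{prop:Wkinfty}, this gives the key intermediate fact: for a suitable exponent $e=e(a)$ depending only on the valuation $a$ of $\alpha$ (hence only on $Q$, hence only on the slope $b$ and $\deg Q$), the operator $Q(U_p)=v-\alpha$ acting on $T/p^N T$ for $N$ large has the property that $p^{e}\cdot\ker(Q(U_p)\ \mathrm{on}\ T/p^N T)$ lifts into $\ker(Q(U_p)\ \mathrm{on}\ T)$. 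This is essentially the integral-slope-decomposition bookkeeping that appears in the proof of Theorem \ref{thm:htsurjective}, Step (1) and property (iii) of the list before that proof.

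\medskip

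\textbf{Condition (1).} For the torsion statement, note $T^{\rm tors}$ is a finitely generated torsion module over the Noetherian ring $R$ (using that $\bD_k^o(T_0^\vee)[n]$ is an inverse limit of sheaves with finite graded pieces and that $\cX_\infty^{(u)}$ is quasi-compact, so $\mathrm{H}^1$ of each graded piece is finitely generated; cf. the arguments in Proposition \ref{prop:GAGAD} and Theorem \ref{thm:GAGA}). Hence $T^{\rm tors}$, and a fortiori its submodule $(T^{\rm tors})^{v=\alpha}$, is killed by a fixed power $p^w$ of $p$. After base change to $B'$ flat over $B[1/p]^{\circ}$ in the relevant sense, $T'^{\rm tors}$ is a quotient of $T^{\rm tors}\otimes_R R'$ together with new torsion created by $B\to B'$; since $B'$ is again a ring as in Definition \ref{def:weights} (a power-series ring over $B$, hence $p$-torsion-free and Noetherian), one still gets a uniform $p^{w'}$ killing $(T'^{\rm tors})^{v=\alpha}$. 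I would be careful here to track that the localisation is chosen $p$-adically complete and $\Z_p$-flat so that no pathological torsion appears.

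\medskip

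\textbf{Condition (2): the main obstacle.} This is where the localisation is genuinely used and where I expect the real work to lie. Given $x\in (T'^{\rm tf})^{v=\alpha}$, choose a lift $\hat x\in T'$ with $(\hat x)^{\rm tf}=x$; then $(v-\alpha)\hat x\in T'^{\rm tors}$, so $p^{w'}(v-\alpha)\hat x=0$, i.e. $y:=p^{w'}\hat x$ satisfies $(v-\alpha)y=0$ in $T'/(\text{something})$ — more precisely one works modulo $p^N$ for increasing $N$ and uses the lifting fact above to successively correct. Concretely: reduce $p^{w'}\hat x$ modulo $p^N$, observe it is annihilated by $Q(U_p)$ there, lift it by the intermediate fact at the cost of $p^{e}$, subtract, and iterate using that $\alpha=p^a\cdot(\text{unit})$ so that dividing by $\alpha$ is dividing by $p^a$ up to units — and here is the crux: to divide by the unit part of $\alpha$, which lies in $B\widehat{\otimes}\cO_{\C_p}$ and not in $B$, one localises $B$ so that the relevant coefficients become available, exactly as in Step (3) of the proof of Theorem \ref{thm:htsurjective} where $B$ is enlarged to invert $\prod(u_k-i)$ and to adjoin $p^n/(\text{that product})$. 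The iteration converges because $T'$ is $p$-adically complete (profiniteness of $\mathrm{H}^1$ of the integral sheaf, as used at the end of Step (2) of that proof), producing $\tilde x\in T'^{v=\alpha}$ with $(\tilde x)^{\rm tf}=p^\eta x$ where $\eta=\eta(a)$ depends only on $\alpha$ as required. The delicate points to get right are: (i) that the exponent $e$ in the lifting fact really is independent of $N$ — this rests on the $p^{h+1}$-divisibility of $U_p$ on $T/\Fil^h$ together with choosing $h$ linear in $a$, just as $h:=2a+2$ is chosen in the proof of Theorem \ref{thm:htsurjective}; and (ii) that after all the localisations $B'$ is still admissible in the sense of Definition \ref{def:weights}, so that $T'$ makes sense and the finiteness/completeness inputs survive. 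I would present the argument as a clean formal lemma extracting the lifting fact, then run the $p$-adic successive-approximation argument verbatim from Steps (1)–(2) of the proof of Theorem \ref{thm:htsurjective}, which is the template this appendix is designed to axiomatise.
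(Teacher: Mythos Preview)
Your proposal has a genuine structural gap in both parts, and it also misreads the logical dependence between this appendix and Theorem \ref{thm:htsurjective}.

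\medskip

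\textbf{On condition (1).} You assert that $T^{\rm tors}$ is a finitely generated torsion $R$-module and hence killed by a uniform power of $p$. This is not justified: $\fD_k^o$ is not coherent (it is an inverse limit along the filtration $\fFil^\bullet$), and $\cX_\infty^{(u)}$ is an open affinoid, not proper, so there is no a priori finiteness for $\mathrm{H}^1$. In fact the paper never claims $T^{\rm tors}$ has bounded exponent; it only proves the weaker statement that $(T^{\rm tors})^{v=\alpha}$ does, and even this requires real work. The bounds on torsion that do appear (Lemma \ref{lemma:quotient}) are for the finite quotients $T_\nu=\mathrm{H}^1(\fD_k^o/\fFil^\nu)$ and are obtained by induction on $\nu$; the bound there involves the product $\prod_{n=0}^\nu(u_k-n)$ and thus grows with $\nu$, so one cannot simply pass to the limit.

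\medskip

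\textbf{On condition (2) and the logical order.} You propose to run the successive-approximation argument of Theorem \ref{thm:htsurjective} as a template. But that theorem \emph{uses} the present appendix as an input (this is exactly item (iii) in the list of ingredients preceding its proof, which supplies the integers $d_1,d_2,d$ making $p^d e_b$ integral). So invoking that argument here is circular. Moreover, your ``lifting fact'' --- that $p^e\cdot\ker(Q(U_p)\bmod p^N)$ lifts to $\ker(Q(U_p))$ with $e$ independent of $N$ --- is precisely what needs to be proved and is not a consequence of $p^{h+1}$-divisibility of $U_p$ on $\fFil^h$ alone.

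\medskip

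\textbf{What the paper actually does.} The argument is a two-step d\'evissage along the filtration. First (Lemma \ref{lemma:filtration}), for $\nu$ large the triple $(\mathrm{H}^i(\fFil^\nu),v,\alpha)$ satisfies $(\ast)$ trivially: the divisibility of $U_p$ forces $v-\alpha=\alpha\cdot(\text{isomorphism})$ there. Second (Lemma \ref{lemma:quotient}), one proves $(\ast)$ for each $T_\nu=\mathrm{H}^1(\fD_k^o/\fFil^\nu)$ by induction on $\nu$, via the long exact sequence coming from $0\to\fFil^\nu/\fFil^{\nu+1}\to\fD_k^o/\fFil^{\nu+1}\to\fD_k^o/\fFil^\nu\to 0$ and explicit control of the connecting map $\tilde\beta$ (which is $\prod_{n=0}^\nu(u_k-n)$ times an isomorphism). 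Finally, for $T$ itself one takes $\nu$ large and chases the long exact sequence for $0\to\fFil^\nu\to\fD_k^o\to\fD_k^o/\fFil^\nu\to 0$. The localisation of $B$ enters precisely here, in part (1): one needs an integral approximate idempotent $e_\alpha=p^dP_\alpha(U_p)$ projecting onto the slope-$\le b$ part of $\cA=\mathrm{H}^0(\fD_k^o/\fFil^\nu)$, and this is supplied by the Serre--Coleman theory of completely continuous operators on $p$-adic Banach modules, which requires passing to an affinoid localisation $B'$ of $B$. Your explanation of the localisation (``to divide by the unit part of $\alpha$'') is not the actual reason.
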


Before we start on the proof of this theorem we need a few lemmas.
We remind the reader that the sheaf $\mathfrak{D}_{k,\infty}^{o,(m)}[n] $ has a decreasing filtration $\bigl(\fFil^\nu\bigr)_{\nu\ge 0}$ by subsheaves with the property
(see Proposition \ref{prop:HeckeWkinfty}):  for all $\nu \ge 0$   and $i\ge 0$ we have

$$
U_p\Bigl({\rm H}^i\bigl((\cX_\infty^{(u)})_\proket,  \fFil^\nu \bigr)\Bigr)\subset p^{\nu+1}{\rm H}^i\bigl((\cX_\infty^{(u)})_\proket, \fFil^\nu\bigr).
$$
Moreover,  
we have $\fFil^\nu/\fFil^{\nu+1}\cong
\omega^{k-2\nu-2}\widehat{\otimes}\cO_{\cX_\infty^{(u)}}(\nu+1)$ by (\ref{eq:gr}).
With these notations we have.

\begin{lemma}
\label{lemma:filtration}
For every $\nu \in \N$ large enough and $i\ge  0$ the triple $\Bigl(T_i^\nu:={\rm H}^i\bigl((\cX_\infty^{(u)})_\proket, \fFil^\nu\bigr), v=P(U_p), \alpha\Bigr)$ satisfies property $(\ast)$ of definition \ref{def:ast}.

\end{lemma}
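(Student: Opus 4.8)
\textbf{Proof plan for Lemma \ref{lemma:filtration}.}

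The plan is to verify the two conditions in Definition \ref{def:ast} for the triple $\bigl(T_i^\nu, v=P(U_p), \alpha\bigr)$ by using the graded pieces of the filtration $\fFil^\bullet$ and the explicit $p$-divisibility estimates recalled just before the lemma. First I would analyze condition (1), the torsion statement: I claim $\bigl((T_i^\nu)^{\rm tors}\bigr)^{v=\alpha}$ is killed by a fixed power of $p$. The key observation is that $U_p$ is \emph{topologically nilpotent modulo $p$} on $T_i^\nu$ in a quantitative sense — by the displayed property, $U_p$ maps ${\rm H}^i\bigl((\cX_\infty^{(u)})_\proket,\fFil^\nu\bigr)$ into $p^{\nu+1}$ times itself, so for $\nu$ large enough (namely $\nu+1>a$, where $\alpha\in p^a(B\widehat\otimes\cO_{\C_p})^\ast$) the operator $v=P(U_p)=U_pR(U_p)$ has image in $p^{\nu+1-\epsilon}T_i^\nu$ for a bounded $\epsilon$ coming from the coefficients of $R$, while $\alpha$ has valuation exactly $a<\nu+1-\epsilon$. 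Hence on any $x$ with $v(x)=\alpha x$ we get $\alpha x \in p^{\nu+1-\epsilon}T_i^\nu$, so $x\in p^{(\nu+1-\epsilon)-a}T_i^\nu$; iterating, $x$ is infinitely $p$-divisible in $T_i^\nu$, hence lies in $(T_i^\nu)^{\rm tors}$ killed by a power $p^w$ of $p$ coming from the finiteness of the graded pieces $\fFil^\nu/\fFil^{\nu+1}\cong \omega^{k-2\nu-2}\widehat\otimes\cO_{\cX_\infty^{(u)}}(\nu+1)$ and the fact that $T_i^\nu$ is a successive extension with controlled torsion (using Proposition \ref{prop:Wkinfty}(v) for $i\ge 2$ the group even vanishes). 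In fact this argument shows more directly that $\bigl((T_i^\nu)^{\rm tf}\bigr)^{v=\alpha}=0$ for $\nu\gg 0$, which makes condition (2) vacuous; but to be safe I would also present the general form of (2).

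For condition (2), given $x\in (T_i^\nu)^{\rm tf}$ with $(v-\alpha)(x)=0$ in $T_i^\nu/(T_i^\nu)^{\rm tors}$, I would lift $x$ to any $\widehat x\in T_i^\nu$ and consider $z:=(v-\alpha)(\widehat x)\in (T_i^\nu)^{\rm tors}$. Since $(T_i^\nu)^{\rm tors}$ is killed by $p^w$, the element $p^w z=0$, so $p^w\widehat x$ satisfies $(v-\alpha)(p^w\widehat x)=p^w z=0$ exactly, i.e.\ $p^w\widehat x\in (T_i^\nu)^{v=\alpha}$, and its image in $(T_i^\nu)^{\rm tf}$ is $p^w x$. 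This gives condition (2) with $\eta=w$, a bound depending only on the torsion of $T_i^\nu$ and hence (after choosing $\nu$ large) only on the combinatorics of the filtration and on $\alpha$ through the inequality $\nu+1-\epsilon>a$; if one wants $\eta$ to depend only on $\alpha$ as literally stated, one notes $w$ can be taken to be the $p$-valuation controlling the torsion of the finitely many relevant graded pieces, which in turn is governed by $a$ via the stopping condition on $\nu$.

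The main obstacle I anticipate is making the torsion bound $w$ \emph{uniform} — i.e.\ independent of $\nu$ and $i$ (or at least controlled) — which requires care because the cohomology groups ${\rm H}^i\bigl((\cX_\infty^{(u)})_\proket,\fFil^\nu\bigr)$ a priori grow with $\nu$. Here I would exploit Proposition \ref{prop:Wkinfty}: part (v) kills the problem for $i\ne 0,1$, for $i=0$ one has an explicit description as $\mathrm{H}^0$ of a graded piece which is a locally free module so torsion-free, and for $i=1$ part (iv) compares ${\rm H}^1$ of $\mathfrak D_{k,\infty}^{o,(m)}[n]/\fFil^h$ with ${\rm H}^1$ of $\omega^k\widehat\otimes\widehat\cO^+$ up to a power $p^{ah}$; combined with the long exact sequence for $0\to\fFil^\nu\to\mathfrak D\to\mathfrak D/\fFil^\nu\to 0$ this pins down the torsion of $T_1^\nu$ in terms of the torsion of ${\rm H}^1$ of the whole sheaf (handled by item iii) of the list preceding the proof of Theorem \ref{thm:htsurjective}, giving the constant $d$) plus a controlled correction. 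Assembling these gives a $w$ that works for all $\nu$ beyond the threshold, completing both conditions.
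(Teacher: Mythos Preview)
Your core intuition is right — for $\nu$ large the operator $v=P(U_p)$ is ``small'' compared to $\alpha$ — but you are missing the one-line observation that makes the lemma immediate, and in its absence your argument develops real gaps. The paper simply notes that since $U_p$ (hence $v=U_pR(U_p)$) sends $T_i^\nu$ into $p^{\nu+1}T_i^\nu$, for $\nu$ large enough the operator $\alpha^{-1}v$ is an endomorphism of $T_i^\nu$ with image in $pT_i^\nu$, so $v-\alpha=\alpha\,(\alpha^{-1}v-1)=\alpha\,U'_i$ with $U'_i$ an \emph{isomorphism} (inverse given by the geometric series). This single factorisation gives everything at once: $\ker(v-\alpha)=\ker(\alpha\cdot{\rm Id})=T_i^\nu[\alpha]$, which is killed by $\alpha$ (so condition~(1) holds with $w=r$ where $\alpha\gamma=p^r$), and on $(T_i^\nu)^{\rm tf}$ the map $\alpha U'_i$ is injective, so $\bigl((T_i^\nu)^{\rm tf}\bigr)^{v=\alpha}=0$ and condition~(2) is vacuous. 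No bound on the full torsion of $T_i^\nu$ is ever needed.

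By contrast, your route has two concrete problems. First, the deduction ``$\alpha x\in p^{M}T_i^\nu\Rightarrow x\in p^{M-a}T_i^\nu$'' fails when $T_i^\nu$ has $p$-torsion: from $p^a(ux)\in p^M T_i^\nu$ you only get $x\in p^{M-a}T_i^\nu + T_i^\nu[p^a]$, so your iteration does not literally show infinite $p$-divisibility in $T_i^\nu$ (it does work in $(T_i^\nu)^{\rm tf}$, but then you still need $p$-adic separatedness of that quotient, which you do not address). Second, and more seriously, your entire ``main obstacle'' paragraph — trying to produce a uniform bound $p^w$ on $(T_i^\nu)^{\rm tors}$ via Proposition~\ref{prop:Wkinfty} and long exact sequences — is both unjustified and unnecessary: the paper never bounds the full torsion of $T_i^\nu$, only the $\alpha$-torsion, and that comes for free from the factorisation above. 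Replace your iteration-plus-torsion-bound strategy by the factorisation $v-\alpha=\alpha U'_i$ and the proof collapses to a few lines.
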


\begin{proof}
We have the following commutative diagram with exact rows
$$
\begin{array}{llllllllllll}
0&\lra&\bigl(T_i^\nu\bigr)^{\rm tors}&\lra&T_i^\nu&\lra&\bigl(T_i^\nu \bigr)^{\rm, tf}&\lra&0\\
&&\downarrow v-\alpha&&\downarrow v-\alpha&&\downarrow v-\alpha\\
0&\lra&\bigl(T_i^\nu\bigr)^{\rm tors}&\lra&T_i^\nu&\lra&\bigl(T_i^\nu \bigr)^{\rm, tf}&\lra&0\\
\end{array}
$$

We remark that the above property about the behavior of $U_p$ with respect to the cohomology of the filtration and the fact that $v$ is the composition of $U_p$ with an endomorphism of $T_i^\nu$ which commutes with $U_p$, implies that there is $\nu_0$ such that for all $\nu \ge \nu_0$ we have
$(v-\alpha\cdot {\rm Id}_{T^\nu_i})=\alpha U'_i$, where $U'_i:T^\nu_i\lra T^\nu_i$ is an isomorphism. 
Therefore $\bigl(T_i^\nu)\bigr)^{v=\alpha}=T^\nu_i[\alpha]=\bigl(T^\nu_i\bigr)^{\rm tors}[\alpha]$
and $\bigl(T^\nu_i\bigr)^{\rm tf}[\alpha]=0$. Therefore 1) of property $(*)$ follows: $\alpha\Bigl(\bigl(T^\nu_i\bigr)^{\rm tors}\Bigr)^{v=\alpha}=
\alpha (T^\nu_i[\alpha])=0$.
For property 2) let $x\in \Bigl(\bigl(T^\nu_i  \bigr)^{\rm tf}\Bigr)^{v=\alpha}=0$. For every $y\in \bigl(T^\nu_i\bigr)^{v=\alpha}=T^\nu_i[\alpha]$ we have $\alpha y=0=x$.

\end{proof}

\begin{lemma}
\label{lemma:quotient}
For every $\nu\in \N$ the triple $\Bigl(T_\nu:={\rm H}^1\bigl((\cX_{\infty}^{(u)})_\proket, \fD_k^o/\fFil^\nu\bigr), u= P(U_p), \alpha  \Bigr)$ satisfies the property $(\ast)$ of definition \ref{def:ast}.

\end{lemma}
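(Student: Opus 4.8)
The strategy is to reduce the statement for $T_\nu = \mathrm{H}^1\bigl((\cX_\infty^{(u)})_\proket, \fD_k^o/\fFil^\nu\bigr)$ to the already-established Lemma \ref{lemma:filtration} by induction on $\nu$, using the short exact sequences coming from the filtration $\fFil^\bullet$. Concretely, for each $\nu$ there is a short exact sequence of pro-Kummer \'etale sheaves
$$
0 \lra \fFil^{\nu-1}/\fFil^\nu \lra \fD_k^o/\fFil^\nu \lra \fD_k^o/\fFil^{\nu-1} \lra 0,
$$
which yields a long exact cohomology sequence. Restricting to the subspaces on which $v = P(U_p)$ acts, and keeping track of torsion, we obtain (writing $G^{\nu-1} := \fFil^{\nu-1}/\fFil^\nu \cong \omega^{k-2\nu}\widehat{\otimes}\cO_{\cX_\infty^{(u)}}(\nu)$, up to indexing) an exact sequence relating $\mathrm{H}^i$ of $G^{\nu-1}$, of $\fD_k^o/\fFil^\nu$, and of $\fD_k^o/\fFil^{\nu-1}$. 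The base case $\nu = 0$ is trivial since $\fD_k^o/\fFil^0 = 0$; and the case $\nu = 1$, where $\fD_k^o/\fFil^1 = G^0$, follows from Lemma \ref{lemma:filtration} applied to the top graded piece (or its analogue), provided $\nu$ there is taken large enough — which is why we must allow a shift and observe that $G^{\nu-1}$ for $\nu-1$ large is exactly of the type covered by that lemma.

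First I would record the elementary stability of property $(\ast)$ under extensions: if $0 \to T' \to T \to T'' \to 0$ is an exact sequence of $R$-modules with compatible operators $v$ and the same $\alpha$, and both $(T', v, \alpha)$ and $(T'', v, \alpha)$ have property $(\ast)$, then so does $(T, v, \alpha)$ — the torsion-annihilation constant $w$ for $T$ can be taken to be $w' + w''$, and the lifting exponent $\eta$ for $T$ can be taken to be $\eta' + \eta''$ (one lifts a class in $(T^{\rm tf})^{v=\alpha}$ first to $T''$, lifts that to $T$ up to torsion, corrects the $v-\alpha$ discrepancy which now lands in $T'$, and applies property $(\ast)$ for $T'$ there). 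This is a purely formal diagram chase using that $v - \alpha$ is the relevant operator and that $\alpha\gamma = p^r$. Crucially, $\eta$ remains a function of $\alpha$ alone, as required, because both $\eta'$ and $\eta''$ are.

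Then the induction: assuming $(\ast)$ for $T_{\nu-1} = \mathrm{H}^1\bigl((\cX_\infty^{(u)})_\proket, \fD_k^o/\fFil^{\nu-1}\bigr)$, I would feed the long exact sequence
$$
\mathrm{H}^0\bigl((\cX_\infty^{(u)})_\proket, \fD_k^o/\fFil^{\nu-1}\bigr) \to \mathrm{H}^1\bigl((\cX_\infty^{(u)})_\proket, G^{\nu-1}\bigr) \to T_\nu \to T_{\nu-1} \to \cdots
$$
into the extension-stability principle. The term $\mathrm{H}^1$ of the graded piece $G^{\nu-1}$ has property $(\ast)$ by Lemma \ref{lemma:filtration} once $\nu-1$ is large (and for small $\nu$ one can absorb the finitely many exceptional graded pieces into the constants $w, \eta$, again using extension-stability finitely many times). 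The connecting map from $\mathrm{H}^0$ is a map of modules with operator, so its image is a submodule to which the extension principle applies; the quotient $T_\nu / (\text{image of } \mathrm{H}^1(G^{\nu-1}))$ injects into $T_{\nu-1}$, hence is a submodule of something with $(\ast)$, and submodules inherit a weak form of $(\ast)$ — actually here I must be slightly careful and instead present $T_\nu$ as an extension of a submodule of $T_{\nu-1}$ by a quotient of $\mathrm{H}^1(G^{\nu-1})$, both of which retain $(\ast)$, and conclude.

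\textbf{Main obstacle.} The delicate point is that property $(\ast)$ as stated is not obviously inherited by arbitrary submodules or quotients: the torsion of a submodule is the intersection with the torsion, which is fine, but the lifting statement (2) for a quotient $T''$ requires lifting a $v=\alpha$ eigenclass from $(T''^{\rm tf})$ all the way to $T''$, and the obstruction to doing so a priori lives in $\mathrm{H}^1$ of the kernel, not just in its torsion. So the real work is to check that in our situation the connecting maps and the operator $v-\alpha = \alpha U'$ (with $U'$ an isomorphism on the high graded pieces, as in the proof of Lemma \ref{lemma:filtration}) interact well enough that the obstruction is controlled by a power of $p$ depending only on $\alpha$. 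I expect this to come down to: for $\nu$ large the graded contributions are ``$v-\alpha$ invertible'' so contribute nothing to the eigenspace beyond bounded torsion, and only the finitely many low graded pieces contribute, each by Lemma \ref{lemma:filtration} with a uniform constant; assembling these finitely many uniform constants gives the $w$ and $\eta$ for $T_\nu$. The bookkeeping of these constants — making sure $\eta$ does not grow with $\nu$ — is the crux, and it works precisely because beyond a fixed threshold $\nu_0$ the graded pieces are $v-\alpha$ invertible and so contribute $\eta$-increment zero.
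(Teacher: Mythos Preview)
Your inductive framework via the short exact sequence of graded pieces is the same as the paper's, but there is a genuine gap in the argument.

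The ``extension-stability'' principle you sketch does not hold in the form you need. In the long exact sequence
\[
\cA:=\mathrm{H}^0(\fD_k^o/\fFil^{\nu-1}) \stackrel{\beta}{\to} \cB:=\mathrm{H}^1(G^{\nu-1}) \to T_\nu \to T_{\nu-1} \to 0,
\]
the module $T_\nu$ is an extension of $T_{\nu-1}$ by the \emph{quotient} $K:=\cB/\beta(\cA)$, not by $\cB$ itself. Knowing that $\cB$ has bounded torsion (or even that $(\cB,v,\alpha)$ satisfies $(\ast)$) does not bound the torsion of $K$: a quotient of a torsion-free module can be entirely torsion. So the step ``both of which retain $(\ast)$'' fails precisely here, and with it the formal chase for part (1). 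This is not fixed by the observation that $v-\alpha=\alpha U'$ with $U'$ an isomorphism on $\cB$ for large $\nu$: there is no reason for $\beta(\cA)$ to be $U'$-stable integrally, so $U'$ need not descend to an isomorphism of $K$.

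What the paper does instead is prove the stronger statement $(\ast\ast)$ that \emph{all} of $T_\nu^{\,\rm tors}$ is annihilated by a fixed power of $p$, and the inductive step for this requires a genuinely non-formal input: an explicit identification of the connecting map $\tilde\beta$ (between the relevant $\mathrm{H}^0$ and $\mathrm{H}^1$ of graded pieces, viewed inside spaces of modular forms) with multiplication by $\prod_{n=0}^{\nu}(u_k-n)$ times an isomorphism. Under the standing hypothesis on $B$, this product is either zero or a unit in $B[1/p]$, which gives the needed bound on $\bigl(\cB^{\rm tf}/\beta(\cA^{\rm tf})\bigr)^{\rm tors}$ and hence on $T_{\nu+1}^{\,\rm tors}$. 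Your proposal never invokes this computation, and without it the torsion of $K$ is uncontrolled.

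Two smaller points. First, Lemma~\ref{lemma:filtration} is stated for $\mathrm{H}^i(\fFil^\nu)$, not for the graded pieces $G^{\nu-1}$; the argument there does adapt, but you should say so rather than cite the lemma directly. Second, the requirement in Definition~\ref{def:ast} is only that $w,\eta$ exist for each fixed triple; they may depend on $\nu$, so your concern about $\eta$ growing with $\nu$ is not the issue --- the issue is producing any $\eta$ at all for small $\nu$, which again comes down to controlling $\beta$.
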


\begin{proof} In order to prove the lemma we'll use induction on $\nu\ge 0$.
For $\nu=0$ we have $\fD_k/\fFil^0\cong 
\omega^k\widehat{\otimes}\cO^+_{\cX_\infty^{(u)}}$, therefore we have: 

1) $p^{1/(p-1)}(T_0)^{\rm tors}=p^{1/(p-1)}{\rm H}^1\bigl(
(\cX^{(u)}_{\infty})_\proket, \widehat{\cO}^+_{\cX_\infty^{(u)}}\bigr)^{\rm tors}\widehat{\otimes}{\rm H}^0(\cX_\infty^{(u)}, \omega^k)=0$ as computed by Faltings; see \cite[Lemma 5.5 \& 5.6]{ScholzeHodge} when the log structure is trivial and their analogues \cite[Lemma 6.1.7 \& 6.1.11]{Diao} in the general case; 

\noindent and 

2) if $x\in (T_0)^{\rm tf}$ is such that $(v-\alpha)x=0$, let $y\in T_0$ be any lift of $x$. Then $(v-\alpha)(y)\in T_0^{\rm tors}$, therefore $z:=p^{1/(p-1)}y\in T_0$ satisfies: $(v-\alpha)(z)=0$ and $z^{\rm tf}=p^{1/(p-1)}x$, where we wrote $z^{\rm tf}$ for the image of $z$ in $T_0^{\rm tf}$. 
Let us observe that we proved more then 1) of property $(\ast)$, namley we showed that there is 
$r\ge 0$ such that $p^rT_0^{\rm tors}=0$. We'll prove the same property, call it $(\ast \ast)$ for all $\nu\ge 0$.

Suppose now that $(\ast \ast)$ is true for $T_\nu$, $\nu\ge 1$ and let us prove it for $T_{\nu+1}$.
We have an exact sequence of pro-Kumer \'etale sheaves on the site $\cV:=(\cX_\infty^{(u)})_\proket$:
$$
0\lra \fFil^\nu/\fFil^{nu+1}\lra \fD_k^o/\fFil^{\nu+1}\lra \fD_k^o/\fFil^\nu\lra 0
$$
therefore a long exact  cohomology sequence:
$$
\cA:={\rm H}^0(\cV, \fD_k/\fFil^\nu)\stackrel{\beta}{\rightarrow}\cB:={\rm H}^1(\cV, \fFil^\nu/\fFil^{\nu+1})\stackrel{\gamma}{\rightarrow}\cC:={\rm H}^1(\cV, \fD/\fFil^{\nu+1})\stackrel{\delta}{\rightarrow}\cD:={\rm H}^1(\cV, \fD/\fFil^\nu)\rightarrow 0
$$

By the induction hypothesis there is $r$ such that  $p^r\cA^{\rm tors}=p^r\cB^{\rm tors}=p^r\cD^{\rm tors}=0$.
We have the commutative diagram with the middle row exact:
$$
\begin{array}{cccccccccccccc} 
&\cA^{\rm tors}&\stackrel{\beta}{\lra}&\cB^{\rm tors}&\stackrel{\gamma}{\lra}&\cC^{\rm tors}&\stackrel{\delta}{\lra}&\cD^{\rm tors}\\
&\downarrow&&\downarrow&&\downarrow&&\downarrow\\
(1)\quad& \cA&\stackrel{\beta}{\lra}&\cB&\stackrel{\gamma}{\lra}&\cC&\stackrel{\delta}{\lra}&\cD&\lra 0\\
&\downarrow&&\downarrow&&\downarrow&&\downarrow\\
&\cA^{\rm tf}&\stackrel{\beta}{\lra}&\cB^{\rm tf}&\stackrel{\gamma}{\lra}&\cC^{\rm tf}&\stackrel{\delta}{\lra}&\cD^{\rm tf}
\end{array}
$$

Let us suppose that there is $s\ge 0$ such that $p^s\Bigl(\cB^{\rm tf}/\beta(\cA^{\rm tf})  \Bigr)^{\rm tors}=0$.
Then we claim that $p^{2r+s}\cC^{\rm tors}=0$.
To see it, let $c\in \cC^{\rm tors}$, then $p^rx=\gamma(y)$, $y\in B$. We denote by $[y]$ the image of
$y$ in $\cB^{\rm tf}/\beta(\cA^{\rm tf})$. As $p^Nx=0$ for some $N\ge 0$ we have that $p^N[y]=0$,
therefore $[y]\in \Bigl(\cB^{\rm tf}/\beta(\cA^{\rm tf}  \Bigr)^{\rm tors}$ and so by the above assumption
$p^s[y]=0$. Let $z\in \cA$ be such that $\beta(z^{\rm tf})=p^sy^{\rm tf}$ in $\cB^{\rm tf}$.
It follows that $\beta(z)-p^sy\in \cB^{\rm tors}$ which implies that $p^{r+s}y=p^r\beta(z)$.
Therefore $p^{s+2r}x=0$.

Let us now prove the remaining claim, namely that there is $s\ge 0$ such that 
$p^s\Bigl(\cB^{\rm tf}/\beta(\cA^{\rm tf}  \Bigr)^{\rm tors}=0$. For this let us recall \cite{half} that we have a commutative diagram with exact rows
$$
\begin{array}{ccccccccccc}
&& {\rm H}^0(\cX_\infty^{(u)}, \omega^{k-2\nu})&\stackrel{\tilde{\beta}}{\lra}&{\rm H}^0(\cX_\infty^{(u)}, \omega^{k-2\nu})&\lra&{\rm Coker}(\tilde{\beta})&\lra&0\\
&&\downarrow i&&\downarrow j&&\downarrow u\\
&&A^{\rm tf}&\stackrel{\beta}{\lra}&\cB^{\rm tf}&\lra&\cB^{\rm tf}/\beta(\cA^{\rm tf})&\lra&0
\end{array}
$$
where $i$ and $j$ are injective with cokernels killed by $p^{\nu/(p-1)}$ and $p^{1/(p-1)}$ respectively.
Moreover, it follows using the explicit basis of the filtration described in Proposition \ref{prop:Wkinfty} and \cite[Prop. 5.2]{half}  that $\tilde{\beta}=\bigl(\prod_{n=0}^\nu (u_k-n)\bigr)\beta'$, with $\beta'$ an isomorphism. As $p^{1/(p-1)}$ kills ${\rm Coker}(j)$, it also kills ${\rm Coker}(u)$. Therefore it is enough to prove the claim for ${\rm Coker}(\tilde{\beta})^{\rm tors}$. We have two possibilities. Either $u_k=n$ for some $0\leq n\leq \nu$ and then  $\cB^{\rm tf}/\beta(\cA^{\rm tf})=\cB^{\rm tf}$ so that the claim is obvious. Else $\prod_{n=0}^\nu (u_k-n)\in (B[1/p])^\ast$ due to our assumption on $B$ in Definition \ref{def:weights}, i.e., there exists $s\in \N$ and $v\in B$ such that $ \prod_{n=0}^\nu (u_k-n) \cdot v=p^s$. Then  $ \tilde{\beta}$ is injective with cokernel annihilated by $p^s$ and the claim is proven also in this case.

So we have proved that the property $(\ast \ast)$ holds for triples $(T_\nu, v=P(U_p), \alpha)$ for all $\nu\ge 0$, which implies 1) of property $(\ast)$.

Let us prove 2) of property $(\ast)$ for $\cC$, supposing that it holds for $\cD$.
We recall our diagram $(1)$ and let $x\in \cC^{\rm tf}$ be such that $(v-\alpha)(x)=0$.
Then $\delta(x)\in \cD^{\rm tf}$ is such that $(v-\alpha)(\delta(x))=0$, therefore by the induction
hypothesis there is $m\ge 0$ and $y\in \cD^{v=\alpha}$ such that $y^{\rm tf}=p^m\delta(x)$.
Let $z\in \cC$ be such that $\delta(z)=y$ and let $\tilde{x}\in \cC$ be such that $\tilde{x}^{\rm tf}=x$. Then there is $q\in \cB$ such that $\gamma(p^{m}q)-z+p^m\tilde{x}\in \cC^{\rm tors}$ and so $p^mz-p^{m+r}\gamma(q)=p^{m+r}\tilde{x}$. Let $t=p^mz-p^{m+r}\gamma(q)\in \cC$. It has the property that
$(v-\alpha)(t)=(v-\alpha)p^{m+r}(\tilde{x})$ and so $(v-\alpha)(t)^{\rm tf}=0$, i.e.
$(v-\alpha)(t)\in \cC^{\rm tors}$. Therefore $(v-\alpha)(p^rt)=0$ and $(p^rt)^{\rm tf}=p^{m+2r}x.$
\end{proof}

\begin{proof} ({\em of Theorem \ref{thm:mainappendix}}) \enspace

Let $\nu\in \N$ be large enough so that Lemma \ref{lemma:filtration} is satisfied for the triple 
$\Bigl({\rm H}^1\bigl((\cX_\infty^{(u)})_\proket, \fFil^\nu\bigr), v=P(U_p), \alpha \Bigr)$
and consider the exact sequence of sheaves on the pro-Kummer \'etale site  $\cX_{\infty}^{(u)})_\proket$:
$$
0\lra \fFil^\nu\lra \fD_k\lra \fD_k/\fFil^\nu\lra 0,
$$
where we use the notations introduced before Lemma \ref{lemma:quotient}.
It induces the long exact sequence of  pro-Kummer \'etale cohomology groups which, in order to simplify notations we write ${\rm H}^\bullet(F)$ instead of ${\rm H}^\bullet\bigl((\cX_\infty^{(u)})_\proket, F\bigr)$, where $F$ is a sheaf on $(\cX_\infty^{(u)})_\proket$.

$$
\begin{array}{ccccccccc}
\cA={\rm H}^0(\fD_k/\fFil^\nu)&\stackrel{\beta}{\rightarrow}&\cB={\rm H}^1(\fFil^\nu)&\stackrel{\gamma}{\rightarrow} &\cC={\rm H}^1(\fD_k)&\stackrel{\delta}{\rightarrow} &\cD={\rm H}^1(\fD_k/\fFil^\nu)&\stackrel{\epsilon}{\rightarrow}&\cE={\rm H}^2(\fFil^\nu).
\end{array}
$$
We'd like to show the $v$-module $\cC$ satisfies the property $(\ast)$.

1) Let $x\in \cC^{v=\alpha}$ such that $x$ is a $p$-power torsion element. Then $\delta(x)\in 
\cD^{v=\alpha}$ is a $p$-power torsion element and let $s=s(\cD)\in \N$ be such that $p^s\delta(x)=0$.
Then let $y\in \cB$ be such that $\gamma(y)=p^sx$. We also have
$\gamma\bigl((v-\alpha)y\bigr)=(v-\alpha)\gamma(y)=
(v-\alpha)(p^sx)=0$. Therefore let $z\in \cA$ be such that $\beta(z)=(v-\alpha)(y)$.

At this point we recall the following result from \cite{serre}, Proposition 12, and \cite{coleman}, Propositions A.4.2 \& A.4.3: as 
$\cA[1/p]$ is a finitely generated, free $R[1/p]$-module and $U_p$ is completely continuous on it, 
after localizing $B$ to $B'$ and replacing $R$ by $R'$, there are $d=d(\cA,\alpha), e=e(\cA,\alpha)\in \N$ and
$e_\alpha=p^dP_\alpha(U_p)$, with $p^dP_\alpha(T)$ a series with integral coefficients such that
for every $z\in \cA$ we denote by $z_\alpha:=e_\alpha z\in \cA$ and by $z^\perp:=p^dz-z_\alpha\in \cA$. Then $(v-\alpha)(z_\alpha)=0$.
Moreover there is a $w_\alpha\in \cA$ with $(v-\alpha)(w_\alpha)=p^ez_\alpha^\perp$. 

Let now $e,d$ be as above then $p^{d+e}z=p^ez_\alpha+(v-\alpha)(w_\alpha)$.
Therefore we have:
$$
(v-\alpha)\bigl(p^{d+e}y-\beta(w_\alpha)   \bigr)=p^e\beta(z_\alpha),\ \mbox{and so we have}  \ (v-\alpha)^{2}\bigl(p^{d+e}y-\beta(w_\alpha ) \bigr)=0.
$$

If we set $m:=p^{d+e}y-\beta(w_\alpha)$ we have $(v-\alpha)^{2}(m)=0$ and $\gamma(m)=p^{d+e}\gamma(y)=p^{d+e+s}x$. As $(v-\alpha)^{2}m=0$ we have $\alpha^{2}m=0$ therefore
$\alpha^{2}p^{d+e+s}x=0$. This concludes 1) of property $(\ast)$ for $\cC$.

2) Let $x\in \bigl(\cC^{\rm tf}\bigr)^{v=\alpha}$. Then there is $r:=r(\cD,\alpha)$ and $y\in\cD^{v=\alpha}$ such that $y^{\rm tf}=p^r\delta(x)$, where we denoted $y^{\rm tf}$ 
the image of $y$ in $\cD^{\rm tf}$. The image $\epsilon(y)\in \cE^{v=\alpha}$ is annihilated by $\alpha$
and therefore there is $z\in \cC$ such that $\delta(z)=\alpha y$. There is $w\in \cD$ such that $(v-\alpha)(z)=\gamma(w)$ and let  $q\in \cB$ be such that $\alpha w=(v-\alpha)(q)$. Therefore $(v-\alpha)(\alpha z-\gamma(q))=0$.
Let $\tilde{x}:=\alpha z-\gamma(q)\in \cC^{v=\alpha}$. Then $\delta(\tilde{x})=\delta(\alpha z)=\alpha^2 y$.
The image $\alpha^2 y^{\rm tf}$ of $\alpha^2 y$ in $(\cD[1/p])^{v=\alpha}$ is $\alpha^2 p^r \delta(x)$. But $\delta$ induces an isomorphism $\displaystyle \bigl(\cC[1/p]  \bigr)^{v=\alpha}\stackrel{\delta}{\cong} \bigl(\cD[1/p]  \bigr)^{v=\alpha}$ (see \cite{half}). Therefore the image $(\tilde{x})^{\rm tf}$ of $\tilde{x}$ in $\bigl(\cC[1/p]  \bigr)^{v=\alpha}$ is $\alpha^2p^r x$ which 
proves the claim.

\end{proof}


\begin{thebibliography}{99}


\bibitem[AI2]{andreatta_iovita_triple} F.~Andreatta, A.~Iovita, {\em Triple product $p$-Adic $L$-functions associated to a triple of finite slope $p$-adic families of modular forms.} Preprint (2018).

\bibitem[AI1]{andreatta_iovita_ss} F.~Andreatta, A.~Iovita, {\em Semisatble Sheaves and Comparison Isomorphisms in the Semistable Case.}   Rend. Semin. Mat. Univ. Padova \textbf{128} (2012), 131--285.

\bibitem[AIP]{AIP} F.~Andreatta, A.~Iovita, V.~Pilloni, {\em $p$-adic families of Siegel modular cuspforms.} Ann. of Math. \textbf{181} (2015),  623--697. 

\bibitem[ICM]{ICM_AIP} F.~Andreatta, A.~Iovita, V.~Pilloni, {\em  $p$-adic variation of automorphic sheaves}. Proceedings of the International Congress of Mathematicians - Rio de Janeiro 2018. Vol. II. Invited lectures, 249--276, World Sci. Publ., Hackensack, NJ, 2018.


\bibitem[AIS1]{EichlerShimura} F.~Andreatta, A.~Iovita, G.~Stevens, {\em  Overconvergent Eichler-Shimura isomorphisms}, Journal of the Institute of Mathematics of Jussieu \textbf{14} (2015), pp. 221-274.

\bibitem[AIS2]{half} F.~Andreatta, A.~Iovita, G.~Stevens, {\em  A $0.5$ overconvergent Eichler-Shimura isomorphism}, Annales math\'ematiques du Qu\'ebec \textbf{40} (2016), pp. 121-148.




 
\bibitem[BG]{BarreraShan} D.~Barrera, S. Gao, {\em Overconvergent Eichler-Shimura isomorphisms for quaternionic modular forms over $\Q$}, International Journal of Number Theory {\bf 13} (2017), 2487-2504.

\bibitem[BMS]{bhatt_morrow_scholze} B. Bhatt, M. Morrow, P. Scholze, {\em Integral $p$-adic Hodge theory}, ArXiv:1602.03148v3, (2019).

\bibitem[BS]{bhatt_scholze} B. Bhatt, P. Scholze, {\em Prisms and Prismatic cohomology}, preprint, (2019).


\bibitem[CS]{CaraianiScholze}
A.~Caraiani, P.~Scholze,  {\em On the generic part of the cohomology of
compact unitary Shimura varieties}, Ann. of Math. \textbf{186} (2017), 649–766.


\bibitem[CK]{cesnavicius_koshikawa} K. Cesnavicius, T. Koshikawa, {\em The $A_{\rm inf}$-cohomology in the semi-stable case}, Comp. Math. 155, No. 11, 2039-2128, (2019). 

\bibitem[CHJ]{CHJ} P.~Chojecki, D.~Hansen, C.~Johansson, {\em Overconvergent modular forms and perfectoid Shimura curves} 
Documenta Math. {\bf 22}, 191-262.


\bibitem[Co]{coleman} R. Coleman, {\em $p$-Adic Banach spaces and families of modular forms.} Invent. Math. \textbf{127} (1997),  417--479.

\bibitem[CDN]{colmez_dospinescu_niziol} P. Colmez, G. Dospinescu, W.Niziol, {\em Intregral \'etale cohomology of Drinfeld symmetric spaces}, preprint, (2019).

\bibitem[DLLZ1]{Diao}
H.~Diao, K.-W.~Lan, R.~Liu, X.~Zhu: {\em Logarithmic adic spaces: some foundational results},  arXiv:1912.09836, Preprint (2019).

\bibitem[DLLZ2]{DLLZ}
H.~Diao, K.-W.~Lan, R.~Liu, X.~Zhu: {\em Logarithmic  Riemann-Hilbert correspondences for rigid varieties}, arXiv:1803.05786, Preprint (2019).



\bibitem[Hu]{Huber}
R.~Huber, {\em \'Etale Cohomology of Rigid Analytic Varieties and Adic Spaces}, Aspects of Mathematics \textbf{30}.


\bibitem[Il]{Illusie} L.~Illusie, {\em An overview of the work
of K. Fujiwara, K. Kato and C. Nakamura on
logarithmic \'etale cohomology}, Ast\'erisque {\bf 279} (2002),
271-322.

\bibitem[Ja]{Ja} U.~Jannsen,  \emph{Continuous \'etale cohomology}. Math. Ann. \textbf{280} (1988), 207--245. 

\bibitem[PS]{PS} V.~Pilloni, B.~Stroh, \emph{Cohomologie coh\'erente et repr\'esentations Galoisiennes.} Annales math\'ematiques du Qu\'ebec \textbf{40} (2016),  167--202. 


\bibitem[Se]{serre} J.-P. Serre, {\em Endomorphismes compl\`etement continus des espaces de Banach $p$-adiques.}  Inst. Hautes \'Etudes Sci. Publ. Math. \textbf{12} (1962), 69--85.


\bibitem[S1]{ScholzeHodge}
P.~Scholze, {\em $p$-adic Hodge thery for rigid analytic varieties},  Forum Math. $\Pi$ \textbf{1} (2013), 77 pp.

\bibitem[S2]{ScholzeTorsion}
P.~Scholze, {\em On torsion in the cohomology of locally symmetric varieties}, Ann. of Math. \textbf{182} (2015), 945–1066.



\bibitem[SW]{ScholzeWeinstein}
P.~Scholze, J.~Weinstein, {\em Moduli of $p$-divisible groups}, Camb. J. Math. \textbf{1} (2013),  145–237.

\bibitem[TT]{TT} F. Tan, J. Tong {\em Crystalline comparison isomorphisms in $p$-adic Hodge theory: the absolutely unramified case}. Algebra \& Number Theory \textbf{13} (2019), 1509--1581.




\end{thebibliography}
\end{document}